\let\OLDthebibliography\thebibliography
\renewcommand\thebibliography[1]{
	\OLDthebibliography{#1}
	\setlength{\parskip}{0pt}
	\setlength{\itemsep}{2pt} 
}
\theoremstyle{definition}
\newtheorem{df}{Definition}[section]
\newtheorem{rem}[df]{Remark}
\newtheorem{cv}[df]{Convention}
\theoremstyle{plain}
\newtheorem{thm}[df]{Theorem}
\newtheorem{pp}[df]{Proposition}
\newtheorem{co}[df]{Corollary}
\newtheorem{lm}[df]{Lemma}
\newcommand{\tr}{\mathrm{t}} 
\newcommand{\End}{\mathrm{End}} 
\newcommand{\id}{\mathrm{id}}
\newcommand{\Hom}{\mathrm{Hom}}
\newcommand{\Conf}{\mathrm{Conf}}
\newcommand{\Res}{\mathrm{Res}}
\newcommand{\ev}{\mathrm{ev}}
\newcommand{\coev}{\mathrm{coev}}
\newcommand{\Rep}{\mathrm{Rep}}
\newcommand{\diag}{\mathrm{diag}}
\newcommand{\uni}{\mathrm{u}}
\newcommand{\di}{\slashed d}
\numberwithin{equation}{section}
\title{Unitarity of the modular tensor categories associated to unitary vertex operator algebras, I}
\author{{\sc Bin Gui}\\
{\small Department of Mathematics, Vanderbilt University}\\
{\small  bin.gui@vanderbilt.edu}
}
\date{}
\begin{document}\sloppy 
	\pagenumbering{arabic}
	\setcounter{section}{-1}
\maketitle

\begin{abstract}
This is the first part in a two-part series of papers constructing a unitary structure for the  modular tensor category (MTC) associated to a unitary rational vertex operator algebra (VOA). Given a rational VOA, we know that its MTC is constructed using the (finite dimensional) vector spaces of intertwining operators of this VOA. Moreover, the tensor-categorical structures can be described by the monodromy behaviors of the intertwining operators. Thus, constructing a unitary structure for the MTC of a unitary rational VOA amounts to defining an inner product on each (finite dimensional) vector space of intertwining operators, and showing that the monodromy matrices of the intertwining operators (e.g. braiding matrices, fusion matrices) are unitary under these inner products.

In this paper, we develop necessary tools and techniques for constructing our unitary structures. This includes giving a systematic treatment of one of the most important functional analytic properties of the intertwining operators: the energy bounds condition.  On the one side, we give some useful criteria for proving the energy bounds condition of intertwining operators. On the other side,  we show that for energy bounded  intertwining operators, one can define the smeared ones, which are (unbounded) closed operator. We prove  that the (well-known) braid relations and adjoint relations for unsmeared intertwining operators have the corresponding smeared versions. We also give criteria on the strong commutativity between smeared intertwining operators and smeared vertex operators localized in disjoint open intervals of $S^1$ (the strong intertwining property).  Besides investigating the energy bounds condition, we also study certain genus 0 geometric properties of intertwining operators. Most importantly, we prove the convergence  of certain mixed products-iterations of intertwining operators. Many useful braid and fusion relations will also be discussed.
\end{abstract}
\tableofcontents

\section{Introduction}

\subsubsection*{Vertex operator algebras: unitarity and reflection positivity, intertwining operators, and modular tensor categories}

Wightman axioms and algebraic quantum field theories (AQFTs) are two major ways to formulate quantum field theories (QFTs) in the rigorous language of mathematics. Roughly speaking, the main difference between these two approaches is that the first one focuses on field operators localized at points, whereas the latter one studies bounded or unbounded but (pre)closed field operators (as well as the von Neumann algebras they generate) localized on open subsets of the space-time. For 2d conformal field theories (CFTs), the AQFT approach   goes under the name ``conformal net". Many fruitful results have been achieved in this functional analytic approach.  We refer the reader to \cite{Kaw15} for a brief survey on this topic.

In many senses, the theory of vertex operator algebras (VOAs) can be regarded as the Wightman axiomatization of CFT. In fact, given a VOA  we have a vertex operator $Y$, which associates to each state vector $v$ and each point $z\in\mathbb C$ a field operator $Y(v,z)$ localized at $z$.  However, one has to be careful when regarding  VOAs as  Wightman CFTs for the following reasons.\\

1. Wightman QFTs are defined on the Minkowski space-time, while VOAs actually correspond to CFTs in the Euclidean picture. It is well known that one can do Wick rotation to pass from Minkowskian QFTs to Euclidean ones.  However, it is not true that any Euclidean QFT can arise from a Minkowskian one. One has to ensure that the Euclidean QFT satisfies \emph{reflection positivity} \cite{OS73}. A VOA satisfying reflection positivity is called \textbf{unitary} \cite{DL}.

2. In most cases, a VOA does not give all field operators of a (closed-string) CFT. In fact, a VOA $V^L$ is  the chiral part of a CFT, consisting  only of meromorphic fields, and the anti-chiral part corresponds to the complex conjugate of another VOA $V^R$. However, in a CFT there are field operators which are locally neither holomorphic nor anti-holomorphic. The typical way of studying these general field operators is through conformal blocks, or equivalently, the \textbf{intertwining operators}. An intertwining operator $\mathcal Y$ of a VOA $V$ is a generalization of the vertex operator $Y$, which intertwines the actions of $V$ on three $V$-modules (the charge space, the source space, and the target space), and which is locally holomorphic but globally multi-valued field.\footnote{See section \ref{lb29} for the precise definition of intertwining operators.} Then a field operator $\Phi(z,\overline z)$ should look like $\Phi(z,\overline z)=\sum_{\alpha,\beta}\mathcal Y_\alpha(w^{\alpha},z)\overline{\mathcal Y_\beta(w^{\beta},z)}$, where each $\mathcal Y_\alpha$ (resp. $\mathcal Y_\beta$) is an intertwining operator of $V^L$ (resp. $V^R$), and $w^\alpha$ (resp. $w^\beta$) is a vector inside the source space of $\mathcal Y_\alpha$ (resp. $\mathcal Y_\beta$).\footnote{cf. \cite{MS88}. Intertwining operators are called chiral vertex operators in that paper.} This means that intertwining operators are indeed the ``chiral halves" of the full field operators. 

Since full field operators  satisfy commutativity (locality), associativity (existence of operator product expansions), and modular invariance,\footnote{cf. \cite{MS88}. In \cite{HK07,HK10} the reader can find the precise statement of these properties in the language of vertex operator algebras.} one may expect that their chiral halves should also satisfy similar properties. But since  intertwining operators are  multi-valued functions, monodromy behaviors will appear when considering these properties. Thus, for intertwining operators, one should expect braiding and fusion instead of commutativity and associativity, and, rather than thinking of the modular invariance of (the trace of) one single intertwining operator, one should consider the modular invariance of the vector space of intertwining operators \cite{MS88}.\footnote{See section \ref{lb77} for the statement of braid and fusion relations. Modular invariance in its most general form can be found in \cite{H MI}.} Hence one will have braid, fusion and modular ($S$ and $T$) matrices, and, written in a coordinate-independent way, one has a \textbf{modular tensor category} (MTC) \cite{MS89, MS90}.\footnote{A mathematically rigorous and complete construction is due to Y.Z.Huang and J.Lepowsky. See \cite{HL13} for a brief review of their theory.}

3. The reason why we have  commutativity, associativity, and modular invariance in CFTs is not quite obvious from the Wightman axioms. These properties have highly geometric nature, and can more easily be seen in the Euclidean picture, where the CFTs are defined, not only on the flat complex plane (or punctured Riemann spheres), but on any compact Riemann surface. Indeed, these properties are among the most important examples of the \emph{sewing property}\footnote{Sewing property says that if a punctured Riemann surfaces $M$ is obtained by attaching another two $M_1$ and $M_2$, then the correlation function on $M$ can always be obtained by taking the composition of two correlation functions defined on $M_1$ and $M_2$ respectively. Note that intertwining operators are nothing but the chiral halves of the correlation functions on the Riemann sphere with three holes.}, which is clear from an (highly geometric) axiomatization of CFT not yet mentioned: G.Segal's definition of CFTs \cite{Seg88}.\\

\subsubsection*{Motivations}

Thus, VOAs are deeply rooted in the geometric nature of CFT, but can be formulated without assuming unitarity (or reflection positivity). On the other hand, conformal nets, the Wightman axiomatization of CFT, are not so geometric but manifestly unitary. The goal of this paper (as well as the forthcoming second part of this series) is to develop a unitary theory for the MTCs of unitary rational VOAs. We explain some motivations behind this theory.

First, we have seen that MTCs are important for the construction of full CFTs. Having constructed  MTCs from rational VOAs, one can use Frobenius algebras over MTCs to  classify  full rational CFTs \cite{Kong06,Kong08} (the word ``rational" means that the sum $\Phi=\sum\mathcal Y_\alpha\overline{\mathcal Y_\beta}$ mentioned earlier is always finite). However, in order to classify \emph{unitary} full CFTs, i.e., full CFTs with \emph{reflection positivity}, one needs the unitarity of these MTCs, and then one studies unitary Frobenius algebras (i.e., Q-systems) over these unitary MTCs. Besides full (closed-string) CFTs, the unitarity of  MTCs is also necessary for studying the unitary extensions of unitary rational VOAs, and unitary open-string CFTs, just as MTCs are important for studying general VOA extensions \cite{HKL15,CKM17} and general open-string CFTs \cite{Kong08}.

The second motivation is to prepare for the investigation of the relations between conformal nets and unitary VOAs. Just like unitary VOAs, conformal nets also describe the chiral parts of  unitary CFTs, and one can construct MTCs from rational conformal nets  \cite{DHR71, FRS89, KLM01}, which is automatically unitary. It is important to know whether the MTCs constructed from conformal nets and from unitary VOAs are equivalent. Clearly, if one can show the equivalence, then the unitarizability of the MTCs of conformal nets will imply that of the MTCs of unitary VOAs. It turns out, however, that in order to prove this equivalence, one has to first equip the MTCs of unitary VOAs with a unitary structure.

\subsubsection*{A glance at the theory}

Now we briefly explain what we shall do in this series of papers in order to find a unitary structure on the MTCs. For simplicity, we assume that $V$ is a unitary ``rational"\footnote{The exact meaning of rationality in this paper will be made clear later. See conditions \eqref{eq302}-\eqref{eq304}.} VOA whose representations are always unitarizable. (For example, $V$ can be a unitary Virasoro VOA (minimal model), or a unitary affine VOA (WZW model).) If $W_i,W_j,W_k$ are unitary representations of $V$, then a type $k\choose i~j$ intertwining operator $\mathcal Y_\alpha$ linearly associates to each $w^{(i)}\in W_i$ a multivalued holomorphic operator-valued function
\begin{align*}
\mathcal Y_\alpha(w^{(i)},z):W_j\rightarrow \widehat W_k,
\end{align*}
where $\widehat W_k$ is the algebraic completion of $W_k$ (see section \ref{lb103}). Moreover, one requires that $\mathcal Y$ ``intertwines" the actions of $V$ on $W_i,W_j,W_k$ (Jacobi identity), and that $\mathcal Y_\alpha$ is conformal covariant (translation property).\footnote{Rigorous definition can be found in definition \ref{lb28}.} The $V$-modules $W_i,W_j,W_k$ are called, respectively, the charge space, the source space, and the target space of $\mathcal Y_\alpha$.  We denote by $\mathcal V{k\choose i~j}$ the vector space of type $k\choose i~j$ intertwining operators. Note that if we set $W_0=V$, then the vertex operator $Y$ is a type $0\choose 0~0$ intertwining operator.

Now, for each equivalence class of irreducible unitary $V$-module, we choose a representing element to form a set $\{W_k:k\in\mathcal E \}$. With abuse of notation, we also let $\mathcal E$ denote this set. For any unitary $V$-modules $W_i,W_j$, their tensor product $W_i\boxtimes W_j$ is a $V$-module defined by
$$W_i\boxtimes W_j=\bigoplus_{k\in\mathcal E}\mathcal V{k\choose i~j}^*\otimes W_k.$$\footnote{This definition is due to Y.Z.Huang and J.Lepowsky, cf. \cite{H 1}.}
By rationality of $V$, $\mathcal E$ is a finite set (i.e., there are only finitely many equivalence classes of irreducible $V$-modules), and $\mathcal V{k\choose i~j}$, as well as its dual space $\mathcal V{k\choose i~j}^*$, is finite-dimensional. Note that although $W_i\boxtimes W_j$ is unitarizable, we don't know how to choose a canonical unitary structure on $W_i\boxtimes W_j$, because we don't know how to choose a meaningful inner product on the vector space $\mathcal V{k\choose i~j}^*$. But this is exactly the goal of our theory. In part II of this series, we will define a sesquilinear form  $\Lambda$ on $\mathcal V{k\choose i~j}^*$ for each $W_i,W_j$ and irreducible $W_k$. After choosing a basis of $\mathcal V{k\choose i~j}$, $\Lambda$ will be defined using certain fusion or braid matrix under this basis. The most difficult part of our theory is to prove that these sesquilinear forms (or equivalently, the corresponding fusion or braid matrices) are positive definite, i.e., they are inner products. Once this is proved, then  it is not hard to show the unitarity of all braid and fusion matrices under any orthonormal basis with respect to this inner product, and hence the unitarity of the MTC.

\subsubsection*{Smeared intertwining operators}

The non-degeneracy of  $\Lambda$ will follow from the rigidity of the MTC. So what we actually need to prove is the positivity of $\Lambda$. Although this problem is purely vertex-operator-algebraic, it seems very difficult to solve it using only VOA methods. We tackle this problem by investigating some analytic and algebraic properties of  the \textbf{smeared intertwining operators} of $V$, so that many results in conformal nets (most importantly, the Haag duality) can be used in our theory. Here,  for any type $k\choose i~j$ intertwining operator $\mathcal Y_\alpha$, $w^{(i)}\in W_i$, $I$ an open interval in $S^1$, and $f\in C^\infty_c(I)$, the smeared intertwining operator is defined to be
$$\mathcal Y_\alpha (w^{(i)},f):=\oint_{S^1} \mathcal Y_\alpha(w^{(i)},z)f(z)\frac{dz}{2i\pi}.$$
This generalizes the smeared vertex operators considered in \cite{CKLW}. Similar to \cite{CKLW}, we require that $\mathcal Y_\alpha(w^{(i)},\cdot)$ satisfies the following \textbf{energy bounds condition}:
there exist $M,r,t\geq0$, such that for any open interval $I\in S^1,f\in C^\infty_c(I),w^{(j)}\in W_j$, 
$$ \lVert \mathcal Y_\alpha (w^{(i)},f)w^{(j)} \lVert\leq M|f|_t\lVert (1+L_0)^rw^{(j)}\lVert,$$
where $|f|_t$ is the $t$-th order Sobolev norm of $f$. Then $\mathcal Y_\alpha (w^{(i)},f)$ will be a (pre)closed unbounded operator mapping $\mathcal H_j\rightarrow \mathcal H_k$.

One of the main purposes in the present paper is to prove the algebraic and analytic properties of smeared intertwining operators that are necessary for showing the positivity of $\Lambda$. First we  discuss  \textbf{braiding of smeared intertwining operators}. As we mentioned above, braiding, fusion, and modular invariance are among the most important geometric properties of intertwining operators. However,  only  braid relation can be translated onto smeared intertwining operators. More specifically, if $I,J$ are disjoint open intervals in $S^1$ with chosen continuous $\arg$ functions, and we have intertwining operators $\mathcal Y_\alpha,\mathcal Y_\beta,\mathcal Y_{\alpha'},\mathcal Y_{\beta'}$ such that the braid relation
$$\mathcal Y_\beta(w^{(j)},\zeta)\mathcal Y_\alpha(w^{(i)},z)=\mathcal Y_{\alpha'}(w^{(i)},z)\mathcal Y_{\beta'}(w^{(j)},\zeta)$$
holds for any vectors $w^{(i)},w^{(j)}$, and any $z\in I,\zeta\in J$, and if these four intertwining operators are energy bounded, then we will show that the corresponding braid relation for smeared intertwining operators
 $$\mathcal Y_\beta(w^{(j)},g)\mathcal Y_\alpha(w^{(i)},f)=\mathcal Y_{\alpha'}(w^{(i)},f)\mathcal Y_{\beta'}(w^{(j)},g)$$
hold  for any vectors $w^{(i)},w^{(j)}$, and any $f\in C^\infty_c(I),g\in C^\infty_c(J)$.
Note  that these two braid relations are understood in different ways. The second one is a completely algebraic relation, where products of smeared intertwining  operators just mean  compositions. However, as compositions of (non-smeared) intertwining operators, the two sides of the first braid relation cannot be defined on  the same region. Braiding of intertwining operators, unlike its smeared version, should be understood in the sense of analytic continuation.

Braid relations tell us what we shall get if we exchange the product of two smeared intertwining operators localized in disjoint open intervals. With the help of \textbf{adjoint relation}, we can obtain the result of exchanging the product of an intertwining operator with the adjoint of another one, say $\mathcal Y_\beta(w^{(j)},g)\mathcal Y_\alpha(w^{(i)},f)^\dagger$, which is also very important in our theory. Given a type $k\choose i~j$ intertwining operator $\mathcal Y_\alpha$, one can define in a canonical way a type $j\choose \overline i~k$ intertwining operator $\mathcal Y_{\alpha^*}$, called the \textbf{adjoint intertwining operator} of $\mathcal Y_\alpha$. (Here $W_{\overline i}$ is the contragredient  module (the dual) of $W_i$.) For any eigenvector $w^{(i)}\in W_i$ of $L_0$ (with eigenvalue $\Delta$) satisfying $L_1w^{(i)}=0$ (i.e., $w^{(i)}$ is a quasi-primary vector), $\mathcal Y_\alpha(w^{(i)},z)^\dagger$ can be related to $\mathcal Y_{\alpha^*}(\overline{w^{(i)}},z)$ by the following very simple relation
$$\mathcal Y_\alpha(w^{(i)},z)^\dagger=e^{-i\pi\Delta}\overline{z^{-2\Delta}}\mathcal Y_{\alpha^*}(\overline{w^{(i)}},\overline {z^{-1}}).$$
We shall prove a similar relation for smeared intertwining operators, so that the result of exchanging $\mathcal Y_\beta(w^{(j)},g)\mathcal Y_\alpha(w^{(i)},f)^\dagger$ will follow from the braiding of $\mathcal Y_\beta(w^{(j)},g)\mathcal Y_{\alpha^*}(\overline{w^{(i)}},f)$

Braiding and adjoint relations are algebraic properties of smeared intertwining operators. To be able to use the powerful machinery of conformal nets, we need an analytic property of smeared intertwining operators: the \textbf{strong intertwining property}. It says that for any disjoint open intervals $I,J\in S^1$, and $f\in C^\infty_c(I),g\in C^\infty_c(J)$, the commuting relation
$$Y_k(v,g) \mathcal Y_\alpha(w^{(i)},f)=\mathcal Y_\alpha(w^{(i)},f)Y_j(v,g)$$
(as a special case of braiding) not only holds when acting on $W_j$, but also holds in a  \textbf{strong} sense, which means that $\mathcal Y_\alpha(w^{(i)},f)$, when extended to an unbounded operator on $\mathcal H_j\oplus\mathcal H_k$ mapping $\mathcal H_k$ to zero, commutes with the von Neumann algebra generated by $Y_j(v,g)\oplus Y_k(v,g)$.\footnote{That the commutativity of two unbounded operators acting on a common invariant core does not imply the strong commutativity of these two operators is well known due to Nelson's counterexample \cite{Nel}.}  The strong intertwining property could be understood as a generalization of the strong locality property (i.e., the strong commutativity of smeared vertex operators) discussed in \cite{CKLW}.\footnote{A natural question is whether one can generalize the strong intertwining property one step further to the strong braiding between smeared intertwining operators. Strong braiding is very important for showing the equivalence between the fusion categories of a unitary VOA and  the corresponding conformal net. But since it will not be used in our present theory, we  leave the discussion of this interesting topic to future work.}

\subsubsection*{Generalized (smeared) intertwining operators}

The above discussion is based on the assumption that the intertwining operators are energy-bounded. However, in practice it might be not easy to show the energy bounds condition for \emph{all} intertwining operators of a given unitary rational VOA.
Let us choose $V$ to be the unitary  level-$l$ affine $\mathfrak{su}_n$ VOA for instance. Then the energy bounds condition for  type $k\choose i~j$ intertwining operators is established only when the charge space $W_i$ is a direct sum of $V$-modules equivalent to $W_0=V$ or $W_\square=L_{\mathfrak {su}_n}(\square,l)$ \cite{Wassermann}. Here $W_\square$ corresponds to the irreducible level $l$ integrable highest weight representation of the affine Lie algebra $\widehat{\mathfrak{su}_n}$ whose  highest weight $\square$ is the one of the vector representation $\mathfrak {su}_n\curvearrowright \mathbb C^n$. So when $W_i$ is a general $V$-module,  it might not be helpful to consider smeared intertwining operators of type $k\choose i~j$.
 
To overcome this difficulty, we consider \textbf{generalized intertwining operators} and their smeared versions. The key observation is that $W_\square$ is a generating object inside the tensor category of $V$. Let us assume, without loss of generality, that $W_i$ is irreducible. Then from the well-known fusion rules of $V$, one can easily find $n=1,2,\dots$ such that $W_i$ is equivalent to a $V$-submodule of $\underbrace{W_\square\boxtimes\cdots\boxtimes W_\square}_n$. It follows that there exist intertwining operators $\mathcal Y_{\sigma_2},\dots,\mathcal Y_{\sigma_n}$ with charge spaces equaling $W_\square$, such that the source space of $\mathcal Y_{\sigma_2}$ is $W_\square$, the target space of $\mathcal Y_{\sigma_n}$ is $W_i$, and for any $3\leq m\leq n$ the source space of $\mathcal Y_{\sigma_m}$ equals the target space of $\mathcal Y_{\sigma_{m-1}}$. (Any sequence of intertwining operators satisfying the last condition is called a \textbf{chain of intertwining operators}.) Now, for any type $k\choose i~j$ intertwining operator $\mathcal Y_\alpha$, we define a \textbf{generalized intertwining operator} $\mathcal Y_{\sigma_n\cdots\sigma_2,\alpha}$ which linearly associates to any $w^{(\square)}_1,\dots,w^{(\square)}_n\in W_\square$   a $\Hom(W_j,\widehat W_k)$-valued multi-valued holomorphic function $\mathcal Y_{\sigma_n\cdots\sigma_2,\alpha}(w^{(\square)}_n,z_n;\dots;w^{(\square)}_1,z_1)$ of the complex variables $z_1,\dots,z_n$ by  setting
\begin{align}
 \mathcal Y_{\sigma_n\cdots\sigma_2,\alpha}(w^{(\square)}_n,z_n;\dots;w^{(\square)}_1,z_1)=\mathcal Y_\alpha\big(\mathcal Y_{\sigma_n}(w^{(\square)}_n,z_n-z_1)\cdots\mathcal Y_{\sigma_2}(w^{(\square)}_2,z_2-z_1)w^{(\square)}_1,z_1 \big).
\end{align}
Then for  any mutually disjoint open intervals $I_1,\dots,I_n\subset S^1$ and $f_1\in C^\infty_c(I_1),\dots,f_n\in C^\infty_c(I_n)$, the corresponding \textbf{smeared generalized intertwining operator} is defined to be
\begin{align*}
&\mathcal Y_{\sigma_n\cdots\sigma_2,\alpha}(w^{(\square)}_n,f_n;\dots;w^{(\square)}_1,f_1)\\
=&\oint_{S^1}\cdots\oint_{S^1} \mathcal Y_{\sigma_n\cdots\sigma_2,\alpha}(w^{(\square)}_n,z_n;\dots;w^{(\square)}_1,z_1)f_n(z_n)\cdots f_1(z_1)\frac {dz_1}{2i\pi}\cdots\frac {dz_n}{2i\pi}.
\end{align*}
Thanks to  fusion relations, there exist a chain of intertwining operators $\mathcal Y_{\alpha_1},\dots,\mathcal Y_{\alpha_n}$ with charge spaces equaling $W_\square$ (hence these intertwining operators are energy-bounded!), such that
\begin{align}\label{eq200}
\mathcal Y_{\sigma_n\cdots\sigma_2,\alpha}(w^{(\square)}_n,z_n;\dots;w^{(\square)}_1,z_1)=\mathcal Y_{\alpha_n}(w^{(\square)}_n,z_n)\cdots \mathcal Y_{\alpha_1}(w^{(\square)}_1,z_1).
\end{align}
So the smeared generalized intertwining operator will be a product of smeared intertwining operators. This shows that $\mathcal Y_{\sigma_n\cdots\sigma_2,\alpha}(w^{(\square)}_n,f_n;\dots;w^{(\square)}_1,f_1)$ has similar analytic properties as smeared intertwining operators: it is a (pre)closed unbounded operator mapping $\mathcal H_j\rightarrow \mathcal H_k$, and it satisfies the strong intertwining property.

Braiding and adjoint of smeared generalized intertwining operators are much harder to prove than those analytic properties. The difficulty is mainly on the unsmeared side: we want to determine the braid relation
\begin{align}
&\mathcal Y_{\tau_m\cdots\tau_2,\beta}(\widetilde w^{(\square)}_m,\zeta_m;\cdots \widetilde w^{(\square)}_1,\zeta_1)\mathcal Y_{\sigma_n\cdots\sigma_2,\alpha}( w^{(\square)}_n,z_n;\cdots w^{(\square)}_1,z_1)\nonumber\\
=&\mathcal Y_{\sigma_n\cdots\sigma_2,?}( w^{(\square)}_n,z_n;\cdots w^{(\square)}_1,z_1)\mathcal Y_{\tau_m\cdots\tau_2,?}(\widetilde w^{(\square)}_m,\zeta_m;\cdots \widetilde w^{(\square)}_1,\zeta_1)\label{eq201}
\end{align}
and the adjoint relation (when the vectors are quasi-primary)
\begin{align*}
\mathcal Y_{\sigma_n\cdots\sigma_2,\alpha}( w^{(\square)}_n,z_n;\cdots w^{(\square)}_1,z_1)^\dagger=(\cdots)\cdot \mathcal Y_{?\cdots?,\alpha^*}( \overline{w^{(\square)}_n},\overline{z_n^{-1}};\cdots \overline{w^{(\square)}_1},\overline{z_1^{-1}}).
\end{align*}
These problems  will be treated in part II of this series. However, certain preparatory results, including general fusion relations (that generalized intertwining operators can be written as the products of several intertwining operators), general braid relations (braiding of the products of more than two intertwining operators), and the well-definedness (convergence) of the products of generalized intertwining operators (the convergence of both sides of \eqref{eq201} for instance),  will be proved in this paper. 

\subsubsection*{Outline of this paper}

Part I is organized as follows. In chapter 1 we review the basic definitions of (unitary) VOAs, their (unitary) representations, and intertwining operators. We define unitary representations of unitary VOAs, adjoint intertwining operators, creation and annihilation operators, and prove some basic properties.

Chapter 3 is devoted to the study of energy bounds condition and smeared intertwining operators. In section 3.1 we define the energy bounds condition for intertwining operators, and give some useful criteria. In section 3.2 we define, for energy-bounded intertwining operators, the corresponding smeared intertwining operators. We prove the braid relations, the adjoint relation, and the strong intertwining property of smeared intertwining operators. We also prove the rotation covariance of smeared intertwining operators\footnote{In fact, the more general conformal covariance can be proved for smeared intertwining operators using the similar argument for proving the conformal covariance of smeared vertex operators (cf. \cite{CKLW} proposition 6.4).}, which will be used in part II to prove some density results.

The purpose of chapter 2 needs more explanations. One of the main goals of this chapter is to give a brief and self-contained introduction to Huang-Lepowsky's tensor product theory of rational VOAs based on the braid and fusion relations of intertwining operators. So, unlike chapter 1, before reading which we suggest that the reader has some basic knowledge on VOAs, this chapter does not require any previous knowledge on Huang-Lepowsky's theory. Moreover,  the results that we shall cite but not prove again in our papers will be kept to a minimum. Such results include: (1) The absolute convergence of the products of intertwining operators (theorem \ref{lb65}). (2) The analytic continuation principle of (chiral) correlation functions due to the existence of holomorphic differential equations (theorem \ref{lb75}.) (3) The existence of the fusion relation for two intertwining operators. (Theorem \ref{lb73} in the special case when $n=2$. The proof of the general case in section A.3 relies on this special case.) (4) The rigidity of the braided tensor category of rational VOAs. (It will only be used in part II to prove the non-degeneracy of $\Lambda$.) 

All the other  results used in our theory are proved in chapter 2 or A. These results are either known in Huang-Lepowsky's theory explicitly or implicitly, or can be easily derived using the machinery they have developed. Such results  include the description of a linear basis of the vector space of correlation functions (proposition \ref{lb66}), the braiding of two or more than two intertwining operators (theorem \ref{lb78}), the relation between the braiding of intertwining operators and the maps $B_{\pm}:\mathcal V{k\choose i~j}\rightarrow\mathcal V{k\choose j~i}$ introduced in section 1.3 (proposition \ref{lb86}), and the fusion and braiding of intertwining operators with vertex operators or creation operators (section 2.3). We give complete proofs of these results in this paper, since  the language and notations  used in Huang and Lepowsky's papers are very different from ours, and also because there are many analytic subtleties in the proofs of these results.\footnote{As an examples of these analytic subtleties, let us assume that we have a braid relation of intertwining operators looking like $AB=B'A'$. If we have another intertwining operator $C$, then the braid relation of three intertwining operators $CAB=CB'A'$ does \emph{not} follow directly from ``multiplying" both sides of the original braid relation by $C$. As we have emphasized before, the braiding of intertwining operators is understood using analytic continuation, but not as the direct composition of operators. Therefore, the braiding of several intertwining operators does not follow from that of two intertwining operators through a direct and algebraic argument. } 
Readers with a background in functional analysis or conformal net might  especially care about these subtleties.

What's new in chapter 2 is the convergence of the products of generalized intertwining operators (theorem \ref{lb12}). Another type of convergence property (corollary \ref{lb13}), which will be used in part II to prove the braid and adjoint relations of generalized (smeared) intertwining operators, is also  given. The conditions on the complex variables under which the absolute convergence holds are especially important for our theory.



\subsubsection*{Acknowledgment}

The author would like to thank Professor Vaughan Jones. This paper, as well as the second half of the series, cannot be finished without his constant support, guidance, and encouragement. The author was supported by NSF grant DMS-1362138.

\subsubsection*{Notations.}
In this paper, we  assume that $V$ is a vertex operator algebra of CFT type. Except in chapter \ref{lb93}, we assume that $V$ also satisfies the following conditions:
\begin{flalign}
&\text{(1) $V$ is isomorphic to $V'$.}&\label{eq302}\\
&\text{(2) Every $\mathbb N$-gradable weak $V$-module is completely reducible.}&\label{eq303}\\
&\text{(3) $V$ is $C_2$-cofinite.}&\label{eq304}
\end{flalign}
(See \cite{H MI} for the definitions of these terminologies.) The following notations are used throughout this paper.\\

\noindent
$A^\tr$: the transpose of the linear operator $A$.\\
$A^\dagger$: the formal adjoint of the linear operator $A$.\\
$A^*$: the ajoint of the possibly unbounded linear operator $A$.\\
$\overline A$: the closure of the pre-closed linear operator $A$.\\
$C_i$: the antiunitary map $W_i\rightarrow W_{\overline i}$.\\
$\mathbb C^\times=\{z\in\mathbb C:z\neq0 \}$.\\
$\Conf_n(\mathbb C^\times)$: the $n$-th configuration space of $\mathbb C^\times$.\\
$\widetilde\Conf_n(\mathbb C^\times)$: the universal covering space of $\Conf_n(\mathbb C^\times)$.\\
$\mathscr D(A)$: the domain of the possibly unbounded operator $A$.\\
$\di\theta=  \frac {e^{i\theta}}{2\pi}d\theta$.\\
$e_r(e^{i\theta})=e^{ir\theta}\quad(-\pi<\theta<\pi)$.\\
$\mathcal E$: a complete list of mutually inequivalent irreducible $V$-modules.\\
$\mathcal E^\uni$: the set of unitary $V$-modules in $\mathcal E$.\\
$\Hom_V(W_i,W_j)$: the vector space of $V$-module homomorphisms from $W_i$ to $W_j$.\\
$\mathcal H_i$: the norm completion of the vector space $W_i$.\\ 
$\mathcal H^r_i$: the vectors of $\mathcal H_i$ that are inside $\mathscr D((1+\overline{L_0})^r)$.\\
$\mathcal H^\infty_i=\bigcap_{r\geq0}\mathcal H^r_i$.\\
$I^c$: the complement of the open interval $I$.\\
$I_1\subset\joinrel\subset I_2$: $I_1,I_2\in\mathcal J$ and $\overline {I_1}\subset I_2$.\\
$\id_i=\id_{W_i}$: the identity operator of $W_i$.\\
$\mathcal J$: the set of (non-empty, non-dense) open intervals of $S^1$.\\
$\mathcal J(U)$: the set of open intervals of $S^1$ contained in the open set $U$.\\
$P_s$: the projection operator of $W_i$ onto $W_i(s)$.\\
$\mathfrak r(t):S^1\rightarrow S^1$: $\mathfrak r(t)(e^{i\theta})=e^{i(\theta+t)}$.\\
$\mathfrak r(t):C^\infty(S^1)\rightarrow C^\infty(S^1)$: $\mathfrak r(t)h=h\circ\mathfrak r(-t)$.\\
$\Rep(V)$: the modular tensor category of the representations of $V$.\\
$\Rep^\uni(V)$: the category of the unitary representations of $V$.\\
$\Rep^\uni_{\mathcal G}(V)$: When $\mathcal G$ is additively closed, it is the subcategory of $\Rep^\uni(V)$ whose objects are unitary $V$-modules in $\mathcal G$. When $\mathcal G$ is multiplicatively closed, then it is furthermore equipped with the structure of a ribbon tensor category.\\
$S^1=\{z\in\mathbb C:|z|=1 \}$.\\
$\mathcal V{k\choose i~j}$: the vector space of type $k\choose i~j$ intertwining operators.\\
$W_0=V$, the vacuum module of $V$.\\
$W_i$: a $V$-module.\\
$\widehat W_i$: the algebraic completion of $W_i$.\\
$W_{\overline i}\equiv W'_i$: the contragredient module of $W_i$.\\
$W_{ij}\equiv W_i\boxtimes W_j$: the tensor product of $W_i,W_j$.\\
$w^{(i)}$: a vector in $W_i$.\\
$\overline {w^{(i)}}=C_iw^{(i)}.$\\
$x$:  a formal variable.\\
$Y_i$: the vertex operator of $W_i$.\\
$\mathcal Y_\alpha$: an intertwining operator of $V$.\\
$\mathcal Y_{\overline{\alpha}}\equiv\overline{\mathcal Y_\alpha}$: the conjugate intertwining operator of $\mathcal Y_\alpha$.\\
$\mathcal Y_{\alpha^*}\equiv\mathcal Y_{\alpha}^\dagger$: the adjoint intertwining operator of $\mathcal Y_\alpha$.\\
$\mathcal Y_{B_\pm\alpha}\equiv B_{\pm}\mathcal Y_\alpha$: the braided intertwining operators of $\mathcal Y_\alpha$.\\
$\mathcal Y_{C\alpha}\equiv C\mathcal Y_\alpha$: the contragredient intertwining operator of $\mathcal Y_\alpha$.\\
$\mathcal Y^i_{i0}$: the creation operator of $W_i$.\\
$\mathcal Y^0_{\overline ii}$: the annihilation operator of $W_i$.\\
$\Delta_i$: the conformal weight of  $W_i$.\\
$\Delta_w$: the conformal weight (the energy) of the homogeneous vector $w$.\\
$\Theta^k_{ij}$: a set of linear basis of $\mathcal V{k\choose i~j}$.\\
$\Theta^k_{i*}=\coprod_{j\in\mathcal E}\Theta^k_{ij},\Theta^k_{*j}=\coprod_{i\in\mathcal E}\Theta^k_{ij},\Theta^*_{ij}=\coprod_{k\in\mathcal E}\Theta^k_{ij}.$\\
$\theta$: the PCT operator of $V$, or a real variable.\\
$\vartheta_i$: the twist of $W_i$.\\
$\nu$: the conformal vector of $V$.\\
$\sigma_{i,j}$: the braid operator  $\sigma_{i,j}:W_i\boxtimes W_j\rightarrow W_j\boxtimes W_i$.\\
$\Omega$: the vacuum vector of $V$.

\section{Intertwining operators of unitary vertex operator algebras (VOAs)}\label{lb93}

We refer the reader to \cite{FHL} for the general theory of VOAs, their representations, and  intertwining operators. Other standard references on VOAs include \cite{Conformal blocks,FLM,VOA beginners,LL}. Unitary VOAs were defined by Dong, Lin in  \cite{DL}. Our approach in this article  follows \cite{CKLW}.
\subsection{Unitary VOAs}

Let $x$ be a formal variable. For a complex vector space $U$, we set
\begin{gather}
U[[x]]=\bigg\{\sum_{n\in\mathbb Z_{\geq0}}u_nx^n:u_n\in U\bigg\},\\
U((x))=\bigg\{\sum_{n\in\mathbb Z}u_nx^n:u_n\in U,u_n=0\text{ for sufficiently small }n\bigg\},\\
U[[x^{\pm1}]]=\bigg\{\sum_{n\in\mathbb Z}u_nx^n:u_n\in U\bigg\},\\
U\{x \}=\bigg\{\sum_{s\in\mathbb R}u_sx^s:u_s\in U\bigg\}.
\end{gather}
We define the formal derivative $\frac d{dx}$ to be
\begin{equation}
\frac d{dx}\bigg(\sum_{n\in\mathbb R}u_nx^n\bigg)=\sum_{n\in\mathbb R}nu_nx^{n-1}. 
\end{equation}

Let $V$ be a  complex vector space with grading $V=\bigoplus_{n\in\mathbb Z}V(n)$. Assume that $\dim V(n)<\infty$ for each $n\in\mathbb Z$, and $\dim V(n)=0$ for $n$ sufficiently small. We say that $V$ is a \textbf{vertex operator algebra} (VOA), if the following conditions are satisfied:\\
(a) There is a linear map 	
\begin{gather*}
V\rightarrow(\text{End }V)[[x^{\pm1}]]\\
u\mapsto Y(u,x)=\sum_{n\in\mathbb Z}Y(u,n) x^{-n-1}\\
\text{(where $Y(u,n)\in$End $V$)},
\end{gather*}
such that for any $v\in V$, $Y(u,n)v=0$ for $n$ sufficiently large.\\
(b) (\textbf{Jacobi identity}) For any $u,v\in V$ and $m,n,h\in\mathbb Z$, we have
	\begin{align}
	&\sum_{l\in\mathbb Z_{\geq0}}{m\choose l}Y(Y(u,n+l)v,m+h-l)\nonumber\\
	=&\sum_{l\in\mathbb Z_{\geq0}}(-1)^l{n\choose l}Y(u,m+n-l)Y(v,h+l)-\sum_{l\in\mathbb Z_{\geq0}}(-1)^{l+n}{n\choose l}Y(v,n+h-l)Y(u,m+l).\label{eq86}
	\end{align}
(c) There exists a vector $\Omega\in V(0)$ (the \textbf{vacuum vector}) such that $Y(\Omega,x)=\text{id}_V$.\\
(d) For any $v\in V$ and $n\in\mathbb Z_{\geq0}$, we have $Y(v,n)\Omega=0$, and $Y(v,-1)\Omega=v$. This condition is simply written as $\lim_{x\rightarrow 0}Y(v,x)\Omega=v$.\\
(e) There exists a vector $\nu\in V(2)$ (the \textbf{conformal vector}) such that the operators $L_n=Y(\nu,n+1)$ ($n\in\mathbb Z$) satisfy the Virasoro relation: $[L_m,L_n]=(m-n)L_{m+n}+\frac 1 {12}(m^3-m)\delta_{m,-n}c$. Here the number $c\in\mathbb C$ is called the \textbf{central charge} of $V$.\\
(f) If $v\in V(n)$ then $L_0v=nv$. $n$ is called the \textbf{conformal weight} (or the \textbf{energy}) of $v$ and will be denoted by $\Delta_v$. $L_0$ is called the \textbf{energy operator}.\\
(g)	(\textbf{Translation property}) $\frac d{dx} Y(v,x)=Y(L_{-1}v,x)$.\\

\begin{cv}
In this article, we  always assume that $V$ is a VOA of \textbf{CFT type}, i.e., $V(0)=\mathbb C\Omega$, and $\dim V(n)=0$ when $n<0$.
\end{cv}

Given a (anti)linear bijective map $\phi:V\rightarrow V$, we say that $\phi$ is an  \textbf{(antilinear) automorphism} of $V$ if the following conditions are satisfied:
\begin{flalign}
&(a) \phi\Omega=\Omega,~~\phi\nu=\nu.&\\
&(b) \text{For any }v\in V,~~\phi Y(v,x)=Y(\phi v,x)\phi.\label{eq92}&
\end{flalign}
It is easy to deduce from these two conditions that $\phi L_n=L_n\phi$ (for any $n\in\mathbb Z$). In particular, since $\phi$ commutes with $L_0$, we have $\phi V(n)=V(n)$ for each $n\in\mathbb Z$.

\begin{df}
 Suppose that  $V$ is equipped with an  inner  product $\langle\cdot|\cdot\rangle$ (antilinear on the second variable) satisfying $\langle\Omega|\Omega\rangle=1$. Then we call $V$ a  \textbf{unitary vertex operator algebra}, if there exists  an antilinear automorphism $\theta$, such that  for any $v\in V$ we have
 \begin{equation}\label{eq94}
 Y(v,x)^\dagger=Y(e^{xL_1}(-x^{-2})^{L_0}\theta v,x^{-1}),
 \end{equation}
 where $\dagger$ is the formal adjoint operation. More precisely, this equation means that for any $v,v_1,v_2\in V$ we have
 \begin{equation}\label{eq91}
 \langle Y(v,x)v_1|v_2\rangle=\langle v_1|Y(e^{xL_1}(-x^{-2})^{L_0}\theta v,x^{-1})v_2\rangle.
 \end{equation}
\end{df}

\begin{rem}
Such $\theta$, if exists, must be unique. Moreover, $\theta$ is anti-unitary (i.e. $\langle \theta v_1|\theta v_2\rangle=\langle v_2|v_1\rangle$ for any $v_1,v_2\in V$), and $\theta^2=id_V$ (i.e. $\theta$ is an involution). We call $\theta$ the \textbf{PCT operator} of $V$. (cf. \cite{CKLW} proposition 5.1.) In this article, $\theta$  denotes either the PCT operator of $V$, or a real variable. These two meanings will be used in different situations. So no confusion will arise.
\end{rem}

We say that a vector $v\in V$ is \textbf{homogeneous} if $v\in V(n)$ for some $n\in \mathbb Z$. If moreover, $L_1v=0$,  we say that $v$ is   \textbf{quasi-primary}. It is clear that the vacuum vector $\Omega$ is quasi-primary. By translation property, we have $L_{-1}\Omega=0$. Therefore, $L_1\nu=L_1Y(\nu,-1)\Omega=L_1L_{-2}\Omega=[L_1,L_{-2}]\Omega=3L_{-1}\Omega=0$. We conclude that \emph{the conformal vector is quasi-primary}. 
 
  Now suppose that $V$ is unitary and $v\in V$ is quasi-primary, then equation \eqref{eq94} can be simplified to
 \begin{gather}
 Y(v,x)^\dagger=(-x^{-2})^{\Delta_v}Y(\theta v,x^{-1}).\label{eq95}
 \end{gather}
 If we take $v=\nu$, then we obtain
 \begin{gather}
 L_n^\dagger=L_{-n}~~~(n\in\mathbb Z).
 \end{gather}
In particular, we have $L_0^\dagger=L_0$. This shows that different energy subspaces are orthogonal, i.e., \emph{the grading $V=\bigoplus_{n\geq0}V(n)$ is orthogonal under the inner product $\langle\cdot|\cdot\rangle$ .}

\subsection{Unitary representations of unitary VOAs}\label{lb103}

\begin{df}
Let $W_i$ be a complex vector space with grading
$W_i=\bigoplus_{s\in\mathbb R}W_i(s)$. Assume $\text{dim }W_i(s)<\infty$ for each $s\in\mathbb R$, and $\dim W_i(s)=0$ for $s$ sufficiently small. We say that $W_i$ is a \textbf{representation of $V$} (or \textbf{$V$-module}), if the following conditions are satisfied:\\
(a) There is a linear map 	
\begin{gather*}
V\rightarrow(\text{End }W_i)[[x^{\pm1}]]\\
v\mapsto Y_i(v,x)=\sum_{n\in\mathbb Z}Y_i(v,n) x^{-n-1}\\
\text{(where $Y(v,n)\in$End $W_i$)},
\end{gather*}
such that for any $w^{(i)}\in W_i$, $Y_i(v,n)w^{(i)}=0$ for $n$ sufficiently large. $Y_i$ is called the \textbf{vertex operator} of $W_i$.\\
(b) (\textbf{Jacobi identity}) For any $u,v\in V$ and $m,n,h\in\mathbb Z$, we have
	\begin{align}
	&\sum_{l\in\mathbb Z_{\geq0}}{m\choose l}Y_i(Y(u,n+l)v,m+h-l)\nonumber\\
	=&\sum_{l\in\mathbb Z_{\geq0}}(-1)^l{n\choose l}Y_i(u,m+n-l)Y_i(v,h+l)-\sum_{l\in\mathbb Z_{\geq0}}(-1)^{l+n}{n\choose l}Y_i(v,n+h-l)Y_i(u,m+l).\label{eq97}
	\end{align}
(c) $Y_i(\Omega,x)=\text{id}_{W_i}$.\\
(d) The operators $L_n=Y_i(\nu,n+1)$ ($n\in\mathbb Z$) satisfy the Virasoro relation: $[L_m,L_n]=(m-n)L_{m+n}+\frac 1 {12}(m^3-m)\delta_{m,-n}c$, where $c$ is the central charge of $V$.\\
(e) If $w^{(i)}\in W_i(s)$ then $L_0w^{(i)}=sw^{(i)}$. $s$ is called the \textbf{conformal weight} (or the \textbf{energy}) of $w^{(i)}$ and will be denoted by $\Delta_{w^{(i)}}$, and $L_0$ is called the \textbf{energy operator}.\\
(f)	(\textbf{Translation property}) $\frac d{dx} Y_i(v,x)=Y_i(L_{-1}v,x)$.
\end{df}

Clearly $V$ itself is a representation of $V$. We  call it the \textbf{vacuum module} of $V$. Modules of $V$ are  denoted by $W_i,W_j,W_k,\dots$, or simply $i,j,k,\dots$. The vacuum module is sometimes denoted by $0$. We let $\id_i=\id_{W_i}$ and $\id_0=\id_V$ be the identity operators on $W_i$ and $V$ respectively.

A \textbf{$V$-module homomorphism} is, by definition, a linear map $\phi:W_i\rightarrow W_j$, such that for any $v\in V$ we have $\phi Y_i(v,x)=Y_j(v,x)\phi$. It is clear that $\phi$ preserves the gradings of $W_i,W_j$, for $\phi$ intertwines the actions of $L_0$ on these spaces. The vector space of homomorphisms $W_i\rightarrow W_j$ is denoted by $\Hom_V(W_i,W_j)$.

\begin{rem}\label{lb102}
If the $V$-module $W_i$ has a subspace $W$ that is invariant under the action of $V$, then the restricted action of $V$ on $W$ produces a submodule of $W_i$. In fact, the only non-trivial thing to check is that $W$ inherits the grading of $W_i$. But this follows from the fact that $L_0$, when restriced to $W$, is diagonalizable on $W$. (In general, if a linear operator of a complex vector space is diagonalizable, then by polynomial interpolations, it must also be diagonalizable on any invariant subspace.)	
\end{rem}

From the  remark above, we see that a module $W_i$ is irreducible if and only if the vector space $W_i$ has no $V$-invariant subspace. If $W_i$ is irreducible, we call $$\Delta_i=\inf\{s:\dim W_i(s)>0 \}$$  the \textbf{conformal weight} of $W_i$. It is easy to show that $W_i=\bigoplus_{n\in\mathbb Z_{\geq0}}W_i(n+\Delta_i)$.

We now review the definition of contragredient modules introduced in \cite{FHL}. Let again $W_i$ be a $V$-module. First we note that  the dual space $W_i^*$ of $W_i$ has the grading $W_i^*=\prod_{s\in\mathbb R}W_i(s)^*$.  Here $W_i(s)^*$ is the dual space of the finite dimensional vector space $W(s)$, and if $s\neq t$, the evaluations of $W_i(s)^*$ on $W_i{(t)}$ are set to be zero. Now we consider the subspace $W_{\overline i}\equiv W_i'=\bigoplus_{s\in\mathbb R}W(s)^*$  of $W^*$.  We define the action of $V$ on $W_{\overline i}$ as follows:
\begin{equation}\label{eq98}
Y_{\overline i}(v,x)=Y_i(e^{xL_1}(-x^{-2})^{L_0} v,x^{-1})^\mathrm{t}
\end{equation}
where the superscript ``t'' stands for the  transpose operation. In other words, for any $w^{(\overline i)}\in W_{\overline i}\subset W_i^*$ and $w^{(i)}\in W_i$, we have
\begin{align}
\langle Y_{\overline i}(v,x)w^{(\overline i)},w^{(i)} \rangle=\langle w^{(\overline i)},Y_i(e^{xL_1}(-x^{-2})^{L_0} v,x^{-1})w^{(i)} \rangle.
\end{align}
We refer the reader to \cite{FHL} section 5.2 for a proof that $(W_{\overline i},Y_{\overline i})$ is  a representation of $V$. This representation is called  the \textbf{contragredient module} of $W_i$. 

In general, for each $V$-module $W_i$,  the vector space $\widehat W_i=\prod_{s\in\mathbb R}W_i(s)$ is called the \textbf{algebraic completion} of $W_i$. The action $Y_i$ of $V$ on $W_i$ can be clearly extended onto $\widehat W_i$. It is clear that $\widehat W_i$ can be identified with $W_{\overline i}^*$.

Equation \eqref{eq98} can be written in terms of modes: if $v\in V$ is a quasi-primary vector with conformal weight $\Delta_v$, then
\begin{equation}
Y_{\overline i}(v,n)=\sum_{m\in\mathbb Z_{\geq0}}\frac{(-1)^{\Delta_v}}{m!}Y_i(L_1^mv,-n-m-2+2\Delta_v)^\tr.
\end{equation}
In particular, by letting $v=\nu$, we obtain $L_n^\tr=L_{-n}$. More precisely, if $w^{(i)}\in W_i,w^{(\overline i)}\in W_{\overline i}$, we have $\langle L_n w^{(i)},w^{(\overline i)} \rangle=\langle w^{(i)},L_{-n} w^{(\overline i)} \rangle$.

The contragredient operation is an involution: $W_i$ is the contragredient module of $W_{\overline i}$. In particular, we have 
\begin{equation}
Y_i(v,x)=Y_{\overline i}(e^{xL_1}(-x^{-2})^{L_0} v,x^{-1})^\mathrm{t}.
\end{equation}
Hence we identify $i$ with $\overline {\overline i}$, the contragredient module of $\overline i$.\\

Now we turn to the definition of unitary VOA modules.

\begin{df}
Suppose that $V$ is unitary and $W_i$ is a $V$-module equipped with an inner product $\langle\cdot|\cdot\rangle$. We call $W_i$ \textbf{unitary} if for any $v\in V$ we have
	\begin{gather}
	Y_i(v,x)^\dagger=Y_i(e^{xL_1}(-x^{-2})^{L_0}\theta v,x^{-1}).\label{eq219}
	\end{gather}
\end{df}

In the remaining part of this section, we assume that $V$ is unitary. Let $W_i$ be a unitary $V$-module. Then formula  \eqref{eq219}, with $v=\nu$, implies that the action of the Virasoro subalgebras $\{L_n\}$ on $W_i$ satisfies $L_n^\dagger=L_{-n}$.  In particular, $L_0$ is symmetric, and hence the decompsition $W_i=\bigoplus_{s\in\mathbb R}W_i(s)$ is orthogonal. If we let $P_s$ be the projection operator of $W_i$ onto $W_i(s)$ (this operator can be defined whether $W_i$ is unitary or not),  we have $P_s^\dagger=P_s$. 

\begin{pp}[Positive energy]\label{lb88}
If $W_i$ is unitary, then we have the grading $W_i=\bigoplus_{s\geq0}W_i(s)$. In particular, if $W_i$ is irreducible, then $\Delta_i\geq0$.
\end{pp}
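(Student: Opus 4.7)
The plan is to argue by contradiction: suppose some nonzero $w \in W_i(s)$ has $s < 0$, and derive a contradiction from the positivity of the inner product combined with the adjoint relation $L_n^\dagger = L_{-n}$ and the Virasoro commutator $[L_1, L_{-1}] = 2L_0$ (which is the one commutator where the central charge drops out, so we avoid needing any additional assumption on $c$).

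First, I would pass from $w$ to a vector annihilated by $L_1$. Since $L_1$ maps $W_i(t)$ into $W_i(t-1)$ and the grading is bounded below (by definition $W_i(t) = 0$ for $t$ sufficiently small), the sequence $w, L_1 w, L_1^2 w, \dots$ must hit zero. Let $n_0 \geq 1$ be the smallest integer with $L_1^{n_0} w = 0$ and set $w' = L_1^{n_0 - 1} w$; then $w' \in W_i(s - n_0 + 1)$ is nonzero with $L_1 w' = 0$.

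Next, compute $\|L_{-1} w'\|^2$ using the adjoint relation. Since $L_{-1}^\dagger = L_1$ (a consequence of \eqref{eq219} with $v = \nu$, already established just before the proposition), we have
\begin{equation*}
\|L_{-1} w'\|^2 = \langle w' \,|\, L_1 L_{-1} w' \rangle = \langle w' \,|\, ([L_1, L_{-1}] + L_{-1} L_1) w' \rangle = \langle w' \,|\, 2 L_0 w' \rangle = 2(s - n_0 + 1)\|w'\|^2,
\end{equation*}
where the central term vanishes because $n^3 - n = 0$ for $n = 1$. Positivity then forces $s - n_0 + 1 \geq 0$, i.e. $s \geq n_0 - 1 \geq 0$, contradicting $s < 0$.

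The main obstacle, if there is one, is really just the termination of the $L_1$-iteration, which is immediate from the definition of a $V$-module (the grading is bounded below and each $W_i(t)$ is finite-dimensional). Once $W_i = \bigoplus_{s \geq 0} W_i(s)$ is established, the statement $\Delta_i \geq 0$ for irreducible $W_i$ follows by taking the infimum over $s$ with $W_i(s) \neq 0$.
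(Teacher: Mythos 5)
Your proof is correct and is essentially the paper's own argument: both reduce a homogeneous vector to a quasi-primary one by iterating $L_1$ (using that the grading is bounded below), then apply $L_{-1}^\dagger = L_1$ and $[L_1,L_{-1}]=2L_0$ to get $2\Delta_{w'}\lVert w'\rVert^2 = \lVert L_{-1}w'\rVert^2 \geq 0$. The only difference is your contradiction framing versus the paper's direct computation, which is immaterial.
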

\begin{proof}
We choose an arbitrary non-zero homogeneous  vector $w^{(i)}\in W_i$ and show that $\Delta_{w^{(i)}}\geq0$. First, assume that $w^{(i)}$ is  \textbf{quasi-primary} (i.e., $L_1w^{(i)}=0$). Then we have
\begin{gather*}
2\Delta_{w^{(i)}}\langle w^{(i)}|w^{(i)}\rangle =2\langle L_0w^{(i)}|w^{(i)}\rangle =\langle [L_1,L_{-1}]w^{(i)}|w^{(i)}\rangle =\lVert L_{-1}w^{(i)}\lVert^2\geq0,
\end{gather*}
which implies that $\Delta_{w^{(i)}}\geq0$.  In general, we may find  $m\in\mathbb Z_{\geq0}$ such that $L_1^mw^{(i)}\neq0$, and $L_1^{m+1}w^{(i)}=0$. So $\Delta_{L_1^mw^{(i)}}\geq0$, and hence $\Delta_{w^{(i)}}=\Delta_{L_1^mw^{(i)}}+m\geq0$. 
\end{proof}

\begin{pp}\label{lb64}
If $W_i$ is unitary, then its contragredient module $W_{\overline i}$ is unitarizable.
\end{pp}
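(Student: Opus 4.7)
My plan is to construct the inner product on $W_{\overline i}$ by transporting it from $W_i$ via a natural antiunitary map. Concretely, I would first define an antilinear bijection $C_i\colon W_i\to W_{\overline i}$ by $(C_i w)(w')=\langle w'\,|\,w\rangle_i$. This is well-defined because the orthogonal decomposition $W_i=\bigoplus_s W_i(s)$ (guaranteed by $L_0^\dagger=L_0$) identifies $W_{\overline i}=\bigoplus_s W_i(s)^*$ with the Riesz dual of $W_i$ on each graded piece, so $C_i$ is a grade-preserving antilinear bijection. I would then set $\langle C_i w_1\,|\,C_i w_2\rangle_{\overline i}:=\langle w_2\,|\,w_1\rangle_i$, which is manifestly Hermitian and positive definite on $W_{\overline i}$.

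The central step is to establish the intertwining identity
\[
Y_{\overline i}(v,x)\,C_i \;=\; C_i\,Y_i(\theta v,x)
\]
as an equality of formal series in $x$, for every $v\in V$. I would derive this by unpacking the defining formula $Y_{\overline i}(v,x)=Y_i(e^{xL_1}(-x^{-2})^{L_0}v,x^{-1})^{\tr}$, pairing with $C_iw_1$, and then using the unitarity of $Y_i$ to move a formal adjoint onto $w_1$. To simplify the resulting expression I would use two formal identities on $V$: (a) $\theta$ commutes with $e^{xL_1}(-x^{-2})^{L_0}$, which follows from $\theta L_0=L_0\theta$, $\theta L_1=L_1\theta$, and the integrality of the weights of $V$; and (b) $e^{x^{-1}L_1}(-x^2)^{L_0}e^{xL_1}(-x^{-2})^{L_0}=\id_V$, a Möbius-group identity that I would verify by a short direct expansion on homogeneous vectors.

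With the intertwining relation in hand, the unitarity of $W_{\overline i}$ is essentially a two-line computation. Since $C_i$ is antiunitary, transporting adjoints gives $(C_i A C_i^{-1})^\dagger=C_i A^\dagger C_i^{-1}$, hence
\begin{align*}
Y_{\overline i}(v,x)^\dagger &= C_i\,Y_i(\theta v,x)^\dagger\,C_i^{-1} \\
&= C_i\,Y_i(e^{xL_1}(-x^{-2})^{L_0}v,x^{-1})\,C_i^{-1} \\
&= Y_{\overline i}(e^{xL_1}(-x^{-2})^{L_0}\theta v,x^{-1}),
\end{align*}
where the second equality uses the unitarity of $W_i$ applied to $\theta v$ (together with $\theta^2=\id$), and the third applies the intertwining relation once more, combined with the commutation of $\theta$ with $e^{xL_1}(-x^{-2})^{L_0}$.

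The main point requiring care is the bookkeeping with the formal variable $x$ in the presence of the antilinear map $\theta$: one must check that all scalar coefficients that interact with $\theta$ are real (in fact integer), so that $\theta$ passes through the formal expressions $e^{xL_1}(-x^{-2})^{L_0}$ cleanly, and one must verify the Möbius identity (b) explicitly. Once these two formal facts are in place, the remainder of the argument is a direct algebraic manipulation, and no analytic input beyond the unitarity of $W_i$ is needed.
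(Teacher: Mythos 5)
Your proposal is correct and follows essentially the same route as the paper: you define the same antilinear map $C_i$, transport the inner product so that $C_i$ is antiunitary, and establish the intertwining identity $Y_{\overline i}(v,x)=C_iY_i(\theta v,x)C_i^{-1}$, which is precisely the paper's equation \eqref{eq104}, obtained there via $A^\dagger=C_i^{-1}A^{\tr}C_i$ together with the unitarity of $W_i$. The only difference is one of presentation: you spell out the two formal facts (commutation of $\theta$ with $e^{xL_1}(-x^{-2})^{L_0}$ and the M\"obius inversion identity, the paper's \eqref{eq101}) that the paper's one-line computation uses implicitly.
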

\begin{proof}
Assume that  $W_i$ has inner product $\langle\cdot|\cdot\rangle$ . Define an anti-linear bijective map $C_i:W_i\rightarrow W_{\overline i}$ such that  $\langle C_iw_1^{(i)},w_2^{(i)} \rangle=\langle w_2^{(i)}|w_1^{(i)} \rangle$ for any $w_1^{(i)},w_2^{(i)}\in W$. We simply write $C_iw^{(i)}=\overline {w^{(i)}}$. Now we may define the inner product on $W_{\overline i}$ such that $C_i$ becomes   antiunitary. 

For any $v\in V$, we show that $Y_{\overline i}(v,x)$ satisfies equation \eqref{eq219}. Note that for any  $A\in\mathrm{End}(W_i)$, if $A$ has a  transpose $A^{\tr}\in\mathrm{End}(W_{\overline i})$, then $A$ also has a formal adjoint $A^\dagger\in\End(W)$, and it satisfies $A^\dagger=C_i^{-1}A^{\tr}C_i$. Thus we have 
\begin{align}
&Y_{\overline{i}}(v,x)=Y_i(e^{xL_1}(-x^{-2})^{L_0} v,x^{-1})^{\tr}\nonumber\\
=&C_iY_i(e^{xL_1}(-x^{-2})^{L_0} v,x^{-1})^\dagger C_i^{-1}=C_iY_i(\theta v,x)C_i^{-1},\label{eq104}
\end{align}
which implies that $Y_{\overline i}$ satisfies \eqref{eq219}.
\end{proof}
From now on, if $W_i$ is a unitary $V$-module, we fix an inner product on $W_{\overline i}$ to be the one constructed in the proof of proposition \ref{lb64}. We view $W_{\overline i}$ as a unitary $V$-module under this inner product.

Note that if we let $v=\nu$, then \eqref{eq104} implies that $L_nC_i=C_iL_n$ ($n\in\mathbb Z$).

Since we use  $W_0$ (or simply $0$) to denote the vacuum module $V$, it is natural to let $C_0$ represent the conjugation map from $V$ onto its  contragredient module $W_{\overline 0}\equiv V'$. By equation \eqref{eq104} (with $i=0$) and \eqref{eq92}, we have:
\begin{co}\label{lb83}
$C_0\theta: V\rightarrow  V'$ is a unitary $V$-module isomorphism.
\end{co}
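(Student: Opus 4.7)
The plan is to verify that $C_0\theta$ has the three properties of a unitary $V$-module isomorphism: it is $\mathbb{C}$-linear and bijective, it intertwines the $V$-actions, and it preserves inner products.

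First I would observe that linearity and bijectivity are essentially formal. Both $\theta$ and $C_0$ are antilinear bijections (the PCT operator is an antilinear involutive automorphism by the preceding remark, and $C_0$ is antilinear by construction), so the composition $C_0\theta$ is a $\mathbb{C}$-linear bijection from $V$ to $V'$.

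Next I would prove that $C_0\theta$ is an isometry. Both factors are antiunitary: $\theta$ is antiunitary by the remark in Section~1.1, and $C_0$ is antiunitary by the very definition of the inner product chosen on $W_{\overline 0}=V'$ in the proof of Proposition~\ref{lb64}. The composition of two antiunitaries is unitary, so $C_0\theta$ is a $\mathbb{C}$-linear isometry onto $V'$.

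The main content is verifying that $C_0\theta$ intertwines the $V$-actions, i.e.\ $(C_0\theta)Y_0(v,x)=Y_{\overline 0}(v,x)(C_0\theta)$ for every $v\in V$. I would combine the two formulas highlighted just before the statement: equation~\eqref{eq104} specialized to $i=0$ gives
\begin{equation*}
Y_{\overline 0}(v,x)=C_0\,Y_0(\theta v,x)\,C_0^{-1},
\end{equation*}
and equation~\eqref{eq92}, together with $\theta^2=\id_V$, gives $Y_0(\theta v,x)=\theta\, Y_0(v,x)\,\theta^{-1}$. Substituting this into the previous line yields
\begin{equation*}
Y_{\overline 0}(v,x)=(C_0\theta)\,Y_0(v,x)\,(C_0\theta)^{-1},
\end{equation*}
which is the desired intertwining property. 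Combining this with the isometry from the previous step gives the corollary.

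I do not expect any serious obstacle: the identity $\theta^2=\id_V$ is essential to convert \eqref{eq104} into the intertwining relation in the form above, and one must be attentive to the fact that $\theta$ and $C_0$ are antilinear so the composition orders match up with the antiunitary-composition rule. Beyond that, everything is a direct consequence of the two input formulas \eqref{eq92} and \eqref{eq104}.
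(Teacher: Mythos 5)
Your proposal is correct and follows essentially the same route as the paper, which derives the corollary directly from equation \eqref{eq104} with $i=0$ combined with the automorphism property \eqref{eq92} and the antiunitarity of $\theta$ and $C_0$; you have merely written out the details the paper leaves implicit. (One small remark: the intertwining identity $Y(\theta v,x)=\theta Y(v,x)\theta^{-1}$ follows from \eqref{eq92} alone, so $\theta^2=\id_V$ is not actually needed there, though it harmlessly lets you replace $\theta^{-1}$ by $\theta$.)
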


Therefore, we identify the vacuum module $V$ with its contragredient module $V'$. This fact can be simply written as $\overline 0=0$. The operators $\theta$ and $C_0$ are also identified. The evaluation map $V\otimes V'\rightarrow \mathbb C$ is equivalent to the symmetric bilinear form $V\otimes V\rightarrow \mathbb C$ defined by $\langle v_1,v_2\rangle=\langle v_1|\theta v_2\rangle$, where $v_1,v_2\in V$.

Recall that we also identify $W_i$ with $W_{\overline{\overline i}}$. It is easy to see that the anti-unitary map $C_{\overline i}:W_{\overline i}\rightarrow W_i=W_{\overline{\overline i}}$ satisfies $C_{\overline i}=C_i^{-1}$.

We now give a criterion for unitary $V$-modules. First, we say that $V$ is \textbf{generated by} a subset $E$ if $V$ is spanned by vectors of the form $Y(v_1,n_1)\cdots Y(v_k,n_m)\Omega$ where $v_1,v_2,\dots,v_m\in E$ and $n_1,\dots,n_m\in\mathbb Z$. By the Jacoby identity \eqref{eq97} (with $m=0$), any vertex operator $Y_i$ is determined by its values on $E$. 

Now we have a useful criterion for unitarity of $V$-modules.

\begin{pp}\label{lb116}
If $V$ is unitary, $W_i$ is a $V$-module equipped with an inner product $\langle \cdot|\cdot\rangle$, $E$ is a generating subset of $V$, and  equation \eqref{eq219} holds under the inner product $\langle \cdot|\cdot\rangle$ for any $v\in E$, then $W_i$ is a unitary $V$-module.
\end{pp}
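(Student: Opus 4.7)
The plan is to reformulate \eqref{eq219} as an intertwining relation with the contragredient vertex operator, producing a second candidate module structure $\tilde Y_i$ on $W_i$ that agrees with $Y_i$ on the generating set $E$, and then to conclude $\tilde Y_i = Y_i$ by a Jacobi-based uniqueness argument. Using the identity $A^\dagger = C_i^{-1}A^{\tr}C_i$ (valid for any $A\in\End(W_i)$ admitting a transpose in $\End(W_{\overline i})$) together with the definition \eqref{eq98} of $Y_{\overline i}$, one checks pointwise in $v$ that \eqref{eq219} is equivalent to
\begin{align*}
C_i Y_i(v,x) = Y_{\overline i}(\theta v, x)\, C_i.
\end{align*}
This lets me define $\tilde Y_i(v, x) := C_i^{-1}Y_{\overline i}(\theta v, x)C_i$, which is linear in $v$ (the antilinearities of $\theta$ and of $C_i^{\pm 1}$ cancel) and which, by the hypothesis on $E$, agrees with $Y_i(v, x)$ for all $v \in E$.

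Next I would verify that $\tilde Y_i$ gives $W_i$ the structure of a $V$-module. The vacuum axiom $\tilde Y_i(\Omega,x) = \id_{W_i}$ follows from $\theta\Omega=\Omega$ and $Y_{\overline i}(\Omega, x) = \id_{W_{\overline i}}$. Since $\theta\nu = \nu$ and $L_n C_i = C_i L_n$ (remark after Proposition \ref{lb64}), the modes of $\tilde Y_i(\nu, x)$ equal the $L_n$ already acting on $W_i$, which gives the Virasoro relations, the energy grading, and the translation property. For the Jacobi identity \eqref{eq97}, one applies Jacobi for $Y_{\overline i}$ to the vectors $\theta u$ and $\theta v$, uses $\theta Y(u, n) = Y(\theta u, n)\theta$ from \eqref{eq92} to recognize $\theta Y(u, n)v = Y(\theta u, n)\theta v$, and conjugates each term by $C_i^{-1}(\cdot)C_i$ to read off Jacobi for $\tilde Y_i$.

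Finally, let $T := \{v\in V : \tilde Y_i(v, x) = Y_i(v, x)\}$, which is a linear subspace of $V$ containing both $\Omega$ and $E$. For $u, v\in T$ and any $n\in\mathbb Z$, the Jacobi identity \eqref{eq97} with $m=0$ expresses $Y_i(Y(u,n)v, h)$ as a combinatorial sum of compositions $Y_i(u,\cdot)Y_i(v,\cdot)$ and $Y_i(v,\cdot)Y_i(u,\cdot)$, and the analogous expansion holds for $\tilde Y_i$; since corresponding factors agree, the right-hand sides match term by term, so $Y(u,n)v\in T$. Iterating from $\Omega$ with vectors drawn from $E$ shows that every generator $Y(v_1, n_1)\cdots Y(v_m, n_m)\Omega$ lies in $T$, hence $T = V$ and \eqref{eq219} holds for every $v\in V$. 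The only nontrivial step is the Jacobi verification for $\tilde Y_i$ in the middle paragraph, but this is pure symbol pushing since $\tilde Y_i$ is built from the already-constructed module $Y_{\overline i}$ by conjugation by the bijection $C_i$ and insertion of the VOA automorphism $\theta$; no new analytic input is required.
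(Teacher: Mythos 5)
Your proof is correct and takes essentially the same route as the paper: the paper's proof likewise sets $\widetilde Y_i(v,x)=Y_i(e^{xL_1}(-x^{-2})^{L_0}\theta v,x^{-1})^\dagger=C_i^{-1}Y_{\overline i}(\theta v,x)C_i$, observes that $\widetilde Y_i$ satisfies the Jacobi identity, and concludes $\widetilde Y_i=Y_i$ because two operators satisfying the Jacobi identity and agreeing on the generating set $E$ agree on all of $V$ (via \eqref{eq97} with $m=0$). Your additional verifications (linearity of $\tilde Y_i$, the vacuum, Virasoro, and translation axioms, and the explicit closure argument for the set $T$) merely spell out steps the paper compresses into citations of the proof of proposition \ref{lb64} and the remark preceding the proposition.
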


\begin{proof}
For any $v\in V$ we define $\widetilde Y_i(v,x)=Y_i(e^{xL_1}(-x^{-2})^{L_0}\theta v,x^{-1})^\dagger$. As in the proof of proposition \ref{lb64}, we have $\widetilde Y_i(v,x)=C_i^{-1}Y_{\overline i}(\theta v,x)C_i$. It follows that $\widetilde Y_i$ satisfies the  Jacobi identity. Since $Y_i$ also satisfies the Jacobi identity, and since $Y_i(v,x)=\widetilde Y_i(v,x)$ for any $v\in E$, we must have $Y_i(v,x)=\widetilde Y_i(v,x)$ for all $v\in V$, which proves that $W_i$ is unitary.
\end{proof}

\subsection{Intertwining operators of unitary VOAs}\label{lb29}
\begin{df}\label{lb28}
Let $W_i,W_j,W_k$ be $V$-modules. A type $W_k\choose W_iW_j$ (or type $k\choose i~j$) \textbf{intertwining operator} $\mathcal Y_\alpha$
 is a linear map 
 \begin{gather*}
 W_i\rightarrow(\Hom(W_j,W_k))\{x\},\\
 w^{(i)}\mapsto \mathcal Y_\alpha(w^{(i)},x)=\sum_{s\in\mathbb R}\mathcal Y_\alpha(w^{(i)},s) x^{-s-1}\\
 \text{ (where $\mathcal Y_\alpha(w^{(i)},s)\in\Hom(W_j,W_k)$)},
 \end{gather*}
 such that:\\
 (a) For any $w^{(j)}\in W_j$, $\mathcal Y_\alpha(w^{(i)},s)w^{(j)}=0$ for $s$ sufficiently large.\\
 (b) (\textbf{Jacobi identity}) For any $u\in V,w^{(i)}\in W_i,m,n\in\mathbb Z,s\in\mathbb R$, we have
	\begin{align}
	&\sum_{l\in\mathbb Z_{\geq0}}{m\choose l}\mathcal Y_\alpha\big(Y_i(u,n+l)w^{(i)},m+s-l\big)\nonumber\\
	=&\sum_{l\in\mathbb Z_{\geq0}}(-1)^l{n\choose l}Y_k(u,m+n-l)\mathcal Y_\alpha(w^{(i)},s+l)\nonumber\\
	&-\sum_{l\in\mathbb Z_{\geq0}}(-1)^{l+n}{n\choose l}\mathcal Y_\alpha(w^{(i)},n+s-l)Y_j(u,m+l).\label{eq128}
	\end{align}
(c)	(\textbf{Translation property}) $	\frac d{dx} \mathcal Y_\alpha(w^{(i)},x)=\mathcal Y_\alpha(L_{-1}w^{(i)},x)$.
\end{df}

Intertwining operators will be denoted by $\mathcal Y_\alpha,\mathcal Y_\beta,\mathcal Y_\gamma,\dots$, or just $\alpha,\beta,\gamma,\dots$.

Note that if we let $n=0$ and $m=0$ respectively, \eqref{eq128} becomes:
\begin{flalign}
&\sum_{l\geq0}{m \choose l}\mathcal Y_\alpha(Y_i(u,l)w^{(i)},m+s-l)=Y_k(u,m)\mathcal Y_\alpha(w^{(i)},s)-\mathcal Y_\alpha(w^{(i)},s)Y_j(u,m),\label{eq106} \\
&\mathcal Y_\alpha(Y_i(u,n)w^{(i)},s)\nonumber\\
	=&\sum_{l\geq0}(-1)^l{n\choose l}Y_k(u,n-l)\mathcal Y_\alpha(w^{(i)},s+l)-\sum_{l\geq0}(-1)^{l+n}{n\choose l}\mathcal Y_\alpha(w^{(i)},n+s-l)Y_j(u,l).
\end{flalign}
In particular, if we let $u=\nu$ and $m=0,1$ respectively,  the first equation implies that
\begin{gather}
[L_{-1},\mathcal Y_\alpha(w^{(i)},x)]=\mathcal Y_\alpha(L_{-1}w^{(i)},x)=\frac d{dx}\mathcal Y_\alpha(w^{(i)},x);\\
[L_0,\mathcal Y_\alpha(w^{(i)},x)]=\mathcal Y_\alpha(L_0w^{(i)},x)+\frac d{dx}\mathcal Y_\alpha(w^{(i)},x).\label{eqa2}
\end{gather}
The second equation is equivalent to that
\begin{align}
[L_0,\mathcal Y_\alpha(w^{(i)},s)]=(-s-1+\Delta_{w^{(i)}})\mathcal Y_\alpha(w^{(i)},s)~~~\text{if $w^{(i)}$ is homogeneous.}\label{eq132}
\end{align}
Hence $\mathcal Y_\alpha(w^{(i)},s)$ raises the energy by $-s-1+\Delta_{w^{(i)}}$. Equation \eqref{eq132} implies the relation
\begin{align}
z^{L_0}\mathcal Y_\alpha(w^{(i)},x)z^{-L_0}=\mathcal Y_{\alpha}(z^{L_0}w^{(i)},zx)\label{eq231}
\end{align}
(cf. \cite{FHL} section 5.4), where $z$ is either a  non-zero complex number, or a formal variable which commutes with and  is independent of $x$. In the former case, we need to assign to $z$ an argument, i.e., a real number  $\arg z$ such that $z=|z|e^{i\arg z}$. Then, for any $s\in\mathbb R$, we let $z^{s}=|z|^{s}e^{is\arg z}$, i.e., we let the argument of $z^s$ be $s\arg z$. 
\begin{cv}In this article, unless otherwise stated, we make the following assumptions:\\
(1) If $t\in\mathbb R$ then $\arg e^{it}=t$.\\
(2) If $z\in\mathbb C^\times$ with argument $\arg z$, then  $\arg \overline z=-\arg z$. If $s\in\mathbb R$, then  $\arg (z^s)=s\arg z$.\\
(3) If $z_1,z_2\in\mathbb C^\times$ with arguments $\arg z_1$ and $\arg z_2$ respectively, then $\arg(z_1z_2)=\arg z_1+\arg z_2$.
\end{cv}
\begin{df}
Let  $U$ be an open subset of	$\mathbb C$ and $f:U\rightarrow \mathbb C^\times$ be a continuous function.  Suppose that $z_1,z_2\in U$, and for any $t\in[0,1]$, $tz_1+(1-t)z_2\in U$. Then we say that the argument  \textbf{$\arg f(z_2)$ is close to $\arg f(z_1)$ as $z_2\rightarrow z_1$}, if there exists a (unique) continuous function $A:[0,1]\rightarrow \mathbb R$, such that $A(0)=\arg z_1,A(1)=\arg z_2$, and that for any $t\in[0,1]$, $A(t)$ is an argument of $f(tz_1+(1-t)z_2)$.
\end{df}

Let $\mathcal V{k\choose i~j}$ be the vector space of type $k\choose i~j$ intertwining operators. If $\mathcal Y_\alpha\in\mathcal V{k\choose i~j}$, we say that $W_i,W_j$ and $W_k$ are the \textbf{charge space}, the \textbf{source space}, and the \textbf{target space} of $\mathcal Y_\alpha$ respectively. We say that $\mathcal Y_\alpha$ is \textbf{irreducible} if $W_i,W_j,W_k$ are irreducible $V$-modules.   If $\mathcal Y_\alpha$ is irreducible, then by \eqref{eq132}, it is easy to see  that   $\mathcal Y(w^{(i)},s)=0$ except possibly when $s\in\Delta_i+\Delta_j-\Delta_k+\mathbb Z$. If $V$ is unitary, and $W_i,W_j,W_k$ are unitary $V$-modules, then we say that $\mathcal Y_\alpha$ is \textbf{unitary}.\\

We have several ways to construct new  intertwining operators from old ones. First, for any $\mathcal Y_\alpha\in\mathcal V{k\choose i~j}$, we define its \textbf{contragredient intertwining operator} (cf. \cite{FHL}) $C\mathcal Y_\alpha\equiv\mathcal Y_{C\alpha}\in\mathcal V{\overline j\choose i~\overline k}$ by letting
\begin{gather}
\mathcal Y_{C\alpha}(w^{(i)},x)=\mathcal Y_\alpha(e^{xL_1}(e^{-i\pi}x^{-2})^{L_0}w^{(i)},x^{-1})^\tr,~~~w^{(i)}\in W_i.\label{eq232}
\end{gather}
In other words, if $w^{(j)}\in W_j$ and $w^{(\overline k)}\in W_{\overline k}$, then
\begin{gather}
\langle  \mathcal Y_{C\alpha}(w^{(i)},x)w^{(\overline k)},w^{(j)} \rangle=\langle w^{(\overline k)}, \mathcal Y_\alpha(e^{xL_1}(e^{-i\pi}x^{-2})^{L_0}w^{(i)},x^{-1})w^{(j)}  \rangle.
\end{gather}

We also define, for each $\mathcal Y_\alpha\in\mathcal V{k\choose i~j}$, an intertwining operator $C^{-1}\mathcal Y_\alpha\equiv\mathcal Y_{C^{-1}\alpha}\in\mathcal V{\overline j\choose i~\overline k}$ such that
\begin{gather}
\mathcal Y_{C^{-1}\alpha}(w^{(i)},x)=\mathcal Y_\alpha(e^{xL_1}(e^{i\pi}x^{-2})^{L_0}w^{(i)},x^{-1})^\tr,~~~w^{(i)}\in W_i.\label{eq102}
\end{gather}
One can show that $C^{-1}C\alpha=CC^{-1}\alpha=\alpha$. (To prove this, we first show that $(xL_1)x_0^{L_0}=x_0^{L_0}(xx_0L_1)$ by checking this relation on any homogeneous vector. We then
show that
\begin{align}
e^{xL_1}x_0^{L_0}=x_0^{L_0}e^{xx_0L_1}\label{eq101},
\end{align}
where $x,x_0$ are independent commuting formal variables. Finally, we may use \eqref{eq101} to prove the desired relation.)

We now define, for any $\mathcal Y_\alpha\in\mathcal V{k\choose i~j}$, a pair of \textbf{braided intertwining operators} (cf. \cite{FHL}) $B_\pm\mathcal Y_\alpha\equiv\mathcal Y_{B_\pm\alpha}\in\mathcal V{k\choose j~i}$  in the following way: If $w^{(i)}\in W_i,w^{(j)}\in W_j$, then
\begin{gather}
\mathcal Y_{B_+\alpha}(w^{(j)},x)w^{(i)}=e^{xL_{-1}}\mathcal Y_\alpha(w^{(i)},e^{i\pi}x)w^{(j)},\\
\mathcal Y_{B_-\alpha}(w^{(j)},x)w^{(i)}=e^{xL_{-1}}\mathcal Y_\alpha(w^{(i)},e^{-i\pi}x)w^{(j)}.
\end{gather}
It's easy to see that $B_\mp$ is the inverse operation of $B_\pm$. We refer the reader to \cite{FHL} chapter 5 for a proof that contragredient  intertwining operators and braided  intertwining operators satisfy the Jacobi identity.\\

In the remaining part of this section, we  assume that $V$ is unitary. Let $W_i,W_j,W_k$ be unitary $V$-modules with conjugation maps $C_i:W_i\rightarrow W_{\overline i}, C_j:W_j\rightarrow W_{\overline j},C_k:W_k\rightarrow W_{\overline k}$ respectively. Given $\mathcal Y_\alpha\in\mathcal V{k\choose i~j}$, we define its \textbf{conjugate intertwining operator} $\overline{\mathcal Y_\alpha}\equiv\mathcal Y_{\overline\alpha} \in\mathcal V{\overline k\choose \overline i~\overline j}$ by setting
\begin{align}
\mathcal Y_{\overline\alpha}(\overline{w^{(i)}},x)=C_k\mathcal Y_\alpha(w^{(i)},x)C_j^{-1},~~~w^{(i)}\in W_i.
\end{align}
It is clear that $\mathcal Y_{\overline{\alpha}}$ satisfies the Jacobi identity.

For any $\mathcal Y_\alpha\in\mathcal V{k\choose i~j}$, it is easy to check that
$$\mathcal Y_{\overline{B_\pm\alpha}}=\mathcal Y_{B_\mp\overline\alpha},\qquad\mathcal Y_{\overline{C^{\pm1}\alpha}}=\mathcal Y_{C^{\mp1}\overline\alpha}.$$
We define $\mathcal Y_\alpha^\dagger\equiv\mathcal Y_{\alpha^*}=\mathcal Y_{\overline{C\alpha}}\in\mathcal V{j\choose \overline i~k}$ and call it the \textbf{adjoint intertwining operator} of $\mathcal Y_{\alpha}$. One can easily check, for any $w^{(i)}\in W_i$, that
\begin{gather}
\mathcal Y_{\alpha^*}(\overline{w^{(i)}},x)=\mathcal Y_\alpha(e^{xL_1}(e^{-i\pi}x^{-2})^{L_0}w^{(i)},x^{-1})^\dagger.\label{eq230}
\end{gather}
where the symbol $\dagger$ on the right hand side  means the formal adjoint. In other words, for any $w^{(j)}\in W_j, j,w^{(k)}\in W_k$, we have
\begin{align}
\langle \mathcal Y_{\alpha^*}(\overline{w^{(i)}},x)w^{(k)}|w^{(j)}  \rangle=\langle w^{(k)}|\mathcal Y_\alpha(e^{xL_1}(e^{-i\pi}x^{-2})^{L_0}w^{(i)},x^{-1})w^{(j)} \rangle.
\end{align}
If $w^{(i)}$ is  homogeneous,  we can write \eqref{eq230} in terms of  modes:
\begin{align}
\mathcal Y_{\alpha^*}(\overline{w^{(i)}},s)=\sum_{m\in\mathbb Z_{\geq0}}\frac {e^{i\pi\Delta_{w^{(i)}}}} {m!}\mathcal Y(L_1^mw^{(i)},-s-m-2+2\Delta_{w^{(i)}})^\dagger\label{eq213}
\end{align}
for all $s\in\mathbb R$.

It is also obvious that the adjoint operation  is an involution, i.e., $\mathcal Y_{\alpha^{**}}=\mathcal Y_\alpha$. Hence $*:\mathcal V{k\choose i~j}\rightarrow \mathcal V{j\choose\overline i~k}$ is an antiunitary map.

We define the cardinal number $N^k_{ij}$ to be the dimension of the vector space $\mathcal V{k\choose i~j}$. $N^k_{ij}$ is called a \textbf{fusion rule} of $V$. The above constructions of intertwining operators imply the following:
\begin{equation}
N^k_{ij}=N^{\overline j}_{i\overline k}=N^k_{ji}=N^{\overline k}_{\overline i~\overline j}=N^j_{\overline ik}.
\end{equation}\\

 We now construct several  intertwining operators related to a given $V$-module $W_i$. First, note that $Y_i\in\mathcal V{i\choose 0~i}$. It is obvious that  $B_+Y_i=B_-Y_i\in\mathcal V{i\choose i~0}$. We define $\mathcal Y^i_{i0}=B_\pm Y_i$ and call it the \textbf{creation operator}  of $W_i$. Using the definition of $B_\pm$, we have, for any $w^{(i)}\in W_i,v\in V$,
\begin{equation}\label{eq120}
\mathcal Y^i_{i0}(w^{(i)},x)v=e^{xL_{-1}}Y_i(v,-x)w^{(i)}.
\end{equation}
In particular, we have
\begin{equation}\label{eq233}
\mathcal Y^i_{i0}(w^{(i)},x)\Omega=e^{xL_{-1}}w^{(i)}.
\end{equation}

We define $\mathcal Y^0_{i\overline i}:=C^{-1}\mathcal Y^i_{i0}=C^{-1}B_\pm Y_i\in\mathcal V{0\choose i~\overline i}$.  Thus for any $w^{(i)}_1\in W_i$ and $w^{(\overline i)}_2\in W_{\overline i}$, we may use \eqref{eq233} and \eqref{eq101} to compute that
\begin{align}
\langle \mathcal Y^0_{ i \bar i}(w^{(i)}_1,x)w^{(\overline i)}_2,\Omega\rangle=&\langle w^{(\overline i)}_2,\mathcal Y^i_{i0}(e^{xL_1}(e^{i\pi}x^{-2})^{L_0}w^{(i)}_1,x^{-1})\Omega\rangle\nonumber\\
=&\langle w^{(\overline i)}_2,e^{x^{-1}L_{-1}}e^{xL_1}(e^{i\pi}x^{-2})^{L_0}w^{(i)}_1\rangle\nonumber\\
=&\langle e^{x^{-1}L_{1}}w^{(\overline i)}_2,e^{xL_1}(e^{i\pi}x^{-2})^{L_0}w^{(i)}_1\rangle\nonumber\\
=&\langle e^{x^{-1}L_{1}}w^{(\overline i)}_2,(e^{i\pi}x^{-2})^{L_0}e^{-x^{-1}L_1}w^{(i)}_1\rangle.\label{eq2}
\end{align}

Note that by \eqref{eq98}, $Y_{\overline i}=C^{\pm1}Y_i\in\mathcal V{\overline i\choose 0~\overline i}$.  $\mathcal Y^0_{\overline ii}=C^{-1}B_\pm Y_{\overline i}$ is called the \textbf{annihilation operator} of $W_i$.

Define $\vartheta_i\in \End_V(W_i)$ by setting $\vartheta_i=e^{2i\pi L_0}$. That $\vartheta_i$ is a $V$-module homomorphism follows from \eqref{eq231}. $\vartheta_i$ is called the \textbf{twist} of $W_i$. Then the intertwining operators $\mathcal Y^0_{i\overline i}$ and $\mathcal Y^0_{\overline i i}$ can be related in the following way: 
\begin{pp}
\begin{gather}
\mathcal Y^0_{i\overline i}(w^{( i)},x)=(B_+\mathcal Y^0_{\overline ii})(\vartheta_i w^{(i)},x)=(B_-\mathcal Y^0_{\overline ii})(\vartheta_i^{-1} w^{(i)},x),\label{eq152}\\
\mathcal Y^0_{i\overline i}(w^{( i)},x)=(B_+\mathcal Y^0_{\overline ii})( w^{(i)},x)\vartheta_i=(B_-\mathcal Y^0_{\overline ii})( w^{(i)},x)\vartheta_i^{-1}.\label{eq151}
\end{gather}
\end{pp}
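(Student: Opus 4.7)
The plan is to verify both identities at the level of matrix coefficients with the vacuum $\Omega \in V$ and then lift to equality of intertwining operators via a uniqueness argument. Mimicking the derivation of \eqref{eq2} with $W_i$ and $W_{\overline i}$ interchanged, I would first obtain
\begin{align*}
\langle \mathcal Y^0_{\overline i i}(w^{(\overline i)}_2, y)w^{(i)}_1, \Omega\rangle = \langle e^{y^{-1}L_1}w^{(i)}_1,(e^{i\pi}y^{-2})^{L_0}e^{-y^{-1}L_1}w^{(\overline i)}_2\rangle.
\end{align*}
Applying the definition of $B_\pm$, using $L_1\Omega = 0$ to cancel the arising $e^{xL_1}\Omega$ factor, and substituting $y = e^{\pm i\pi}x$ under the branch conventions set out at the beginning of the paper (so that $(e^{i\pi}y^{-2})^{L_0}$ collapses for $B_+$ to $(e^{-i\pi}x^{-2})^{L_0}$, and for $B_-$ to $(e^{3i\pi}x^{-2})^{L_0} = \vartheta\cdot(e^{i\pi}x^{-2})^{L_0}$), this reduces the left-hand side to
\begin{align*}
\langle (B_+\mathcal Y^0_{\overline i i})(w^{(i)}_1, x)w^{(\overline i)}_2, \Omega\rangle = \langle e^{-x^{-1}L_1}w^{(i)}_1,(e^{-i\pi}x^{-2})^{L_0}e^{x^{-1}L_1}w^{(\overline i)}_2\rangle
\end{align*}
(and the analogous formula for $B_-$).

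In parallel, I would recast \eqref{eq2} into the same shape by commuting $e^{x^{-1}L_{-1}}$ past $(e^{i\pi}x^{-2})^{L_0}$ via the identity $e^{yL_{-1}}x_0^{L_0} = x_0^{L_0}e^{yx_0^{-1}L_{-1}}$ (the $L_{-1}$-analogue of \eqref{eq101}, derived from $[L_0,L_{-1}]=L_{-1}$ in the same way) and then moving $L_0^{\tr} = L_0$ across the pairing, yielding
\begin{align*}
\langle \mathcal Y^0_{i\overline i}(w^{(i)}_1, x)w^{(\overline i)}_2,\Omega\rangle = \langle e^{-x^{-1}L_1}w^{(i)}_1, (e^{i\pi}x^{-2})^{L_0}e^{x^{-1}L_1}w^{(\overline i)}_2\rangle.
\end{align*}
Comparing, the $B_+$ version differs from this only by a factor $e^{2i\pi L_0} = \vartheta$ in the $(\cdot)^{L_0}$ piece. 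Because $[L_0, L_1] = -L_1$ yields $e^{2i\pi L_0}L_1 = L_1 e^{2i\pi L_0}$ (since $e^{-2i\pi}=1$), this twist commutes with $e^{x^{-1}L_1}$ and can therefore be absorbed either into the $W_i$-argument of $(B_+\mathcal Y^0_{\overline i i})$, producing \eqref{eq152} for $B_+$, or via $L_0^{\tr} = L_0$ into the $W_{\overline i}$-argument, producing \eqref{eq151} for $B_+$. The $B_-$ case is handled identically, with $\vartheta$ replaced by $\vartheta^{-1}$.

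Finally, to lift equality of $\Omega$-matrix coefficients to equality of intertwining operators, I would use that $V$ is spanned by vectors obtained by applying vertex operator modes to $\Omega$, combined with the commutator formula \eqref{eq106}, which transfers any $Y_0(u, m)$ past a type ${0\choose i~\overline i}$ intertwining operator at the cost of altering its two $W$-arguments; iterating, every $V'$-pairing is reduced to an $\Omega$-pairing with modified entries. The principal technical obstacle is the careful bookkeeping of branches in the $(\cdot)^{L_0}$ factors: the proposition is fundamentally an assertion about the $2\pi i$-monodromy of an intertwining operator under braiding, and the twist $\vartheta$ emerges precisely as the discrepancy between the two branches of $(-x^{-2})^{L_0}$ that are built into the defining formulas for $C^{\pm 1}$ and $B_\pm$.
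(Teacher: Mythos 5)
Your proposal is correct and takes essentially the same route as the paper's proof: evaluate against $\Omega$ using \eqref{eq2} and its $i\leftrightarrow\overline i$ counterpart together with the definition of $B_\pm$ and $L_1\Omega=0$, identify $\vartheta^{\pm1}=e^{\pm2i\pi L_0}$ as the branch discrepancy in the $(\cdot)^{L_0}$ factor, commute it past $e^{x^{-1}L_1}$ and absorb it on either argument via $L_0^{\tr}=L_0$, then lift from $\Omega$-coefficients to all of $V$ using cyclicity of $\Omega$ (simplicity of $V$) and the commutator formula \eqref{eq106}. The only cosmetic differences are that you bring both sides to a common normal form rather than simplifying the $\vartheta^{\pm1}$-inserted $B_\pm$ expression directly, and your appeal to $e^{yL_{-1}}x_0^{L_0}=x_0^{L_0}e^{yx_0^{-1}L_{-1}}$ is a harmless detour, since the recast of \eqref{eq2} already follows from $L_0^{\tr}=L_0$ and the symmetry of the pairing.
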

\begin{proof}
Using equations \eqref{eq101}, \eqref{eq2}, and that $L_1\Omega=0$, we see that for any $w^{(i)}_1\in W_i,w^{(\overline i)}_2\in W_{\overline i}$,
\begin{align}
&\langle (B_\pm\mathcal Y^0_{\overline ii}) (\vartheta_i^{\pm1} w^{(i)}_1,x)w^{(\overline i)}_2,\Omega \rangle\nonumber\\
=&\langle \mathcal Y^0_{\overline ii}(w^{(\overline i)}_2,e^{\pm i\pi}x)e^{\pm2i\pi L_0}w^{(i)}_1  , \Omega  \rangle\nonumber\\
=&\langle  e^{-x^{-1}L_1}e^{\pm2i\pi L_0}w^{(i)}_1 , (e^{i\pi\mp2i\pi}x^{-2})^{L_0}e^{x^{-1}L_1}w^{(\overline i)}_2  \rangle\nonumber\\
=&\langle  e^{\pm2i\pi L_0}e^{-x^{-1}L_1}w^{(i)}_1 , (e^{i\pi\mp2i\pi}x^{-2})^{L_0}e^{x^{-1}L_1}w^{(\overline i)}_2  \rangle\nonumber\\
=&\langle (e^{i\pi}x^{-2})^{L_0} e^{-x^{-1}L_1}w^{(i)}_1 , e^{x^{-1}L_1}w^{(\overline i)}_2  \rangle\nonumber\\
=&\langle \mathcal Y^0_{ i \bar i}(w^{(i)}_1,x)w^{(\overline i)}_2,\Omega\rangle.
\end{align}
Since $V$ is of CFT type and isomorphic to $V'$ as a $V$-module, $V$ is a simple VOA, i.e., $V$ is an irreducible $V$-module (cf., for example, \cite{CKLW} proposition 4.6-(iv)). Hence $\Omega$ is a cyclic vector in $V$. By \eqref{eq106}, we have $\langle (B_\pm\mathcal Y^0_{\overline ii}) (\vartheta_i^{\pm1} w^{(i)}_1,x)w^{(\overline i)}_2,v \rangle=\langle \mathcal Y^0_{ i \bar i}(w^{(i)}_1,x)w^{(\overline i)}_2,v\rangle$ for any $v\in V$, which proves \eqref{eq152}. \eqref{eq151} can be proved in a similar way.
\end{proof}
When $W_i$ is unitary, we also have
\begin{align}
\mathcal Y^0_{\overline ii}=(\mathcal Y^i_{i0})^\dagger.
\end{align}
Indeed, by \eqref{eq104},  $Y_{\overline i}=\overline {Y_i}$. Hence
\begin{align*}
\mathcal Y^0_{\overline ii}=C^{-1}B_{\pm}\overline {Y_i}=\overline{CB_{\mp}Y_i}=(B_{\mp}Y_i)^\dagger=(\mathcal Y^i_{i0})^\dagger.
\end{align*}

\section{Braiding and fusion of intertwining operators}\label{lb77}
Starting from this chapter, we assume that $V$ satisfies conditions \eqref{eq302}, \eqref{eq303}, and \eqref{eq304}. Recall that, by corollary \ref{lb83}, a unitary VOA automatically satisfies condition \eqref{eq302}.

By \cite{H ODE} theorem 3.5, \emph{the fusion rules of $V$ are finite numbers, and there are only finitely many equivalence classes of irreducible $V$-modules.} Let us choose, for each equivalence class $[W_k]$ of irreducible $V$-module, a representing element $W_k$, and let these modules form a finite set $\{W_k:k\in\mathcal E\}$. (With abuse of notations, we also let $\mathcal E$ denote this finite set.) In other words, $\mathcal E$ is a complete list of mutually inequivalent irreducible $V$-modules. We also require that $V$ is inside $\mathcal E$.  If, moreover, $V$ is unitary, then for any unitarizable $W_k$ ($k\in\mathcal E$), we fix a unitary structure on $W_k$. The unitary structure on $V$ is the standard one. We let $\mathcal E^\uni$ be the set of all unitary $V$-modules in $\mathcal E$.
 
Let $W_i,W_j,W_k$ be $V$-modules. Then $\Theta^k_{ij}$ will always denote (the index set  of) a basis $\{\mathcal Y_\alpha:\alpha\in\Theta^k_{ij} \}$ of the vector space $\mathcal V{k\choose i~j}$. If bases of the vector spaces of intertwining operators are chosen, then for any $W_i,W_k$, we set $\Theta^k_{i*}=\coprod_{j\in\mathcal E}\Theta^k_{ij}$. The notations $\Theta^k_{*j},\Theta^*_{ij}$ are understood in a similar way.

\subsection{Genus $0$ correlation functions}
In this section, we review the construction of genus $0$ correlation functions from intertwining operators. We first give a complex analytic point of view of intertwining operators. Let $\mathcal Y_\alpha\in\mathcal V{k\choose i~j}$. For any $w^{(i)}\in W_i,w^{(j)}\in W_j,w^{(\overline k)}\in W_{\overline k}$,
\begin{align}
\langle \mathcal Y_{\alpha}(w^{(i)},z)w^{(j)},w^{(\overline k)}\rangle=\langle \mathcal Y_{\alpha}(w^{(i)},x)w^{(j)},w^{(\overline k)}\rangle\big|_{x=z}=\sum_{s\in\mathbb R}\langle \mathcal Y_{\alpha}(w^{(i)},s)w^{(j)},w^{(\overline k)}\rangle z^{-s-1}\label{eq234}
\end{align}
is  a finite sum of powers of $z$. (Indeed, if all the vectors are homogeneous then, by \eqref{eq132}, the coefficient before each $z^{-s-1}$ is zero, except when $s=\Delta_{w^{(i)}}+\Delta_{w^{(j)}}-\Delta_{w^{(k)}}-1$.) Since the powers of $z$ are not necessarily integers,  \eqref{eq234} is a \emph{multivalued holomorphic function} defined for $z\in\mathbb C^\times= \mathbb C\setminus\{0\}$: the exact value of $\eqref{eq234}$ depends not only on $z$, but also on $\arg z$. We can also regard $\mathcal Y_\alpha$ as a multivalued $(W_i\otimes W_j\otimes W_{\overline k})^*$-valued holomorphic function on $\mathbb C^\times$. Note that by proposition \ref{lb70}, the transition from the formal series viewpoint to the complex analytic one is faithful.
\begin{cv}
At this point, the notations $\mathcal Y_\alpha(w^{(i)},x)$, $\mathcal Y_\alpha(w^{(i)},z)$, and $\mathcal Y_\alpha(w^{(i)},s)$ seem confusing. We clarify their meanings as follows.

Unless otherwise stated, $\mathcal Y_\alpha(w^{(i)},x)$ is a formal series of the formal variable $x$. If $z\neq0$ is a  complex number, or if $z$ is a complex variable (possibly taking real values), $\mathcal Y_\alpha(w^{(i)},z)$ is defined by \eqref{eq234}.
If $s$ is a real number, $\mathcal Y_\alpha(w^{(i)},s)$ is a mode of $\mathcal Y_\alpha(w^{(i)},x)$, i.e., the  coefficient before $x^{-s-1}$ in $\mathcal Y_\alpha(w^{(i)},x)$.
\end{cv}

Intertwining operators are also called $3$-point (correlation) functions. In \cite{H ODE}, Y. Z. Huang constructed general $n$-point functions by taking the products of intertwining operators. His approach can be sketched as follows: 

For any $n=1,2,3,\dots$, we define the \textbf{configuration space} $\Conf_{n}(\mathbb C^\times)$ to be the complex sub-manifold of $(\mathbb C^\times)^n$ whose points are $(z_1,z_2,\dots,z_n)\in \Conf_{n}(\mathbb C^\times)$ satisfying that $z_m\neq z_l$ whenever $1\leq m<l\leq n$. We let $\widetilde\Conf_{n}(\mathbb C^\times)$ be the universal covering space of $\Conf_{n}(\mathbb C^\times)$.

Let $\mathcal Y_{\alpha_1},\mathcal Y_{\alpha_2},\dots,\mathcal Y_{\alpha_n}$ be intertwining operators $V$. We say that they form a \textbf{chain of intertwining operators}, if for each $1\leq m\leq n-1$, the target space of $\mathcal Y_{\alpha_m}$ equals the source space of $\mathcal Y_{\alpha_{m+1}}$. The following theorem was proved by Huang.

\begin{thm}[cf. \cite{H ODE} theorem 3.5]\label{lb65}
Suppose that $\mathcal Y_{\alpha_1},\dots,\mathcal Y_{\alpha_n}$ form a chain of intertwining operators. For each $1\leq m\leq n$, we let $W_{i_m}$ be the charge space of $\mathcal Y_{\alpha_m}$. We let $W_{i_0}$ be the source space of $\mathcal Y_{\alpha_1}$, and let $W_k$ be the target space  of $\mathcal Y_{\alpha_n}$. Then for any $w^{(i_0)}\in W_{i_0},w^{(i_1)}\in W_{i_1},\dots, w^{(i_n)}\in W_{i_n},w^{(\overline k)}\in W_{\overline k}$, and $z_1,z_2,\dots,z_n\in\mathbb C$ such that $0<|z_1|<|z_2|<\dots<|z_n|$, the expression
\begin{align}
\langle \mathcal Y_{\alpha_n}(w^{(i_n)},z_n)\mathcal Y_{\alpha_{n-1}}(w^{(i_{n-1})},z_{n-1})\cdots\mathcal Y_{\alpha_1}(w^{(i_1)},z_1)w^{(i_0)}, w^{(\overline k)}    \rangle\label{eq235}
\end{align}
\textbf{converges absolutely}, which means that the series
\begin{align}
&\sum_{s_1,s_2,\dots,s_{n-1}\in\mathbb R}\big|\langle \mathcal Y_{\alpha_n}(w^{(i_n)},z_n)P_{s_{n-1}}\mathcal Y_{\alpha_{n-1}}(w^{(i_{n-1})},z_{n-1})P_{s_{n-2}}\nonumber\\
&\qquad\qquad\qquad\cdots P_{s_1}\mathcal Y_{\alpha_1}(w^{(i_1)},z_1)w^{(i_0)}, w^{(\overline k)}    \rangle\big|\label{eq236}
\end{align}
converges, where each $P_{s_m}$ ($1\leq m\leq n-1$) is the  projection of the  target space of $\mathcal Y_{\alpha_m}$ onto its weight-$s_m$ component.
\end{thm}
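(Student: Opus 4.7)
The plan is to follow Huang's strategy of associating, to each matrix element of a product of intertwining operators, a system of ordinary differential equations with regular singular points, and then invoking classical Fuchs theory. I would first reduce to homogeneous vectors. By \eqref{eq132}, each operator $\mathcal Y_{\alpha_m}(w^{(i_m)}, z_m)$ with homogeneous $w^{(i_m)}$ shifts $L_0$-weights by $-s_m - 1 + \Delta_{w^{(i_m)}}$ on the $s_m$-component, so the matrix element \eqref{eq235} expands formally as
\begin{equation*}
\sum_{s_1, \ldots, s_n \in \mathbb{R}} c_{s_1, \ldots, s_n}\, z_1^{-s_1 - 1 + \Delta_{w^{(i_1)}}} z_2^{-s_2 - 1 + \Delta_{w^{(i_2)}}} \cdots z_n^{-s_n - 1 + \Delta_{w^{(i_n)}}},
\end{equation*}
where only countably many coefficients are nonzero and the exponents lie in finitely many cosets of $\mathbb Z$ inside $\mathbb R$. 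Absolute convergence of \eqref{eq235} is then the assertion that this multi-series converges absolutely in the domain $0 < |z_1| < \cdots < |z_n|$.

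The crux is to derive from the $C_2$-cofiniteness assumption \eqref{eq304} a system of ODEs satisfied by these series. For each $m$, consider the enlarged family of formal series obtained by replacing $w^{(i_m)}$ with $Y_{i_m}(v, -1) w^{(i_m)}$ for $v \in V$. Using the Jacobi identity \eqref{eq128} together with the $C_2$-cofiniteness relation $V = T + C_2(V)$ for some finite-dimensional subspace $T \subset V$, one can show that the $\mathbb C$-linear span of all such series, viewed as a module over the ring $\mathbb C[z_1^{\pm 1}, \ldots, z_n^{\pm 1}, (z_m - z_l)^{-1}][\partial_{z_1}, \ldots, \partial_{z_n}]$, is finitely generated. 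This forces the original series to satisfy an ODE system of finite rank, whose singular divisor is $\bigcup_m \{z_m = 0\} \cup \bigcup_{m < l} \{z_m = z_l\} \cup \{\infty\}$. A careful analysis of pole orders, using the translation property and the conformal-weight shift formula \eqref{eq132}, shows that the singularities are in fact \emph{regular} singular.

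Granted the regular singular ODE system, I would apply Fuchs' theorem (in its multivariable form, as developed e.g.\ by Kohno or Deligne, or as carried out directly by Huang) to conclude that every formal power series solution of the system converges absolutely on each simply connected component of the complement of the singular divisor. The specified domain $0 < |z_1| < \cdots < |z_n|$ lies in such a component, and the matrix element is readily verified, order by order in the expansion parameters $z_m/z_{m+1}$, to be a formal solution. Absolute convergence of \eqref{eq236} then follows. For an inductive alternative, one can reduce to $n = 2$ and iterate: the $n = 2$ case endows $\mathcal Y_{\alpha_2}(w^{(i_2)}, z_2)\mathcal Y_{\alpha_1}(w^{(i_1)}, z_1)w^{(i_0)}$ with an analytic (not just formal) meaning in the annular region $|z_1| < |z_2|$, after which the remaining operators $\mathcal Y_{\alpha_3}, \ldots, \mathcal Y_{\alpha_n}$ can be handled by the same device applied to the target module.

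The principal obstacle is the derivation and analysis of the ODE system. The finite-generation statement depends crucially on $C_2$-cofiniteness, but more delicate is the verification that the singular points are regular, especially at infinity: this requires careful bookkeeping of how the Jacobi identity interacts with the cofiniteness relations and with the $L_{-1}$-derivative property. Once these technical points are settled, the convergence conclusion is a standard consequence of the regular singular theory, and the matching of the formal series expansion with the analytic solution in the prescribed domain is essentially forced by the exponents dictated by \eqref{eq132}.
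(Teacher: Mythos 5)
Your main route is exactly the argument behind the result as the paper treats it: theorem \ref{lb65} is not proved in this paper but cited from \cite{H ODE} (theorem 3.5 there, with the differential equations quoted here as theorem \ref{lb75}), and Huang's proof is precisely the scheme you describe --- $C_2$-cofiniteness gives finite generation over $R=\mathbb C[z_1^{\pm1},\dots,z_n^{\pm1},(z_m-z_l)^{-1}]$ \emph{itself} (a stronger statement than your finite generation over the ring of differential operators, and the one actually needed, since it yields for each $m$ a finite-order ODE in $\partial_{z_m}$ with coefficients in $R$), the singular points are shown regular via the weight grading, and the formal series built from \eqref{eq132} is matched with the convergent solution. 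One caveat: your ``inductive alternative'' reducing to $n=2$ does not work as stated, because after the $n=2$ step the product lands only in the algebraic completion $\widehat W$, and applying $\mathcal Y_{\alpha_3}(w^{(i_3)},z_3)$ to such an element presupposes exactly the joint absolute convergence over all intermediate weights $s_1,\dots,s_{n-1}$ that is to be proved --- this is the analytic subtlety the paper itself emphasizes, and it is why Huang handles general $n$ directly through the ODE system rather than by iterating the two-operator case.
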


Note that \eqref{eq235} also \textbf{converges absolutely and locally uniformly}, which means that there exists a neighborhood $U\subset\Conf_n(\mathbb C^\times)$ of $(z_1,z_2,\dots,z_n)$, and a finite number $M>0$, such that  for any $(\zeta_1,\zeta_2,\dots,\zeta_n)\in U$, \eqref{eq236} is bounded by $M$ if we replace each $z_1,z_2,\dots$ with $\zeta_1,\zeta_2,\dots$ in that expression. 

To see this, we assume, without loss of generality, that all the vectors in \eqref{eq235} are homogeneous, and that all the intertwining operators are irreducible. Consider a new set of coordinates $\omega_1,\omega_2,\dots,\omega_n$ such that $z_m=\omega_m\omega_{m+1}\cdots\omega_n$ ($1\leq m\leq n$). Then the condition that $0<|z_1|<|z_2|<\cdots<|z_n|$ is equivalent to that $0<|\omega_1|<1,\dots,0<|\omega_{n-1}|<1,0<|\omega_n|$. By \eqref{eq231}, expression \eqref{eq235} as a formal series also equals
\begin{align}
&\big\langle \mathcal Y_{\alpha_n}\big(w^{(i_n)},\omega_n\big)\mathcal Y_{\alpha_{n-1}}\big(w^{(i_{n-1})},\omega_{n-1}\omega_n\big)\cdots\mathcal Y_{\alpha_1}\big(w^{(i_1)},\omega_1\omega_2\cdots\omega_n\big)w^{(i_0)}, w^{(\overline k)}    \big\rangle\nonumber\\
=&\big\langle \omega_n^{L_0}\mathcal Y_{\alpha_n}\big(\omega_n^{-L_0}w^{(i_n)},1\big)\omega_{n-1}^{L_0}\mathcal Y_{\alpha_{n-1}}\big((\omega_{n-1}\omega_n)^{-L_0}w^{(i_{n-1})},1\big)\cdots\nonumber \\
&~\cdot \omega_1^{L_0}\mathcal Y_{\alpha_1}\big((\omega_1\omega_2\cdots\omega_n)^{-L_0}w^{(i_1)},1\big)(\omega_1\omega_2\cdots\omega_n)^{-L_0}w^{(i_0)}, w^{(\overline k)}    \big\rangle\nonumber\\
=&\big\langle \omega_n^{L_0}\mathcal Y_{\alpha_n}\big(w^{(i_n)},1\big)\omega_{n-1}^{L_0}\mathcal Y_{\alpha_{n-1}}\big(w^{(i_{n-1})},1\big)\cdots \omega_1^{L_0}\mathcal Y_{\alpha_1}\big(w^{(i_1)},1\big)w^{(i_0)}, w^{(\overline k)}    \big\rangle\nonumber\\
&\cdot \prod_{1\leq m\leq n} \omega_m^{-\big(\Delta_{w^{(i_0)}}+\Delta_{w^{(i_1)}}+\cdots+\Delta_{w^{(i_m)}}\big)},\label{eq237}
\end{align} 
where $\mathcal Y_{\alpha_m}(w^{(i_m)},1)=\mathcal Y_{\alpha_m}(w^{(i_m)},x)\big|_{x=1}$ . Since the target space of each $\mathcal Y_{\alpha_m}$ is irreducible, \eqref{eq237} is a \textbf{quasi power series of $\omega_1,\dots,\omega_n$} (i.e., a power series of $\omega_1,\dots,\omega_n$ multiplied by a monomial  $\omega_1^{s_1}\cdots\omega_n^{s_n}$, where $s_1,\dots,s_n\in\mathbb C$), and the convergence of \eqref{eq236} is equivalent to the absolute convergence of the quasi power series \eqref{eq237}.  Therefore, pointwise absolute convergence implies locally uniform absolute convergence.\\

We see that \eqref{eq235} is  a multi-valued holomorphic function defined when $0<|z_1|<\cdots<|z_n|$. We let $\varphi$ be the multi-valued $(W_{i_0}\otimes W_{i_1}\otimes \cdots\otimes W_{i_n}\otimes W_{\overline k})^*$-valued holomorphic function on $\{0<|z_1|<\cdots<|z_n| \}$ defined by \eqref{eq235}. $\varphi$ is called an \textbf{$(n+2)$-point (correlation) function}\footnote{So far our definition of genus $0$ correlation functions is local. We will give a global definition at the end of next section.} of $V$, and is denoted by $ \mathcal Y_{\alpha_n}\mathcal Y_{\alpha_{n-1}}\cdots\mathcal Y_{\alpha_1}$. We define $\mathcal V{k\choose i_n~i_{n-1}~\cdots~ i_0}$ to be the vector space of $(W_{i_0}\otimes W_{i_1}\otimes \cdots\otimes W_{i_n}\otimes W_{\overline k})^*$-valued $n+2$-point functions of $V$. The following proposition can be used to find  a basis of $\mathcal V{k\choose i_n~i_{n-1}~\cdots~ i_0}$.
\begin{pp}\label{lb66}
	Define a linear map $\Phi:$
	\begin{gather*}
	\bigoplus_{j_1,\dots,j_{n-1}\in\mathcal E} \Bigg(\mathcal V{k\choose {i_n}~j_{n-1}}\otimes \mathcal V{j_{n-1}\choose i_{n-1}~j_{n-2}}\otimes \mathcal V{j_{n-2}\choose i_{n-2}~j_{n-3}}\otimes\cdots\otimes \mathcal V{j_1\choose i_1~i_0}\Bigg)\\
	\rightarrow \mathcal V{k\choose i_n~i_{n-1}~\cdots~ i_0},\\
	\mathcal Y_{\alpha_n}\otimes \mathcal Y_{\alpha_{n-1}}\otimes \mathcal Y_{\alpha_{n-2}}\otimes\cdots\otimes\mathcal Y_{\alpha_1}\mapsto  \mathcal Y_{\alpha_n}\mathcal Y_{\alpha_{n-1}} \mathcal Y_{\alpha_{n-2}}\cdots\mathcal Y_{\alpha_1}.
	\end{gather*}
	Then $\Phi$ is an isomorphism.
\end{pp}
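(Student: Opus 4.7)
The argument proceeds by induction on $n$. The base case $n=1$ is tautological: both sides equal $\mathcal V{k\choose i_1~i_0}$ and $\Phi$ is the identity map, since there are no intermediate modules to sum over.

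For the inductive step, assume the proposition for chains of length $n-1$ (with any irreducible target module). Applying this hypothesis with $W_{j_{n-1}}$ in place of $W_k$ for every $j_{n-1}\in\mathcal E$ yields, for each such $j_{n-1}$, an isomorphism
\begin{align*}
\bigoplus_{j_1,\dots,j_{n-2}} \mathcal V{j_{n-1}\choose i_{n-1}~j_{n-2}}\otimes\cdots\otimes\mathcal V{j_1\choose i_1~i_0}\xrightarrow{\sim}\mathcal V{j_{n-1}\choose i_{n-1}~\cdots~i_0}.
\end{align*}
Rearranging, the domain of $\Phi$ is identified with $\bigoplus_{j_{n-1}\in\mathcal E}\mathcal V{k\choose i_n~j_{n-1}}\otimes\mathcal V{j_{n-1}\choose i_{n-1}~\cdots~i_0}$, and under this identification $\Phi$ becomes the leftmost-composition map $\mathcal Y_{\alpha_n}\otimes\psi\mapsto\mathcal Y_{\alpha_n}\psi$, which produces an $(n+2)$-point function from an intertwining operator and an $(n+1)$-point function. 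Thus the problem reduces to showing that this left-composition map is an isomorphism --- essentially the $n=2$ case of the proposition, with the rightmost slot enlarged to an arbitrary higher correlation function.

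For this reduced statement, the strategy is as follows. Surjectivity is obtained from the fusion theorem for two intertwining operators (the $n=2$ case of Theorem \ref{lb73}): given any $\varphi\in\mathcal V{k\choose i_n~i_{n-1}~\cdots~i_0}$, an OPE-type decomposition in the region $|z_1|,\dots,|z_{n-1}|<|z_n|$ writes $\varphi$ as a finite sum, indexed by the irreducibles $W_{j_{n-1}}\in\mathcal E$, of products $\mathcal Y_{\alpha_n}\psi$. Injectivity is established by a projection-based separation: if $\sum_{j_{n-1},\alpha_n,\psi} c\,\mathcal Y_{\alpha_n}\psi=0$ as a multivalued function, then inserting $P_s$ between $\mathcal Y_{\alpha_n}$ and $\psi$, letting $s$ range over the energy spectrum of each irreducible $W_{j_{n-1}}$, and exploiting the fact that modules $\{W_{j_{n-1}}:j_{n-1}\in\mathcal E\}$ are mutually inequivalent irreducibles together with complete reducibility \eqref{eq303} --- so that the $V$-action propagated through the Jacobi identity for $\mathcal Y_{\alpha_n}$ separates the distinct $j_{n-1}$-components --- forces each coefficient $c$ to vanish individually.

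The main technical obstacle is the fusion step. Products of intertwining operators are \emph{not} literal operator compositions but are defined only through absolutely convergent formal series (Theorem \ref{lb65}) on specific annular regions of the configuration space. Turning the heuristic OPE into a precise decomposition therefore requires Huang--Lepowsky's analytic continuation machinery --- in particular the holomorphic ODE argument of Theorem \ref{lb75} --- to globally identify the fusion decomposition with a single-valued branch of a multivalued holomorphic function on $\widetilde\Conf_n(\mathbb C^\times)$, and to justify that the indexing over $\mathcal E$ is finite.
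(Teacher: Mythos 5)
Your skeleton---induction on $n$, reduction to a two-factor composition map, insertion of the projections $P_s$, and a separation argument---is essentially the paper's proof in mirror image: the paper peels off the \emph{rightmost} operator, reducing to injectivity of $\Psi:\bigoplus_{j\in\mathcal E}\mathcal V{k\choose i_n~\cdots~i_2~j}\otimes \mathcal V{j\choose i_1~i_0}\rightarrow\mathcal V{k\choose i_n~\cdots~i_0}$, $\mathcal X\otimes\mathcal Y_\alpha\mapsto\mathcal X\mathcal Y_\alpha$, whereas you peel off the leftmost; the two reductions are interchangeable (e.g.\ via the contragredient symmetry $C$), so this is not itself a problem. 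Two peripheral corrections, though: surjectivity requires no fusion theorem, since $\mathcal V{k\choose i_n~\cdots~i_0}$ is \emph{defined} in this paper as the span of products of chains of intertwining operators, so one only needs complete reducibility \eqref{eq303} to replace arbitrary intermediate modules by direct sums of modules in $\mathcal E$; and to pass from the vanishing of the total series to the vanishing of each $P_s$-inserted piece you need the uniqueness of quasi-power-series expansions (proposition \ref{lb70}), which your sketch omits.

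The genuine gap is in the separation step, which is the mathematical core of the proposition. Mutual inequivalence of the irreducibles $\{W_{j_{n-1}}:j_{n-1}\in\mathcal E\}$ can at best split the relation into its $j_{n-1}$-components; it says nothing when a fusion rule $N^k_{i_nj_{n-1}}>1$, i.e.\ when several linearly independent intertwining operators $\mathcal Y_\alpha$, $\alpha\in\Theta^k_{i_nj_{n-1}}$, have the \emph{same} type. After splitting you are still facing $\sum_{\alpha\in\Theta^k_{i_nj_{n-1}}}\mathcal Y_\alpha\,\psi_\alpha=0$ with \emph{function-valued} coefficients $\psi_\alpha$, and linear independence of the $\mathcal Y_\alpha$ as $3$-point functions cannot be invoked directly, because the vectors $P_s\psi_\alpha(\cdots)$ fed into $\mathcal Y_\alpha$ differ from one $\alpha$ to the next---there is no common vector to factor out, and assuming otherwise would be circular (it amounts to the linear independence of products being proved). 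This is exactly what the paper's lemma \ref{lb67}, proposition \ref{lb98}, and corollary \ref{lb69} supply: one assembles the multiplicity module $W_l=\bigoplus_{k\in\mathcal E}\big(\bigoplus_{\alpha\in\Theta^k_{ij}}W^\alpha_k\big)$ carrying a single big intertwining operator $\mathcal Y$, shows via the commutator form \eqref{eq106} of the Jacobi identity that the common annihilator of its matrix coefficients is a $V$-submodule of $W_{\overline l}$, and then uses Schur's lemma to convert a hypothetical nonzero annihilating vector into a \emph{scalar} relation $\sum_\alpha\lambda_\alpha\mathcal Y_\alpha=0$, contradicting the linear independence of the chosen basis; the dual spanning statement (corollary \ref{lb69}, that the modes $\mathcal Y(w^{(i_1)},s)w^{(i_0)}$ span $W_l$) is then what allows one to isolate a single $\alpha$-component and conclude $\psi_\alpha=0$. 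In your mirrored setup you would need the source-side analogue of this machinery, obtained by applying $C$. Without some version of this Schur-type multiplicity argument, your assertion that the relation ``forces each coefficient $c$ to vanish individually'' does not follow, and the proof is incomplete precisely where the real work lies.
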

Therefore, if elements in $\{\mathcal Y_{\alpha_1} \},\dots,\{\mathcal Y_{\alpha_n} \}$ are linearly independent respectively, then the correlation functions $\{\mathcal Y_{\alpha_n}\mathcal Y_{\alpha_{n-1}}\cdots\mathcal Y_{\alpha_1} \}$ are also linearly independent. The proof of this proposition is postponed to section \ref{lb68}.\\

It was also shown in \cite{H ODE} that correlations functions satisfy a system of linear differential equations, the coefficients of which are holomorphic functions defined on $\Conf_n(\mathbb C^\times)$. More precisely, we have the following:

\begin{thm}[cf. \cite{H ODE}, especially theorem 1.6]\label{lb75}
For any $w^{(i_0)}\in W_{i_0},w^{(i_1)}\in W_{i_1},\dots,w^{(i_n)}\in W_{i_n},w^{(\overline k)}\in W_{\overline k}$, there exist $h_1,\dots,h_n\in\mathbb Z_{\geq0}$, and single-valued holomorphic functions $a_{1,m}(z_1,\dots,z_n),a_{2,m}(z_1,\dots,z_n),\dots,a_{h_m,m}(z_1,\dots,z_n)$  on $\Conf_n(\mathbb C^\times)$, such that for any $(W_{i_0}\otimes W_{i_1}\otimes \cdots\otimes W_{i_n}\otimes W_{\overline k})^*$-valued $(n+2)$-point correlation function $\varphi$ defined  on $\{0<|z_1|<\cdots<|z_n| \}$, the function $\varphi(w^{(i_0)},w^{(i_1)},\dots,w^{(i_n)},w^{(\overline k)};z_1,z_2,\dots,z_n)$ of $(z_1,\dots,z_n)$ satisfies the following system of differential equations:
\begin{align}
\frac{\partial^{h_m}\varphi}{\partial z_m^{h_m}}+a_{1,m}\frac{\partial^{h_m-1}\varphi}{\partial z_m^{h_m-1}}+a_{2,m}\frac{\partial^{h_m-2}\varphi}{\partial z_m^{h_m-2}}+\cdots+a_{h_m,m}\varphi=0\qquad(m=1,\dots,n).\label{eq238}
\end{align}
\end{thm}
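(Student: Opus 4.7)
The plan is to adapt Huang's argument based on $C_2$-cofiniteness of $V$. Fix an index $m\in\{1,\dots,n\}$; the differential equation in the single variable $z_m$ (with $z_1,\dots,\widehat{z_m},\dots,z_n$ treated as parameters) will arise from bounding the dimension of a certain ``space of residual insertions at position $m$.'' The key dictionary is the translation property: on formal series,
\begin{align*}
\frac{\partial}{\partial z_m}\mathcal Y_{\alpha_m}\bigl(w^{(i_m)},z_m\bigr)=\mathcal Y_{\alpha_m}\bigl(L_{-1}w^{(i_m)},z_m\bigr),
\end{align*}
so differentiating $\varphi$ with respect to $z_m$ amounts to replacing $w^{(i_m)}$ by $L_{-1}w^{(i_m)}$ inside the $m$-th slot of the correlation function. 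Iterating, $\partial_{z_m}^h\varphi$ corresponds to inserting $L_{-1}^h w^{(i_m)}$.

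Next I would introduce a filtered vector space $T_m$ whose elements encode ``formal multi-variable insertions at position $z_m$.'' Concretely, $T_m$ is generated by symbols $[Y(v_1,n_1)\cdots Y(v_p,n_p)w^{(i_m)}]$ modulo the relations that, when translated into correlation functions via Jacobi identity \eqref{eq128} applied at the $m$-th slot, hold identically among the associated $(n+2)$-point functions. The relations involving a mode $Y(v,-k)$ with $k\geq 2$ can be rewritten using the lower modes plus a term involving $Y(v,-2)v'$, whence $C_2$-cofiniteness forces a bound on the ``depth'' of insertions one actually needs. The central technical claim is that $T_m$ (suitably filtered by the conformal weight of the inserted vector) is \emph{finite-dimensional}; this is exactly the finite-dimensionality result at the heart of \cite{H ODE}, and it is the point where the hypotheses \eqref{eq303} and \eqref{eq304} are essential.

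Granted this finite-dimensionality, the vectors $w^{(i_m)},L_{-1}w^{(i_m)},L_{-1}^2w^{(i_m)},\dots$ eventually become linearly dependent in $T_m$ with coefficients $b_0,b_1,\dots,b_{h_m}\in\mathbb C$. Translating back to correlation functions via the change of variables $\omega_m=z_m-z_{m-1}$ used in \eqref{eq237} (or an analogous one recording the relative positions of the other punctures), each such linear dependence in $T_m$ turns into a relation among $\partial_{z_m}^j\varphi$ whose coefficients are universal holomorphic functions of $(z_1,\dots,z_n)$ on $\Conf_n(\mathbb C^\times)$ — they are built from the formal parameters governing how the relations in $T_m$ are expressed, and they extend single-valuedly because they depend only on the \emph{positions} of the punctures, not on any choice of branch for the multivalued $\varphi$. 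Clearing the leading coefficient by a suitable holomorphic unit on $\Conf_n(\mathbb C^\times)$ (using that $V/C_2(V)$ is finite-dimensional one can arrange the leading term not to vanish) yields the system \eqref{eq238}.

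The main obstacle, as always in this circle of ideas, is the finite-dimensionality of $T_m$: extracting it from $C_2$-cofiniteness requires a careful inductive argument using the associativity and Jacobi identities to rewrite an arbitrary insertion as a finite combination of low-depth ones modulo the relations that are automatically satisfied on correlation functions. A secondary technical point is verifying that the coefficients $a_{i,m}$ produced this way are globally single-valued and holomorphic on all of $\Conf_n(\mathbb C^\times)$; this follows because the relations defining $T_m$ are VOA-intrinsic and the change-of-coordinates bringing $(z_1,\dots,z_n)$ into ``standard form'' is itself a single-valued holomorphic map. Rather than redo this argument in full, I would appeal directly to \cite{H ODE} theorem~1.6, which is exactly this statement.
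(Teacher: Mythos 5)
Your proposal is correct and takes essentially the same route as the paper: Theorem \ref{lb75} is one of the results the paper explicitly cites from \cite{H ODE} (theorem 1.6) without reproving (see item (2) in the outline of chapter 2 in the introduction), and your concluding appeal to that citation is exactly the paper's treatment. Your sketch of Huang's $C_2$-cofiniteness mechanism is accurate in outline, though note that in \cite{H ODE} the coefficients $a_{i,m}$ come out as rational functions with poles only along $z_i=0$ and $z_i=z_j$ (so single-valuedness on $\Conf_n(\mathbb C^\times)$ is automatic), and the relation on $L_{-1}^h w^{(i_m)}$ is monic for free from a Noetherian finitely-generated-module argument, so no clearing of a leading coefficient is required.
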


Hence, due to elementary ODE theory,  $\varphi$ can be analytically continued to  a multivalued holomorphic function on $\Conf_n(\mathbb C^\times)$ (or equivalently, a single-valued holomorphic function on $\widetilde\Conf_n(\mathbb C^\times)$), which satisfies system \eqref{eq238} globally.

Note that \emph{(global) correlation functions are determined by their values at any fixed point in $\widetilde\Conf_n(\mathbb C^\times)$}. Indeed, since $\varphi$ satisfies \eqref{eq238}, the function $\varphi$ is determined by the values of $\{\frac{\partial^l }{\partial z_m^l}\varphi:1\leq m\leq n, 0\leq l\leq h_m-1 \}$ at any fixed point. On the other hand, by translation property and the locally uniform absolute convergence of \eqref{eq235}, we have
\begin{align}
&\frac \partial {\partial z_m}\varphi(w^{(i_0)},w^{(i_1)},\dots,w^{(i_n)},w^{(\overline k)};z_1,z_2,\dots,z_n)\nonumber\\
=&\varphi(w^{(i_0)},w^{(i_1)},\dots,L_{-1}w^{(i_m)},\dots,w^{(i_n)},w^{(\overline k)};z_1,z_2,\dots,z_n).
\end{align}
Hence $\varphi$ is determined by its value at a point.\\

\subsection{General braiding and fusion relations for intertwining operators}\label{lb72}

The braid and the fusion relations for two intertwining operators were proved by Huang and Lepowsky in \cite{H 1,H 2,H 3,H 4,H ODE}. In this section, we generalize these relations to more than two  intertwining operators.  We also state some useful convergence theorems. The proofs are technical, so we leave them to section \ref{lb71}. 

\subsubsection*{General fusion relations and convergence properties}
\begin{thm}[Fusion of a chain of intertwining operators]\label{lb73}
Let $\mathcal Y_{\sigma_2},\mathcal Y_{\sigma_3},\dots,\mathcal Y_{\sigma_n}$ be a chain of intertwining operators of $V$ with charge spaces $W_{i_2},W_{i_3},\dots,W_{i_n}$ respectively. Let $\mathcal Y_\gamma$ be another intertwining operator of $V$, whose charge space is the same as the target space of $\mathcal Y_{\sigma_n}$. Let $W_{i_0}$ be the source space of $\mathcal Y_\gamma$, $W_{i_1}$ be the source space of $\mathcal Y_{\sigma_2}$, and $W_k$ be the target space of $\mathcal Y_\gamma$. Then for any $w^{(i_0)}\in W_{i_0},w^{(i_1)}\in W_{i_1},\dots,w^{(i_n)}\in W_{i_n}, w^{(\overline k)}\in W_{\overline k}$, and  any $(z_1,z_2,\dots,z_n)\in\Conf_n(\mathbb C^\times)$ satisfying
\begin{align}
0<|z_2-z_1|<|z_3-z_1|<\cdots<|z_n-z_1|<|z_1|,\label{eq246}
\end{align}
 the expression
\begin{align}
&\big\langle \mathcal Y_\gamma\big(\mathcal Y_{\sigma_n}(w^{(i_n)},z_n-z_1)\mathcal Y_{\sigma_{n-1}}(w^{(i_{n-1})},z_{n-1}-z_1)\nonumber\\
&\qquad\cdots\mathcal Y_{\sigma_2}(w^{(i_2)},z_2-z_1)w^{(i_1)},z_1\big)w^{(i_0)},w^{(\overline k)}\big\rangle\label{eq245}
\end{align}
\textbf{converges absolutely and locally uniformly}, which means that there exists a neighborhood $U\subset\Conf_n(\mathbb C^\times)$ of $(z_1,z_2,\dots,z_n)$, and a finite number $M>0$, such that  for any $(z_1,z_2,\dots,z_n)\in U$, 
\begin{align}
&\sum_{s_2,\dots,s_n\in\mathbb R}\big|\langle\mathcal Y_\gamma( P_{s_n}\mathcal Y_{\sigma_n}(w^{(i_n)},\zeta_n-\zeta_1) P_{s_{n-1}}\mathcal Y_{\sigma_{n-1}}(w^{(i_{n-1})},\zeta_{n-1}-\zeta_1)\nonumber\\
&\qquad\qquad\cdots P_{s_2}\mathcal Y_{\sigma_2}(w^{(i_2)},\zeta_2-\zeta_1)w^{(i_1)},\zeta_1)w^{(i_0)},w^{(\overline k)}\rangle\big|<M.
\end{align}

Moreover, if $(z_1,z_2,\dots,z_n)$ satisfies \eqref{eq246} and 
\begin{align}
0<|z_1|<|z_2|<\cdots<|z_n|,
\end{align}
then \eqref{eq245} as a $(W_{i_0}\otimes W_{i_1}\otimes\cdots\otimes W_{i_n}\otimes W_{\overline k})^*$-valued holomorphic function defined  near $(z_1,\dots,z_n)$
is an element in $\mathcal V{k\choose i_n~i_{n-1}~\cdots~ i_0}$, and any element in $\mathcal V{k\choose i_n~i_{n-1}~\cdots~ i_0}$ defined near $(z_1,\dots,z_n)$ can be written as \eqref{eq245}.
\end{thm}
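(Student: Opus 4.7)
The plan is to induct on $n$. The base case $n=2$ is the fusion relation for two intertwining operators, which is one of the cited results from Huang--Lepowsky. The cases $n=0,1$ are trivial. The central tool is the repeated use of the $n=2$ fusion to ``peel off'' the outermost operator $\mathcal{Y}_{\sigma_n}$ of the inner chain and move it outside $\mathcal{Y}_\gamma$.

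For the convergence claim, I would first invoke theorem \ref{lb65} to conclude that, under $0<|z_2-z_1|<\cdots<|z_n-z_1|$, the inner chain
\begin{equation*}
u(z_2,\dots,z_n)=\mathcal{Y}_{\sigma_n}(w^{(i_n)},z_n-z_1)\cdots\mathcal{Y}_{\sigma_2}(w^{(i_2)},z_2-z_1)w^{(i_1)}
\end{equation*}
converges absolutely and locally uniformly as an element of $\widehat{W}_j$, where $W_j$ is the target of $\mathcal{Y}_{\sigma_n}$. To control the outer action of $\mathcal{Y}_\gamma(\cdot,z_1)$ on $u$, I apply the $n=2$ fusion to the pair $(\mathcal{Y}_\gamma,\mathcal{Y}_{\sigma_n})$: there exist finitely many intertwining operators $\mathcal{Y}_{\alpha_l},\mathcal{Y}_{\beta_l}$ such that, as analytic continuations,
\begin{equation*}
\mathcal{Y}_\gamma(\mathcal{Y}_{\sigma_n}(w^{(i_n)},z_n-z_1)w^{(j)},z_1)w^{(i_0)}=\sum_l\mathcal{Y}_{\alpha_l}(w^{(i_n)},z_n)\mathcal{Y}_{\beta_l}(w^{(j)},z_1)w^{(i_0)}
\end{equation*}
for every $w^{(j)}\in W_j$. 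I then substitute $w^{(j)}$ by the inner chain $\mathcal{Y}_{\sigma_{n-1}}(\cdots)\cdots\mathcal{Y}_{\sigma_2}(\cdots)w^{(i_1)}$; by the induction hypothesis applied to $\mathcal{Y}_{\beta_l}$ with this chain of length $n-1$, each $\mathcal{Y}_{\beta_l}(\cdot,z_1)w^{(i_0)}$ (paired against vectors in the dual of the target of $\mathcal{Y}_{\beta_l}$) converges absolutely in its iterated region. Combining this with theorem \ref{lb65} applied to the two-operator product $\mathcal{Y}_{\alpha_l}\cdot\mathcal{Y}_{\beta_l}$, and using the ODE-driven analytic continuation of theorem \ref{lb75} to transport the identifications between iterated and product expressions, yields convergence of the full $n$-fold iterated expression throughout $0<|z_2-z_1|<\cdots<|z_n-z_1|<|z_1|$.

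To upgrade pointwise to locally uniform absolute convergence, I rescale coordinates in the manner of the argument following theorem \ref{lb65}, writing $\omega_m=(z_m-z_1)/(z_{m+1}-z_1)$ for $2\leq m\leq n-1$ and $\omega_n=(z_n-z_1)/z_1$. Under the iterated ordering all $|\omega_m|<1$, and using \eqref{eq231} iteratively the expression becomes a quasi power series in $\omega_2,\dots,\omega_n$ times a prefactor depending only on $z_1$; pointwise absolute convergence of such a quasi power series then forces locally uniform absolute convergence by standard arguments. For the identification with elements of $\mathcal V{k\choose i_n~i_{n-1}~\cdots~i_0}$ under the additional assumption $0<|z_1|<\cdots<|z_n|$, the overlap region is precisely where both iterated and product expansions converge simultaneously, so the identification follows from the analytic continuation already established. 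Surjectivity onto $\mathcal V{k\choose i_n~i_{n-1}~\cdots~i_0}$ is then a direct consequence of proposition \ref{lb66} combined with the $n=2$ fusion applied iteratively, in the opposite direction, to rewrite an arbitrary product expansion $\mathcal{Y}_{\tilde\alpha_n}\cdots\mathcal{Y}_{\tilde\alpha_1}$ as an iterated expression of the form \eqref{eq245}.

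The hardest step will be the substitution described above, where the $n=2$ fusion identity (originally valid only for honest vectors $w^{(j)}\in W_j$) must be extended to the $\widehat{W}_j$-valued convergent series $u$. Justifying the interchange of the sum over homogeneous components of $u$ with the finite sum over fusion channels $l$ requires uniform estimates produced from the absolute and locally uniform convergence of theorem \ref{lb65}, together with the finite-dimensionality of each $W_j(s)$. This is precisely the type of analytic subtlety flagged in the Introduction, where algebraic identities of intertwining operators do not transfer automatically once compositions are replaced by series in algebraic completions.
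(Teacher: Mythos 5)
There is a genuine gap, and you have located it yourself without closing it. Your induction hinges on inserting the $\widehat W_j$-valued series $u$ into the $n=2$ fusion identity and then interchanging the sum over the homogeneous components $P_s u$ with the infinite sums over intermediate energies hidden inside the product side $\mathcal Y_{\alpha_l}(w^{(i_n)},z_n)\mathcal Y_{\beta_l}(P_su,z_1)$. The justification you offer --- ``uniform estimates produced from the absolute and locally uniform convergence of theorem \ref{lb65}, together with the finite-dimensionality of each $W_j(s)$'' --- does not do this job: theorem \ref{lb65} gives locally uniform convergence \emph{in the complex variables} for each \emph{fixed} inserted vector, and says nothing about how the resulting two-point sums grow with the energy $s$ of $P_su$ (whose norms are unbounded as $s\to\infty$); summability over $s$ is precisely what must be proved. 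In the paper this joint absolute convergence of the mixed product--iterate expression (there it is \eqref{eq252}) is obtained not by soft uniformity but by the change of variables to the moduli $\omega_m$, convergence on slices (one grading variable at a time, using the already-established convergence results), multivalued analytic continuation via theorem \ref{lb75}, and then lemma \ref{lb74}, which is the specific device converting slice-wise convergence plus continuability into joint absolute convergence of the quasi power series. Your $\omega$-rescaling appears only as an upgrade from pointwise to locally uniform convergence, but pointwise \emph{joint} absolute convergence of the multi-sum was never established in your argument; lemma \ref{lb74} (the idea the paper credits to \cite{HLZ} proposition 12.7) is the missing ingredient, and without it the chain ``$n=2$ fusion $+$ induction $+$ theorem \ref{lb65} $+$ theorem \ref{lb75}'' only yields equalities of analytic continuations where convergence is already known.

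You also miss the trick by which the paper makes the convergence claim itself essentially free of induction: writing $\langle\mathcal Y_\gamma(w,z_1)w^{(i_0)},w^{(\overline k)}\rangle=\langle\mathcal Y_{B_-\gamma}(w^{(i_0)},e^{i\pi}z_1)w,\,e^{z_1L_1}w^{(\overline k)}\rangle$ turns \eqref{eq245} into a \emph{pure product} of the chain $\mathcal Y_{B_-\gamma},\mathcal Y_{\sigma_n},\dots,\mathcal Y_{\sigma_2}$ at radii $|z_2-z_1|<\cdots<|z_n-z_1|<|z_1|$, so theorem \ref{lb65} applies verbatim and delivers absolute and locally uniform convergence in one stroke (this is the device of \cite{H 4} proposition 14.1). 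With that in hand, the induction --- which the paper runs by peeling off the \emph{innermost} $\mathcal Y_{\sigma_2}$ rather than your outermost $\mathcal Y_{\sigma_n}$, an inessential difference --- only has to produce the correlation-function identification, and the one sum interchange it requires is exactly what the lemma \ref{lb74} argument justifies. The rest of your skeleton (base case $n=2$ from Huang--Lepowsky, surjectivity via proposition \ref{lb66} and iterated $n=2$ fusion) does match the paper; to repair the proof, adopt the $B_-$ rewriting for the convergence statement and supply the lemma \ref{lb74} step at your flagged interchange.
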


The following convergence theorem for products of generalized intertwining operators is necessary for our theory. (See the discussion in the introduction.)
\begin{thm}\label{lb12}
Let $m$ be a positive integer. For each $a=1,\dots,m$, we choose a positive integer $n_a$. Let $W_{i^1},\dots,W_{i^m}$ be $V$-modules, and let $\mathcal Y_{\alpha^1},\dots,\mathcal Y_{\alpha^m}$ be a chain of intertwining operators with charge spaces $W_{i^1},\dots,W_{i^m}$ respectively. We let $W_i$ be the source space of $\mathcal Y_{\alpha^1}$, and let $W_k$ be the target space of $\mathcal Y_{\alpha^m}$. For each $a=1,\dots,m$ we choose a chain of intertwining operators $\mathcal Y_{\alpha^a_2},\dots,\mathcal Y_{\alpha^a_{n_a}}$ with charge spaces $W_{i^a_2},\dots,W_{i^a_{n_a}}$ respectively. We let $W_{i^a_1}$ be the source space of $\mathcal Y_{\alpha^a_2}$, and assume that the target space of $\mathcal Y_{\alpha^a_{n_a}}$ is $W_{i^a}$.
	
For any $a=1,\dots,m$ and $b=1,\dots,n_a$, we choose a non-zero complex number $z^a_b$. Choose $w^a_b\in W_{i^a_b}$. We also choose vectors $w^i\in W_i,w^{\overline k}\in W_{\overline k}$. Suppose that the complex numbers $\{z^a_b \}$ satisfy the following conditions:\\
(1) For each $a=1,\dots,m$, $0<|z^a_2-z^a_1|<|z^a_3-z^a_1|<\cdots<|z^a_{n_a}-z^a_1|<|z^a_1|$;\\
(2) For each $a=1,\dots,m-1$, $|z^a_1|+|z^a_{n_a}-z^a_1|<|z^{a+1}_1|-|z^{a+1}_{n_{a+1}}-z^{a+1}_1|$,\\
then the expression
	\begin{align}
	&\Big\langle\Big[ \prod_{m\geq a\geq 1}\mathcal Y_{\alpha^a}\Big(\Big(\prod_{n_a\geq b\geq 2}\mathcal Y_{\alpha^a_b}(w^a_b,z^a_b-z^a_1)\Big) w^a_1,z^a_1\Big)\Big]w^i,w^{\overline k} \Big\rangle\nonumber\\
	\equiv&\big\langle \mathcal Y_{\alpha^m}\big(\mathcal Y_{\alpha^m_{n_m}}(w^m_{n_m},z^m_{n_m}-z^m_1)\cdots\mathcal Y_{\alpha^m_2}(w^m_2,z^m_2-z^m_1)w^m_1,z^m_1\big)\nonumber\\
	&\quad\qquad\qquad\qquad\qquad\qquad\qquad\vdots\nonumber\\
	&\cdot\mathcal Y_{\alpha^1}\big(\mathcal Y_{\alpha^1_{n_1}}(w^1_{n_1},z^1_{n_1}-z^1_1)\cdots\mathcal Y_{\alpha^1_2}(w^1_2,z^1_2-z^1_1)w^1_1,z^1_1\big)w^i,w^{\overline k} \big\rangle\label{eq48}
	\end{align}
 \textbf {converges absolutely and locally uniformly}, i.e., there exists $M>0$ and a neighborhood $U^a_b$ of each $z^a_b$, such that for any $\zeta^a_b\in U^a_b$ $(1\leq a\leq m,1\leq b\leq n_a)$ we have:
	\begin{align}
	\sum_{s^a_1,s^a_b\in\mathbb R}\bigg|\Big\langle \Big[\prod_{m\geq a\geq 1}P_{s^a_1}\mathcal Y_{\alpha^a}\Big(\Big(\prod_{n_a\geq b\geq 2}P_{s^a_b}\mathcal Y_{\alpha^a_b}(w^a_b,\zeta^a_b-\zeta^a_1)\Big) w^a_1,\zeta^a_1\Big)\Big]w^i,w^{\overline k} \Big\rangle\bigg|<M.
	\end{align}

Assume, moreover, that $\{z^a_b:1\leq a\leq m,1\leq b\leq n_a \}$ satisfies the following condition:\\
(3) For any $1\leq a,a'\leq m,1\leq b\leq n_a,1\leq b'\leq n_{a'}$, the inequality $0<|z^a_b|<|z^{a'}_{b'}|$ holds when $a<a'$, or $a=a'$ and $b<b'$.\\
Then \eqref{eq48} defined near $\{z^a_b:1\leq a\leq m,1\leq b\leq n_a \}$ is an element in $\mathcal V{k\choose i^m_{n_m}~\cdots~i^m_1~\cdots\cdots~i^1_{n_1}~\cdots~i^1_1~i}$.
	\end{thm}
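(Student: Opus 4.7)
The plan is to split the argument into within-block fusions (handled by Theorem \ref{lb73}) and between-block chaining (handled by a quasi-power-series change-of-variables, as in the proof sketch of Theorem \ref{lb65}). For each fixed block $a$, Theorem \ref{lb73} gives absolute and locally uniform convergence of $\mathcal{Y}_{\alpha^a}(\mathcal{Y}_{\alpha^a_{n_a}}(w^a_{n_a},z^a_{n_a}-z^a_1)\cdots w^a_1, z^a_1)$ under condition (1); in particular this controls the partial sums over the within-block projection indices $s^a_b$ for $b\geq 2$. Condition (2), meanwhile, forces $0<|z^1_1|<\cdots<|z^m_1|$ with strictly separating annular gaps, which is the geometric input needed to chain the blocks.

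To carry out the combined sum I would introduce the change of variables $z^a_1=u^a u^{a+1}\cdots u^m$ (so that $|u^a|<1$ for $a<m$ is equivalent to $|z^a_1|<|z^{a+1}_1|$), and, inside each block, reparametrize $z^a_b-z^a_1$ for $b\geq 2$ by ratios $\omega^a_b$ of magnitude less than $1$, with the outermost ratio of each block bounded in terms of $|z^a_1|$. Conditions (1) and (2) then translate into a region in these new coordinates carved out inside a polydisc of small ratios (together with extra coupled inequalities from condition (2)). Using \eqref{eq231} repeatedly to pull every $L_0$-power to the outside, expression \eqref{eq48} becomes a quasi-power series in the $u^a$ and $\omega^a_b$, whose coefficients are iterated matrix elements of intertwining operators evaluated at unit arguments $\mathcal{Y}_{\alpha^a_b}(\bullet, 1)$. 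As in the argument sketched after Theorem \ref{lb65}, absolute convergence of such a quasi-power series at a single interior point of its polydisc of parameters is equivalent to absolute and locally uniform convergence throughout that polydisc.

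It therefore suffices to verify convergence at one convenient point, which I would choose inside the subregion also satisfying condition (3). At such a point, combining Theorem \ref{lb73} with Proposition \ref{lb66} rewrites each block's fused form as a finite sum of product forms $\mathcal{Y}_{\beta^a_{n_a}}(w^a_{n_a},z^a_{n_a})\cdots\mathcal{Y}_{\beta^a_1}(w^a_1,z^a_1)$; concatenating across blocks produces a finite sum of chains of single intertwining operators at points $z^a_b$ strictly ordered by magnitude, to which Theorem \ref{lb65} applies directly to give absolute convergence at the chosen point. This yields the asserted convergence under (1)-(2). Under the additional condition (3), Proposition \ref{lb66} applied to the concatenated chain identifies \eqref{eq48} as an element of $\mathcal{V}{k\choose i^m_{n_m}~\cdots~i^1_1~i}$.

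The principal obstacle I anticipate is making the quasi-power-series step fully rigorous: the fractional powers of $u^a$ and $\omega^a_b$ arising from the various conformal weights $\Delta_{w^a_b}$ must be tracked uniformly, and the domain cut out by conditions (1)-(2) is not itself a polydisc but is carved out by coupled inequalities. One likely needs to exhaust it by a suitable family of polydiscs and apply the pointwise-to-polydisc principle on each, so that locally uniform absolute convergence propagates across the whole domain. This is a natural generalization of the same analytic subtlety already present in the proof of Theorem \ref{lb65}.
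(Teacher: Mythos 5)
Your change of variables is essentially the one the paper uses ($z^a_1=\omega^a_1\omega^{a+1}_1\cdots\omega^m_1$ with within-block ratios), and your observation that \eqref{eq48} becomes a quasi power series whose absolute convergence at a point propagates to the polydisc below is correct. The genuine gap is in how you propose to verify convergence at the base point. You rewrite each block via fusion (theorem \ref{lb73} plus proposition \ref{lb66}) into a finite sum of product chains $\mathcal Y_{\beta^a_{n_a}}(w^a_{n_a},z^a_{n_a})\cdots\mathcal Y_{\beta^a_1}(w^a_1,z^a_1)$ and invoke theorem \ref{lb65}. But theorem \ref{lb65} gives absolute convergence of the series whose projections $P_t$ sit \emph{between consecutive single intertwining operators in the product form}; the statement of theorem \ref{lb12} concerns the joint series over the original indices $\{s^a_b\}$, with projections inserted in the \emph{nested (iterate) form}. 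These are different series that merely agree in value where both converge: absolute convergence does not transfer under such a rearrangement. Already in the case $m=1$, $n_1=2$, your scheme would "derive" the absolute convergence of the iterate $\sum_s\langle\mathcal Y_\gamma(P_s\mathcal Y_{\sigma_2}(w_2,z_2-z_1)w_1,z_1)w_0,w'\rangle$ from that of the product $\sum_t\langle\mathcal Y_{\beta_2}(w_2,z_2)P_t\mathcal Y_{\beta_1}(w_1,z_1)w_0,w'\rangle$, which is precisely the nontrivial content of Huang's theorem cited in theorem \ref{lb73}, not a consequence of it. There is also a circularity: to concatenate the block-wise fusion relations across blocks you must interchange the within-block sums with the between-block sums, and this interchange is only justified once the joint absolute convergence — the very thing being proved — is available (this is exactly the commuting-of-sums issue handled in step 2 of the paper's proof of theorem \ref{lb73}).

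The paper closes this gap differently: it proceeds by induction on $m$ and applies lemma \ref{lb74}, whose essential hypothesis is not convergence at a point but the existence of a \emph{multivalued analytic continuation} of the function to the punctured polydisc (supplied by the differential equations of theorem \ref{lb75}). Lemma \ref{lb74} upgrades (i) absolute convergence of the coefficient series in all variables but one (the inductive hypothesis, for each fixed $s^1_1$), together with (ii) single-variable convergence in $\omega^1_1$ for real positive values of the remaining variables, to joint absolute convergence of the full multi-indexed quasi power series on the polydisc. That is the mechanism your proposal is missing; without it, or some substitute argument producing joint absolute convergence of the nested series at one point, the pointwise-to-polydisc principle has nothing to act on. Your final step — using condition (3), proposition \ref{lb66}, and the rewriting into a chain to identify \eqref{eq48} as an element of $\mathcal V{k\choose i^m_{n_m}~\cdots~i^1_1~i}$ — is fine, but only after the joint absolute convergence is in hand, since that rewriting again requires commuting the two families of sums.
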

	
We need another type of convergence property. The notion of absolute and locally uniform convergence is understood as usual.
\begin{co}\label{lb13}
	Let $\mathcal Y_{\sigma_2},\mathcal Y_{\sigma_3},\dots,\mathcal Y_{\sigma_m}$ be a chain of intertwining operators of $V$ with charge spaces $W_{i_2},W_{i_3},\dots,W_{i_m}$ respectively. Let $W_{i_1}$ be the source space of $\mathcal Y_{\sigma_2}$ and $W_i$ be the target space of $\mathcal Y_{\sigma_m}$. Similarly we let $\mathcal Y_{\rho_2},\mathcal Y_{\rho_3},\dots,\mathcal Y_{\rho_m}$ be a chain of intertwining operators of $V$ with charge spaces $W_{j_2},W_{j_3},\dots,W_{j_n}$ respectively. Let $W_{j_1}$ be the source space of $\mathcal Y_{\rho_2}$ and $W_j$ be the target space of $\mathcal Y_{\rho_n}$. Moreover we choose $V$-modules $W_{k_1},W_{k_2},W_{k_3}$, a type $k_1\choose i~j$ intertwining operator $\mathcal Y_{\alpha}$ and a type $k_2\choose k_1~k_0$ intertwining operator $\mathcal Y_{\beta}$. Choose $w^{(i_1)}\in W_{i_1},w^{(i_2)}\in W_{i_2},\dots,w^{(i_m)}\in W_{i_m},w^{(j_1)}\in W_{j_1},w^{(j_2)}\in W_{j_2},\dots,w^{(j_m)}\in W_{j_m},w^{(k_0)}\in W_{i_0},w^{(\overline{k_2})}\in W_{\overline{k_2}}$. Then for any non-zero complex numbers $z_1,z_2,\dots,z_m,\zeta_1,\zeta_2,\dots,\zeta_n$, satisfying $0<|\zeta_2-\zeta_1|<|\zeta_3-\zeta_1|<\cdots<|\zeta_n-\zeta_1|<|z_1-\zeta_1|-|z_m-z_1|$ and $0<|z_2-z_1|<|z_3-z_1|<\cdots<|z_m-z_1|<|z_1-\zeta_1|<|\zeta_1|-|z_m-z_1|$, the expression
	\begin{align}
	&\bigg\langle\mathcal Y_\beta\bigg(\mathcal Y_\alpha\Big(\mathcal Y_{\sigma_m}(w^{(i_m)},z_m-z_1)\cdots\mathcal Y_{\sigma_2}(w^{(i_2)},z_2-z_1)w^{(i_1)},z_1-\zeta_1\Big)\nonumber\\
	&\qquad\cdot\mathcal Y_{\rho_n}(w^{(j_n)},\zeta_n-\zeta_1)\cdots\mathcal Y_{\rho_2}(w^{(j_2)},\zeta_2-\zeta_1)w^{(j_1)},\zeta_1\bigg)w^{(k_0)},w^{(\overline{k_2})}\bigg\rangle
	\end{align}
	exists and converges absolutely and locally uniformly.
\end{co}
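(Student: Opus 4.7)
The expression in Corollary \ref{lb13} has a three-level nested structure --- outermost $\mathcal Y_\beta$ at $\zeta_1$, a middle layer consisting of $\mathcal Y_\alpha$ and the $\mathcal Y_\rho$-chain, and an innermost $\mathcal Y_\sigma$-chain sitting inside the charge slot of $\mathcal Y_\alpha$ --- whereas Theorem \ref{lb12} handles only two levels of nesting. My plan is to detach the innermost level: first treat the charge of $\mathcal Y_\alpha$ as a free variable vector $w\in W_i$ and apply Theorem \ref{lb12} to the remaining two-level structure to obtain absolute convergence uniformly in $w$, and then substitute $w$ by the $\sigma$-chain iterate $B$ and sum over the $W_i$-grading of $B$ using Theorem \ref{lb65}.

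Concretely, apply Theorem \ref{lb12} with its $m$ set to $1$, outer operator $\mathcal Y_\beta$ at position $\zeta_1$, inner chain $(\mathcal Y_{\rho_2},\ldots,\mathcal Y_{\rho_n},\mathcal Y_\alpha)$ of length $n+1$, charge vectors $(w^{(j_2)},\ldots,w^{(j_n)},w)$, source vector $w^{(j_1)}$, and inner positions $(\zeta_2,\ldots,\zeta_n,z_1)$; this is indeed a chain since the target of $\mathcal Y_{\rho_n}$ is $W_j$, the source of $\mathcal Y_\alpha$. Hypotheses (1) and (2) of the corollary imply the ordering
\[
|\zeta_2-\zeta_1|<\cdots<|\zeta_n-\zeta_1|<|z_1-\zeta_1|<|\zeta_1|,
\]
which is condition (1) of Theorem \ref{lb12} (its condition (2) being vacuous for a single outer operator). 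Hence for each fixed homogeneous $w\in W_i$,
\[
\Phi(w):=\big\langle\mathcal Y_\beta\big(\mathcal Y_\alpha(w,z_1-\zeta_1)\mathcal Y_{\rho_n}(w^{(j_n)},\zeta_n-\zeta_1)\cdots\mathcal Y_{\rho_2}(w^{(j_2)},\zeta_2-\zeta_1)w^{(j_1)},\zeta_1\big)w^{(k_0)},w^{(\overline{k_2})}\big\rangle
\]
converges absolutely and locally uniformly in the position variables, and is linear in $w$. Setting $w=B:=\mathcal Y_{\sigma_m}(w^{(i_m)},z_m-z_1)\cdots\mathcal Y_{\sigma_2}(w^{(i_2)},z_2-z_1)w^{(i_1)}\in\widehat W_i$ and decomposing $B=\sum_s P_sB$ by $W_i$-grading, the desired expression formally equals $\sum_s\Phi(P_sB)$. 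Theorem \ref{lb65} applied to the $\sigma$-chain under hypothesis (2) supplies the absolute and locally uniform convergence of the $P_sB$-expansion needed to justify this substitution.

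The crux is to upgrade these two iterated absolute convergences into a single absolute convergence of the full double sum. The decisive estimate is an energy-tracking one: for homogeneous $w\in W_i(s)$ and each fixed output grading, the expansion coefficient of $\mathcal Y_\alpha(w,z_1-\zeta_1)$ in $(z_1-\zeta_1)$ carries an exponent affine-linear in $-s$, while (by the quasi-power-series expansion explained in the paragraph around equation \eqref{eq237}) the matrix coefficients of $P_sB$ in the variables $(z_2-z_1),\ldots,(z_m-z_1)$ contain a leading power of order $|z_m-z_1|^{s-\text{const}}$. Their product therefore produces a geometric factor of order $(|z_m-z_1|/|z_1-\zeta_1|)^s$, which is summable because hypothesis (2) forces $|z_m-z_1|<|z_1-\zeta_1|$. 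Together with the local uniformity of each component convergence, this geometric damping yields the asserted absolute and locally uniform convergence. The main bookkeeping effort --- and the part I expect to be the main obstacle --- is to extract the uniform-in-$s$ bounds on $\Phi|_{W_i(s)}$ and on $P_sB$ from the quantitative proofs of Theorems \ref{lb12} and \ref{lb65} given in section \ref{lb71}, rather than just from their qualitative statements.
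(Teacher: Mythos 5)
Your opening move is sound: with $m=1$, Theorem \ref{lb12} (equivalently, Theorem \ref{lb73}) does apply to the chain $\mathcal Y_{\rho_2},\dots,\mathcal Y_{\rho_n},\mathcal Y_\alpha$, and the corollary's inequalities do give $0<|\zeta_2-\zeta_1|<\cdots<|\zeta_n-\zeta_1|<|z_1-\zeta_1|<|\zeta_1|$, so $\Phi(w)$ converges absolutely and locally uniformly for each fixed $w\in W_i$. The gap is in the gluing step, and it is genuine. Absolute convergence of the \emph{full} multi-sum (over the $\sigma$-chain gradings, over $s$, and over all gradings internal to $\Phi$) is exactly the statement to be proved, and your ``decisive estimate'' is asserted rather than established: the statements of Theorems \ref{lb65} and \ref{lb12} give convergence of $\Phi(w)$ for each fixed homogeneous $w$, but provide no bound on the norm of the linear functional $\Phi|_{W_i(s)}$ with controlled growth in $s$; likewise a Cauchy-type bound on $P_sB$ over the growing-dimensional spaces $W_i(s)$ (note chapter 2 assumes no unitarity, so there is not even a preferred norm) requires joint absolute convergence of the $\sigma$-chain series in all variables at a strictly larger polyradius --- a statement of the same nature as the one being proved. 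A bound of the form $\|\Phi|_{W_i(s)}\|\leq Cr^{-s}$ with $r>|z_m-z_1|$ is essentially equivalent to absolute convergence of the full series in one additional modulus variable, so at the decisive moment the argument, as written, is circular; ``extracting uniform-in-$s$ bounds from the quantitative proofs'' is not bookkeeping but the entire difficulty.

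The paper closes this gap without any uniform estimates, in either of two ways, and either repairs your proof. First, by the trick in step 1 of the proof of Theorem \ref{lb73}: since $\mathcal Y_\beta=B_+B_-\mathcal Y_\beta$, one has
\begin{align*}
\big\langle\mathcal Y_\beta(u,\zeta_1)w^{(k_0)},w^{(\overline{k_2})}\big\rangle
=\big\langle\mathcal Y_{B_-\beta}(w^{(k_0)},e^{i\pi}\zeta_1)\,u,\ e^{\zeta_1L_1}w^{(\overline{k_2})}\big\rangle,
\end{align*}
which converts the outermost nesting into an ordinary product: the expression becomes a product of generalized intertwining operators --- the $\mathcal Y_{\rho_b}$ with trivial inner chains at $\zeta_b-\zeta_1$, then $\mathcal Y_\alpha$ carrying the $\sigma$-chain at $z_1-\zeta_1$, then $\mathcal Y_{B_-\beta}$ with trivial inner chain at $e^{i\pi}\zeta_1$ --- applied to $w^{(j_1)}$, i.e.\ literally of the form \eqref{eq48}; conditions (1) and (2) of Theorem \ref{lb12} are then precisely the hypotheses $|z_2-z_1|<\cdots<|z_m-z_1|<|z_1-\zeta_1|$, $|\zeta_n-\zeta_1|<|z_1-\zeta_1|-|z_m-z_1|$, and $|z_1-\zeta_1|+|z_m-z_1|<|\zeta_1|$, with the extra graded sum from $e^{\zeta_1L_1}w^{(\overline{k_2})}\in\widehat W_{\overline{k_2}}$ handled as in that step. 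Second, alternatively, one can run the induction of step 3: after the change of variables to the moduli $\omega$, the full series is a quasi power series, and your two iterated convergences, together with the analytic continuation supplied by Theorem \ref{lb75}, are exactly the hypotheses of Lemma \ref{lb74}, which yields joint absolute convergence directly. That lemma is the rigorous replacement for your geometric-damping heuristic: it trades the missing uniform-in-$s$ constants for single-valuedness of the analytically continued function, which is available, whereas the constants are not.
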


\subsubsection*{General braid relations}
Let $z_1,z_2,\dots,z_n$ be distinct complex values in $\mathbb C^\times$. Assume that $0<|z_1|=|z_2|=\dots=|z_n|$, and choose  arguments $\arg z_1,\arg z_2,\dots,\arg z_n$.  We define the expression
\begin{align}
\langle \mathcal Y_{\alpha_n}(w^{(i_n)},z_n)\mathcal Y_{\alpha_{n-1}}(w^{(i_{n-1})},z_{n-1})\cdots\mathcal Y_{\alpha_1}(w^{(i_1)},z_1)w^{(i_0)}, w^{(\overline k)} \rangle\label{eq263}
\end{align}
in the following way: Choose $0<r_1<r_2<\dots<r_n$. Then the expression 
\begin{align}
\langle \mathcal Y_{\alpha_n}(w^{(i_n)},r_nz_n)\mathcal Y_{\alpha_{n-1}}(w^{(i_{n-1})},r_{n-1}z_{n-1})\cdots\mathcal Y_{\alpha_1}(w^{(i_1)},r_1z_1)w^{(i_0)}, w^{(\overline k)} \rangle\label{eq264}
\end{align}
converges absolutely. We define \eqref{eq263} to be the limit of \eqref{eq264} as $r_1,r_2,\dots,r_n\rightarrow 1$. The existence of this limit is guaranteed by theorem \ref{lb75}.

Let $S_n$ be the symmetric group of degree $n$, and choose any $\varsigma\in S_n$. The general braid relations can be stated in the following way:
\begin{thm}[Braiding of intertwining operators]\label{lb78}
Choose distinct $z_1,\dots,z_n\in\mathbb C^\times$ satisfying $0<|z_1|=\cdots=|z_n|$. Let $\mathcal Y_{\alpha_{\varsigma(1)}},\mathcal Y_{\alpha_{\varsigma(2)}},\dots,\mathcal Y_{\alpha_{\varsigma(n)}}$ be a chain of intertwining operators of $V$. For each $1\leq m\leq n$, we let $W_{i_m}$ be the charge space of $\mathcal Y_{\alpha_m}$. Let $W_{i_0}$ be the source space of $\mathcal Y_{\alpha_{\varsigma(1)}}$, and let $W_k$ be the target space of $\mathcal Y_{\alpha_{\varsigma(n)}}$. Then there exists a chain of intertwining operators $\mathcal Y_{\beta_1},\mathcal Y_{\beta_2},\dots,\mathcal Y_{\beta_n}$ with charge spaces $W_{i_1},W_{i_2},\dots,W_{i_n}$ respectively, such that the source space of $\mathcal Y_{\beta_1}$ is $W_{i_0}$, that the target space of $\mathcal Y_{\beta_n}$ is $W_k$, and that for any $w^{(i_0)}\in W_{i_0},w^{(i_1)}\in W_{i_1},\dots w^{(i_n)}\in W_{i_n},w^{(\overline k)}\in W_{\overline k}$, the following braid relation holds:
\begin{align}
&\langle \mathcal Y_{\alpha_{\varsigma(n)}}(w^{(i_{\varsigma(n)})},z_{\varsigma(n)})\cdots\mathcal Y_{\alpha_{\varsigma(1)}}(w^{(i_{\varsigma(1)})},z_{\varsigma(1)})w^{(i_0)},w^{(\overline k)}  \rangle\nonumber\\
=&\langle \mathcal Y_{\beta_n}(w^{(i_n)},z_n)\mathcal Y_{\beta_{n-1}}(w^{(i_{n-1})},z_{n-1})\cdots\mathcal Y_{\beta_1}(w^{(i_1)},z_1)w^{(i_0)},w^{(\overline k)}  \rangle.\label{eq265}
\end{align}
We usually omit the vectors $w^{(i_0)},w^{(\overline k)}$, and write the above equation as
\begin{align}
\mathcal Y_{\alpha_{\varsigma(n)}}(w^{(i_{\varsigma(n)})},z_{\varsigma(n)})\cdots\mathcal Y_{\alpha_{\varsigma(1)}}(w^{(i_{\varsigma(1)})},z_{\varsigma(1)})=\mathcal Y_{\beta_n}(w^{(i_n)},z_n)\cdots\mathcal Y_{\beta_1}(w^{(i_1)},z_1).\label{eqa3}
\end{align}

\end{thm}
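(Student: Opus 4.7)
The plan is to prove this theorem by induction on $n$, using the classical $n = 2$ braiding of Huang--Lepowsky as the base case (for $n=2$ the statement is essentially the content of proposition \ref{lb86} combined with the $n=2$ version of theorem \ref{lb73}). For the inductive step, I would first observe that every $\varsigma \in S_n$ is a product of adjacent transpositions $\tau_a = (a,a+1)$, so by iteratively feeding the output chain of one swap as the input chain for the next, it suffices to prove the theorem in the special case $\varsigma = \tau_a$.

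For this single-swap case, the left-hand side is the common-radius limit of the absolutely convergent product (theorem \ref{lb65}) on the chamber
\begin{equation*}
|z_1|<\cdots<|z_{a-1}|<|z_{a+1}|<|z_a|<|z_{a+2}|<\cdots<|z_n|,
\end{equation*}
while the right-hand side should be the corresponding limit on the standard chamber $|z_1|<\cdots<|z_n|$. By theorem \ref{lb75}, the left-hand side extends uniquely to a single-valued holomorphic function on $\widetilde\Conf_n(\mathbb C^\times)$, and the task is to identify its restriction to the standard chamber with an element of $\mathcal V{k\choose i_n~i_{n-1}~\cdots~i_0}$. To accomplish this, I would expand the iterated product in the fusion basis by inserting the projections $P_s$ onto homogeneous intermediate modules between consecutive operators (the rearrangement being justified by the locally uniform absolute convergence of theorem \ref{lb65}), thereby reducing the problem to a two-operator braiding for the adjacent pair $\mathcal Y_{\alpha_a}(z_a)\mathcal Y_{\alpha_{a+1}}(z_{a+1})$ sandwiched between frozen flanking data. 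The base case then supplies intertwining operators with swapped charge spaces that realize the exchange, and reassembling the full product, together with the fusion isomorphism of proposition \ref{lb66}, yields a chain $\mathcal Y_{\beta_1},\ldots,\mathcal Y_{\beta_n}$ with charge spaces $W_{i_1},\ldots,W_{i_n}$ that presents the identity \eqref{eq265} in the standard chamber; passing to the common-radius boundary concludes the single-swap case.

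The main technical obstacle is the bookkeeping of argument functions during the analytic continuation. The $n=2$ braiding comes in two flavors ($B_+$ and $B_-$), distinguished by whether $z_a$ is continued around $z_{a+1}$ clockwise or counterclockwise, and the correct choice depends on the specific path in $\widetilde\Conf_n(\mathbb C^\times)$ joining the home chamber of $\tau_a$ to the standard chamber, relative to the paths along which the two common-radius limits are taken. A second subtlety is that inserting the projections and applying the $n=2$ braid relation termwise must preserve locally uniform absolute convergence in \emph{both} chambers simultaneously before one can pass to the common-radius limit; this is ultimately controlled by the same uniform bounds that underlie theorem \ref{lb65} (the quasi-power-series estimate in the $\omega_1,\ldots,\omega_n$ coordinates introduced there). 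Once these points are handled, the inductive step, and hence the general braid relation \eqref{eqa3}, follows.
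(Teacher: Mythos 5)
Your skeleton matches the paper's proof: reduce to an adjacent transposition $\varsigma=(m,m+1)$, take the $n=2$ braiding as the base case, and identify the two orderings of the full product via analytic continuation obtained by scaling the norms of $z_m,z_{m+1}$ across the wall $|z_m|=|z_{m+1}|$. But the step you propose for crossing that wall contains a genuine gap, and it is exactly the pitfall the paper flags in a footnote of its introduction. You insert projections $P_s$ between consecutive operators and apply the two-operator braid relation termwise to the middle pair. Each projected summand does indeed continue correctly (the flanking data are genuine vectors in weight spaces, so the $n=2$ result applies). The problem is interchanging the infinite sum over intermediate weights with the analytic continuation: the series for the ordering $\mathcal Y_{\alpha_m}(z_m)\mathcal Y_{\alpha_{m+1}}(z_{m+1})$ converges absolutely only in its own chamber $|z_{m+1}|<|z_m|$ (relative to the frozen radii), and the series for the swapped ordering only in the opposite chamber; neither converges on or near the wall, where the continuation path must pass. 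Your appeal to ``the same uniform bounds that underlie theorem \ref{lb65}'' cannot close this, because the quasi-power-series estimates in the $\omega$-coordinates there are intrinsically tied to a single nested-radius chamber $0<|\omega_1|<1,\dots$; they give no control on a connected neighborhood of the locus $|z_m|=|z_{m+1}|$, so the continuation of the sum cannot be identified with the sum of the continuations.

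The paper's missing ingredient is a \emph{third} presentation of the middle pair that is convergent on a region containing both chambers: by fusion (theorem \ref{lb73} with $n=2$) one rewrites $\mathcal Y_{\alpha_m}(w^{(i_m)},z_m)\mathcal Y_{\alpha_{m+1}}(w^{(i_{m+1})},z_{m+1})$ as the iterate $\mathcal Y_\delta\big(\mathcal Y_\gamma(w^{(i_m)},z_m-z_{m+1})w^{(i_{m+1})},z_{m+1}\big)$, arranging (via proposition \ref{lb85}) that the same iterate also represents $\mathcal Y_{\beta_{m+1}}(z_{m+1})\mathcal Y_{\beta_m}(z_m)$ on the other side of the wall. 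Then theorem \ref{lb12} --- the convergence theorem for products of generalized intertwining operators, whose hypotheses constrain only $|z_m-z_{m+1}|$ relative to the surrounding radii and impose no ordering of $|z_m|$ versus $|z_{m+1}|$ --- shows that the \emph{full} $n$-point expression with the iterate inserted converges absolutely and locally uniformly on a connected region covering both chambers. The continuation of \eqref{eq265}'s left side to its right side is then manifest, with no termwise resummation needed. Without this fusion-plus-theorem-\ref{lb12} mechanism (or an equivalent bridge across the wall), your inductive step does not go through; with it, your reduction to adjacent transpositions and your bookkeeping of arguments along the continuation paths are fine.
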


When $n=2$, the proof of   braid relations is based on the following well-known property.  For the reader's convenience, we include a proof in section \ref{lb71}.
\begin{pp}\label{lb85}
	Let $\mathcal Y_\gamma,\mathcal Y_\delta$ be intertwining operators of $V$, and assume $\mathcal Y_\gamma\in\mathcal V{k\choose i~j}$. Choose $z_i,z_j\in\mathbb C^\times$ satisfying $0<|z_j-z_i|<|z_i|,|z_j|$. Choose $\arg(z_j-z_i)$,  and let $\arg z_j$ be close to $\arg z_i$ as $z_j\rightarrow z_i$. Then for any $w^{(i)}\in W_i,w^{(j)}\in W_j$,
	\begin{align}
	\mathcal Y_\delta\big(\mathcal Y_{B_\pm\gamma}(w^{(j)},z_j-z_i)w^{(i)},z_i \big)=\mathcal Y_\delta\big(\mathcal Y_\gamma(w^{(i)},e^{\pm i\pi}(z_j-z_i))w^{(j)},z_j \big).\label{eq275}
	\end{align}
\end{pp}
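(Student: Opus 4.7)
}
The strategy is to unwind the definition of the braided intertwining operator on the left-hand side and then convert the ``internal translation'' $e^{(z_j-z_i)L_{-1}}$ into an ``external Taylor shift'' of the second argument of $\mathcal Y_\delta$, via the $L_{-1}$-derivative property. The identity then reduces to a Taylor expansion statement that is controlled by the hypothesis $|z_j-z_i|<|z_i|$.

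The first step is to substitute the definition
\begin{align*}
\mathcal Y_{B_\pm\gamma}(w^{(j)},z_j-z_i)w^{(i)}=e^{(z_j-z_i)L_{-1}}\mathcal Y_\gamma(w^{(i)},e^{\pm i\pi}(z_j-z_i))w^{(j)},
\end{align*}
so that the left-hand side of \eqref{eq275} becomes $\mathcal Y_\delta\bigl(e^{(z_j-z_i)L_{-1}}u,z_i\bigr)$, where I write $u=\mathcal Y_\gamma(w^{(i)},e^{\pm i\pi}(z_j-z_i))w^{(j)}\in\widehat W_k$ (the algebraic completion of the source space of $\mathcal Y_\delta$).

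The key ingredient is the $L_{-1}$-derivative property: iterating translation property \textit{(c)} from Definition \ref{lb28}, one gets $\mathcal Y_\delta(L_{-1}^n v,x)=\frac{d^n}{dx^n}\mathcal Y_\delta(v,x)$ for every $v$ in the source space, hence formally $\mathcal Y_\delta(e^{x_0 L_{-1}}v,x)=\sum_{n\geq0}\frac{x_0^n}{n!}\frac{d^n}{dx^n}\mathcal Y_\delta(v,x)$. For a homogeneous vector $v$, pairing $\mathcal Y_\delta(v,z)$ with any dual vector gives a finite sum of (multi-valued) powers of $z$, and so its Taylor series at $z=z_i$ converges absolutely on the disk $|z_j-z_i|<|z_i|$ and evaluates, under the convention that $\arg z_j$ is close to $\arg z_i$ as $z_j\to z_i$, to $\mathcal Y_\delta(v,z_j)$. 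Applying this mode by mode to $u$, I obtain
\begin{align*}
\mathcal Y_\delta\bigl(e^{(z_j-z_i)L_{-1}}u,z_i\bigr)=\mathcal Y_\delta(u,z_j),
\end{align*}
which is precisely the right-hand side of \eqref{eq275}.

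The main technical obstacle is that $u$ is not a single vector but an infinite series in powers of $e^{\pm i\pi}(z_j-z_i)$, so the above ``mode-by-mode'' argument involves interchanging two summations: the Taylor summation in $n$ and the modes summation in $s$ indexing $u_s=\mathcal Y_\gamma(w^{(i)},s)w^{(j)}$. To justify this, I would fix an arbitrary $w^{(?)}$ in the source space of $\mathcal Y_\delta$ and a dual vector $w^{(\overline{?})}$, and observe that the resulting double series equals, in its natural iterated form, the matrix coefficient of $\mathcal Y_\delta(\mathcal Y_\gamma(w^{(i)},e^{\pm i\pi}(z_j-z_i))w^{(j)},z_j)w^{(?)}$, whose absolute convergence under the assumption $0<|z_j-z_i|<|z_j|$ is guaranteed by the $n=2$ case of Theorem \ref{lb73} (equivalently Theorem \ref{lb65} after the change of variables $z_1=z_j$, $z_2-z_1=z_j-z_i$). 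Fubini then legitimizes the reorganization, while the other remaining condition $|z_j-z_i|<|z_i|$ guarantees that the Taylor series used on the left-hand side also converges. Matching the branches of $z_j$ via the convention that $\arg z_j$ is close to $\arg z_i$ as $z_j\to z_i$ completes the identification of the two sides.
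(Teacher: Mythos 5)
Your overall route coincides with the paper's: unwind $\mathcal Y_{B_\pm\gamma}$ via its definition, trade $e^{(z_j-z_i)L_{-1}}$ for a Taylor shift of the second argument of $\mathcal Y_\delta$ using the translation property, and reduce \eqref{eq275} to an interchange of the Taylor summation (index $n$) with the mode summation (index $s$ for $P_s\mathcal Y_\gamma(w^{(i)},e^{\pm i\pi}(z_j-z_i))w^{(j)}$). The gap is in the last step: your appeal to Fubini is not licensed by the convergence facts you assemble. Fubini--Tonelli for series requires \emph{joint} absolute summability $\sum_{n,s}|a_{n,s}|<\infty$. What you actually have is (i) for each fixed $s$, absolute convergence of the inner Taylor sum over $n$ (valid when $|z_j-z_i|<|z_i|$), and (ii) absolute convergence over $s$ of the \emph{already $n$-summed} terms, i.e. $\sum_s\big|\sum_n a_{n,s}\big|<\infty$, coming from the $n=2$ case of theorem \ref{lb73}. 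Neither of these yields $\sum_{n,s}|a_{n,s}|<\infty$, and it is exactly the joint bound that you need, because the left-hand side of \eqref{eq275} is the regrouping of the double series by total weight $r$ (through $P_re^{(z_j-z_i)L_{-1}}P_s$, i.e. along the diagonals $n=r-s$), which is a genuinely different arrangement from your iterated $\sum_s\sum_n$.

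Moreover, if you run the Tonelli estimate honestly, it closes only under a hypothesis stronger than the one in the statement: bounding $\sum_n|a_{n,s}|$ for fixed $s$ via the binomial series effectively replaces the evaluation radius $|z_i|$ by $|z_i|-|z_j-z_i|$, so the resulting $s$-sum of these bounds is controlled by theorem \ref{lb73} only when $|z_j-z_i|<|z_i|-|z_j-z_i|$, i.e. $|z_j-z_i|<\tfrac 1 2|z_i|$. This is precisely why the paper proceeds differently: it decouples the translation amount and the insertion point into two independent variables $z_{ji},\widetilde z_{ji}$, uses the locally uniform absolute convergence of the mode sum to commute it with the Cauchy coefficient integrals in $\widetilde z_{ji}$, thereby identifies the double series as the expansion of a holomorphic function on the polydisk $0<|z_{ji}|,|\widetilde z_{ji}|<\tfrac 1 2|z_i|$ and obtains joint absolute convergence there; it then proves \eqref{eq275} for $0<|z_j-z_i|<\tfrac 1 2|z_i|$ and removes the restriction by analytic continuation, which is legitimate by theorem \ref{lb75}. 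Your proposal is missing both the joint-convergence argument and this final continuation step. With those two repairs --- prove the identity first on $|z_j-z_i|<\tfrac 1 2|z_i|$, either by a genuine Tonelli bound as sketched above or by the paper's two-variable Cauchy-integral argument, and then continue analytically to the full region $0<|z_j-z_i|<|z_i|,|z_j|$ --- your plan goes through.
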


\begin{rem}
The braid relation \eqref{eqa3} is unchanged if we scale the norm of the complex variables $z_1,z_2,\dots,z_n$, or rotate each variable without meeting the others, and change its $\arg$ value continuously. The braid relation might change, however, if $z_1,z_2,\dots,z_n$ are fixed, but their arguments are changed by $2\pi$ multiplied by some integers. 
\end{rem}

The proof of theorem \ref{lb78} (see section \ref{lb71})  implies the following:
\begin{pp}\label{lb79}
	Let $\mathcal Y_{\gamma_1},\dots,\mathcal Y_{\gamma_m},\mathcal Y_{\alpha_{\varsigma(1)}},\dots,\mathcal Y_{\alpha_{\varsigma(n)}},\mathcal Y_{\delta_1},\dots,\mathcal Y_{\delta_l}$ be a chain of intertwining operator of $V$ with charge spaces $W_{i'_1},\dots,W_{i'_m},W_{i_{\varsigma(1)}},\dots,W_{i_{\varsigma(n)}},W_{i''_1},\dots,W_{i''_l}$ respectively. Let $W_{j_1}$ be the source space of $\mathcal Y_{\gamma_1}$ and $W_{j_2}$ be the target space of $\mathcal Y_{\delta_l}$. Let $z_1,\dots,z_n,z'_1,\dots,z'_m,z''_1,\dots,z''_l$ be distinct  complex numbers in $S^1$ with fixed arguments. Choose vectors $w^{(j_1)}\in W_{j_1},w^{(i'_1)}\in W_{i'_1},\dots,w^{(i'_m)}\in W_{i'_m},w^{(i''_1)}\in W_{i''_1},\dots,w^{(i''_l)}\in W_{i''_l},w^{(\overline{j_2})}\in W_{\overline{j_2}}$. Let
	\begin{gather*}
	\mathcal X_1=\mathcal Y_{\gamma_m}(w^{(i'_m)},z'_m)\cdots\mathcal Y_{\gamma_1}(w^{(i'_1)},z'_1),\\
	\mathcal X_2=\mathcal Y_{\delta_l}(w^{(i''_l)},z''_l)\cdots\mathcal Y_{\delta_1}(w^{(i''_1)},z''_1).
	\end{gather*}
	Suppose that the braid relation \eqref{eq265} holds for all $w^{(i_0)}\in W_{i_0},w^{(i_1)}\in W_{i_1},\dots,w^{(i_n)}\in W_{i_n},w^{(\overline k)}\in W_{\overline k}$. Then we also have the braid relation
	\begin{align}
	&\langle\mathcal X_2\mathcal Y_{\alpha_{\varsigma(n)}}(w^{(i_{\varsigma(n)})},z_{\varsigma(n)})\cdots\mathcal Y_{\alpha_{\varsigma(1)}}(w^{(i_{\varsigma(1)})},z_{\varsigma(1)})\mathcal X_1 w^{(j_1)},w^{(\overline{j_2})} \rangle\nonumber\\
	=&\langle\mathcal X_2\mathcal Y_{\beta_n}(w^{(i_n)},z_n)\cdots\mathcal Y_{\beta_1}(w^{(i_1)},z_1)\mathcal X_1 w^{(j_1)},w^{(\overline{j_2})} \rangle.
	\end{align}
\end{pp}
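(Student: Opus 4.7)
The plan is to pass from the original $S^1$ configuration to a carefully chosen intermediate point of $\widetilde\Conf_{n+m+l}(\mathbb C^\times)$, apply the hypothesized braid relation pointwise on homogeneous input vectors, and then use analytic continuation to transport the result back. By Theorem~\ref{lb65} and the analytic-continuation principle of Theorem~\ref{lb75}, both sides of the desired identity extend from the $S^1$ configuration (where they are defined by the limiting procedure introduced just before Theorem~\ref{lb78}) to single-valued holomorphic functions on the universal cover $\widetilde\Conf_{n+m+l}(\mathbb C^\times)$. The uniqueness part of Theorem~\ref{lb75} then reduces the proposition to exhibiting a single point on $\widetilde\Conf_{n+m+l}(\mathbb C^\times)$, reached by continuous deformation from the given lift with arguments tracked, at which both sides agree.

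First I would continuously deform the configuration within $\Conf_{n+m+l}(\mathbb C^\times)$ to a point $P$ satisfying
\[ |z'_1|<\cdots<|z'_m|<|z_{\varsigma(1)}|<\cdots<|z_{\varsigma(n)}|<|z''_1|<\cdots<|z''_l|. \]
At $P$, the LHS, read left-to-right as $\mathcal Y_{\delta_l}(\cdot,z''_l)\cdots\mathcal Y_{\delta_1}(\cdot,z''_1)\mathcal Y_{\alpha_{\varsigma(n)}}(\cdot,z_{\varsigma(n)})\cdots\mathcal Y_{\alpha_{\varsigma(1)}}(\cdot,z_{\varsigma(1)})\mathcal Y_{\gamma_m}(\cdot,z'_m)\cdots\mathcal Y_{\gamma_1}(\cdot,z'_1)w^{(j_1)}$, is a chain in the strictly increasing-radius regime of Theorem~\ref{lb65}, hence converges absolutely. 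The resulting Fubini-type rearrangement permits inserting the complete projections $\sum_{s_0}P_{s_0}=\id_{W_{i_0}}$ between $\mathcal X_1$ and the $\alpha$-block and $\sum_{s_1}P_{s_1}=\id_{W_k}$ between the $\alpha$-block and $\mathcal X_2$. Writing $U_{s_0}:=P_{s_0}\bigl[\mathcal Y_{\gamma_m}(w^{(i'_m)},z'_m)\cdots\mathcal Y_{\gamma_1}(w^{(i'_1)},z'_1)w^{(j_1)}\bigr]\in W_{i_0}(s_0)$, this yields
\[ \mathrm{LHS}(P)=\sum_{s_0,\,s_1}\bigl\langle\,\mathcal X_2\,P_{s_1}\,\mathcal Y_{\alpha_{\varsigma(n)}}(w^{(i_{\varsigma(n)})},z_{\varsigma(n)})\cdots\mathcal Y_{\alpha_{\varsigma(1)}}(w^{(i_{\varsigma(1)})},z_{\varsigma(1)})\,U_{s_0}\,,\,w^{(\overline{j_2})}\bigr\rangle. \]

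Next I would apply the braid relation \eqref{eq265} term by term. For each $s_0$ and every $v\in W_{\overline k}$, substituting $w^{(i_0)}=U_{s_0}$ and $w^{(\overline k)}=v$ in \eqref{eq265} yields an equality of two multivalued holomorphic functions of $(z_1,\dots,z_n)$ on $\Conf_n(\mathbb C^\times)$, holding initially on $|z_1|=\cdots=|z_n|$ and extending by Theorem~\ref{lb75} to all of $\Conf_n(\mathbb C^\times)$. Running $v$ through $W_{\overline k}(s_1)$ and using the nondegenerate pairing $W_k(s_1)\times W_{\overline k}(s_1)\to\mathbb C$ upgrades this to the vector identity at $P$,
\[ P_{s_1}\,\mathcal Y_{\alpha_{\varsigma(n)}}(w^{(i_{\varsigma(n)})},z_{\varsigma(n)})\cdots\mathcal Y_{\alpha_{\varsigma(1)}}(w^{(i_{\varsigma(1)})},z_{\varsigma(1)})\,U_{s_0}=P_{s_1}\,\mathcal Y_{\beta_n}(w^{(i_n)},z_n)\cdots\mathcal Y_{\beta_1}(w^{(i_1)},z_1)\,U_{s_0}, \]
the left side being the convergent iterated composition at $P$ and the right side its analytic continuation. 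Substituting this into the previous display gives
\[ \mathrm{LHS}(P)=\sum_{s_0,\,s_1}\bigl\langle\,\mathcal X_2\,P_{s_1}\,\mathcal Y_{\beta_n}(w^{(i_n)},z_n)\cdots\mathcal Y_{\beta_1}(w^{(i_1)},z_1)\,U_{s_0}\,,\,w^{(\overline{j_2})}\bigr\rangle, \]
and one finally identifies this with $\mathrm{RHS}(P)$: both are the analytic continuation to $P$ of the same multivalued correlation function built from the $\beta$-chain flanked by $\mathcal X_1$ and $\mathcal X_2$, since in the region $|z'_1|<\cdots<|z'_m|<|z_1|<\cdots<|z_n|<|z''_1|<\cdots<|z''_l|$ the RHS is an absolutely convergent iterated product admitting the same projection-insertion and regrouping.

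The hard part is exactly the subtlety stressed in the introduction and in the footnote before this proposition: braiding of intertwining operators is not an algebraic composition but an equality of multivalued holomorphic functions understood via analytic continuation, so one cannot simply ``multiply both sides of \eqref{eq265} by $\mathcal X_1$ and $\mathcal X_2$''. The manoeuvre above circumvents this by moving to an intermediate point of $\widetilde\Conf_{n+m+l}(\mathbb C^\times)$ where one of the two chains is a genuine absolutely convergent iterated product, using this absolute convergence to reduce the problem to the original braid theorem applied to each homogeneous vector $U_{s_0}\in W_{i_0}(s_0)$ separately (where \eqref{eq265} becomes a bona fide identity of vectors in $W_k(s_1)$), and then invoking uniqueness of analytic continuation from Theorem~\ref{lb75} to transport the equality back to the given $S^1$ configuration.
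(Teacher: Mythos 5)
Your reduction to the radius-ordered point $P$ is sound as far as it goes: absolute convergence at $P$ (theorem \ref{lb65}) legitimizes inserting the complete projections $\sum_{s_0}P_{s_0}$ and $\sum_{s_1}P_{s_1}$, and since \eqref{eq265} is assumed for \emph{all} vectors at one lifted point, the determination-at-a-point principle stated after theorem \ref{lb75} does upgrade it to an identity of global correlation functions, so each term with the $\alpha$-block may be replaced by the continued $\beta$-block. The genuine gap is your final identification of $\sum_{s_0,s_1}\bigl\langle\mathcal X_2 P_{s_1}\,[\beta\text{-block, continued to }P]\,U_{s_0},w^{(\overline{j_2})}\bigr\rangle$ with $\mathrm{RHS}(P)$, i.e.\ with the analytic continuation to $P$ of the absolutely convergent product in the region $|z'_1|<\cdots<|z'_m|<|z_1|<\cdots<|z_n|<|z''_1|<\cdots<|z''_l|$. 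What you actually know is that this double series converges at $P$ (its terms are equal, number by number, to those of $\mathrm{LHS}(P)$) and that it converges to the RHS in the identity-ordered region. But each summand is only a \emph{separately} continued holomorphic function of the middle variables, and pointwise convergence of a series of holomorphic functions at the two ends of a path does not imply that the two limits are continuations of each other: one needs locally uniform convergence along the entire path, which you never establish, and which fails in general (by Runge's theorem one builds a telescoping series of polynomials converging to $0$ on one disk and to $1$ on a disjoint one). This is exactly the subtlety flagged in the paper's footnote: the infinite sums over the spectator-adjacent energy projections $s_0,s_1$ cannot be interchanged with analytic continuation for free, and your phrase ``admitting the same projection-insertion and regrouping'' only justifies the expansion inside the identity-ordered region, not across the wall.

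The paper closes precisely this gap by a different mechanism, which is why proposition \ref{lb79} is presented as a byproduct of the \emph{proof} of theorem \ref{lb78} rather than of its statement. There, an adjacent transposition is implemented by first fusing the two operators being braided into a single iterate $\mathcal Y_\delta\bigl(\mathcal Y_\gamma(w^{(i_m)},z_m-z_{m+1})w^{(i_{m+1})},z_{m+1}\bigr)$, and then theorem \ref{lb12} supplies absolute and \emph{locally uniform} convergence of the fully sandwiched expression $\bigl\langle\mathcal X_2\,\mathcal Y_\delta\bigl(\mathcal Y_\gamma(w^{(i_m)},z_m-z_{m+1})w^{(i_{m+1})},z_{m+1}\bigr)\mathcal X_1 w^{(i_0)},w^{(\overline k)}\bigr\rangle$ on a region containing the wall $|z_m|=|z_{m+1}|$; the whole $(n+m+l+2)$-point function is therefore holomorphic across the wall, and the continuation with spectators present comes in one stroke, with a general $\varsigma$ handled as a composite of adjacent transpositions and the hypothesized $\beta$-chain matched to the constructed one via proposition \ref{lb66}. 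To repair your argument you would have to prove a uniform bound for the $(s_0,s_1)$-sums along the deformation path, and the only tool in the paper capable of that is theorem \ref{lb12} applied to a fused form of the middle block — at which point you have reproduced the paper's proof.
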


The braiding operators $B_\pm$ and the braid relations of intertwining operators are related in the following way:
\begin{pp}\label{lb86}
	Let $z_i,z_j\in S^1$ and $\arg z_j<\arg z_i<\arg z_j+\pi/3$. Let $\arg(z_i-z_j)$ be close to $\arg z_i$ as $z_j\rightarrow 0$, and let $\arg(z_j-z_i)$ be close to $\arg z_j$ as $z_i\rightarrow 0$.
	
	Let $\mathcal Y_\beta,\mathcal Y_\alpha$ be a chain of intertwining operators with charge spaces $W_j,W_i$ respectively, and let $\mathcal Y_{\alpha'},\mathcal Y_{\beta'}$ be a chain of intertwining operators with charge spaces $W_i,W_j$ respectively. Assume that the source spaces of $\mathcal Y_\beta$ and $\mathcal Y_{\alpha'}$ are $W_{k_1}$, and that the target spaces of $\mathcal Y_\alpha$ and $\mathcal Y_{\beta'}$ are $W_{k_2}$.

If there exist a $V$-module $W_k$, and $\mathcal Y_\gamma\in{k\choose i~j},\mathcal Y_\delta\in{k_2\choose k~k_1}$, such that for any $w^{(i)}\in W_i,w^{(j)}\in W_j$, we have the fusion relations:
\begin{gather}
\mathcal Y_\alpha(w^{(i)},z_i)\mathcal Y_\beta(w^{(j)},z_j)=\mathcal Y_\delta(\mathcal Y_\gamma(w^{(i)},z_i-z_j)w^{(j)},z_j),\\
\mathcal Y_{\beta'}(w^{(j)},z_j)\mathcal Y_{\alpha'}(w^{(i)},z_i)=\mathcal Y_\delta(\mathcal Y_{B_+\gamma}(w^{(j)},z_j-z_i)w^{(i)},z_i).
\end{gather}
Then the following braid relation holds:
	\begin{align}
	\mathcal Y_{\alpha}(w^{(i)},z_i)\mathcal Y_\beta(w^{(j)},z_j)=\mathcal Y_{\beta'}(w^{(j)},z_j)\mathcal Y_{\alpha'}(w^{(i)},z_i).\label{eq278}
	\end{align}
\end{pp}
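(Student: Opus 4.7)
The plan is to chain together the two fusion hypotheses via Proposition~\ref{lb85} applied to the braided intertwining operator $\mathcal{Y}_{B_+\gamma}$. Starting from the second hypothesis
$$\mathcal{Y}_{\beta'}(w^{(j)},z_j)\mathcal{Y}_{\alpha'}(w^{(i)},z_i)=\mathcal{Y}_\delta\big(\mathcal{Y}_{B_+\gamma}(w^{(j)},z_j-z_i)w^{(i)},z_i\big),$$
I would apply Proposition~\ref{lb85} with the $+$ sign to rewrite the right-hand side as
$$\mathcal{Y}_\delta\big(\mathcal{Y}_\gamma(w^{(i)},e^{i\pi}(z_j-z_i))w^{(j)},z_j\big).$$
The angular separation hypothesis $\arg z_j<\arg z_i<\arg z_j+\pi/3$ guarantees $|z_j-z_i|<1=|z_i|=|z_j|$, so the convergence hypotheses of Proposition~\ref{lb85} (and of both fusion expansions) are met.

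The main task is then to verify that the argument conventions match, so that $e^{i\pi}(z_j-z_i)$ and $z_i-z_j$ denote the \emph{same} element of $\mathbb{C}^\times$ carrying the \emph{same} chosen argument as the one used in the first fusion hypothesis. Writing $z_i=e^{i\theta_i}$, $z_j=e^{i\theta_j}$, I would transport $\arg(z_i-z_j)$ along the path $s\mapsto e^{i\theta_i}-s e^{i\theta_j}$ as $s:0\to 1$ (the path mandated by the convention ``$\arg(z_i-z_j)$ close to $\arg z_i$ as $z_j\to 0$''); since this segment stays in the rotated half-plane $e^{i\theta_i}\cdot\{\mathrm{Re}>0\}$ (which is where the $\pi/3$ bound is used), a direct computation gives
$$\arg(z_i-z_j)=\tfrac{\theta_i+\theta_j}{2}+\tfrac{\pi}{2}.$$
The analogous path computation for $\arg(z_j-z_i)$ yields $\tfrac{\theta_i+\theta_j}{2}-\tfrac{\pi}{2}$, and hence $\arg(e^{i\pi}(z_j-z_i))=\arg(z_j-z_i)+\pi=\arg(z_i-z_j)$, so the two symbols really denote the same point of $\mathbb{C}^\times$. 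A brief check also confirms that this $\arg(z_j-z_i)$ is compatible with the consistency convention in Proposition~\ref{lb85} (``$\arg z_j$ close to $\arg z_i$ as $z_j\to z_i$''), since the angular gap $|\theta_i-\theta_j|<\pi/3<\pi$ allows $\arg z_j$ to be transported to $\arg z_i$ without crossing a branch cut.

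Having matched the arguments, I would invoke the first fusion hypothesis in reverse,
$$\mathcal{Y}_\delta\big(\mathcal{Y}_\gamma(w^{(i)},z_i-z_j)w^{(j)},z_j\big)=\mathcal{Y}_\alpha(w^{(i)},z_i)\mathcal{Y}_\beta(w^{(j)},z_j),$$
and concatenate the three equalities to obtain the desired braid relation \eqref{eq278}.

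The main obstacle is the argument bookkeeping in the second step: reconciling the three conventions (``close to $\arg z_i$ as $z_j\to 0$'', ``close to $\arg z_j$ as $z_i\to 0$'', and Proposition~\ref{lb85}'s ``$\arg z_j$ close to $\arg z_i$ as $z_j\to z_i$'') with the formal rotation by $e^{i\pi}$. The $\pi/3$ restriction is precisely what both (i) confines $|z_i-z_j|<1$ so the fusion expansions converge and (ii) keeps all three transport paths inside a single branch of $\arg$, so that the principal-branch computation above is valid without hidden $2\pi$ shifts.
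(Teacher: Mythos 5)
Your proposal is correct and takes essentially the same route as the paper: the paper's proof consists of observing that under the stated conventions $\arg(z_i-z_j)=\arg(z_j-z_i)+\pi$ and then invoking Proposition \ref{lb85}, which is exactly your chain of equalities linking the two fusion hypotheses. Your explicit computations $\arg(z_i-z_j)=\tfrac{\theta_i+\theta_j}{2}+\tfrac{\pi}{2}$ and $\arg(z_j-z_i)=\tfrac{\theta_i+\theta_j}{2}-\tfrac{\pi}{2}$, together with the checks that $|z_i-z_j|<1$ and that $\arg z_j$ is close to $\arg z_i$ as $z_j\rightarrow z_i$, simply verify in detail the step the paper dismisses with ``clearly''.
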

\begin{proof}
	Clearly we have $\arg(z_i-z_j)=\arg(z_j-z_i)+\pi$. So equation \eqref{eq278} follows directly from proposition \ref{lb85}.
\end{proof}

Using braid relations, we can give a global description of correlation functions. Consider the covering map $\pi_n:\widetilde\Conf_n(\mathbb C^\times)\rightarrow \Conf_n(\mathbb C^\times)$. Choose $\varsigma\in S_n$,  let $U_\varsigma=\{(z_1,\dots,z_n):0<|z_{\varsigma(1)}|<|z_{\varsigma(2)}|<\cdots<|z_{\varsigma(n)}|\}$, and choose a connected component $\widetilde U_\varsigma$ of $\pi_n^{-1}(U_\varsigma)$. Then a $(W_{i_0}\otimes W_{i_{\varsigma(1)}}\otimes \cdots\otimes W_{i_{\varsigma(n)}}\otimes W_{\overline k})^*$-valued  correlation function  defined when $(z_{\varsigma(1)},\dots,z_{\varsigma(n)})\in U_\varsigma$ by the left hand side of equation \eqref{eq265} can be lifted through $\pi_n:\widetilde U_\varsigma\rightarrow U_\varsigma$ and analytically continued to a (single-valued) holomorphic function $\varphi$ on $\widetilde\Conf_n(\mathbb C^\times)$. We  define the vector   space $\mathcal V{k\choose i_n~i_{n-1}~\cdots~ i_0}$ of \textbf{$(W_{i_0}\otimes W_{i_1}\otimes \cdots\otimes W_{i_n}\otimes W_{\overline k})^*$-valued (genus $0$) correlation function} to be the vector space of holomorphic functions on $\widetilde\Conf_n(\mathbb C^\times)$ of the form $\varphi$. This definition  does not depend on the choice of $\varsigma$ and $\widetilde U_\varsigma$: If $\varsigma'\in S_n$ and $\widetilde U'_{\varsigma'}$ is a connected component of $\pi_n^{-1}(U_{\varsigma'})$, then by theorem \ref{lb78}, for any  $\varphi\in\mathcal V{k\choose i_n~i_{n-1}~\cdots~ i_0}$ defined on $\widetilde\Conf_n(\mathbb C^\times)$,  it is not hard to find a $(W_{i_0}\otimes W_{i_{\varsigma'(1)}}\otimes \cdots\otimes W_{i_{\varsigma'(n)}}\otimes W_{\overline k})^*$-valued  correlation function  defined when $(z_{\varsigma'(1)},\dots,z_{\varsigma'(n)})\in U_{\varsigma'}$ which can  be lifted through $\pi_n:\widetilde U'_{\varsigma'}\rightarrow U_{\varsigma'}$ and analytically continued to the function $\varphi$.

\subsection[Braiding and fusion with  $Y_i$ and  $\mathcal Y^i_{i0}$]{Braiding and fusion with vertex operators and creation operators}

In this section, we prove some useful braid and fusion relations. These relations are not only important for constructing a braided tensor category of representations of $V$, but also necessary for studying generalized intertwining operators.
\subsubsection*{Braiding and fusion with vertex operators}

The  Jacobi identity \eqref{eq128} can be interpreted in terms of braid and fusion relations:

\begin{pp}\label{lb80}
	Let $\mathcal Y_\alpha$ be a type $k\choose i~j$ intertwining operator of $V$.  Choose $z,\zeta\in \mathbb C^\times$ satisfying $0<|z-\zeta|<|z|=|\zeta|$. Choose an argument $\arg z$. Then for any $u\in V, w^{(i)}\in W_i$, we have
\begin{align}
Y_k(u,\zeta)\mathcal Y_\alpha(w^{(i)},z)=\mathcal Y_\alpha(w^{(i)},z)Y_j(u,\zeta)=\mathcal Y_\alpha\big(Y_i(u,\zeta-z)w^{(i)},z\big).
\end{align}
\end{pp}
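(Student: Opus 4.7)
The plan is to view the three expressions as three different expansions of a single correlation function, each absolutely convergent on its own natural region, and to derive the triple equality on the circle $|z|=|\zeta|$ from the Jacobi identity combined with analytic continuation.

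First, I would derive from \eqref{eq128} (equivalently, from the commutator form \eqref{eq106} together with its associator counterpart) the two absolutely convergent series identities
\begin{align*}
Y_k(u,\zeta)\mathcal Y_\alpha(w^{(i)},z) &= \mathcal Y_\alpha\bigl(Y_i(u,\zeta-z)w^{(i)},z\bigr) && \text{on } \{|z|<|\zeta|,\ |\zeta-z|<|z|\}, \\
\mathcal Y_\alpha(w^{(i)},z)Y_j(u,\zeta) &= \mathcal Y_\alpha\bigl(Y_i(u,\zeta-z)w^{(i)},z\bigr) && \text{on } \{|\zeta|<|z|,\ |\zeta-z|<|z|\}.
\end{align*}
These are the standard associativity relations (treating $Y_k,Y_j$ as type $\binom{k}{0~k}$ and $\binom{j}{0~j}$ intertwining operators) and follow by regrouping the Jacobi identity \eqref{eq128}. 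Absolute convergence of the product sides is theorem \ref{lb65}, and absolute convergence of the fusion side is theorem \ref{lb73} with $n=2$.

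Next, I would observe that the fusion expression converges absolutely and locally uniformly throughout the entire connected region $R:=\{0<|\zeta-z|<|z|\}$ (by theorem \ref{lb73}), and hence defines a single-valued holomorphic function on $R$ once continuous branches of $\arg z$ and $\arg(\zeta-z)$ are fixed. The region $R$ contains both sub-regions appearing in the first step, as well as the arc $\{|z|=|\zeta|,\ 0<|\zeta-z|<|z|\}$ on which the statement is asserted. By theorem \ref{lb75}, the two product expressions extend as multi-valued holomorphic functions on $\Conf_2(\mathbb C^\times)$ satisfying a holomorphic ODE system; since they agree with the fusion on an open set and all three solve the same ODE, uniqueness of analytic continuation forces the three to coincide throughout $R$, in particular at every point of the circle arc. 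For the $\arg$ bookkeeping, the geometric constraint $|\zeta-z|<|z|=|\zeta|$ forces $|\arg\zeta-\arg z|<\pi/3$, so given $\arg z$ the unique continuous $\arg\zeta$ in this arc, together with a continuous $\arg(\zeta-z)$ along any path in $R$ from an overlap region to the circle, pins everything down unambiguously.

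The main obstacle is neither the Jacobi manipulation nor the local analyticity, but rather verifying in the second step that the two analytic continuations of the products, approached from opposite sides $|\zeta|>|z|$ and $|\zeta|<|z|$, terminate on the \emph{same} branch of the (a priori multi-valued) fusion at the circle. This is resolved by working entirely inside $R$: on $R$, for the fixed continuous choice of $\arg(\zeta-z)$, the fusion is single-valued, so both continuations take place inside a single sheet and must therefore agree with this single-valued function, proving the desired triple equality.
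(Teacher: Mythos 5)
Your argument is correct in outline, but it takes a genuinely different route from the paper, and it is worth seeing what each buys. The paper does not patch together convergence regions at all: it observes that the asserted triple identity is \emph{equivalent} to the statement that the three series, as functions of $\zeta$ for fixed $z$, are local expansions (near $\infty$, near $0$, near $z$) of one single-valued meromorphic function on $\mathbb P^1$ with poles in $\{0,z,\infty\}$, and then shows via a Mittag-Leffler/residue criterion that this rationality statement, tested against the spanning functions $\zeta^m(\zeta-z)^n$, is term-by-term \emph{equivalent} to the Jacobi identity \eqref{eq128} (this is equation \eqref{eq266}). That one-variable argument needs none of theorems \ref{lb65}, \ref{lb73}, \ref{lb75} — hence none of the rationality hypotheses \eqref{eq302}--\eqref{eq304} — and it delivers a stronger conclusion (a single rational function of $\zeta$) which is exactly what is consumed later in proposition \ref{lb81} and corollary \ref{lb84}. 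Your version instead quotes weak associativity on the two ordering regions and then continues analytically inside the annulus $\{0<|\zeta-z|<|z|\}$; this works, and your observation that the fusion side is single-valued there is right (indeed for a stronger reason than you give: $Y_i(u,\cdot)$ has only integer powers, so no branch of $\arg(\zeta-z)$ and no $\arg\zeta$ is ever needed, making the $\pi/3$ bookkeeping and the appeal to the ODE system \ref{lb75} superfluous — agreement on an open subset of a connected domain already forces agreement). Two caveats. First, your step 1 is where all the content hides: ``regrouping the Jacobi identity'' does not by itself convert a formal-modes identity into an identity of absolutely convergent series, and the standard proof of the two associativity relations you quote \emph{is} the paper's residue argument (or its formal-calculus equivalent in \cite{FHL}); so your proof is essentially the paper's proof with its core step outsourced to a citation. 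Second, you should state explicitly that on the circle $|z|=|\zeta|$ the two composition series themselves diverge (already for $1/(\zeta-z)=\sum_{n\ge 0}z^n\zeta^{-n-1}$ the terms do not tend to $0$ when $|\zeta|=|z|$), so the asserted equality there is one of analytic continuations in the sense of the paper's convention \eqref{eq263}--\eqref{eq264}; your continuation argument does prove exactly that, but the interpretation should be made explicit rather than left implicit.
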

\begin{proof}
	The above braid and fusion relations are equivalent to the following statement: for any $w^{(j)}\in W_j,w^{(\overline k)}\in W_{\overline k}$, and for any $z\in\mathbb C^\times$, the functions of $\zeta$:
	\begin{gather}
	\langle \mathcal Y_\alpha(w^{(i)},z)Y_j(u,\zeta) w^{(j)},w^{(\overline k)} \rangle,\\
	\langle \mathcal Y_\alpha\big(Y_i(u,\zeta-z)w^{(i)},z\big) w^{(j)},w^{(\overline k)} \rangle,\\
	\langle Y_k(u,\zeta)\mathcal Y_\alpha(w^{(i)},z) w^{(j)},w^{(\overline k)} \rangle
	\end{gather}
	defined respectively near $0$, near $z$, and near $\infty$ can be analytically continued to the same (single-valued) meromorphic function on $\mathbb P^1$ whose  poles are inside $\{0,z,\infty\}$. This is equivalent to  that for any $f(\zeta,z)\in\mathbb C[\zeta^{\pm1},(\zeta-z)^{-1}]$,
	\begin{align}
	&\Res_{\zeta=0}\big( \langle \mathcal Y_\alpha(w^{(i)},z)Y_j(u,\zeta) w^{(j)},w^{(\overline k)} \rangle\cdot f(\zeta,z)d\zeta\big)\nonumber\\
	+&\Res_{\zeta=z}\big( \langle \mathcal Y_\alpha\big(Y_i(u,\zeta-z)w^{(i)},z\big) w^{(j)},w^{(\overline k)} \rangle\cdot f(\zeta,z)d\zeta\big)\nonumber\\
	+&\Res_{\zeta=\infty}\big(\langle Y_k(u,\zeta)\mathcal Y_\alpha(w^{(i)},z) w^{(j)},w^{(\overline k)} \rangle\cdot f(\zeta,z)d\zeta\big)=0.\label{eq266}
	\end{align}\footnote{Here we use the following well-known Mittag-Leffler type theorem: Let $M$ be a compact Riemann surface, $p_1,\dots,p_n$ are distinct points on $M$, $\zeta_1,\dots,\zeta_n$ are local coordinates at $p_1,\dots,p_n$ respectively. For any $j=1,\dots,n$ we associate a locally defined formal meromorphic function $\varphi_j\in\mathbb C((\zeta_j))$ near $p_j$. Then the following two conditions are equivalent. (a) There exists a global meromorphic function $\varphi$ on $M$ with no poles outside $p_1,\dots,p_n$, such that the  series expansion of $\varphi$ near each $p_j$ is $\varphi_j$. (b) For any global meromorphic $1$-form $\omega$ on $M$ with no poles outside $p_1,\dots,p_n$, the relation $\sum_j \Res_{p_j}\varphi_i\omega=0$ holds. (a)$\Rightarrow$(b) is obvious from residue theorem. (b)$\Rightarrow$(a) is not hard to prove using Serre duality. See \cite{Ueno} theorem 1.22, or \cite{Muk} theorem 1.}
Note that $\mathbb C[\zeta^{\pm1},(\zeta-z)^{-1}]$ is spanned by functions of the form $\zeta^m(\zeta-z)^n$, where $m,n\in\mathbb Z$. If we let $f(\zeta,z)=\zeta^m(\zeta-z)^n$, then one can compute that the left hand side of \eqref{eq266} becomes $\sum_{s\in\mathbb R}c_sz^{-s-1}$, where each coefficient $c_s$ is the difference between the left hand side and the right hand side of the Jacobi identity \eqref{eq128}. This shows that \eqref{eq266} is equivalent to the Jacobi identity \eqref{eq128}. 
\end{proof}

The above intertwining property can be generalized to any correlation function.

\begin{pp}\label{lb81} \footnote{One can use proposition \ref{lb81} and  the translation property to define correlation functions (parallel sections of conformal blocks). cf. \cite{Conformal blocks} chapter 10. }
Let $z_0=0$, choose $(z_1,z_2,\dots,z_n)\in\Conf_n(\mathbb C^\times)$, and choose a correlation function $\varphi\in\mathcal V{k\choose i_n~i_{n-1}~\cdots~i_1~i_0}$ defined near $(z_1,z_2,\dots,z_n)$. Then for any $u\in V,w^{(i_0)}\in W_{i_0},w^{(i_1)}\in W_{i_1},\dots,w^{(i_n)}\in W_{i_n},w^{(\overline k)}\in W_{\overline k}$, and any $0\leq m\leq n$, the following formal series in $\mathbb C((\zeta-z_m))$:
	\begin{align}
	&\psi_{i_m}(\zeta,z_1,z_2,\dots,z_n)\nonumber\\
	=&\varphi(w^{(i_0)},\dots,w^{(i_{m-1})},Y_{i_m}(u,\zeta-z_m)w^{(i_m)},w^{(i_{m+1})},\dots,w^{(i_n)},w^{(\overline k)};z_1,z_2,\dots,z_n),
	\end{align}
	and the following formal series in $\mathbb C((\zeta^{-1}))$:
	\begin{align}
	&\psi_k(\zeta,z_1,z_2,\dots,z_n)\nonumber\\
	=&\varphi(w^{(i_0)},w^{(i_1)},w^{(i_2)},\dots,w^{(i_n)},Y_k(u,\zeta)^\tr w^{(\overline k)};z_1,z_2,\dots,z_n)
	\end{align}
	are expansions of the same (single-valued) holomorphic function on $\mathbb P\setminus\{z_0,z_1,z_2,\dots,z_n,\infty \}$ near the poles $\zeta=z_m$ ($0\leq m\leq n$) and $\zeta=\infty$ respectively.
\end{pp}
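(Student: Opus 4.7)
The plan is to reduce the statement to an iterated application of proposition \ref{lb80}, which handles the case $n=1$. By proposition \ref{lb66}, the space $\mathcal V{k\choose i_n~i_{n-1}~\cdots~ i_0}$ is spanned by correlation functions coming from chains of intertwining operators, and since the Laurent coefficients of $\psi_{i_m}$ and $\psi_k$ depend holomorphically on $(z_1,\dots,z_n)$, it suffices by analytic continuation to work in the region $0<|z_1|<|z_2|<\cdots<|z_n|$ with
\begin{align*}
\varphi=\langle\mathcal Y_{\alpha_n}(w^{(i_n)},z_n)\cdots\mathcal Y_{\alpha_1}(w^{(i_1)},z_1)w^{(i_0)},w^{(\overline k)}\rangle.
\end{align*}
Let $W_{j_m}$ denote the common target of $\mathcal Y_{\alpha_m}$ and source of $\mathcal Y_{\alpha_{m+1}}$, with $j_0=i_0$ and $j_n=k$. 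For each $m=0,1,\dots,n$ I will define the sandwich function
\begin{align*}
G_m(\zeta):=\langle\mathcal Y_{\alpha_n}(w^{(i_n)},z_n)\cdots\mathcal Y_{\alpha_{m+1}}(w^{(i_{m+1})},z_{m+1})Y_{j_m}(u,\zeta)\mathcal Y_{\alpha_m}(w^{(i_m)},z_m)\cdots w^{(i_0)},w^{(\overline k)}\rangle,
\end{align*}
where $G_0$ absorbs $Y_{i_0}(u,\zeta)$ acting on $w^{(i_0)}$ at the far right, and $G_n$ absorbs $Y_k(u,\zeta)^{\tr}$ acting on $w^{(\overline k)}$ at the far left. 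By theorem \ref{lb65}, each $G_m(\zeta)$ converges absolutely and locally uniformly on the annulus $|z_m|<|\zeta|<|z_{m+1}|$ (conventions $|z_0|=0$, $|z_{n+1}|=\infty$), producing a holomorphic function there; moreover $G_0$ restricted to $|\zeta|<|z_1|$ equals $\psi_{i_0}$, and $G_n$ restricted to $|\zeta|>|z_n|$ equals $\psi_k$.

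The main step will be to glue the $G_m$ into a single meromorphic function on $\mathbb P^1\setminus\{0,z_1,\dots,z_n,\infty\}$ whose local Laurent expansion at $z_m$ (respectively at $\infty$) recovers $\psi_{i_m}$ (respectively $\psi_k$). For each $m=1,\dots,n$, I will apply proposition \ref{lb80} to the single intertwining operator $\mathcal Y_{\alpha_m}$: as functions of $\zeta$, the three expressions
\begin{align*}
Y_{j_m}(u,\zeta)\mathcal Y_{\alpha_m}(w^{(i_m)},z_m),\quad \mathcal Y_{\alpha_m}(w^{(i_m)},z_m)Y_{j_{m-1}}(u,\zeta),\quad \mathcal Y_{\alpha_m}(Y_{i_m}(u,\zeta-z_m)w^{(i_m)},z_m)
\end{align*}
are local expansions, respectively near $\infty$, near $0$, and near $z_m$, of a single meromorphic function on $\mathbb P^1\setminus\{0,z_m,\infty\}$. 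Inserting this identity between the remaining intertwining operators and invoking the absolute local uniform convergence statements from theorems \ref{lb65} and \ref{lb73} in their respective domains, I will conclude that $G_{m-1}$ and $G_m$ are analytic continuations of each other across the circle $|\zeta|=|z_m|$ (with potential pole at $\zeta=z_m$), and that their common analytic continuation has Laurent expansion $\psi_{i_m}$ at $\zeta=z_m$. This patches $G_0,G_1,\dots,G_n$ into a single holomorphic function $F(\zeta)$ on $\mathbb C^\times\setminus\{z_1,\dots,z_n\}$ realizing all of the required local expansions.

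Single-valuedness of $F$ on this punctured domain, and hence meromorphicity on $\mathbb P^1$ with poles only in $\{0,z_1,\dots,z_n,\infty\}$, will then follow automatically from the observation that $u\in V$ has integer conformal weight: every vertex operator $Y_{j_m}(u,\zeta)$ contains only integer powers of $\zeta$, so each local expansion $\psi_{i_m}$ lies in $\mathbb C((\zeta-z_m))$ and $\psi_k$ lies in $\mathbb C((\zeta^{-1}))$, making the monodromy of $F$ around each $z_m$ and around $\infty$ trivial. The main obstacle I anticipate is the rigorous justification of the sandwiched application of proposition \ref{lb80}: that proposition is stated for a single intertwining operator, and one must verify that when $\mathcal Y_{\alpha_m}(w^{(i_m)},z_m)$ is flanked by further intertwining operators, all three flanked expressions remain different local expansions of one and the same $(n+1)$-point correlation function in $(\zeta,z_1,\dots,z_n)$. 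This is ultimately handled by combining theorem \ref{lb73} (which reinterprets each flanked expression as a convergent $(n+1)$-point correlation function in its respective domain) with the ODE-uniqueness principle from theorem \ref{lb75}.
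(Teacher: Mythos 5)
Your proposal is correct and follows essentially the same route as the paper: Huang--Gui's proof likewise establishes the statement first in the region $0<|z_1|<\cdots<|z_n|$ via flanked applications of proposition \ref{lb80} (with proposition \ref{lb79} and theorem \ref{lb12} supplying the flanked braid and convergence statements), and then extends to all of $\Conf_n(\mathbb C^\times)$ by rewriting the gluing condition as the residue identities \eqref{eq267}, which are linear equations on $\varphi$ with single-valued holomorphic coefficients and hence propagate by the identity principle --- exactly the analytic-continuation mechanism you invoke. The only small correction is that the convergence of the flanked iterate $\mathcal X_2\,\mathcal Y_{\alpha_m}\big(Y_{i_m}(u,\zeta-z_m)w^{(i_m)},z_m\big)\mathcal X_1$ is theorem \ref{lb12} (products of generalized intertwining operators), not theorem \ref{lb73}, which covers only a single unflanked iterate.
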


\begin{proof}
	When $0<|z_1|<|z_2|<\cdots<|z_n|$, we can prove this property easily using proposition \ref{lb79}, proposition \ref{lb80}, and theorem $\ref{lb12}$. Note that this property is equivalent to  that for any $f(\zeta,z_1,\dots,z_n)\in\mathbb C[\zeta^{\pm1},(\zeta-z_1)^{-1},\dots,(\zeta-z_n)^{-1}]$,
	\begin{align}
	&\sum_{0\leq m\leq n}\Res_{\zeta=z_m}\big(\psi_{i_m}(\zeta,z_1,\dots,z_n)f(\zeta,z_1,\dots,z_n)d\zeta\big)\nonumber\\
	=&-\Res_{\zeta=\infty}\big(\psi_k(\zeta,z_1,\dots,z_n)f(\zeta,z_1,\dots,z_n)d\zeta\big).\label{eq267}
	\end{align}
 If we write down the above equations explicitly, we will find that condition \eqref{eq267} is equivalent to a set of linear equations of $\varphi$, the coefficients of which are $\End\big((W_{i_0}\otimes W_{i_1}\otimes\cdots\otimes W_{i_n}\otimes W_{\overline k})^*\big)$-valued single-valued holomorphic functions on $\Conf_n(\mathbb C^\times)$. Since $\varphi$ satisfies these equations locally, it must satisfy them globally. Therefore $\varphi$ satisfies the desired property at any point in $\Conf_n(\mathbb C^\times)$.
\end{proof}

As an application of this intertwining property, we prove a very useful uniqueness property for correlation functions.

\begin{co}\label{lb84}
Fix $(z_1,z_2,\dots,z_n)\in\Conf_n(\mathbb C^\times)$.	Let $\varphi\in\mathcal V{\overline{i_{n+1}}\choose i_n~i_{n-1}~\cdots~i_1~i_0}$ be a correlation function defined near $(z_1,z_2,\dots,z_n)$. Choose $l\in\{0,1,2,\dots,n+1 \}$.	For any $m\in\{0,1,2,\dots,n+1 \}$ such that $m\neq l$, we assume that $W_{i_m}$ is irreducible, and choose a nonzero vector $w^{(i_m)}_0\in W_{i_m}$. Suppose that for any $w^{(i_l)}\in W_{i_l}$,
	\begin{align}
	\varphi(w^{(i_0)}_0,\dots,w^{(i_{l-1})}_0,w^{(i_l)},w^{(i_{l+1})}_0,\dots,w^{(i_{n+1})}_0;z_1,z_2,\dots,z_n)=0,\label{eq268}
	\end{align}
	then $\varphi=0$.
\end{co}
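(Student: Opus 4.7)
The plan is to leverage the intertwining property of correlation functions (Proposition~\ref{lb81}) to propagate the vanishing of $\varphi$ from the hypothesis---namely, vanishing when $w^{(i_{m'})} = w^{(i_{m'})}_0$ for all $m' \neq l$---to vanishing on arbitrary inputs in every slot other than $l$. The key mechanism is that inserting a vertex operator $Y(u,\zeta)$ and studying the resulting meromorphic function of $\zeta$ allows one to transfer the vertex-operator action between different slots by comparing Laurent expansions at different punctures.

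The core step is the following lemma: suppose that for each $m' \neq l$ a vector $w^{(i_{m'})}_\ast \in W_{i_{m'}}$ has been specified, and that
$$\varphi(w^{(i_0)}_\ast, \ldots, w^{(i_{l-1})}_\ast, w^{(i_l)}, w^{(i_{l+1})}_\ast, \ldots, w^{(i_{n+1})}_\ast; z_1, \ldots, z_n) = 0$$
for every $w^{(i_l)} \in W_{i_l}$. Then for any $m \neq l$, any $u \in V$, and any $h \in \mathbb{Z}$, the same equality holds when $w^{(i_m)}_\ast$ is replaced by $Y_{i_m}(u,h) w^{(i_m)}_\ast$ (interpreted for the dual slot $m = n+1$ via the contragredient action on $W_{\overline k} = W_{i_{n+1}}$). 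To prove this, fix $u \in V$ and apply Proposition~\ref{lb81} with inputs $w^{(i_0)}_\ast, \ldots, w^{(i_l)}, \ldots, w^{(i_{n+1})}_\ast$: the resulting single-valued meromorphic function $\psi(\zeta)$ on $\mathbb{P}^1 \setminus \{0, z_1, \ldots, z_n, \infty\}$ has Laurent expansion at the ``$l$-puncture'' (i.e.\ $z_l$ when $1 \leq l \leq n$, $0$ when $l = 0$, $\infty$ when $l = n+1$) whose coefficients are precisely the values of $\varphi$ evaluated with $Y_{i_l}(u, h) w^{(i_l)}$ (or the contragredient analog, when $l = n+1$) in the $l$-th slot. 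These all vanish by hypothesis; hence $\psi$ vanishes in a punctured neighborhood of the $l$-puncture, and by the identity theorem on the connected domain $\mathbb{P}^1 \setminus \{0, z_1, \ldots, z_n, \infty\}$, one concludes $\psi \equiv 0$. Reading off the Laurent expansion of $\psi$ at the $m$-puncture then yields the desired vanishing.

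Next I would iterate the lemma. Starting from the hypothesis with $w^{(i_{m'})}_\ast = w^{(i_{m'})}_0$, the lemma lets me replace $w^{(i_m)}_0$ by $Y_{i_m}(u, h) w^{(i_m)}_0$ for any $u, h$, and the same reasoning then applies again with the modified vector in slot $m$ now playing the role of $w^{(i_m)}_\ast$. Because $W_{i_m}$ is irreducible and $w^{(i_m)}_0 \neq 0$, the linear span of vectors of the form $Y_{i_m}(u_1, h_1) \cdots Y_{i_m}(u_r, h_r) w^{(i_m)}_0$ equals all of $W_{i_m}$. Hence by repeated application one may replace $w^{(i_m)}_0$ by an arbitrary vector $w^{(i_m)} \in W_{i_m}$. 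Carrying this out successively for every $m \neq l$ shows that $\varphi(w^{(i_0)}, \ldots, w^{(i_{n+1})}; z_1, \ldots, z_n) = 0$ for all choices of inputs, so $\varphi \equiv 0$.

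The main obstacle is the identity-theorem step inside the lemma---verifying that vanishing of every Laurent coefficient of $\psi$ at a single puncture forces $\psi$ to vanish on its entire domain of analyticity. Thanks to Proposition~\ref{lb81}, $\psi$ is already known to be single-valued meromorphic on $\mathbb{P}^1$ minus the specified finite set, so the implication is classical once single-valuedness is granted. A secondary subtlety is the uniform treatment of the dual slot (either as the free slot $l = n+1$ or as a modified slot $m = n+1$): one uses the contragredient formula $Y_{\overline k}(v, x) = Y_k\big(e^{xL_1}(-x^{-2})^{L_0} v,\, x^{-1}\big)^{\tr}$ to identify the modes of $Y_k(u, \cdot)^{\tr}$ acting on $W_{\overline k} = W_{i_{n+1}}$ with modes of $Y_{i_{n+1}}$, and then irreducibility of $W_{i_{n+1}}$ finishes the generation argument as before.
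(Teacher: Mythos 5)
Your proposal is correct and follows essentially the same route as the paper's proof: insert $Y(u,\zeta)$, use proposition \ref{lb81} to realize the resulting series as a single-valued holomorphic function on $\mathbb P^1$ minus the punctures, kill that function via its vanishing Laurent expansion at the free puncture $l$ together with the identity theorem, read off the expansion at the puncture $m$, and iterate using irreducibility of $W_{i_m}$ (your uniform treatment of the dual slot via the contragredient formula $Y_{\overline k}(v,x)=Y_k(e^{xL_1}(-x^{-2})^{L_0}v,x^{-1})^\tr$ is exactly what the paper abbreviates as ``the case $l=n+1$ can be proved in a similar way''). The only addition needed is at the very end: vanishing of $\varphi$ on all inputs at the single point $(z_1,\dots,z_n)$ yields $\varphi\equiv 0$ globally only after invoking that correlation functions are determined by their values at a fixed point, which follows from theorem \ref{lb75} and the translation property, as the paper's last line does.
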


\begin{proof}
	We assume that $l\leq n$. The case that $l=n+1$ can be proved in a similar way. Suppose that \eqref{eq268} holds. Then for any $u\in V$, the formal series in $\mathbb C((\zeta-z_l))$:
	\begin{align}
	\varphi(w^{(i_0)}_0,\dots,w^{(i_{l-1})}_0,Y_{i_l}(u,\zeta-z_l)w^{(i_l)},w^{(i_{l+1})}_0,\dots,w^{(i_{n+1})}_0;z_1,z_2,\dots,z_n)\label{eq269}
	\end{align}
	equals zero. By proposition \ref{lb81}, \eqref{eq269} is the expansion of a global holomorphic function (which must be zero) on $\mathbb P\setminus\{z_0,z_1,\dots,z_n,\infty\}$, and when $\zeta$ is near $z_0=0$, this function becomes
	\begin{align}
	\varphi(Y_{i_0}(u,\zeta)w^{(i_0)}_0,w^{(i_1)}_0\dots,w^{(i_{l-1})}_0,w^{(i_l)},w^{(i_{l+1})}_0,\dots,w^{(i_{n+1})}_0;z_1,z_2,\dots,z_n),
	\end{align}
	which is zero. Therefore, for each mode $Y_{i_1}(u,s)$ ($s\in\mathbb Z$), we have
	\begin{align}
	\varphi(Y_{i_0}(u,s)w^{(i_0)}_0,w^{(i_1)}_0\dots,w^{(i_{l-1})}_0,w^{(i_l)},w^{(i_{l+1})}_0,\dots,w^{(i_{n+1})}_0;z_1,z_2,\dots,z_n)=0.
	\end{align}
	Since $W_{i_0}$ is irreducible, for any $w^{(i_0)}\in W_0$ we have
	\begin{align}
	\varphi(w^{(i_0)},w^{(i_1)}_0\dots,w^{(i_{l-1})}_0,w^{(i_l)},w^{(i_{l+1})}_0,\dots,w^{(i_{n+1})}_0;z_1,z_2,\dots,z_n)=0.
	\end{align}
	If we repeat this argument several times,  we see that for any $w^{(i_0)}\in W_{i_0},w^{(i_1)}\in W_{i_1},\dots,w^{(i_{n+1})}\in W_{i_{n+1}}$,
	\begin{align}
	\varphi(w^{(i_0)},w^{(i_1)},\dots,w^{(i_{n+1})};z_1,\dots,z_n)=0.
	\end{align}
	Hence $\varphi$ equals zero at $(z_1,\dots,z_n)$. By theorem \ref{lb75} and the translation property, the value of $\varphi$ equals zero at any point.
\end{proof}

\subsubsection*{Braiding and fusion with creation operators}

\begin{lm}\label{lb1}

	Let $\mathcal Y_\alpha$ be a type $k\choose i~j$ intertwining operator. Then for any $w^{(i)}\in W_i,w^{(j)}\in W_j,w^{(\overline k)}\in W_{\overline k}$, $z\in \mathbb C^\times$ and $z_0\in\mathbb C$:\\
	(1) If $0\leq|z_0|<|z|$, and $\arg (z-z_0)$ is close to $\arg z$ as $z_0\rightarrow 0$, then
	\begin{align}
	\sum_{s\in\mathbb R}\langle w^{(\overline k)},\mathcal Y_\alpha(w^{(i)},z)P_se^{z_0L_{-1}}w^{(j)}\rangle
	\end{align}
	converges absolutely and equals 
	\begin{align}
	\langle w^{(\overline k)},e^{z_0L_{-1}}\mathcal Y_\alpha(w^{(i)},z-z_0)w^{(j)}\rangle.
	\end{align}
	We simply write
	\begin{equation}\label{eq10}
	e^{z_0L_{-1}}\mathcal Y_\alpha(w^{(i)},z-z_0)=\mathcal Y_\alpha(w^{(i)},z)e^{z_0L_{-1}}.
	\end{equation}
	(2) If $0\leq|z_0|<|z|^{-1}$ and $\arg (1-zz_0)$ is close to $\arg 1=0$ as $z_0\rightarrow 0$, then 
	\begin{align}
	\sum_{s\in\mathbb R}\langle w^{(\overline k)},e^{z_0L_1}P_s\mathcal Y_\alpha(w^{(i)},z)w^{(j)}\rangle
	\end{align}
	converges absolutely and equals
	\begin{align}
	\big\langle w^{(\overline k)},\mathcal Y_\alpha\big(e^{z_0(1-zz_0)L_1}(1-zz_0)^{-2L_0}w^{(i)},z/(1-zz_0)\big)e^{z_0L_1}w^{(j)}\big\rangle.\label{eq274}
	\end{align}
	We simply write
	\begin{align}
	e^{z_0L_1}\mathcal Y_\alpha(w^{(i)},z)=\mathcal Y_\alpha\big(e^{z_0(1-zz_0)L_1}(1-zz_0)^{-2L_0}w^{(i)},z/(1-zz_0)\big)e^{z_0L_1}.\label{eq11}
	\end{align}
\end{lm}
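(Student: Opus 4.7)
Proof plan for Lemma \ref{lb1}.

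The lemma contains two parallel identities expressing how the intertwining operator transforms under $L_{-1}$- and $L_1$-translations. My plan is to prove Part (1) by combining a formal power-series identity with analytic continuation, and then derive Part (2) from Part (1) applied to the contragredient intertwining operator $\mathcal Y_{C\alpha}$.

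For Part (1), I would first establish the formal-series identity $\mathcal Y_\alpha(w^{(i)},x)\,e^{z_0 L_{-1}}=e^{z_0 L_{-1}}\,\mathcal Y_\alpha(w^{(i)},x-z_0)$, viewed as equality of power series in $z_0$. This follows by iterating the translation property $[L_{-1},\mathcal Y_\alpha(w^{(i)},x)]=\frac{d}{dx}\mathcal Y_\alpha(w^{(i)},x)$: setting $F(t)=e^{tL_{-1}}\mathcal Y_\alpha(w^{(i)},x)e^{-tL_{-1}}$ gives $\partial_t F=\partial_x F$, so $F(t)=\mathcal Y_\alpha(w^{(i)},x+t)$. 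Next, I would reinterpret the left-hand side of the claim as a two-point correlation function via the creation-operator formula \eqref{eq233}, namely $e^{z_0 L_{-1}}w^{(j)}=\mathcal Y^j_{j0}(w^{(j)},z_0)\Omega$. Theorem \ref{lb65} then gives absolute and locally uniform convergence of $\sum_s\langle w^{(\overline k)},\mathcal Y_\alpha(w^{(i)},z)P_s\mathcal Y^j_{j0}(w^{(j)},z_0)\Omega\rangle$ for $0<|z_0|<|z|$, producing a holomorphic function $g(z_0)$. On the other hand, $f(z_0):=\langle w^{(\overline k)},e^{z_0L_{-1}}\mathcal Y_\alpha(w^{(i)},z-z_0)w^{(j)}\rangle=\langle e^{z_0L_1}w^{(\overline k)},\mathcal Y_\alpha(w^{(i)},z-z_0)w^{(j)}\rangle$ is holomorphic on the same domain (the sum $e^{z_0L_1}w^{(\overline k)}$ is finite since $L_1$ lowers energy and energies in $W_{\overline k}$ are bounded below by Proposition \ref{lb88}). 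Matching $f$ and $g$ by comparing Taylor coefficients at $z_0=0$, which agree by the formal identity combined with the expansion $\mathcal Y_\alpha(w^{(i)},z-z_0)=\sum_n\frac{(-z_0)^n}{n!}\mathcal Y_\alpha(L_{-1}^nw^{(i)},z)$ from the translation property, and invoking analytic continuation on the connected domain, completes Part (1).

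For Part (2), I would apply Part (1) to the contragredient intertwining operator $\mathcal Y_{C\alpha}\in\mathcal V{\overline j\choose i~\overline k}$ with variable $y$. Taking the transpose of both sides (so that $e^{z_0 L_{-1}}$ on $W_{\overline k}$ and $W_{\overline j}$ becomes $e^{z_0 L_1}$ on $W_j$ and $W_k$) and using \eqref{eq232} to write $\mathcal Y_{C\alpha}(w^{(i)},y)^{\tr}=\mathcal Y_\alpha(e^{yL_1}(e^{-i\pi}y^{-2})^{L_0}w^{(i)},y^{-1})$, I would then substitute $x=y^{-1}$ (so $y-z_0=(1-xz_0)/x$ and $(y-z_0)^{-1}=x/(1-xz_0)$) and absorb the charge-vector transformation by renaming the vector. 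Collapsing the remaining factor via the commutation relation $c^{L_0}e^{tL_1}=e^{(t/c)L_1}c^{L_0}$ (coming from $[L_0,L_1]=-L_1$), a short algebraic computation produces exactly the transformed charge $e^{z_0(1-xz_0)L_1}(1-xz_0)^{-2L_0}w^{(i)}$ and the new insertion point $x/(1-xz_0)$. The domain $|z_0|<1/|z|$ with $\arg(1-zz_0)$ close to $0$ as $z_0\to 0$ translates directly from the Part (1) condition $|z_0|<|y|$ with $\arg(y-z_0)$ close to $\arg y$.

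The main obstacle is the bridge from formal to analytic in Part (1): one must verify that $f$ and $g$ are both genuine holomorphic functions on a common connected domain containing $z_0=0$, and that their Taylor expansions there agree term-by-term. Convergence of $g$ rests essentially on Theorem \ref{lb65}; that of $f$ is more direct because $e^{z_0L_1}w^{(\overline k)}$ is a finite sum. The term-by-term matching is forced by the formal identity once one expands using the translation property. A secondary subtlety in Part (2) is tracking the multivalued factors $(e^{-i\pi}y^{-2})^{L_0}$ and $(e^{-i\pi}x^2)^{-L_0}$ coming from \eqref{eq232}, but these cancel cleanly when evaluated on weight-homogeneous vectors.
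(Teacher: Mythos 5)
Your proposal is correct, and your Part (2) --- deriving the $L_1$-identity by applying Part (1) to the contragredient operator and then collapsing the resulting factors with $e^{xL_1}x_0^{L_0}=x_0^{L_0}e^{xx_0L_1}$, i.e.\ \eqref{eq101} --- is essentially the paper's own route: the paper writes $\alpha=C^{-1}C\alpha$ and unfolds the definitions \eqref{eq102} and \eqref{eq232}, which is your transpose manipulation in different notation, and it likewise translates $|z_0|<|z|^{-1}$ and $\arg(z^{-1}-z_0)$ close to $-\arg z$ into the stated condition on $\arg(1-zz_0)$. Where you genuinely diverge is the convergence mechanism in Part (1). The paper never invokes Theorem \ref{lb65}: after the formal identity \eqref{eq21} (from \cite{FHL} \S 5.4, as you also use), it observes that for a fixed homogeneous $w^{(\overline k)}$ the pairing $\langle w^{(\overline k)},e^{x_0L_{-1}}\mathcal Y_\alpha(w^{(i)},x_1)w^{(j)}\rangle$ is a \emph{finite} sum $\sum_m c_m x_0^m x_1^{d-m}$ (only finitely many powers of $L_{-1}$ can return to the energy level of $w^{(\overline k)}$, by lower truncation of $\mathcal Y_\alpha$), so substituting $x_0=z_0$, $x_1=z-z_0$ and expanding each $(z-z_0)^{d-m}$ binomially yields an absolutely convergent double series on $|z_0|<|z|$ outright, and regrouping by powers of $z_0$ is precisely the series in the statement. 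Your route instead writes $e^{z_0L_{-1}}w^{(j)}=\mathcal Y^j_{j0}(w^{(j)},z_0)\Omega$, extracts absolute convergence from Theorem \ref{lb65} applied to the chain $\mathcal Y^j_{j0},\mathcal Y_\alpha$, and then identifies the two holomorphic functions by matching Taylor coefficients at $z_0=0$. This is valid --- the standing hypotheses \eqref{eq302}--\eqref{eq304} are in force in chapter \ref{lb77}, and there is no circularity since Theorem \ref{lb65} is imported from \cite{H ODE} rather than proved here --- but it is strictly heavier: it requires the rationality and $C_2$-cofiniteness hypotheses underlying Huang's differential equations, whereas the paper's finite-sum argument is elementary, valid for arbitrary modules of any VOA, and already delivers absolute convergence of exactly the stated series with no extra identity-theorem step. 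Two small points to repair: the finiteness of $e^{z_0L_1}w^{(\overline k)}$ follows from lower-boundedness of the grading of $W_{\overline k}$ (a module axiom), not from Proposition \ref{lb88}, which is the positive-energy statement for \emph{unitary} modules --- the lemma assumes no unitarity; and in Part (2) the phase bookkeeping you defer is exactly where care is needed, since a wrong choice of arguments shifts the charge factor by $e^{2\pi ikL_0}$, which is nontrivial on non-integer weights (it is the twist $\vartheta$) --- the paper's computation \eqref{eq273}--\eqref{eq274}, carried out with the stated argument conventions, is what certifies the cancellation you assert.
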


\begin{proof}
	Assume without loss of generality that all the vectors are homogeneous.	
	
	(1)
	Let $x,x_0,x_1$ be commuting independent formal variables.  Note first of all that \eqref{eq10} holds in the formal sense:
	\begin{align}
	\langle w^{(\overline k)},e^{x_0L_{-1}}\mathcal Y_\alpha(w^{(i)},x-x_0)w^{(j)}\rangle=\langle w^{(\overline k)},\mathcal Y_\alpha(w^{(i)},x)e^{x_0L_{-1}}w^{(j)}\rangle,\label{eq21}
	\end{align}
	where $$\mathcal Y_\alpha(w^{(i)},x-x_0)=\sum_{s\in\mathbb R}\sum_{r\in\mathbb Z_{\geq0}}\mathcal Y_\alpha(w^{(i)},s){-s-1\choose r}x^{-s-1-r}(-x_0)^r.$$ Equation \eqref{eq21} can be proved using the relation $[L_{-1},\mathcal Y_\alpha(w^{(i)},x)]=\frac d{dx}\mathcal Y_\alpha(w^{(i)},x)$. (See  \cite{FHL} section 5.4 for more details.) Write
	\begin{align}
	\langle w^{(\overline k)},e^{x_0L_{-1}}\mathcal Y_\alpha(w^{(i)},x_1)w^{(j)}\rangle=\sum_{m\in\mathbb Z_{\geq0}}c_mx_0^mx_1^{d-m}
	\end{align}
	where $d\in\mathbb R$ and $c_m\in\mathbb C$. Clearly $c_m=0$ for all but finitely many $m$.	Then the left hand side of \eqref{eq21} equals
	\begin{align*}
	\sum_{m,l\in\mathbb Z_{\geq0}}c_mx_0^m\cdot{d-m\choose l}x^{d-m-l}(-x_0)^l.
	\end{align*}

	We now substitute $z$ and $z_0$ for $x$ and $x_0$ in equation \eqref{eq21}. For any $z_0$ satisfying $0\leq|z_0|<|z|$,  let $\arg(z-z_0)$ be close to $\arg z$ as $z_0\rightarrow 0$. Then
	\begin{align}
	&\langle w^{(\overline k)},e^{z_0L_{-1}}\mathcal Y_\alpha(w^{(i)},z-z_0)w^{(j)}\rangle\nonumber\\
	=&\langle w^{(\overline k)},e^{x_0L_{-1}}\mathcal Y_\alpha(w^{(i)},x_1)w^{(j)}\rangle\big|_{x_0=z_0,x_1=z-z_0}\nonumber\\
	=&\sum_{m\in\mathbb Z_{\geq0}}c_mz_0^m(z-z_0)^{d-m}\nonumber\\
	=&\sum_{m,l\in\mathbb Z_{\geq0}}c_mz_0^m\cdot{d-m\choose l}z^{d-m-l}(-z_0)^l,\label{eq271}
	\end{align}
	which converges absolutely and equals
	\begin{align}
	&\langle w^{(\overline k)},\mathcal Y_\alpha(w^{(i)},x)e^{x_0L_{-1}}w^{(j)}\rangle\big|_{x=z,x_0=z_0}\nonumber\\
	=&\langle w^{(\overline k)},\mathcal Y_\alpha(w^{(i)},z)e^{z_0L_{-1}}w^{(j)}\rangle.\label{eq22}
	\end{align}
	This proves part (1).\\
	
	(2) Since $\alpha=C^{-1}C\alpha$, we have
	\begin{align*}
	&\sum_{s\in\mathbb R}\langle w^{(\overline k)},e^{z_0L_1}P_s\mathcal Y_\alpha(w^{(i)},z)w^{(j)}\rangle\\
=&\sum_{s\in\mathbb R}\langle P_se^{z_0L_1}w^{(\overline k)},\mathcal Y_{C^{-1}C\alpha}(w^{(i)},z)w^{(j)}\rangle\\
	=&\sum_{s\in\mathbb R}\big\langle\mathcal Y_{C\alpha}\big(e^{zL_1}(e^{i\pi}z^{-2})^{L_0}w^{(i)},z^{-1}\big) P_se^{z_0L_{-1}} w^{(\overline k)},w^{(j)}\big\rangle,
	\end{align*}
	which, according to part (1), converges absolutely and equals
	\begin{gather}
	\big\langle e^{z_0L_{-1}}\mathcal Y_{C\alpha}\big(e^{zL_1}(e^{i\pi}z^{-2})^{L_0}w^{(i)},z^{-1}-z_0\big)  w^{(\overline k)},w^{(j)}\big\rangle,\label{eq272}
	\end{gather}
	where $\arg(z^{-1}-z_0)$ is close to $\arg(z^{-1})=-\arg z$ as $z_0\rightarrow 0$.	This is equivalent to saying that $\arg(1-zz_0)$ is close to $0$ as $z_0\rightarrow 0$.
	
	By the definition of $C\alpha$, \eqref{eq272} equals
	\begin{align}
	&\big\langle \mathcal Y_{C\alpha}\big(e^{zL_1}(e^{i\pi}z^{-2})^{L_0}w^{(i)},z^{-1}-z_0\big)  w^{(\overline k)},e^{z_0L_1}w^{(j)}\big\rangle\nonumber\\
	=	&\big\langle   w^{(\overline k)},\mathcal Y_\alpha\big(e^{(z^{-1}-z_0)L_1}(e^{-i\pi}(z^{-1}-z_0)^{-2})^{L_0}\nonumber\\
	&\cdot e^{zL_1}(e^{i\pi}z^{-2})^{L_0}w^{(i)},(z^{-1}-z_0)^{-1}\big)e^{z_0L_{1}}w^{(j)}\big\rangle.\label{eq273}
	\end{align}
	Note that \eqref{eq101} also holds when	$x\in\mathbb C,x_0\in\mathbb C^\times$. Therefore, by applying relation \eqref{eq101}, expression \eqref{eq273} equals \eqref{eq274}. This finishes the proof of part (2).
\end{proof}

\begin{pp}\label{lb10}
	Let $z_1,\dots,z_n\in\mathbb C^\times$ satisfy $|z_1|<|z_2|<\dots<|z_n|$ and $|z_2-z_1|<\dots<|z_n-z_1|<|z_1|$. Choose arguments $\arg z_1,\arg z_2,\dots,\arg z_n$. For each $2\leq m\leq n$, we let $\arg (z_m-z_1)$ be close to $\arg z_m$ as $z_1\rightarrow 0$.
	Let $\mathcal Y_{\sigma_2},\dots,\mathcal Y_{\sigma_n}$ be a chain of intertwining operators of $V$ with charge spaces $W_{i_2},\dots,W_{i_n}$ respectively. Let $W_{i_1}$ be the source space of $\mathcal Y_{\sigma_2}$, and let $W_i$ be the target space of $\mathcal Y_{\sigma_n}$. Then for any $w^{(i_1)}\in W_{i_1},w^{(i_2)}\in W_{i_2},\dots,w^{(i_n)}\in W_{i_n}$, we have the  fusion relation
	\begin{align}
	&\mathcal Y^i_{i0}\big(\mathcal Y_{\sigma_n}(w^{(i_n)},z_n-z_1)\cdots\mathcal Y_{\sigma_2}(w^{(i_2)},z_2-z_1)w^{(i_1)},z_1\big)\nonumber\\
	=&\mathcal Y_{\sigma_n}(w^{(i_n)},z_n)\cdots\mathcal Y_{\sigma_2}(w^{(i_2)},z_2)\mathcal Y^{i_1}_{i_10}(w^{(i_1)},z_1).\label{eq37}
	\end{align}
\end{pp}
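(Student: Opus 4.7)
The plan is to reduce both sides to correlation functions in the same space and verify they agree by testing on the vacuum. By Theorem \ref{lb73}, under the hypothesis $|z_2-z_1|<\cdots<|z_n-z_1|<|z_1|$, the left-hand side paired with any $w^{(\overline i)}\in W_{\overline i}$ gives a correlation function $\varphi_L\in\mathcal V{i\choose i_n~\cdots~i_1~0}$; by Theorem \ref{lb65}, under $|z_1|<|z_2|<\cdots<|z_n|$, the right-hand side paired with $w^{(\overline i)}$ gives another correlation function $\varphi_R$ in the same space. By Theorem \ref{lb75} it suffices to prove $\varphi_L=\varphi_R$ as global correlation functions, which by linearity and the complete reducibility \eqref{eq303} may be checked after reducing to the case where $W_{i_1},\dots,W_{i_n},W_i$ are irreducible.

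First I would set $v=\Omega$ and compute both sides directly. Using $\mathcal Y^i_{i0}(w^{(i)},x)\Omega=e^{xL_{-1}}w^{(i)}$ from \eqref{eq233}, the left-hand side applied to $\Omega$ becomes
\begin{align*}
e^{z_1L_{-1}}\mathcal Y_{\sigma_n}(w^{(i_n)},z_n-z_1)\cdots\mathcal Y_{\sigma_2}(w^{(i_2)},z_2-z_1)w^{(i_1)},
\end{align*}
while the right-hand side applied to $\Omega$ becomes
\begin{align*}
\mathcal Y_{\sigma_n}(w^{(i_n)},z_n)\cdots\mathcal Y_{\sigma_2}(w^{(i_2)},z_2)\,e^{z_1L_{-1}}w^{(i_1)}.
\end{align*}
The task is then to commute $e^{z_1L_{-1}}$ from the far left past each of the $n-1$ operators $\mathcal Y_{\sigma_n},\dots,\mathcal Y_{\sigma_2}$ to the far right, converting each argument $z_m-z_1$ into $z_m$. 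This is exactly the content of Lemma \ref{lb1}(1) applied to one $\mathcal Y_{\sigma_m}$ at a time, whose hypotheses $|z_1|<|z_m|$ and ``$\arg(z_m-z_1)$ close to $\arg z_m$ as $z_1\to 0$'' are built into the assumptions of the proposition. To pass from the single-vector version of Lemma \ref{lb1}(1) to a product, I would work at the level of matrix coefficients, insert energy projections $P_s$ between consecutive intertwining operators so that each intermediate input is a genuine finite-energy vector, and invoke the absolute and locally uniform convergence provided by Theorems \ref{lb65} and \ref{lb73} to rearrange the resulting double sums. This identifies the two matrix coefficients, giving $\varphi_L=\varphi_R$ whenever the source vector is $\Omega$.

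Finally I would apply Corollary \ref{lb84} to $\varphi:=\varphi_L-\varphi_R$ with $l=1$, taking $w^{(i_0)}_0=\Omega$ (which is nonzero in the irreducible module $W_0=V$) and arbitrary nonzero vectors in the irreducible modules $W_{i_2},\dots,W_{i_n},W_{\overline i}$; the previous step ensures that $\varphi$ vanishes for every $w^{(i_1)}\in W_{i_1}$ in this configuration, and the corollary then forces $\varphi\equiv 0$, which is the desired identity. The main obstacle is the second step: Lemma \ref{lb1}(1) concerns a single intertwining operator acting on a single vector, so iterating it across a product of $n-1$ operators requires careful bookkeeping of the energy decompositions and of the interchange of absolutely convergent sums. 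This is precisely where the convergence theorems established earlier in Chapter 2 become essential.
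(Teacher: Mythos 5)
Your overall architecture is exactly the paper's: test both sides on $\Omega$ via \eqref{eq233}, walk $e^{z_1L_{-1}}$ through the product one operator at a time using Lemma \ref{lb1}(1), and then propagate from $\Omega$ to all of $V$ by Corollary \ref{lb84}. Your last step is fine and differs only cosmetically from the paper: after reducing to irreducible modules you can quote Corollary \ref{lb84} verbatim with $l=1$, whereas the paper skips the reduction and invokes \emph{the proof of} the corollary, since with all slots other than the $V$-slot already arbitrary one only needs irreducibility of $W_0=V$ (i.e.\ simplicity of $V$ and cyclicity of $\Omega$) together with Proposition \ref{lb81}.

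The genuine gap is in the convergence step, which you correctly identify as the main obstacle but then resolve with the wrong tools. The intermediate quantities $\langle \mathcal Y_{\sigma_n}(w^{(i_n)},z_n)\cdots e^{z_1L_{-1}}\mathcal Y_{\sigma_m}(w^{(i_m)},z_m-z_1)\cdots\mathcal Y_{\sigma_2}(w^{(i_2)},z_2-z_1)w^{(i_1)},w^{(\overline i)}\rangle$ are neither pure products (Theorem \ref{lb65}) nor pure iterates (Theorem \ref{lb73}): they are mixed product--iterate expressions, and to interchange the sum produced by Lemma \ref{lb1}(1) with the remaining energy sums one needs absolute convergence of the full multi-indexed sum, which those two theorems do not provide. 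The paper obtains it by rewriting $e^{z_1L_{-1}}(\,\cdot\,)=\mathcal Y^{j_m}_{j_m0}(\,\cdot\,,z_1)\Omega$, so that each intermediate expression becomes a product of generalized intertwining operators, and then invoking Theorem \ref{lb12}. But condition (2) of Theorem \ref{lb12} requires $|z_1|+|z_m-z_1|<|z_{m+1}|$, and this does \emph{not} follow from the hypotheses of the proposition: $|z_m-z_1|<|z_1|<|z_{m+1}|$ is compatible with $|z_1|+|z_m-z_1|\geq|z_{m+1}|$ (e.g.\ $|z_1|=1$, $|z_m-z_1|=0.9$, $|z_{m+1}|=1.05$). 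So one must first specialize to a configuration where the inequality holds --- the paper places $z_1,\dots,z_n$ on a common ray from the origin, where $|z_1|+|z_m-z_1|=|z_m|<|z_{m+1}|$, which is \eqref{eq280} --- prove \eqref{eq37} there, and only then recover the general case by analytic continuation via Theorem \ref{lb75}. This preliminary reduction, and the appeal to Theorem \ref{lb12} rather than Theorems \ref{lb65} and \ref{lb73}, are missing from your proposal; without them the rearrangement of the double sums is unjustified.
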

\begin{proof}
	We  assume that $z_1,z_2,\dots,z_n$ are on the same ray emitting from the origin (e.g. on $\mathbb R_{>0}$). (We don't assume, however, that these complex values have the same argument.) Then for each $2\leq m\leq n$, these complex numbers satisfy
	\begin{align}
	|z_1|+|z_m-z_1|<|z_{m+1}|.\label{eq280}
	\end{align}
	If \eqref{eq37} is proved at these points, then by theorem \ref{lb75} and analytic continuation, \eqref{eq37} holds in general.
	
	Choose any $w^{(\overline i)}\in W_{\overline i}$. Using equations \eqref{eq233} and \eqref{eq10} several times, we have
	\begin{align}
	&\langle\mathcal Y_{\sigma_n}(w^{(i_n)},z_n)\cdots\mathcal Y_{\sigma_3}(w^{(i_3)},z_3)\mathcal Y_{\sigma_2}(w^{(i_2)},z_2)\mathcal Y^{i_1}_{i_10}(w^{(i_1)},z_1)\Omega,w^{(\overline i)}\rangle\nonumber\\
	=&\langle \mathcal Y_{\sigma_n}(w^{(i_n)},z_n)\cdots\mathcal Y_{\sigma_3}(w^{(i_3)},z_3)\mathcal Y_{\sigma_2}(w^{(i_2)},z_2)e^{z_1L_{-1}}w^{(i_1)},w^{(\overline i)}\rangle\nonumber\\
	=&\langle \mathcal Y_{\sigma_n}(w^{(i_n)},z_n)\cdots\mathcal Y_{\sigma_3}(w^{(i_3)},z_3)e^{z_1L_{-1}}\mathcal Y_{\sigma_2}(w^{(i_2)},z_2-z_1)w^{(i_1)},w^{(\overline i)}\rangle\nonumber\\
	=&\langle \mathcal Y_{\sigma_n}(w^{(i_n)},z_n)\cdots e^{z_1L_{-1}}\mathcal Y_{\sigma_3}(w^{(i_3)},z_3-z_1)\mathcal Y_{\sigma_2}(w^{(i_2)},z_2-z_1)w^{(i_1)},w^{(\overline i)}\rangle\nonumber\\
	&\qquad\qquad\qquad\qquad\qquad\qquad\qquad\vdots\nonumber\\
	=&\langle e^{z_1L_{-1}}\mathcal Y_{\sigma_n}(w^{(i_n)},z_n-z_1)\cdots \mathcal Y_{\sigma_3}(w^{(i_3)},z_3-z_1)\mathcal Y_{\sigma_2}(w^{(i_2)},z_2-z_1)w^{(i_1)},w^{(\overline i)}\rangle\nonumber\\
	=&\big\langle \mathcal Y^i_{i0}\big(\mathcal Y_{\sigma_n}(w^{(i_n)},z_n-z_1)\cdots \mathcal Y_{\sigma_2}(w^{(i_2)},z_2-z_1)w^{(i_1)},z_1\big)\Omega,w^{(\overline i)}\big\rangle.
	\end{align}
	Note that in order to make the above argument  valid, we have to check that the expression in each step converges absolutely. To see this, we choose any $m=1,\dots,n$, and let $W_{j_m}$ be the target space of $\mathcal Y_{\sigma_m}$. Then
	\begin{align}
	&\langle \mathcal Y_{\sigma_n}(w^{(i_n)},z_n)\cdots e^{z_1L_{-1}}\mathcal Y_{\sigma_m}(w^{(i_m)},z_m-z_1)\cdots\mathcal Y_{\sigma_2}(w^{(i_2)},z_2-z_1)w^{(i_1)},w^{(\overline i)}\rangle\nonumber\\
	=&\sum_{s_1,\dots,s_{n-1}\in\mathbb R}\langle   \mathcal Y_{\sigma_n}(w^{(i_n)},z_n)P_{s_{n-1}} \cdots P_{s_1}e^{z_1L_{-1}}P_{s_m}\mathcal Y_{\sigma_m}(w^{(i_m)},z_m-z_1)P_{s_{m-1}}\nonumber\\
	&\qquad\qquad \cdots P_{s_2}\mathcal Y_{\sigma_2}(w^{(i_2)},z_2-z_1)w^{(i_1)},w^{(\overline i)}\rangle\nonumber\\
	=&\sum_{s_1,\dots,s_{n-1}\in\mathbb R}\big\langle   \mathcal Y_{\sigma_n}(w^{(i_n)},z_n)P_{s_{n-1}} \cdots P_{s_1}\mathcal Y^{j_m}_{j_m0}\big(P_{s_m}\mathcal Y_{\sigma_m}(w^{(i_m)},z_m-z_1)P_{s_{m-1}}\nonumber\\
	&\qquad\qquad \cdots P_{s_2}\mathcal Y_{\sigma_2}(w^{(i_2)},z_2-z_1)w^{(i_1)},z_1\big)\Omega,w^{(\overline i)}\big\rangle,
	\end{align}
	which, by \eqref{eq280} and theorem \ref{lb12},	converges absolutely. Therefore, equation \eqref{eq37} holds when both sides act on the vacuum vector $\Omega$. By (the proof of) corollary \ref{lb84}, equation \eqref{eq37} holds when acting on any vector $v\in V$.
\end{proof}

\begin{co}\label{lb17}
	Let $\mathcal Y_\alpha\in\mathcal V {k\choose i~j}$. Let $z_i,z_j\in S^1$ with arguments satisfying $\arg z_j<\arg z_i<\arg z_j+2\pi$. Then for any $w^{(i)}\in W_i$ and $w^{(j)}\in W_j$, we have the braid relation
	\begin{align}
	\mathcal Y_\alpha(w^{(i)},z_i)\mathcal Y^j_{j0}(w^{(j)},z_j)=\mathcal Y_{B_+\alpha}(w^{(j)},z_j)\mathcal Y^i_{i0}(w^{(i)},z_i).
	\end{align}
\end{co}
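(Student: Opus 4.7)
My plan is to derive the stated braid relation by combining the fusion relation from Proposition \ref{lb10} applied to each side with the braiding identity from Proposition \ref{lb85}, together with the analytic continuation of correlation functions (Theorem \ref{lb75}) and the uniqueness statement of Corollary \ref{lb84}.

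First I would reduce to checking the equation when both sides are applied to $\Omega \in V$ and evaluated against any $w^{(\overline k)} \in W_{\overline k}$. This is legitimate because the vacuum module $V$ is irreducible (it is of CFT type and isomorphic to $V'$, hence simple), $\Omega \neq 0$, and, with the other vectors and positions held fixed, both sides define correlation functions in $\mathcal V{k\choose i\ j\ 0}$; Corollary \ref{lb84} then upgrades equality at $\Omega$ to equality on all of $V$. After this reduction both sides, viewed as functions of $(z_i,z_j)\in\Conf_2(\mathbb C^\times)$, are correlation functions, so by Theorem \ref{lb75} they lift uniquely to single-valued holomorphic functions on $\widetilde\Conf_2(\mathbb C^\times)$, and it suffices to identify these two lifts along a single path.

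Next, in the region $R_{-}=\{0<|z_j|<|z_i|,\ |z_i-z_j|<|z_j|\}$, with $\arg(z_i-z_j)$ chosen close to $\arg z_i$ as $z_j\to 0$, Proposition \ref{lb10} applied to the length-two chain $\mathcal Y^j_{j0},\mathcal Y_\alpha$ yields
\[
\mathcal Y_\alpha(w^{(i)},z_i)\,\mathcal Y^j_{j0}(w^{(j)},z_j)\,\Omega
=\mathcal Y^k_{k0}\big(\mathcal Y_\alpha(w^{(i)},z_i-z_j)w^{(j)},\,z_j\big)\Omega.
\]
Similarly, in the region $R_{+}=\{0<|z_i|<|z_j|,\ |z_j-z_i|<|z_i|\}$, with $\arg(z_j-z_i)$ close to $\arg z_j$ as $z_i\to 0$, Proposition \ref{lb10} applied to the chain $\mathcal Y^i_{i0},\mathcal Y_{B_+\alpha}$ yields
\[
\mathcal Y_{B_+\alpha}(w^{(j)},z_j)\,\mathcal Y^i_{i0}(w^{(i)},z_i)\,\Omega
=\mathcal Y^k_{k0}\big(\mathcal Y_{B_+\alpha}(w^{(j)},z_j-z_i)w^{(i)},\,z_i\big)\Omega.
\]
Proposition \ref{lb85}, taken with $\mathcal Y_\gamma=\mathcal Y_\alpha$ and $\mathcal Y_\delta=\mathcal Y^k_{k0}$, then identifies the two fusion expressions on the right in their common region $|z_j-z_i|<|z_i|,|z_j|$ via $e^{i\pi}(z_j-z_i)=z_i-z_j$, so a single correlation function $\varphi\in\mathcal V{k\choose i\ j\ 0}$ restricts to the LHS on $R_{-}$ and to the RHS on $R_{+}$.

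Finally I would use analytic continuation along a path in $\widetilde\Conf_2(\mathbb C^\times)$ that moves $(z_i,z_j)$ onto $S^1$ with the prescribed arguments $\arg z_j<\arg z_i<\arg z_j+2\pi$ (for instance, start in $R_-$, rotate $z_j$ counterclockwise around $0$ until $|z_i|=|z_j|$, then slide $z_i$ along $S^1$ without crossing $z_j$); the value of $\varphi$ at the endpoint is, on the one hand, the LHS of the desired identity and, on the other hand, the RHS, proving the equation. The main technical obstacle is the careful bookkeeping of arguments: one must verify that the choice $\arg(z_j-z_i)$ close to $\arg z_j$ as $z_i\to 0$ (used in the $R_+$ fusion) and the choice $\arg(z_i-z_j)$ close to $\arg z_i$ as $z_j\to 0$ (used in the $R_-$ fusion) are related exactly by the factor $e^{i\pi}$ appearing in Proposition \ref{lb85}. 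This matching is precisely what singles out $B_+$ rather than $B_-$ and what translates, on $S^1$, to the counterclockwise condition $\arg z_j<\arg z_i<\arg z_j+2\pi$.
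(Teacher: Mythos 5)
Your proof is correct and follows essentially the same route as the paper's: two applications of Proposition \ref{lb10} to fuse each side onto a creation operator, identification of the two fusion expressions via Proposition \ref{lb85} with the argument matching $\arg(z_i-z_j)=\arg(z_j-z_i)+\pi$ (the paper packages exactly this as Proposition \ref{lb86}), and analytic continuation to the configuration on $S^1$. The only differences are cosmetic: your explicit reduction to the vacuum vector via Corollary \ref{lb84} is redundant, since Proposition \ref{lb10} is already an operator identity on all of $V$ (its proof contains that reduction), and your continuation path should scale rather than rotate $z_j$ to reach $|z_i|=|z_j|$.
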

\begin{proof}
	By analytic continuation, we may assume, without loss of generality, that $0<|z_i-z_j|<1$. Let $\arg(z_i-z_j)$ be close to $\arg z_i$ as $z_j\rightarrow 0$, and let $\arg(z_j-z_i)$ be close to $\arg z_j$ as $z_i\rightarrow 0$. Then by propositions \ref{lb10} and \ref{lb86},
	\begin{align*}
	&\mathcal Y_\alpha(w^{(i)},z_i)\mathcal Y^j_{j0}(w^{(j)},z_j)\\
	=&\mathcal Y^k_{k0}\big(\mathcal Y_\alpha(w^{(i)},z_i-z_j)w^{(j)},z_j \big)\\
	=&\mathcal Y^k_{k0}\big(\mathcal Y_{B_+\alpha}(w^{(j)},z_j-z_i)w^{(i)},z_i \big)\\
	=&\mathcal Y_{B_+\alpha}(w^{(j)},z_j)\mathcal Y^i_{i0}(w^{(i)},z_i).
	\end{align*}
\end{proof}

\subsection{The ribbon categories associated to VOAs}\label{lb33}

We refer the reader to \cite{Tur16} for the general theory of ribbon categories and modular tensor categories. See also \cite{BK01,Etingof}. In this section, we review the construction of the ribbon category $\Rep(V)$ for $V$  by Huang and Lepowspky. (cf. \cite{HL94} and \cite{H Rigidity}.) As an additive categoy, $\Rep(V)$ is the representation category of $V$:  Objects of $\Rep(V)$ are $V$-modules, and the vector space of morphisms from $W_i$ to $W_j$ is $\Hom_V(W_i,W_j)$. We now equip with $\Rep(V)$ a structure of a ribbon category.

The \textbf{tensor product} of two $V$-modules $W_i,W_j$ is defined to be
\begin{gather}
W_{ij}\equiv W_i\boxtimes W_j=\bigoplus_{k\in\mathcal E}\mathcal V{k \choose i~j}^*\otimes W_k,\nonumber\\
Y_{ij}(v,x)=\bigoplus_{k\in\mathcal E}\id\otimes Y_k(v,x)\qquad(v\in V), \label{eq270}
\end{gather}
where $\mathcal V{k \choose i~j}^*$ is the dual space of $\mathcal V{k \choose i~j}$. (Recall our notations at the beginning of this chapter.) Thus for any $k\in\mathcal E$, we can define an isomorphism $$\mathcal V{k\choose i~j}\rightarrow \Hom_V(W_{ij},W_k),~~~\mathcal Y\mapsto R_{\mathcal Y},$$ such that if $\widecheck {\mathcal Y}\in \mathcal V{k\choose i~j}^*$ and $w^{(k)}\in W_{k}$, then
\begin{equation}
R_{\mathcal Y}(\widecheck{\mathcal Y}\otimes w^{(k)})=\langle \mathcal {\widecheck{\mathcal Y}},\mathcal Y\rangle w^{(k)}.
\end{equation}
$R_{\mathcal Y}$ is called the homomorphism represented by $\mathcal Y$.

The tensor product of two morphisms are defined as follows: If  $F\in\Hom_V(W_{i_1},W_{i_2}),G\in\Hom_V(W_{j_1},W_{j_2})$, then for each $k\in\mathcal E$ we have a linear map $(F\otimes G)^\tr:\mathcal V{k\choose i_2~j_2}\rightarrow \mathcal V{k\choose i_1~j_1}$, such that if $\mathcal Y\in\mathcal V{k\choose i_2~j_2}$, then  $(F\otimes G)^\tr\mathcal Y\in\mathcal V{k\choose i_1~j_1}$, and for any $w^{(i_1)}\in W_{i_1},w^{(j_1)}\in W_{j_1}$,
\begin{align}
\big((F\otimes G)^\tr\mathcal Y\big)(w^{(i_1)},x)w^{(j_1)}=\mathcal Y(Fw^{(i_1)},x)Gw^{(j_1)}.\label{eq141}
\end{align}
Then $F\otimes G:\mathcal V{k\choose i_1~j_1}^*\rightarrow\mathcal V{k\choose i_2~j_2} ^*$ is defined  to be the transpose of $(F\otimes G)^\tr$, and can be extended to a homomorphism $$F\otimes G=\bigoplus_{k\in\mathcal E}(F\otimes G)\otimes \id_{k}:W_{i_1}\boxtimes W_{j_1}\rightarrow W_{i_2}\boxtimes W_{j_2}.$$ Hence we've define the tensor product $F\otimes G$ of $F$ and $G$. 

Let $W_0=V$ be the unit object  of $\Rep(V)$. The functorial isomorphisms $\lambda_i:W_0\boxtimes W_i\rightarrow W_i$ and $\rho_j:W_i\boxtimes W_0\rightarrow W_i$ are defined as follows: If $i\in\mathcal E$, then $\lambda_i$ is represented by the intertwining operator $Y_i$, and $\rho_i$ is represented by $\mathcal Y^i_{i0}$. In general, $\lambda_i$ (resp. $\rho_i$) is defined to be the unique isomorphism satisfying that for any $k\in\mathcal E$ and any $R\in\Hom_V(W_i,W_k)$, $R\lambda_i=\lambda_k(\id_0\otimes R)$ (resp. $R\rho_i=\rho_k(R\otimes\id_0)$).

We now define the associator. First of all, to simplify our notations, we assume the following:
\begin{cv}
Let $W_i,W_j,W_k,W_{i'},W_{j'},W_{k'}$ be $V$-modules. Let $\mathcal Y_\alpha\in\mathcal V{k'\choose i'~j'}$. If either $W_i\neq W_{i'},W_j\neq W_{j'}$, or $W_{k}\neq W_{k'}$, then for any $w^{(i)}\in W_i,w^{(j)}\in W_j,w^{(\overline k)}\in W_{\overline k},z\in\mathbb C^\times$, we let $$\langle \mathcal Y_\alpha(w^{(i)},z)w^{(j)},w^{(\overline k)}\rangle=0.$$
Therefore, $\mathcal Y_\beta(w^{(i)},z_2)\mathcal Y_\alpha(w^{(j)},z_1)=0$ if the target space of $\mathcal Y_\alpha$ does not equal the source space of $\mathcal Y_\beta$;  $\mathcal Y_\gamma\big(\mathcal Y_\delta(w^{(i)},z_1-z_2)w^{(j)},z_2 \big)=0$  if the target space of $\mathcal Y_\delta$ does not equal the charge space of $\mathcal Y_\gamma$.
\end{cv}

Given three $V$-modules $W_i,W_j,W_k$,  we have
\begin{gather}
(W_i\boxtimes W_j)\boxtimes W_k=\bigoplus_{s,t\in\mathcal E}\mathcal V{t\choose s~k}^*\otimes\mathcal V{s\choose i~j}^* \otimes W_t,\\
W_i\boxtimes (W_j\boxtimes W_k)=\bigoplus_{r,t\in\mathcal E} \mathcal V{t\choose i~r}^*\otimes\mathcal V{r\choose j~k}^*\otimes W_t.
\end{gather}
Choose basis $\Theta^t_{sk},\Theta^s_{ij},\Theta^t_{ir},\Theta^r_{jk}$ of these spaces of intertwining operators. Choose $z_i,z_j\in\mathbb C^\times$ satisfying $0<|z_i-z_j|<|z_j|<|z_i|$. Choose $\arg z_i$. Let $\arg z_j$ be close to $\arg z_i$ as $z_i-z_j\rightarrow 0$, and let $\arg (z_i-z_j)$ be close to $\arg z_i$ as $z_j\rightarrow 0$.  For any $t\in\mathcal E,\alpha\in\Theta^t_{i*},\beta\in\Theta^*_{jk}$, there exist complex numbers $F^{\beta'\alpha'}_{\alpha\beta}$ independent of the choice of $z_i,z_j$, such that for any $w^{(i)}\in W_i,w^{(j)}\in W_j$, we have the fusion relation
\begin{align}
\mathcal Y_\alpha(w^{(i)},z_i)\mathcal Y_\beta(w^{(j)},z_j)=\sum_{\alpha'\in\Theta^*_{ij},\beta'\in\Theta^t_{*k}}F^{\beta'\alpha'}_{\alpha\beta}\mathcal Y_{\beta'}(\mathcal Y_{\alpha'}(w^{(i)},z_i-z_j)w^{(j)},z_j).\label{eq139}
\end{align}
If the source space of $\mathcal Y_\alpha$ does not equal the target space of $\mathcal Y_\beta$, or if the target space of $\mathcal Y_{\alpha'}$ does not equal the charge space of $\mathcal Y_{\beta'}$,  we set $F^{\beta'\alpha'}_{\alpha\beta}=0$. Then, by the proof of proposition \ref{lb66}, the numbers $F^{\beta'\alpha'}_{\alpha\beta}$ are uniquely determined by the basis chosen. The matrix $\{F^{\beta'\alpha'}_{\alpha\beta}\}_{\alpha\in\Theta^t_{i*},\beta\in\Theta^*_{jk}}^{\alpha'\in\Theta^*_{ij},\beta'\in\Theta^t_{*k}}$ is called a \textbf{fusion matrix}. Define an isomorphism
\begin{gather}
A^\tr:\bigoplus_{r\in\mathcal E} \mathcal V{t\choose i~r}\otimes\mathcal V{r\choose j~k}\rightarrow\bigoplus_{s\in\mathcal E} \mathcal V{t\choose s~k}\otimes \mathcal V{s\choose i~j},\nonumber\\
\mathcal Y_\alpha\otimes \mathcal Y_\beta\mapsto \sum_{\alpha'\in\Theta^*_{ij},\beta'\in\Theta^t_{*k}}F^{\beta'\alpha'}_{\alpha\beta}\mathcal Y_{\beta'}\otimes\mathcal Y_{\alpha'}.\label{eq140}
\end{gather}
Clearly $A^\tr$ is independent of the basis chosen. Define
\begin{equation}
A:\bigoplus_{s\in\mathcal E} \mathcal V{t\choose s~k}^*\otimes \mathcal V{s\choose i~j}^*\rightarrow\bigoplus_{r\in\mathcal E} \mathcal V{t\choose i~r}^*\otimes\mathcal V{r\choose j~k}^*
\end{equation}
to be the transpose of $A^\tr$, and extend it to
\begin{align}
A=\sum_{t\in\mathcal E}A\otimes\id_t: (W_i\boxtimes W_j)\boxtimes W_k\rightarrow W_i\boxtimes (W_j\boxtimes W_k),
\end{align}
which is an  \textbf{associator} of $\Rep(V)$. One can prove the pentagon axiom  using theorem \ref{lb12} and corollary \ref{lb13}, and prove the triangle axiom using propositions \ref{lb80} and \ref{lb85}.

Recall the linear isomorphisms $$B_\pm:\mathcal V{k\choose j~i}\rightarrow \mathcal V{k\choose i~j},~~~\mathcal Y\mapsto B_\pm\mathcal Y.$$ We let $\sigma_{i,j}:\mathcal V{k\choose i~j}^*\rightarrow \mathcal V{k\choose j~i}^*$ be the transpose of $B_+$ and extend it to a morphism
\begin{align}
\sigma_{i,j}=\sum_{t\in\mathcal E}\sigma_{i,j}\otimes\id_t:W_i\boxtimes W_j\rightarrow W_j\boxtimes W_i.
\end{align}
This gives the \textbf{braid operator}. The  hexagon axiom can be proved using  propositions \ref{lb79}, \ref{lb86}, and theorem \ref{lb12}.

For each object $i$, the twist is just the operator $\vartheta_i=\vartheta_{W_i}$ defined in section \ref{lb29}.

With these structural maps, Huang proved in \cite{H MI,H Verlinde,H Rigidity} that $\Rep(V)$ is rigid and in fact a modular tensor category.  From his proof, it is clear that $\overline i$ is the right  dual of $i$: there exist homomorphisms $\coev_i:V\rightarrow W_i\boxtimes W_{\overline i}$ and $\ev_i:W_{\overline i}\boxtimes W_i\rightarrow V$ satisfying
\begin{gather}
(\id_i\otimes\ev_i)\circ(\coev_i\otimes \id_i)=\id_i,\label{eq145}\\
(\ev_i\otimes \id_{\overline i})\circ(\id_{\overline i}\otimes\coev_i)=\id_{\overline i}.\label{eq146}
\end{gather} 
Since $i=\overline{\overline i}$, $\overline i$ is also the left dual of $i$.\\

Now assume that $V$ is unitary. The additive category $\Rep^\uni(V)$ is defined to be the representation category of unitary $V$-modules. We  show that $\Rep^\uni(V)$ is a $C^*$-category. First, we need the following  easy consequence of Schur's lemma.
\begin{lm}\label{lb82}
Choose for each $k\in\mathcal E^\uni$ a number $n_k\in\mathbb Z_{\geq0}$. Define the unitary $V$-module $$W=\bigoplus^\perp_{k\in\mathcal E^\uni}W_k\otimes \mathbb C^{n_k}=\bigoplus^\perp_{k\in\mathcal E^\uni}\underbrace{W_k\oplus^\perp W_k\oplus^\perp\cdots\oplus^\perp W_k}_{n_k}.$$
Then we have
\begin{align}
\End_V(W)=\bigoplus_{k\in\mathcal E^\uni}\id_k\otimes \End(\mathbb C^{n_k}).
\end{align}
\end{lm}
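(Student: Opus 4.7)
The plan is to reduce to the standard form of Schur's lemma for irreducible $V$-modules and then assemble the pieces. First I would introduce notation for the decomposition: write $W = \bigoplus_{k \in \mathcal{E}^\uni, 1 \leq i \leq n_k}^\perp W_k^{(i)}$, where $W_k^{(i)}$ denotes the $i$-th copy of $W_k$ in the orthogonal sum, and let $P_k^{(i)}: W \to W_k^{(i)}$ be the orthogonal projection, which is automatically a $V$-module homomorphism. Given any $F \in \End_V(W)$, I would define $F_{k,l}^{i,j} := P_k^{(i)} \circ F \circ \iota_l^{(j)} \in \Hom_V(W_l, W_k)$, where $\iota_l^{(j)}$ is the inclusion of the $j$-th copy of $W_l$.

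Next I would invoke Schur's lemma in the form appropriate to our setting. Since each $W_k$ ($k \in \mathcal{E}$) is irreducible with countable-dimensional underlying vector space (the grading pieces are finite-dimensional), any $V$-module endomorphism of $W_k$ must be a scalar: any such endomorphism commutes with $L_0$, hence preserves each finite-dimensional weight space, and therefore has an eigenvalue $\lambda$ on some such weight space; the kernel of $F - \lambda \id_k$ is then a nonzero $V$-submodule of $W_k$, which by irreducibility must be all of $W_k$. Similarly, if $k \neq l$ then $W_k$ and $W_l$ are inequivalent irreducibles (by construction of $\mathcal{E}$), so any $V$-homomorphism $W_l \to W_k$ is zero — the kernel and image are submodules, and irreducibility forces such a map to be either zero or an isomorphism.

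Applying these two facts, I obtain $F_{k,l}^{i,j} = 0$ whenever $k \neq l$, and $F_{k,k}^{i,j} = (M_k)_{ij} \cdot \id_k$ for some scalar $(M_k)_{ij} \in \mathbb{C}$. Reassembling, each $k \in \mathcal{E}^\uni$ contributes an element $M_k \in \End(\mathbb{C}^{n_k})$, and
\begin{equation*}
F = \bigoplus_{k \in \mathcal{E}^\uni} \id_k \otimes M_k.
\end{equation*}
Conversely, every element of the right-hand side clearly commutes with the action of $V$, so lies in $\End_V(W)$. This gives the desired equality. There is no serious obstacle here; the only subtlety is the version of Schur's lemma used, which is routine given the standard properties of finite-dimensional graded pieces and the irreducibility of the $W_k$.
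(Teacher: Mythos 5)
Your proof is correct and takes essentially the approach the paper intends: the paper states this lemma without a written proof, calling it an ``easy consequence of Schur's lemma,'' and its proof of the closely analogous lemma \ref{lb67} uses exactly your component-projection argument. Your self-contained justification of Schur's lemma via an eigenvalue on a finite-dimensional weight space (so that $\ker(F-\lambda\,\id_k)$ is a nonzero submodule, using that $V$-module maps commute with $L_0$) is valid in the paper's setting and correctly sidesteps any dimension-counting version of Schur.
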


\begin{thm}\label{lb114}
$\Rep^\uni(V)$ is a $C^*$-category, i.e., $\Rep^\uni(V)$ is equipped with an involutive antilinear contravariant endofunctor $*$ which is the identity on objects; The positivity condition is satisfied: If $W_i,W_j$ are unitary and $F\in\Hom_V(W_i,W_j)$, then there exists $R\in\End_V(W_i)$ such that $F^*F=R^*R$; The hom-spaces $\Hom_V(W_i,W_j)$ are normed spaces and the norms satisfy
\begin{gather}
\|GF\|\leq\|G\|\|F\|,\quad \|F^*F\|=\|F\|^2
\end{gather}
for all $F\in\Hom(i,j),G\in\Hom(j,k)$. 
\end{thm}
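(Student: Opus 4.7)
My plan is to build the $C^*$-structure directly from the Hilbert space structure of the underlying unitary $V$-modules. Given $F\in\Hom_V(W_i,W_j)$, since $F$ commutes with $L_0$ it preserves the weight decompositions and restricts to maps $F|_{W_i(s)}:W_i(s)\to W_j(s)$ between finite-dimensional inner product spaces. I define $F^*:W_j\to W_i$ to be the unique weight-preserving operator whose restriction to each $W_j(s)$ is the Hilbert adjoint of $F|_{W_i(s)}$; equivalently, $\langle F^*w^{(j)}|w^{(i)}\rangle=\langle w^{(j)}|Fw^{(i)}\rangle$ on homogeneous vectors. Antilinearity in $F$, the identity $\id_i^*=\id_i$, involutivity $F^{**}=F$, and contravariance $(GF)^*=F^*G^*$ are all immediate weight-space statements.

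The one nontrivial point is that $F^*$ is actually a $V$-module homomorphism. From $FY_i(v,n)=Y_j(v,n)F$, taking Hilbert-space adjoints on the finite-dimensional weight spaces yields $Y_i(v,n)^\dagger F^*=F^*Y_j(v,n)^\dagger$. Unitarity of $W_i$ and $W_j$, in the simplified quasi-primary form \eqref{eq95} of \eqref{eq219}, gives $Y_\bullet(v,n)^\dagger=(-1)^{\Delta_v}Y_\bullet(\theta v,2\Delta_v-n-2)$ for quasi-primary $v$. As $v$ ranges over quasi-primary vectors and $n$ over $\mathbb Z$, this exhausts all modes $Y_\bullet(u,m)$ with $u$ quasi-primary; combining with the translation property $Y_\bullet(L_{-1}v,m)=-mY_\bullet(v,m-1)$ and the fact that quasi-primary vectors together with their $L_{-1}$-descendants span $V$, we conclude that $F^*$ commutes with every vertex operator mode, i.e.\ $F^*\in\Hom_V(W_j,W_i)$.

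For the norm, I take the operator norm on the Hilbert completions, $\|F\|=\sup_s\|F|_{W_i(s)}\|$, restricting $\Hom_V(W_i,W_j)$ to those homomorphisms with this supremum finite. Under the rationality hypotheses \eqref{eq302}--\eqref{eq304}, a unitary $V$-module decomposes orthogonally (using \eqref{eq303} together with the fact that orthogonal complements of invariant subspaces are invariant by unitarity) as $W_i=\bigoplus_k^\perp W_k\otimes\mathcal M_k$ with multiplicity spaces $\mathcal M_k$, and Lemma \ref{lb82} extends to give $\End_V(W_i)=\prod_{k\in\mathcal E^\uni}\id_k\otimes B(\mathcal M_k)$, a product of type-$I$ factors and in particular a von Neumann algebra. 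Submultiplicativity $\|GF\|\leq\|G\|\|F\|$ and the $C^*$-identity $\|F^*F\|=\|F\|^2$ are then inherited directly from the standard Hilbert-space versions.

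For positivity, $F^*F\in\End_V(W_i)$ is self-adjoint, and under the decomposition above it takes the form $\bigoplus_k\id_k\otimes A_k$ with $A_k\geq 0$ on $\mathcal M_k$. Setting $R:=\bigoplus_k\id_k\otimes A_k^{1/2}$ produces an element of $\End_V(W_i)$ with $R^*R=F^*F$. The main obstacle I anticipate is the intertwiner step in paragraph two: one must match formal-series adjoints with Hilbert-space adjoints on weight spaces, and verify that modes of the adjoint vertex operators genuinely sweep out all modes of the original ones (this uses both the unitarity formula and the density of the span of $L_{-1}$-orbits of quasi-primary vectors in $V$).
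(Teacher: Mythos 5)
Your proof is correct, but it takes a genuinely different route from the paper's at the key step. The paper defines $F^*$ globally by the adjoint relation $\langle Fw^{(i)}|w^{(j)}\rangle=\langle w^{(i)}|F^*w^{(j)}\rangle$ and gets its existence \emph{as a homomorphism} in one stroke from Lemma \ref{lb82} applied to $W_i\oplus^\perp W_j$: complete reducibility plus Schur's lemma identifies $\End_V(W_i\oplus^\perp W_j)$ with a finite-dimensional $*$-closed block-matrix algebra $\bigoplus_k\id_k\otimes\End(\mathbb C^{n_k})$, so the adjoint of the corner entry $F$ automatically lands in $\Hom_V(W_j,W_i)$, and the norm, $C^*$-identity and positivity are all checked inside that algebra. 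You instead build $F^*$ weight-space by weight-space and prove the intertwining property by hand, via the adjoint formula \eqref{eq95} for quasi-primary modes, the translation property, and the fact that $V$ is spanned by $L_{-1}$-descendants of quasi-primary vectors; that spanning fact, which you rightly flag, is a standard consequence of unitarity ($L_1^\dagger=L_{-1}$ and finite-dimensional weight spaces give $V(n)=\ker(L_1|_{V(n)})\oplus^\perp L_{-1}V(n-1)$, then induct), so your step two closes. What your route buys is generality: the $*$-operation exists on $\Hom_V(W_i,W_j)$ for \emph{any} unitary modules over \emph{any} unitary VOA, with no appeal to \eqref{eq302}--\eqref{eq304} — though you still need the orthogonal decomposition and (the extension of) Lemma \ref{lb82} exactly where the paper does, namely for the square root in the positivity condition and for the norm axioms, so the gain is confined to the functor $*$ itself. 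One small point: your caveat about ``restricting to homomorphisms with finite supremum'' is vacuous under the standing hypotheses — since $\mathcal E$ is finite and lowest weight spaces are finite-dimensional, every multiplicity space $\mathcal M_k$ is finite-dimensional and every $F\in\Hom_V(W_i,W_j)$ is automatically bounded; had the restriction been genuine it would have forced you to recheck that composition and $*$ preserve the smaller hom-sets, so it is better dropped.
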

\begin{proof}
For any $F\in\Hom_V(W_i,W_j)$, we let $F^*$ be the formal adjoint of $F$ , i.e. the unique homomorphism $F^*\in \Hom_V(W_j,W_i)$ satisfying $\langle Fw^{(i)}|w^{(j)} \rangle=\langle w^{(i)}|F^*w^{(j)}\rangle$ for all $w^{(i)}\in W_i,w^{(j)}\in W_j$. The existence of $F^*$ follows from lemma \ref{lb82} applied to  $W\cong W_i\oplus^\perp W_j$. Let $\|F\|$ be the operator norm of $F$, i.e., $\|F\|=\sup_{w^{(i)}\in W_i\setminus\{0\}}(\|Fw^{(i)}\|/\|w^{(i)}\|)$. Using lemma \ref{lb82}, it is easy to check that $\Rep^\uni(V)$ satisfies all the conditions to be a $C^*$-category.
\end{proof}

It is not clear whether  unitarizable $V$-modules are closed under tensor product. So it may not be a good idea to define a structure of a ribbon category on $\Rep^\uni(V)$. We consider instead certain subcategories.  Let $\mathcal G$ be a collection of unitary $V$-modules. We say that $\mathcal G$ is \textbf{additively closed}, if the following conditions are satisfied:\\
(1) If $i\in\mathcal G$ and $W_j$ is isomorphic to a submodule of  $W_i$, then $j\in\mathcal G$.\\
(2) If $i_1,i_2,\dots,i_n\in\mathcal G$, then $W_{i_1}\oplus^\perp W_{i_2}\oplus^\perp \cdots\oplus^\perp W_{i_n}\in\mathcal G$.\\
If $\mathcal G$ is additively closed, we define the  additive category $\Rep^\uni_{\mathcal G}(V)$ to be the subcategory of $\Rep^\uni(V)$ whose objects are elements in $\mathcal G$.

We say that $\mathcal G$ is \textbf{multiplicatively closed}, if $\mathcal G$ is additively closed, and the following conditions are  satisfied:\\
(a) $0\in\mathcal G$.\\
(b) If $i\in\mathcal G$, then $\overline i\in\mathcal G$.\\
(c) If $i,j\in\mathcal G$, then $W_{ij}=W_i\boxtimes W_j$ is unitarizable, and any unitarization  of $W_{ij}$ is inside $\mathcal G$.

Suppose that $\mathcal G$ is multiplicatively closed. A \textbf{unitary structure}  on $\mathcal G$ assigns to each triplet $(i,j,k)\in \mathcal G\times\mathcal G\times\mathcal E$ an inner product on $\mathcal V{k\choose i~j}^*$. For each unitary structure  on $\mathcal G$, we define $\Rep^\uni_{\mathcal G}(V)$ to be a ribbon  category in the following way: If $i,j\in\mathcal G$, then $W_{ij}$ as a $V$-module is defined, as before, to be $\bigoplus_{k\in\mathcal E}\mathcal V{k \choose i~j}^*\otimes W_k$. Since $\mathcal G$ is multiplicatively closed, each $W_k$ in $\mathcal E$ satisfying $N^k_{ij}>0$ must be equipped with a unitary structure. Hence the inner products on all  $\mathcal V{k\choose i~j}^*$'s give rise to a unitary structure on $W_{ij}$. $W_{ij}$ now becomes a unitary $V$-module. The other functors and structural maps are defined in the same way as we did for $\Rep(V)$. Clearly $\Rep^\uni_{\mathcal G}(V)$ is a ribbon fusion category and is equivalent to a ribbon fusion subcategory of $\Rep(V)$.

Our main goal in this two-part series is to define a unitary structure on $\mathcal G$, under which $\Rep^\uni_{\mathcal G}(V)$ becomes a unitary ribbon fusion category. More explicitly, we want to show (cf. \cite{Gal12}) that for any $i_1,i_2,j_1,j_2\in\mathcal G$ and any $F\in \Hom_V(W_{i_1},W_{i_2}),G\in\Hom_V(W_{j_1},W_{j_2})$, 
\begin{equation}\label{eq142}
(F\otimes G)^*=F^*\otimes G^*;
\end{equation}
that  the associators, the operators $\lambda_i,\rho_i$ ($i\in\mathcal G$), and the braid operators of $\Rep^\uni_{\mathcal G}(V)$ are unitary; and that for each $i\in\mathcal G$, $\vartheta_i$ is unitary, and $\ev_i$ and $\coev_i$ can be chosen in such a way that the following equations hold:
\begin{gather}
(\coev_i)^*=\ev_i\circ \sigma_{i,\overline i}\circ(\vartheta_i\otimes\id_{\overline i}),\label{eq143}\\
(\ev_i)^*=(\id_{\overline i}\otimes\vartheta_i^{-1})\circ \sigma_{\overline i,i}^{-1}\circ\coev_i.\label{eq144}
\end{gather}

\section{Analytic aspects of vertex operator algebras}\label{lb90}

\subsection{Intertwining operators with energy bounds}\label{lb76}

The energy bounds conditions for vertex operators are important when one tries to construct conformal nets/loop groups representations from unitary VOAs/infinite dimensional Lie algebras. This can be seen, for instance, in \cite{GW}, \cite{BSM}, and \cite{CKLW}. In this section, we generalize this notion to intertwining operators of VOAs.

We assume in this chapter that $V$ is unitary. If $W_i$ is a unitary $V$-module, we let the Hilbert space $\mathcal H_i$ be the norm completion of $W_i$, and view $W_i$ as a norm-dense subspace of $\mathcal H_i$. It is clear that the unbounded operator $L_0$ on $\mathcal H_i$ (with domain $W_i$) is essentially self-adjoint, and  its closure $\overline {L_0}$ is positive.

Now for any $r\in\mathbb R$, we let $\mathcal H_i^r$ be the domain $\mathscr D((1+\overline{L_0})^r)$ of $(1+\overline{L_0})^r$. If $\xi\in\mathcal H^r_i$, we define the $r$-th order Sobolev norm to be
$$\lVert\xi\lVert_r=\lVert(1+\overline{L_0})^r\xi\lVert.$$
Note that the $0$-th Sobolev norm is just the vector norm. We let $$\mathcal H_i^\infty=\bigcap_{r\geq 0}\mathcal H^r_i.$$ Clearly $\mathcal H_i^\infty$ contains $W_i$. Vectors inside $\mathcal H_i^\infty$ are said to be \textbf{smooth}.

\begin{df}
Let $W_i,W_j,W_k$ be unitary  $V$-modules,  $\mathcal Y_\alpha\in\mathcal V{k\choose i~j}$, and $w^{(i)}\in W_i$ be homogeneous. Choose $r\geq0$. We say that $\mathcal Y_\alpha(w^{(i)},x)$ satisfies \textbf{$r$-th order energy bounds}, if there exist $M\geq0,t\geq0$, such that for any $s\in\mathbb R$ and $w^{(j)}\in W_j$, 
\begin{align}
\lVert \mathcal Y_\alpha(w^{(i)},s)w^{(j)} \lVert\leq M(1+|s|)^t\lVert w^{(j)} \lVert_r.\label{eq211}
\end{align}
Here $\mathcal Y_\alpha(w^{(i)},s)$ is the $s$-th mode of the intertwining operator $\mathcal Y_\alpha(w^{(i)},x)$. It is clear that if $r_1\leq r_2$ and $\mathcal Y_\alpha(w^{(i)},x)$ satisfies $r_1$-th order energy bounds, then $\mathcal Y_\alpha(w^{(i)},x)$ also satisfies $r_2$-th order energy bounds.

$1$-st order energy bounds are called \textbf{linear energy bounds}. We say that $\mathcal Y_\alpha(w^{(i)},x)$ is \textbf{energy-bounded}  if it satisfies energy bounds of some positive order. If for every $w^{(i)}\in W_i$, $\mathcal Y_\alpha(w^{(i)},x)$ is energy-bounded,  we say that $\mathcal Y_\alpha$ is  \textbf{energy-bounded}. A unitary $V$-module $W_i$ is called \textbf{energy-bounded} if $Y_i$ is energy-bounded. The unitary VOA $V$ is called \textbf{energy-bounded} if the vacuum module $V=W_0$ is energy-bounded. We now prove some useful properties concerning energy-boundedness.
\end{df}	

\begin{pp}\label{lb53}
If $w^{(i)}\in W_i$ is homogeneous and inequality \eqref{eq211} holds, then for any $p\in\mathbb R$, there exists $M_p>0$ such that for any   $w^{(j)}\in W_j$, 
\begin{align}
\lVert \mathcal Y_\alpha(w^{(i)},s)w^{(j)} \lVert_p\leq M_p(1+|s|)^{|p|+t}\lVert w^{(j)} \lVert_{p+r}.\label{eq295}
\end{align}
\end{pp}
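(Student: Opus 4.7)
The plan rests on the commutation relation \eqref{eq132}, which says that when $w^{(i)}$ is homogeneous, the mode $\mathcal Y_\alpha(w^{(i)},s)$ shifts the $L_0$-eigenvalue by the fixed amount $\Delta_{w^{(i)}}-s-1$. Concretely, decompose $w^{(j)}=\sum_\lambda P_\lambda w^{(j)}$ into $L_0$-eigenvectors (a finite sum since $w^{(j)}\in W_j$). Then each $\mathcal Y_\alpha(w^{(i)},s)P_\lambda w^{(j)}$ is either zero or an $L_0$-eigenvector with eigenvalue $\mu_\lambda:=\lambda+\Delta_{w^{(i)}}-s-1$, and by positive energy (Proposition \ref{lb88}) one has $\mu_\lambda\geq 0$ whenever this vector is nonzero.

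First, I would exploit orthogonality of distinct $L_0$-eigenspaces in $\mathcal H_k$ to write
\begin{align*}
\lVert\mathcal Y_\alpha(w^{(i)},s)w^{(j)}\lVert_p^2 = \sum_\lambda (1+\mu_\lambda)^{2p}\lVert\mathcal Y_\alpha(w^{(i)},s)P_\lambda w^{(j)}\lVert^2,
\end{align*}
and invoke the hypothesis \eqref{eq211} on each $P_\lambda w^{(j)}$ (noting $\lVert P_\lambda w^{(j)}\lVert_r=(1+\lambda)^r\lVert P_\lambda w^{(j)}\lVert$) to obtain the bound
\begin{align*}
\lVert\mathcal Y_\alpha(w^{(i)},s)w^{(j)}\lVert_p^2 \leq M^2(1+|s|)^{2t}\sum_\lambda (1+\mu_\lambda)^{2p}(1+\lambda)^{2r}\lVert P_\lambda w^{(j)}\lVert^2.
\end{align*}

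The next step is an elementary Peetre-type inequality: for any $p\in\mathbb R$ there exists a constant $C_p>0$ (depending also on the fixed number $\Delta_{w^{(i)}}$) such that
\begin{align*}
(1+\mu_\lambda)^p \leq C_p(1+\lambda)^p(1+|s|)^{|p|}\qquad (\mu_\lambda,\lambda\geq 0).
\end{align*}
This is proved by splitting into the cases $p\geq 0$ and $p<0$ and using the identity $\lambda=\mu_\lambda+s+1-\Delta_{w^{(i)}}$ to swap the roles of $\lambda$ and $\mu_\lambda$ in the negative case. Substituting this inequality into the sum converts $(1+\lambda)^{2r}$ into $(1+\lambda)^{2(p+r)}$ and pulls out an extra factor $(1+|s|)^{2|p|}$, yielding
\begin{align*}
\lVert\mathcal Y_\alpha(w^{(i)},s)w^{(j)}\lVert_p^2 \leq C_p^2 M^2 (1+|s|)^{2(|p|+t)}\sum_\lambda(1+\lambda)^{2(p+r)}\lVert P_\lambda w^{(j)}\lVert^2 = C_p^2 M^2(1+|s|)^{2(|p|+t)}\lVert w^{(j)}\lVert_{p+r}^2.
\end{align*}
Taking square roots and setting $M_p=C_pM$ gives \eqref{eq295}.

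There is no serious obstacle here; the only subtlety worth flagging is in the Peetre-type estimate when $p<0$, where one must use that $\mu_\lambda\geq 0$ (not merely $\lambda\geq 0$) to prevent $(1+\mu_\lambda)^{-|p|}$ from blowing up. Positive energy of the \emph{target} module $W_k$ is precisely what guarantees this.
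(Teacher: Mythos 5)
Your proposal is correct and follows essentially the same route as the paper's proof: the paper likewise reduces to homogeneous $w^{(j)}$ via orthogonality of the $L_0$-grading, applies \eqref{eq211} componentwise, and then bounds the ratio $\big(\tfrac{\Delta_{w^{(i)}}-s+\Delta_{w^{(j)}}}{1+\Delta_{w^{(j)}}}\big)^{2p}$ by $4^{|p|}(1+\Delta_{w^{(i)}})^{2|p|}(1+|s|)^{2|p|}$ through the same case split on the sign of $p$, which is exactly your Peetre-type inequality with $\mu_\lambda=\lambda+\Delta_{w^{(i)}}-s-1$. Your flagged subtlety for $p<0$ is also the one the paper addresses, invoking positive energy of the target module (Proposition \ref{lb88}) to guarantee $\Delta_{w^{(i)}}+\Delta_{w^{(j)}}-1-s\geq 0$.
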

\begin{proof}
(cf. \cite{Toledano} chapter II proposition 1.2.1) We want to show that
\begin{align}
\lVert \mathcal Y_\alpha(w^{(i)},s)w^{(j)} \lVert_p^2\leq M_p^2(1+|s|)^{2(|p|+t)}\lVert w^{(j)} \lVert_{p+r}^2.\label{eq212}
\end{align}
Since 
\begin{gather*}
\lVert \mathcal Y_\alpha(w^{(i)},s)w^{(j)} \lVert_p^2=\sum_{q\in\mathbb R}\lVert P_{q-s-1+\Delta_{w^{(i)}}}\mathcal Y_\alpha(w^{(i)},s)w^{(j)} \lVert_p^2=\sum_{q\in\mathbb R}\lVert \mathcal Y_\alpha(w^{(i)},s)P_qw^{(j)} \lVert_p^2,\\
\lVert w^{(j)} \lVert_{p+r}^2=\sum_{q\in\mathbb R}\lVert P_qw^{(j)} \lVert_{p+r}^2,
\end{gather*}
 it suffices to assume that $w^{(j)}$ is homogeneous. We also assume that $\mathcal Y_\alpha(w^{(i)},s)w^{(j)}\neq 0$. Then by proposition \ref{lb88}, $\Delta_{w^{(i)}}+\Delta_{w^{(j)}}-1-s\geq0$.

By \eqref{eq211} we have
\begin{align}
\lVert \mathcal Y_\alpha(w^{(i)},s)w^{(j)} \lVert^2\leq M^2(1+|s|)^{2t}(1+\Delta_{w^{(j)}})^{2r}\lVert w^{(j)} \lVert^2.
\end{align}
Hence
\begin{align}
&\lVert \mathcal Y_\alpha(w^{(i)},s)w^{(j)} \lVert_p^2\nonumber\\
=&(\Delta_{w^{(i)}}+\Delta_{w^{(j)}}-s)^{2p}\lVert \mathcal Y_\alpha(w^{(i)},s)w^{(j)} \lVert^2\nonumber\\
\leq& (\Delta_{w^{(i)}}+\Delta_{w^{(j)}}-s)^{2p}M^2(1+|s|)^{2t}(1+\Delta_{w^{(j)}})^{2r}\lVert w^{(j)} \lVert^2\nonumber\\
=&M^2\Big(\frac {\Delta_{w^{(i)}}+\Delta_{w^{(j)}}-s}{1+\Delta_{w^{(j)}}}\Big)^{2p}(1+|s|)^{2t}(1+\Delta_{w^{(j)}})^{2(p+r)}\lVert w^{(j)} \lVert^2\nonumber\\
=&M^2\Big(\frac {\Delta_{w^{(i)}}-s+\Delta_{w^{(j)}}}{1+\Delta_{w^{(j)}}}\Big)^{2p}(1+|s|)^{2t}\lVert w^{(j)} \lVert^2_{p+r}.
\end{align}
If $p\geq0$ then
\begin{align}
&\Big(\frac {\Delta_{w^{(i)}}-s+\Delta_{w^{(j)}}}{1+\Delta_{w^{(j)}}}\Big)^{2p}\nonumber\\
\leq&\Big(\frac {1+\Delta_{w^{(i)}}+|s|+\Delta_{w^{(j)}}}{1+\Delta_{w^{(j)}}}\Big)^{2p}\nonumber\\
\leq&(1+\Delta_{w^{(i)}}+|s|)^{2p}\leq (1+\Delta_{w^{(i)}})^{2p}(1+|s|)^{2p}.
\end{align}
If $p<0$ and $1\leq\Delta_{w^{(i)}}-s$, then
\begin{align}
&\Big(\frac {\Delta_{w^{(i)}}-s+\Delta_{w^{(j)}}}{1+\Delta_{w^{(j)}}}\Big)^{2p}\nonumber\\
=&\Big(\frac{1+\Delta_{w^{(j)}}} {\Delta_{w^{(i)}}-s+\Delta_{w^{(j)}}}\Big)^{2|p|}
\leq1.
\end{align}
If $p<0$ and $1\geq\Delta_{w^{(i)}}-s$, then since $\Delta_{w^{(i)}}-s+\Delta_{w^{(j)}}\geq 1$,
\begin{align}
&\Big(\frac{1+\Delta_{w^{(j)}}} {\Delta_{w^{(i)}}-s+\Delta_{w^{(j)}}}\Big)^{2|p|}\nonumber\\
=&\Big(1+\frac{1+s-\Delta_{w^{(i)}}}{\Delta_{w^{(i)}}-s+\Delta_{w^{(j)}}}  \Big)^{2|p|}\nonumber\\
\leq&(2-\Delta_{w^{(i)}}+s)^{2|p|}\nonumber\\
\leq&(2+2\Delta_{w^{(i)}}+2|s|)^{2|p|}\nonumber\\
\leq& 2^{2|p|}(1+\Delta_{w^{(i)}})^{2|p|}(1+|s|)^{2|p|}.
\end{align}
Therefore, if we let $M_p=2^{|p|}(1+\Delta_{w^{(i)}})^{|p|}$, then   \eqref{eq212} is always true.
\end{proof}

The next property is obvious.
\begin{pp}\label{lb94}
	If $\mathcal Y\in\mathcal V{k\choose i~j}$ is unitary, $w^{(i)}\in W_i$ is homogeneous, and $\mathcal Y_\alpha(w^{(i)},x)$ satisfies $r$-th order energy bounds, then $\mathcal Y_{\overline \alpha}(\overline {w^{(i)}},x)$	satisfies $r$-th order energy bounds.
\end{pp}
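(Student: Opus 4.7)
The plan is to reduce the energy bound for $\mathcal Y_{\overline\alpha}(\overline{w^{(i)}},x)$ to the one for $\mathcal Y_\alpha(w^{(i)},x)$ via the defining relation
\[
\mathcal Y_{\overline\alpha}(\overline{w^{(i)}},x)=C_k\,\mathcal Y_\alpha(w^{(i)},x)\,C_j^{-1},
\]
taking mode-by-mode. First I would observe that, because $C_j\colon W_j\to W_{\overline j}$ and $C_k\colon W_k\to W_{\overline k}$ are antiunitary, they extend to antiunitary maps between the Hilbert-space completions $\mathcal H_j,\mathcal H_{\overline j}$ and $\mathcal H_k,\mathcal H_{\overline k}$. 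In particular, for every vector $w^{(\overline j)}\in W_{\overline j}$,
\[
\bigl\lVert \mathcal Y_{\overline\alpha}(\overline{w^{(i)}},s)\,w^{(\overline j)}\bigr\rVert=\bigl\lVert C_k\,\mathcal Y_\alpha(w^{(i)},s)\,C_j^{-1}w^{(\overline j)}\bigr\rVert=\bigl\lVert \mathcal Y_\alpha(w^{(i)},s)\,C_j^{-1}w^{(\overline j)}\bigr\rVert.
\]

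Next I would invoke the identity $L_n C_j=C_j L_n$ (proved just after Proposition \ref{lb64}, by applying \eqref{eq104} with $v=\nu$). In particular $C_j^{-1}$ intertwines the action of $L_0$ on $W_{\overline j}$ with its action on $W_j$; consequently $C_j^{-1}$ sends each energy subspace $W_{\overline j}(s)$ isometrically to $W_j(s)$, and for any $p\in\mathbb R$ preserves the Sobolev norm, i.e.\ $\lVert C_j^{-1}w^{(\overline j)}\rVert_r=\lVert w^{(\overline j)}\rVert_r$ for all $w^{(\overline j)}\in W_{\overline j}$ (equivalently, $C_j^{-1}$ extends to an antiunitary map $\mathcal H_{\overline j}^r\to\mathcal H_j^r$).

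Combining these two observations with the assumed bound $\lVert \mathcal Y_\alpha(w^{(i)},s)w^{(j)}\rVert\leq M(1+|s|)^t\lVert w^{(j)}\rVert_r$ applied to $w^{(j)}=C_j^{-1}w^{(\overline j)}$ yields
\[
\bigl\lVert \mathcal Y_{\overline\alpha}(\overline{w^{(i)}},s)\,w^{(\overline j)}\bigr\rVert\leq M(1+|s|)^t\,\bigl\lVert C_j^{-1}w^{(\overline j)}\bigr\rVert_r=M(1+|s|)^t\,\lVert w^{(\overline j)}\rVert_r,
\]
with the same constants $M$ and $t$ as for $\mathcal Y_\alpha(w^{(i)},x)$. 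This is exactly the $r$-th order energy bound for $\mathcal Y_{\overline\alpha}(\overline{w^{(i)}},x)$. There is no genuine obstacle here: the whole argument is essentially the antiunitarity of $C_j,C_k$ together with their commutation with $L_0$, so the proof should be very short.
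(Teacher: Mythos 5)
Your proof is correct and is precisely the argument the paper has in mind: the paper states this proposition without proof (``the next property is obvious''), and the obvious argument is exactly yours --- the modes satisfy $\mathcal Y_{\overline\alpha}(\overline{w^{(i)}},s)=C_k\,\mathcal Y_\alpha(w^{(i)},s)\,C_j^{-1}$, and since $C_j,C_k$ are antiunitary and commute with $L_0$ (so $C_j^{-1}$ preserves every Sobolev norm $\lVert\cdot\rVert_r$), the bound transfers with the same constants $M$ and $t$.
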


\begin{pp}\label{lb56}
Suppose that $\mathcal Y_\alpha\in \mathcal V{k\choose i~j}$ is unitary, $w^{(i)}\in W_i$ is homogeneous, $r\geq0$, and for any $m\in\mathbb Z_{\geq0}$, $\mathcal Y_\alpha(L_1^mw^{(i)},x)$ satisfies $r$-th order energy bounds. Then  $\mathcal Y_{\alpha^*}(\overline{w^{(i)}},x)$ and $\mathcal Y_{C^{\pm1}\alpha}(w^{(i)},x)$ satisfy $r$-th order energy bounds.
\end{pp}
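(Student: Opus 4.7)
The plan is to reduce the result for $\mathcal Y_{\alpha^*}$ to bounds on the formal adjoints of individual modes of $\mathcal Y_\alpha$ via the mode expansion \eqref{eq213}. Since $W_i$ is a unitary $V$-module, proposition \ref{lb88} implies $L_1^m w^{(i)}=0$ for $m>\Delta_{w^{(i)}}$, so the sum on the right hand side of \eqref{eq213} is finite. It therefore suffices to show that for each fixed $m$, the operator $\mathcal Y_\alpha(L_1^m w^{(i)},t)^\dagger$ satisfies an $r$-th order energy bound in $w^{(k)}\in W_k$ with constants uniform in $m$ and polynomial in $|t|$; substituting $t=-s-m-2+2\Delta_{w^{(i)}}$ and summing the finitely many terms then yields the desired bound for $\mathcal Y_{\alpha^*}(\overline{w^{(i)}},s)$, since $|t|$ grows linearly in $|s|$ with a coefficient independent of $m$.

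The crux is the adjoint bound. Fix $m$ and write $A=\mathcal Y_\alpha(L_1^m w^{(i)},t)$. By hypothesis there exist $M_0,t_0\geq 0$ with $\lVert Aw^{(j)}\rVert\leq M_0(1+|t|)^{t_0}\lVert w^{(j)}\rVert_r$. Applying proposition \ref{lb53} with $p=-r$ gives
\[
\lVert Aw^{(j)}\rVert_{-r}\leq M_1(1+|t|)^{r+t_0}\lVert w^{(j)}\rVert,
\]
so that $B:=(1+\overline{L_0})^{-r}A$ extends from $W_j$ to a bounded operator $\mathcal H_j\to\mathcal H_k$ of norm at most $M_1(1+|t|)^{r+t_0}$. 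The self-adjointness of $(1+\overline{L_0})^{-r}$ shows that the Hilbert-space adjoint $B^*$ agrees with $A^\dagger(1+L_0)^{-r}$ on $W_k$, and since $W_k\subset\mathcal H_k^\infty$ one may rearrange to obtain $A^\dagger w^{(k)}=B^*(1+L_0)^r w^{(k)}$ for $w^{(k)}\in W_k$. Therefore
\[
\lVert A^\dagger w^{(k)}\rVert\leq\lVert B\rVert\cdot\lVert w^{(k)}\rVert_r\leq M_1(1+|t|)^{r+t_0}\lVert w^{(k)}\rVert_r,
\]
which is exactly the required polynomial bound on $\mathcal Y_\alpha(L_1^m w^{(i)},t)^\dagger$.

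For $\mathcal Y_{C^{\pm 1}\alpha}(w^{(i)},x)$, the same strategy works: analogous mode expansions derived directly from \eqref{eq232} and \eqref{eq102} express each mode as a finite sum of formal transposes $\mathcal Y_\alpha(L_1^m w^{(i)},t)^{\tr}$. Since the antiunitary conjugations $C_j,C_k$ commute with $L_0$ (hence preserve all Sobolev norms) and satisfy $A^{\tr}=C_jA^\dagger C_k^{-1}$, the bound on $A^\dagger$ established above transports verbatim to an identical bound on $A^{\tr}$ measured with respect to the Sobolev norms on $W_{\overline j}$ and $W_{\overline k}$. Alternatively, $\mathcal Y_{C\alpha}=\overline{\mathcal Y_{\alpha^*}}$, so energy-boundedness for $\mathcal Y_{C\alpha}$ follows from the $\mathcal Y_{\alpha^*}$ case together with proposition \ref{lb94}, and a parallel antiunitary argument dispatches $\mathcal Y_{C^{-1}\alpha}$. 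The only genuine obstacle is the Sobolev-norm bookkeeping in the second paragraph: one must pay the Sobolev cost $r$ on the target side of $A$ before taking the Hilbert-space adjoint, and this is precisely the flexibility supplied by proposition \ref{lb53} combined with the self-adjointness of fractional powers of $1+\overline{L_0}$.
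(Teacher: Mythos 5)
Your proposal is correct and follows essentially the same route as the paper: the finite mode expansion \eqref{eq213}, proposition \ref{lb53} applied with $p=-r$ to shift the Sobolev cost from the source to the target side, a duality step to pass to the formal adjoint, and the identities $\mathcal Y_{C\alpha}=\overline{\mathcal Y_{\alpha^*}}$ together with proposition \ref{lb94} for the $C^{\pm1}\alpha$ cases. The only difference is cosmetic: you package the duality step as the Hilbert-space adjoint of the bounded extension $B=(1+\overline{L_0})^{-r}A$, whereas the paper performs the equivalent Cauchy--Schwarz estimate directly on the matrix element $\langle(1+L_0)^r\,\cdot\,|(1+L_0)^{-r}\mathcal Y_\alpha(L_1^m w^{(i)},\cdot)\,\cdot\,\rangle$.
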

\begin{proof}
	First we note that $L_1^mw^{(i)}=0$ for $m$ large enough. Now suppose that \eqref{eq211} holds for all $m$ if we replace $w^{(i)}$ by $L_1^mw^{(i)}$. Then by \eqref{eq213}, for any $w^{(j)}\in W_j,w^{(k)}\in W_k$ and $s\in\mathbb R$,
	\begin{align*}
	&\Big|\big\langle \mathcal Y_{\alpha^*}(\overline{w^{(i)}},s)w^{(j)}\big|w^{(k)} \big\rangle\Big|\nonumber\\
	\leq&\sum_{m\geq0}\frac 1 {m!}\Big|\big\langle w^{(j)}\big|\mathcal Y_\alpha(L_1^mw^{(i)},-s-k-2+2\Delta_{w^{(i)}})w^{(k)} \big\rangle\Big|\\
	=&\sum_{m\geq0}\frac 1 {m!}\Big|\big\langle(1+L_0)^r w^{(j)}\big|(1+L_0)^{-r}\mathcal Y_\alpha(L_1^mw^{(i)},-s-k-2+2\Delta_{w^{(i)}})w^{(k)} \big\rangle\Big|\\
	\leq&\sum_{m\geq0}\frac 1 {m!}\big\lVert w^{(j)} \big\lVert_r\big\lVert \mathcal Y_\alpha(L_1^mw^{(i)},-s-k-2+2\Delta_{w^{(i)}})w^{(k)} \big\lVert_{-r}.
	\end{align*}
	By proposition \ref{lb53}, we can find  positive numbers $C_1,C_2$ independent of  $w^{(j)},w^{(k)}$, such that
	\begin{align*}
	&\big\lVert \mathcal Y_\alpha(L_1^mw^{(i)},-s-m-2+2\Delta_{w^{(i)}})w^{(k)} \big\lVert_{-r}\nonumber\\
	\leq&C_1\big(1+|s+m+2-2\Delta_{w^{(i)}}|\big)^{r+t}\big\lVert w^{(k)} \big\lVert\\
	\leq& C_2\big(1+|s|\big)^{r+t}\big\lVert w^{(k)} \big\lVert.
	\end{align*}
	Thus there exists $C_3>0$ independent of $w^{(j)},w^{(k)}$, such that
	\begin{align*}
	\Big|\big\langle \mathcal Y_{\alpha^*}(\overline{w^{(i)}},s)w^{(j)}\big|w^{(k)} \big\rangle\Big|\leq C_3\big(1+|s|\big)^{r+t}\big\lVert w^{(j)} \big\lVert_r\big\lVert w^{(k)} \big\lVert.
	\end{align*}
	This proves that
	\begin{align}
	\big\lVert \mathcal Y_{\alpha^*}(\overline{w^{(i)}},s)w^{(j)} \big\lVert\leq C_3\big(1+|s|\big)^{r+t}\big\lVert w^{(j)} \big\lVert_r.
	\end{align}
Therefore $\mathcal Y_{\alpha^*}(\overline{w^{(i)}},x)$ satisfies $r$-th order energy bounds. Since $C\alpha=\overline{\alpha^*}$ and $\mathcal Y_{C^{-1}\alpha}(w^{(i)},x)=e^{2i\pi\Delta_{w^{(i)}}}\mathcal Y_{C\alpha}(w^{(i)},x)$, by proposition \ref{lb94}, $\mathcal Y_{C^{\pm1}\alpha}(w^{(i)},x)$ also satisfy $r$-th order energy bounds.
\end{proof}

\begin{pp}\label{lb55}
	Let $W_i,W_j,W_k$ be unitary $V$-modules, $\mathcal Y_\alpha\in \mathcal V{k\choose i~j}$, and choose homogeneous vectors $w^{(i)}\in W_i,u\in V$. Suppose that $\mathcal Y_\alpha(w^{(i)},x),Y_j(u,x),Y_k(u,x)$ are energy-bounded. Then for any $n\in\mathbb Z$, $\mathcal Y_\alpha\big(Y_i(u,n)w^{(i)},x\big)$ is energy-bounded.
\end{pp}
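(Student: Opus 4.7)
The plan is to use the Jacobi identity \eqref{eq128} with $m=0$ and $h=s$, which gives, for every $n\in\mathbb Z$ and $s\in\mathbb R$,
\begin{align*}
\mathcal Y_\alpha(Y_i(u,n)w^{(i)},s)= &\sum_{l\geq 0}(-1)^l\binom{n}{l}Y_k(u,n-l)\mathcal Y_\alpha(w^{(i)},s+l)\\
&-\sum_{l\geq 0}(-1)^{l+n}\binom{n}{l}\mathcal Y_\alpha(w^{(i)},n+s-l)Y_j(u,l).
\end{align*}
This expresses $\mathcal Y_\alpha(Y_i(u,n)w^{(i)},\cdot)$ as a linear combination of products involving only operators whose energy bounds are hypothesized.

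For $n\geq 0$, since $\binom{n}{l}=0$ for $l>n$, both sums are finite of length at most $n+1$, and the desired energy bound follows at once by estimating each term: the energy bound on $Y_k(u,\cdot)$, combined with Proposition \ref{lb53} applied to $\mathcal Y_\alpha(w^{(i)},s+l)$ to transfer Sobolev norms, handles the first sum; the energy bound on $\mathcal Y_\alpha(w^{(i)},\cdot)$, combined with the bound on $Y_j(u,l)$, handles the second. The resulting estimate has the form $M(1+|s|)^{t'}\|w^{(j)}\|_{r'}$ with constants depending on $n,u,w^{(i)}$ but not on $s$ or $w^{(j)}$.

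For $n<0$ the two sums are formally infinite (the coefficient $|\binom{n}{l}|$ grows polynomially in $l$ of degree $|n|-1$), but they truncate to finite sums on any homogeneous $w^{(j)}\in W_j(\Delta_{w^{(j)}})$. Indeed, since $W_j$ and $W_k$ are unitary, Proposition \ref{lb88} gives positivity of energies, so $Y_j(u,l)w^{(j)}=0$ whenever $l>\Delta_{w^{(j)}}+\Delta_u-1$, and similarly $\mathcal Y_\alpha(w^{(i)},s+l)w^{(j)}=0$ once its image would have negative energy. Hence each sum has $O(1+|s|+\Delta_{w^{(j)}})$ nonzero terms. Bounding each term exactly as in the $n\geq 0$ case, and summing via the elementary estimates $|\binom{n}{l}|\leq C(1+l)^{|n|-1}$ and $\sum_{l=0}^{L}(1+l)^P\leq C'(1+L)^{P+1}$, I obtain a bound of the form $C(1+|s|)^{T}(1+\Delta_{w^{(j)}})^{N}\|w^{(j)}\|_{R}$; since $w^{(j)}$ is homogeneous, $(1+\Delta_{w^{(j)}})^{N}\|w^{(j)}\|_{R}=\|w^{(j)}\|_{R+N}$, giving the required energy bound uniformly over homogeneous $w^{(j)}$.

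To extend from homogeneous to arbitrary $w^{(j)}=\sum_{\Delta}w^{(j)}_\Delta$, I use that $\mathcal Y_\alpha(Y_i(u,n)w^{(i)},s)w^{(j)}_\Delta$ lies in the single $L_0$-eigenspace $W_k(\Delta+\Delta_{w^{(i)}}+\Delta_u-n-s-2)$, so the images of distinct homogeneous components are mutually orthogonal and $\|\mathcal Y_\alpha(Y_i(u,n)w^{(i)},s)w^{(j)}\|^2=\sum_\Delta\|\mathcal Y_\alpha(Y_i(u,n)w^{(i)},s)w^{(j)}_\Delta\|^2$; summing the homogeneous estimates yields the full energy bound. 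The main difficulty lies in the case $n<0$, where the positive-energy hypothesis is essential for truncating the formally infinite sums, and the combined polynomial growth coming from $|\binom{n}{l}|$ and from the truncation length $\sim\Delta_{w^{(j)}}+|s|$ must be carefully absorbed into Sobolev norms of fixed finite order.
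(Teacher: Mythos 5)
Your proposal is correct and follows essentially the same route as the paper's proof: the Jacobi identity with $m=0$, truncation of the sums via positivity of energy (so $l\leq\Delta_u+\Delta_{w^{(j)}}-1$ in one sum and $l\lesssim\Delta_{w^{(i)}}+\Delta_{w^{(j)}}-s$ in the other), the polynomial growth bound on $\big|{n\choose l}\big|$, Sobolev-norm transfer via Proposition \ref{lb53}, and absorption of powers of $(1+\Delta_{w^{(j)}})$ into a higher Sobolev norm. Your explicit case split $n\geq0$ versus $n<0$ and the orthogonality argument reducing to homogeneous $w^{(j)}$ are only presentational refinements of what the paper does implicitly.
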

\begin{proof}
	By Jacobi Identity, for any $s\in\mathbb R$ we have
	\begin{align}
	&\mathcal Y_\alpha(Y_i(u,n)w^{(i)},s)\nonumber\\
	=&\sum_{l\in\mathbb Z_{\geq0}}(-1)^l{n\choose l}Y_k(u,n-l)\mathcal Y_\alpha(w^{(i)},s+l)-\sum_{l\in\mathbb Z_{\geq0}}(-1)^{l+n}{n\choose l}\mathcal Y_\alpha(w^{(i)},n+s-l)Y_j(u,l).
	\end{align}
	It can be shown by induction on $|n|$ that
	$$\limsup_{l\rightarrow \infty}\bigg|{n\choose l}\bigg|l^{-|n|}<+\infty.$$
	Choose any homogeneous vector $w^{(j)}\in W_j$ with energy $\Delta_{w^{(j)}}$. Then by energy-boundedness of $\mathcal Y_\alpha(w^{(i)},x),Y_j(u,x),Y_k(u,x)$ and proposition \ref{lb53}, there exist positive constants $C_1,C_2,\dots,C_8$ and $r_1,t_1,r_2,t_2,r_3,t_3$ independent of $w^{(j)}$ and $s$, such that
	\begin{align}
	&\bigg\lVert \sum_{l\geq0}(-1)^{l+n}{n\choose l}\mathcal Y_\alpha(w^{(i)},n+s-l)Y_j(u,l)w^{(j)} \bigg\lVert\nonumber\\
	\leq&\sum_{l\geq0}C_1l^{|n|}\big\lVert \mathcal Y_\alpha(w^{(i)},n+s-l)Y_j(u,l)w^{(j)} \big\lVert\nonumber\\
	\leq&\sum_{l\geq0}C_2l^{|n|}\big(1+|n+s-l|\big)^{t_1}\big\lVert Y_j(u,l)w^{(j)} \big\lVert_{r_1}\nonumber\\
	\leq&\sum_{0\leq l\leq\Delta_u+\Delta_{w^{(j)}}-1}C_3l^{|n|}(1+|s|)^{t_1}(1+l)^{t_1}\cdot (1+l)^{r_1+t_2}\lVert w^{(j)} \lVert_{r_1+r_2}\nonumber\\
	\leq&\sum_{0\leq l\leq\Delta_u+\Delta_{w^{(j)}}-1}C_3(1+|s|)^{t_1} (1+l)^{|n|+t_1+r_1+t_2}\lVert w^{(j)} \lVert_{r_1+r_2}\nonumber\\
	\leq& C_4(1+|s|)^{t_1}(1+\Delta_{w^{(j)}})^{1+|n|+t_1+r_1+t_2}\lVert w^{(j)} \lVert_{r_1+r_2}\nonumber\\
	=&C_4(1+|s|)^{t_1}\lVert w^{(j)} \lVert_{1+|n|+t_1+t_2+2r_1+r_2}.
	\end{align}
	Here the inequality $l\leq\Delta_u+\Delta_{w^{(j)}}-1$ comes from the fact that every nonzero  $Y_j(u,l)w^{(j)}$ must have non-negative energy. Similarly we have
	\begin{align}
	&\bigg\lVert \sum_{l\geq0}(-1)^l{n\choose l}Y_k(u,n-l)\mathcal Y_\alpha(w^{(i)},s+l)w^{(j)} \bigg\lVert\nonumber\\
	\leq&\sum_{l\geq0}C_5l^{|n|}\big\lVert Y_k(u,n-l)\mathcal Y_\alpha(w^{(i)},s+l)w^{(j)} \big\lVert\nonumber\\
	\leq&\sum_{l\geq0}C_6l^{|n|}(1+|n-l|)^{t_3}\big\lVert \mathcal Y_\alpha(w^{(i)},s+l)w^{(j)} \big\lVert_{r_3}\nonumber\\
	\leq&\sum_{0\leq l\leq\Delta_{w^{(i)}}+\Delta_{w^{(j)}}-s-1} C_7l^{|n|}(1+l)^{t_3}(1+|s+l|)^{r_3+t_2}\lVert w^{(j)} \lVert_{r_3+r_2}\nonumber\\
	\leq&\sum_{0\leq l\leq\Delta_{w^{(i)}}+\Delta_{w^{(j)}}-s-1}C_7(1+|s|)^{r_3+t_2}(1+l)^{|n|+t_3+r_3+t_2}\lVert w^{(j)} \lVert_{r_3+r_2}\nonumber\\
	\leq& C_8(1+|s|)^{r_3+t_2}(1+\Delta_{w^{(j)}}+|s|)^{1+|n|+t_3+r_3+t_2}\lVert w^{(j)} \lVert_{r_3+r_2}\nonumber\\
	\leq&C_8(1+|s|)^{2r_3+2t_2+1+|n|+t_3}(1+\Delta_{w^{(j)}})^{1+|n|+t_3+r_3+t_2}\lVert w^{(j)} \lVert_{r_3+r_2}\nonumber\\
	=&C_8(1+|s|)^{2r_3+2t_2+1+|n|+t_3}\lVert w^{(j)} \lVert_{2r_3+r_2+t_2+1+|n|+t_3}.
	\end{align}
	The energy-boundedness of $\mathcal Y_\alpha(Y_i(u,n)w^{(i)},x)$ follows from these two inequalities.
\end{proof}

The following proposition is also very useful. One can prove it using the argument in \cite{BSM} section 2.
\begin{pp}\label{lb89}
	If $v=\nu$ or $v\in V(1)$, then for any unitary $V$-module $W_i$, $Y_i(v,x)$ satisfies linear energy bounds.
\end{pp}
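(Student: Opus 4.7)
The plan is to treat both cases in parallel using the quasi-primary property of the vectors involved, following the strategy of \cite{BSM} Section~2. First, $\nu$ is quasi-primary (noted earlier in the paper), and I claim that every $v \in V(1)$ is primary. Indeed, $L_1 v \in V(0) = \mathbb{C}\Omega$; taking the inner product with $\Omega$ and using $L_1^\dagger = L_{-1}$ together with $L_{-1}\Omega = 0$ gives $\langle L_1 v \mid \Omega\rangle = \langle v \mid L_{-1}\Omega\rangle = 0$, forcing $L_1 v = 0$, and $L_n v \in V(1 - n) = 0$ for $n \ge 2$ by positive energy (Proposition~\ref{lb88}). Because $v$ is quasi-primary in both cases, the Jacobi identity \eqref{eq97} with $u = \nu$ collapses to clean commutator formulas:
\begin{align*}
[L_m, L_n] = (m-n) L_{m+n} + \tfrac{c}{12} m(m^2-1) \delta_{m+n, 0} \qquad (v = \nu),
\end{align*}
and, writing $J_n := Y_i(v, n)$,
\begin{align*}
[L_m, J_n] = -n J_{m+n} \qquad (v \in V(1) \text{ primary}).
\end{align*}
Unitarity gives $L_n^\dagger = L_{-n}$ and $J_n^\dagger = -\overline{J}_{-n}$, where $\overline{J}_n := Y_i(\theta v, n)$.

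For the main estimate, consider first $v = \nu$. For homogeneous $w$ of energy $h$ and $n \ge 1$, the Virasoro relation and $L_n^\dagger = L_{-n}$ yield
\begin{align*}
\|L_n w\|^2 = \|L_{-n} w\|^2 - 2 n h \|w\|^2 - \tfrac{c}{12} n (n^2 - 1) \|w\|^2,
\end{align*}
so $\|L_n w\| \le \|L_{-n} w\|$, and it suffices to bound $\|L_{-n} w\|$ polynomially in $n$ and linearly in $\|(1+\overline{L_0}) w\|$. Such a bound is obtained by induction on $n$, iterating the commutator identity $L_{-n-1} = \tfrac{1}{1-n}[L_{-n}, L_{-1}]$ (for $n \ne 1$) and handling the small-$n$ base cases directly from the positive-energy structure. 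For $v \in V(1)$ the analogous recursion is $J_{n+1} = -\tfrac{1}{n}[L_1, J_n]$ (for $n \ne 0$), and the base cases $J_0, J_{\pm 1}$ are controlled using the Heisenberg-type commutator $[J_m, J_n^\dagger]$ coming from the OPE of $v$ with itself (which yields a scalar term $\sim m \delta_{m+n,0}$ plus a weight-$1$ mode handled inductively).

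The main obstacle is ensuring that the inductive argument yields a bound with Sobolev exponent $r = 1$ (i.e., truly linear energy bounds) rather than some higher exponent; naively, each commutator iteration would introduce an extra factor of $(1 + \overline{L_0})$ on the right. The point of \cite{BSM} is that the $L_0$-grading shift induced by $L_{\pm 1}$ exactly compensates these factors at each step, so that the final bound reads
\begin{align*}
\|Y_i(v, n) w\| \le M (1 + |n|)^t \|(1 + \overline{L_0}) w\|
\end{align*}
for some exponent $t$ depending only on the weight of $v$. Extending linearly from real quasi-primary weight-$1$ vectors (i.e.\ those fixed by $\theta$) to all of $V(1)$ by splitting into real and imaginary parts completes the proof.
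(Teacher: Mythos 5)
Your algebraic preliminaries are correct and consistent with what the paper's one-line proof (a citation to \cite{BSM} section 2) presupposes: every $v\in V(1)$ is primary (your argument via $V(0)=\mathbb{C}\Omega$, $L_1^\dagger=L_{-1}$, and proposition \ref{lb88} is fine), the commutator and adjoint formulas are right, and the identity $\lVert L_nw\lVert^2=\lVert L_{-n}w\lVert^2-2nh\lVert w\lVert^2-\tfrac{c}{12}n(n^2-1)\lVert w\lVert^2$ holds (you implicitly use $c\geq0$, which follows from unitarity via $\lVert L_{-2}\Omega\lVert^2=\tfrac{c}{2}$). The genuine gap is the core analytic step. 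The induction on the mode index through $L_{-n-1}=\tfrac{1}{1-n}[L_{-n},L_{-1}]$ (resp.\ $J_{n+1}=-\tfrac 1n[L_1,J_n]$) cannot be made to yield linear energy bounds: after the triangle inequality, each commutation with $L_{\mp1}$ genuinely costs one power of $(1+\overline{L_0})$. Test on an $L_0$-eigenvector $w$ of energy $h$: the separate terms $\lVert L_{-n}L_{-1}w\lVert$ and $\lVert L_{-1}L_{-n}w\lVert$ grow like $h^2\lVert w\lVert$, while their difference $|1-n|\cdot\lVert L_{-n-1}w\lVert$ is supposed to be of order $h\lVert w\lVert$ --- the cancellation lives precisely in the difference, and the triangle inequality discards it. So your induction produces a Sobolev exponent growing by one with each step (hence unbounded in $n$), not a fixed exponent, let alone $r=1$. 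The assertion that "the $L_0$-grading shift induced by $L_{\pm1}$ exactly compensates these factors" is exactly the missing proof, not a proof; you flag it as "the main obstacle" and then resolve it by attribution rather than argument, and moreover it misdescribes what \cite{BSM} actually does.

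The BSM-type mechanism the paper invokes treats each $n$ in a single stroke instead of inductively. For $v=\nu$ and fixed $n\geq1$, one uses the triple $\ell_{\pm1}=n^{-1}L_{\pm n}$, $\ell_0=n^{-1}L_0+\tfrac{c}{24}\tfrac{n^2-1}{n}\geq 0$, which satisfies the $\mathfrak{sl}_2$ relations; positive-energy (lowest-weight) representation theory of this triple gives $\lVert\ell_{\pm1}\xi\lVert\leq\mathrm{const}\cdot\lVert(1+\ell_0)\xi\lVert$, and unwinding yields $\lVert L_{\pm n}\xi\lVert\leq C(1+n)^2\lVert(1+\overline{L_0})\xi\lVert$ --- a first-order bound uniform in $n$ with polynomial constant, which is all that the definition \eqref{eq211} requires. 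For $v\in V(1)$ the analogue is a quadratic-form/Sugawara-type positivity argument (e.g.\ a bound of the shape $J_{-n}J_n\lesssim L_0$ uniformly in $n>0$, obtained from the current-algebra relations and positivity of energy), again handling all modes simultaneously. Your reduction $\lVert L_nw\lVert\leq\lVert L_{-n}w\lVert$ for $n\geq1$ and the final decomposition of $v\in V(1)$ into $\theta$-fixed parts can be kept; the inductive step is what must be replaced.
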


We summarize the results  in this section as follows:
\begin{co}\label{lb107}
Let  $W_i,W_j,W_k$ be unitary $V$-modules, and  $\mathcal Y_\alpha\in\mathcal V{k\choose i~j}$.\\
(a) Suppose that $V$ is generated by a set $E$ of homogeneous vectors. If for each $v\in E$, $Y_i(v,x)$ is energy-bounded, then $Y_i$ is energy-bounded.\\
(b) If $W_i$ is irreducible,  $Y_j,Y_k$ are energy-bounded, and there exists a nonzero homogeneous vector $w^{(i)}\in W_i$ such that $\mathcal Y_\alpha(w^{(i)},x)$ is energy-bounded, then $\mathcal Y_\alpha$ is energy-bounded.\\
(c) If $w^{(i)}\in W_i$ is homogeneous, and $\mathcal Y_\alpha(w^{(i)},x)$ is energy-bounded, then $\mathcal Y_{C^{\pm1}\alpha}(w^{(i)},x)$, $\mathcal Y_{\overline \alpha}(\overline {w^{(i)}},x)$, and $\mathcal Y_{\alpha^*}(\overline{w^{(i)}},x)$ are energy-bounded.\\
(d) If $w^{(i)}\in W_i$ is quasi-primary, and $\mathcal Y_\alpha(w^{(i)},x)$ satisfies $r$-th order energy bounds. Then  $\mathcal Y_{C^{\pm1}\alpha}(w^{(i)},x)$, $\mathcal Y_{\overline \alpha}(\overline {w^{(i)}},x)$, and $\mathcal Y_{\alpha^*}(\overline{w^{(i)}},x)$  satisfy $r$-th order energy bounds.
\end{co}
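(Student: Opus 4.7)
The plan is to assemble all four parts from propositions already established in this section; no new analysis is required. I would handle the parts in the order (d), (c), (a), (b), reflecting their logical dependence.

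For (d), the quasi-primary assumption $L_1 w^{(i)}=0$ forces $\mathcal Y_\alpha(L_1^m w^{(i)},x)=0$ for every $m\geq 1$, so the hypothesis of Proposition \ref{lb56} is met with the same $r$ as given. That proposition then delivers the $r$-th order bounds for $\mathcal Y_{C^{\pm 1}\alpha}(w^{(i)},x)$ and $\mathcal Y_{\alpha^*}(\overline{w^{(i)}},x)$, while Proposition \ref{lb94} supplies the same bound for $\mathcal Y_{\overline\alpha}(\overline{w^{(i)}},x)$. For (c), the key observation is that $L_1$ strictly lowers energy on homogeneous vectors, so $L_1^m w^{(i)}=0$ once $m>\Delta_{w^{(i)}}$; only finitely many $\mathcal Y_\alpha(L_1^m w^{(i)},x)$ are nonzero. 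Since $L_1=Y_i(\nu,2)$ and Proposition \ref{lb89} yields energy bounds on $Y_j(\nu,x)$ and $Y_k(\nu,x)$, iterated application of Proposition \ref{lb55} shows every such operator is energy-bounded. Taking the maximum of their orders, Proposition \ref{lb56} applies and gives energy-boundedness of $\mathcal Y_{C^{\pm 1}\alpha}(w^{(i)},x)$ and $\mathcal Y_{\alpha^*}(\overline{w^{(i)}},x)$, and $\mathcal Y_{\overline\alpha}(\overline{w^{(i)}},x)$ is again handled by Proposition \ref{lb94}.

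For (a), begin with $Y_i(\Omega,x)=\id_{W_i}$, which trivially satisfies $0$-th order bounds. Since $V$ is spanned by vectors of the form $Y(v_1,n_1)\cdots Y(v_m,n_m)\Omega$ with $v_\ell\in E$, iterated application of Proposition \ref{lb55} (with $\mathcal Y_\alpha=Y_i$ and $W_j=W_k=W_i$), using energy-boundedness of $Y_i(v,x)$ for $v\in E$, produces energy bounds for $Y_i(v,x)$ on every homogeneous $v\in V$, whence on all of $V$ by linearity. For (b), the irreducibility of $W_i$ together with Remark \ref{lb102} implies that the graded $V$-stable subspace spanned by the vectors $Y_i(v_1,n_1)\cdots Y_i(v_m,n_m)w^{(i)}$ for homogeneous $v_\ell\in V$ is all of $W_i$. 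Starting from the given energy-boundedness of $\mathcal Y_\alpha(w^{(i)},x)$ and using that $Y_j,Y_k$ are energy-bounded, iterated application of Proposition \ref{lb55} gives energy bounds for $\mathcal Y_\alpha$ on every homogeneous vector in that span, and linearity extends this to $W_i$.

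No step presents a genuine obstacle: the corollary is essentially a bookkeeping result organizing the prior propositions. The one spot requiring slight care is in (c), where one must note that a single $r$ suffices for the finitely many $L_1^m w^{(i)}$ in order to apply Proposition \ref{lb56}; this is automatic by taking the maximum of the orders produced by the induction.
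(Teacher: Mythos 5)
Your proposal is correct and follows essentially the same route as the paper, whose proof of this corollary is precisely the citation chain you reconstruct: (a) and (b) from Proposition \ref{lb55}, (c) from Propositions \ref{lb94}, \ref{lb56}, \ref{lb55}, and \ref{lb89}, and (d) from Propositions \ref{lb94} and \ref{lb56}. Your expanded bookkeeping — the vanishing of $L_1^m w^{(i)}$ in (d), taking a common order $r$ over the finitely many nonzero $\mathcal Y_\alpha(L_1^m w^{(i)},x)$ in (c), and the generation/irreducibility inductions in (a) and (b) via Remark \ref{lb102} — correctly fills in the details the paper leaves implicit.
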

\begin{proof}
(a) and (b) follow from proposition \ref{lb55}. (c) follows from propositions \ref{lb94}, \ref{lb56}, \ref{lb55}, and \ref{lb89}. (d) follows from propositions \ref{lb94} and \ref{lb56}.
\end{proof}

\subsection{Smeared intertwining operators}\label{lb101}

In this section, we construct smeared intertwining operators for energy-bounded intertwining operators, and prove the adjoint relation, the braid relations, the rotation covariance, and the intertwining property for these operators. The proof of the last property requires some knowledge of the strong commutativity of unbounded closed operators on a Hilbert space. We give a brief 
exposition of this theory in chapter \ref{lba1}.

\subsubsection*{The unbounded operator $\mathcal Y_\alpha(w^{(i)},f)$}

For any open subset $I$ of $S^1$, we   denote by $C^\infty_c(I)$ the set of all complex smooth functions on $S^1$ whose supports lie in $I$. If $I=\{e^{it}:a< t< b \}$ ($a,b\in\mathbb R,a<b<a+2\pi$), we say that $I$ is an \textbf{open interval} of $S^1$. We let $\mathcal J$ be the set of all open intervals of $S^1$. In general, if $U$ is an open subset of $S^1$, we let $\mathcal J(U)$ be the set of open intervals of $S^1$ contained in $U$. If $I\in\mathcal J$, then its \textbf{complement} $I^c$ is defined to be $S^1\setminus \overline I$.  If $I_1, I_2\in\mathcal J$, we write $I_1\subset\joinrel\subset I_2$ if $\overline {I_1}\subset I_2$.

Let  $\mathcal Y_\alpha \in\mathcal V{k\choose i~j}$ be unitary. (Recall that this means that $W_i,W_j,W_k$ are unitary $V$-modules.) For any $w^{(i)}\in W_i,z\in\mathbb C^\times$, $\mathcal Y_\alpha(w^{(i)},z)$ is a linear map $W_j\rightarrow\widehat W_k$.  Therefore we can regard $\mathcal Y_\alpha(w^{(i)},z)$ as a sesquilinear form $W_j\times W_k\rightarrow \mathbb C, (w^{(j)}, w^{(k)})\mapsto \langle \mathcal Y_\alpha(w^{(i)},z)w^{(j)}|w^{(k)}\rangle$.

We now define the smeared intertwining operators. Let $\di\theta=  \frac {e^{i\theta}}{2\pi}d\theta$.  For any  $f\in C_c^\infty(S^1\setminus\{-1\})$, we define a sesquilinear form
\begin{gather*}
\mathcal Y_\alpha(w^{(i)},f):W_j\times W_k\rightarrow \mathbb C,~~~
(w^{(j)}, w^{(k)})\mapsto \langle \mathcal Y_\alpha(w^{(i)},f)w^{(j)}|w^{(k)} \rangle
\end{gather*}
satisfying
\begin{align}
\langle \mathcal Y_\alpha(w^{(i)},f)w^{(j)}|w^{(k)} \rangle=\int_{-\pi}^{\pi}\langle \mathcal Y_\alpha(w^{(i)},e^{i\theta})w^{(j)}|w^{(k)} \rangle f(e^{i\theta})\di\theta.\label{eq218}
\end{align}
$\mathcal Y_\alpha(w^{(i)},f)$ can be regarded as a linear map  $W_j\rightarrow\widehat W_k$. In the following, we show that when  $\mathcal Y_\alpha(w^{(i)},x)$ is energy-bounded,  $\mathcal Y_\alpha(w^{(i)},f)$ is a preclosed unbounded operator.

To begin with, we note that for any $f\in C_c^\infty(S^1\setminus\{-1\})$ and any $s\in\mathbb R$,  the  $s$-th mode of $f$ is
\begin{align}
\widehat f(s)=\int_{-\pi}^{\pi}f(e^{i\theta})e^{-is\theta}\cdot\frac {d\theta}{2\pi}.
\end{align}
Then we have
\begin{align}
\mathcal Y_\alpha(w^{(i)},f)=\sum_{s\in\mathbb R}\mathcal Y_\alpha(w^{(i)},s)\widehat f(s).
\end{align}
Define
\begin{gather*}
\mathcal D_V=\{\Delta_i+\Delta_j-\Delta_k:\text{ $W_i,W_j,W_k$ are irreducible $V$-modules} \},\\
\mathbb Z_V=\mathbb Z+\mathcal D_V.
\end{gather*}
Then $\mathcal Y_\alpha(w^{(i)},s)=0$ except possibly when $s\in\mathbb Z_V$. Since $V$ has finitely many equivalence classes of irreducible representations, the set $\mathcal D_V$ is finite.  Now for any $t\in\mathbb R$ we define a norm $|\cdot|_{V,t}$ on $C^\infty_c(S^1\setminus\{-1\})$ to be
\begin{align}
|f|_{V,t}=\sum_{s\in\mathbb Z_V}(1+|s|)^t|\widehat f(s)|,
\end{align}
which is easily seen to be finite. For each $r\in\mathbb R$, we define $e_r:S^1\setminus\{-1\}\rightarrow \mathbb C$ to be
\begin{align}
e_r(e^{i\theta})=e^{ir\theta},~~~(-\pi<\theta<\pi).
\end{align}
When $r\in\mathbb Z$, we regard $e_r$ as a continuous function on $S^1$.

\begin{lm}\label{lb57}
Suppose that $w^{(i)}\in W_i$ is homogeneous, and  $\mathcal Y_\alpha(w^{(i)},x)$ is energy-bounded and satisfies condition \eqref{eq211}.\\
(a) Let $p\in\mathbb R$. Then there exists $M_p\geq0$, such that for any  $f\in C^\infty_c(S^1\setminus\{-1\}),w^{(j)}\in W_j$, we have $\mathcal Y_\alpha(w^{(i)},f)w^{(j)}\in\mathcal H^\infty_k$, and 
\begin{align}
\big\lVert \mathcal Y_\alpha(w^{(i)},f)w^{(j)} \big\lVert_p\leq M_p\big|f\big|_{V,|p|+t}\big\lVert w^{(j)} \big\lVert_{p+r}.\label{eq214}
\end{align}
(b) For any $w^{(j)}\in W_j,w^{(k)}\in W_k$ we have
\begin{align}
\langle w^{(k)}| \mathcal Y_\alpha(w^{(i)},f)w^{(j)} \rangle=\sum_{m\geq0}\frac {e^{-i\pi\Delta_{w^{(i)}}}} {m!} \langle \mathcal Y_{\alpha^*}(\overline {L_1^mw^{(i)}},\overline {e_{(m+2-2\Delta_{w^{(i)}})}f}) w^{(k)}|w^{(j)}\rangle.
\end{align}
\end{lm}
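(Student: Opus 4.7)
The plan is to prove (a) by extending the mode-level energy bounds of Proposition \ref{lb53} to the smeared operator, and to prove (b) by inverting the adjoint relation \eqref{eq213}. For (a), expand
$$\mathcal Y_\alpha(w^{(i)},f)w^{(j)}=\sum_{s\in\mathbb Z_V}\widehat f(s)\,\mathcal Y_\alpha(w^{(i)},s)w^{(j)}.$$
Proposition \ref{lb53} gives $\|\mathcal Y_\alpha(w^{(i)},s)w^{(j)}\|_p\leq M_p(1+|s|)^{|p|+t}\|w^{(j)}\|_{p+r}$. Since $f\in C^\infty_c(S^1\setminus\{-1\})$ is smooth, the coefficients $\widehat f(s)$ decay faster than any polynomial in $s$, so the partial sums $\sum_{|s|\leq N}\widehat f(s)\,\mathcal Y_\alpha(w^{(i)},s)w^{(j)}$ are $\|\cdot\|_p$-Cauchy with $\|\cdot\|_p$-limit bounded by $M_p|f|_{V,|p|+t}\|w^{(j)}\|_{p+r}$. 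As $\mathcal H^p_k$ is complete, the limit lies in $\mathcal H^p_k$ for every $p\geq 0$, hence in $\mathcal H^\infty_k$. Pairing against any $w^{(k)}\in W_k$ and interchanging the finite sum and the integral shows that this limit agrees with the vector defined by \eqref{eq218}, yielding \eqref{eq214}.

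For (b), I first invert \eqref{eq213}. Replacing $\alpha$ by $\alpha^*$ and $w^{(i)}$ by $\overline{w^{(i)}}$ in \eqref{eq213}, and using $(\alpha^*)^*=\alpha$, $\overline{\overline{w^{(i)}}}=w^{(i)}$, $\Delta_{\overline{w^{(i)}}}=\Delta_{w^{(i)}}$, and $L_1\overline{w^{(i)}}=\overline{L_1w^{(i)}}$ (the last coming from $L_nC_i=C_iL_n$), one obtains
$$\mathcal Y_\alpha(w^{(i)},s)=\sum_{m\geq 0}\frac{e^{i\pi\Delta_{w^{(i)}}}}{m!}\,\mathcal Y_{\alpha^*}(\overline{L_1^mw^{(i)}},-s-m-2+2\Delta_{w^{(i)}})^\dagger,$$
and taking formal adjoints yields
$$\mathcal Y_\alpha(w^{(i)},s)^\dagger=\sum_{m\geq 0}\frac{e^{-i\pi\Delta_{w^{(i)}}}}{m!}\,\mathcal Y_{\alpha^*}(\overline{L_1^mw^{(i)}},-s-m-2+2\Delta_{w^{(i)}}).$$
The sum over $m$ is finite because $L_1$ lowers conformal weight by one while energies of nonzero vectors are non-negative by Proposition \ref{lb88}.

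Substituting the latter expression into $\langle w^{(k)}|\mathcal Y_\alpha(w^{(i)},f)w^{(j)}\rangle=\sum_{s'}\overline{\widehat f(s')}\,\langle\mathcal Y_\alpha(w^{(i)},s')^\dagger w^{(k)}|w^{(j)}\rangle$ and reindexing by $s=-s'-m-2+2\Delta_{w^{(i)}}$ reduces (b) to the Fourier identity
$$\overline{\widehat f(-s-m-2+2\Delta_{w^{(i)}})}=\int_{-\pi}^{\pi}e^{-i(s+m+2-2\Delta_{w^{(i)}})\theta}\,\overline{f(e^{i\theta})}\,\frac{d\theta}{2\pi},$$
which is immediate from the definition of $\widehat{\,\cdot\,}$, the factor $e^{-i(m+2-2\Delta_{w^{(i)}})\theta}$ coming from $\overline{e_{m+2-2\Delta_{w^{(i)}}}(e^{i\theta})}$, the factor $e^{-i(s+1)\theta}$ coming from the $s$-th mode of $\mathcal Y_{\alpha^*}$, and the factor $e^{i\theta}$ in $\di\theta=\frac{e^{i\theta}}{2\pi}d\theta$. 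The main obstacle in the whole argument is purely bookkeeping---carefully tracking complex conjugates, $\arg$ conventions on $S^1\setminus\{-1\}$, shifted Fourier indices, and combinatorial factors in \eqref{eq213}---once the inversion of \eqref{eq213} is in hand, everything else reduces to routine computation.
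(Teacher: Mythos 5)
Your proof is correct and in substance the same as the paper's: part (a) coincides with the paper's argument (mode expansion, the Sobolev-norm bound \eqref{eq295} from Proposition \ref{lb53}, and the definition of $|f|_{V,|p|+t}$), and part (b) rests on the same adjoint relation, the only difference being that the paper conjugates the pointwise identity coming from \eqref{eq230} at $z=e^{i\theta}$ and integrates against $\overline{f(e^{i\theta})\di\theta}$ using $\overline{\di\theta}=e^{-2i\theta}\di\theta$, whereas you invert the mode formula \eqref{eq213} via the involutivity $\mathcal Y_{\alpha^{**}}=\mathcal Y_\alpha$ together with $L_nC_i=C_iL_n$, take formal adjoints, and reindex Fourier coefficients. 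Your inversion of \eqref{eq213} (including the conjugated phase $e^{-i\pi\Delta_{w^{(i)}}}$, the finiteness of the $m$-sum via Proposition \ref{lb88}, and the Fourier identity $\widehat{g}(s)=\overline{\widehat f(-s-m-2+2\Delta_{w^{(i)}})}$ for $g=\overline{e_{(m+2-2\Delta_{w^{(i)}})}f}$) checks out, so the two computations are equivalent bookkeeping for the same relation.
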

\begin{proof}
(a) We have
\begin{align}
\mathcal Y_\alpha(w^{(i)},f)w^{(j)}=\sum_{s\in\mathbb Z_V}\widehat f(s)\mathcal Y_\alpha(w^{(i)},s)w^{(j)}.
\end{align}
Choose $M_p\geq0$ such that \eqref{eq295} always holds. Then
\begin{align}
&\sum_{s\in\mathbb Z_V}\big\lVert \widehat f(s)\mathcal Y_\alpha(w^{(i)},s)w^{(j)} \big\lVert_p\nonumber\\
\leq& \sum_{s\in\mathbb Z_V}M_p \big|\widehat f(s)\big|(1+|s|)^{|p|+t}\lVert w^{(j)} \lVert_{p+r}\nonumber\\
=& M_p\big|f\big|_{V,|p|+t}\big\lVert w^{(j)} \big\lVert_{p+r}.
\end{align}
In particular, $\mathcal Y_\alpha(w^{(i)},f)w^{(j)}\in\mathcal H_k^\infty$.

(b) For any  $w^{(j)}\in W_j,w^{(k)}\in W_k$, and $z\in\mathbb C^\times$ with argument $\arg z$, we have
\begin{align}
&\langle w^{(k)}| \mathcal Y_\alpha(w^{(i)},z)w^{(j)} \rangle\nonumber\\
=&\langle \mathcal Y_{\alpha^*}(e^{\overline zL_1}(e^{-i\pi}\overline z^{-2})^{L_0}\overline {w^{(i)}},\overline z^{-1})w^{(k)}|w^{(j)} \rangle\nonumber\\
=&e^{-i\pi\Delta_{w^{(i)}}}\sum_{m\geq0} \frac {\overline z^{m-2\Delta_{w^{(i)}} }}{m!}\langle \mathcal Y_{\alpha^*}(\overline {L_1^mw^{(i)}},\overline z^{-1}) w^{(k)} |w^{(j)}\rangle.
\end{align}
Note also that $\overline{\di\theta}=e^{-2i\theta}\di\theta$. Therefore we have
\begin{align}
&\langle w^{(k)}| \mathcal Y_\alpha(w^{(i)},f)w^{(j)} \rangle\nonumber\\
=&\int_{-\pi}^{\pi}\langle w^{(k)}|\mathcal Y_\alpha(w^{(i)},e^{i\theta})w^{(j)}\rangle \overline {f(e^{i\theta})\di\theta}\nonumber\\
=&\sum_{m\geq0}\int_{-\pi}^{\pi}\frac {e^{-i\pi\Delta_{w^{(i)}}}} {m!}\langle \mathcal Y_{\alpha^*}(\overline {L_1^mw^{(i)}},e^{i\theta}) w^{(k)}|w^{(j)}\rangle e^{-i(m+2-2\Delta_{w^{(i)}})\theta}\overline {f(e^{i\theta})}\di\theta\nonumber\\
=&\sum_{m\geq0}\frac {e^{-i\pi\Delta_{w^{(i)}}}} {m!} \langle \mathcal Y_{\alpha^*}(\overline {L_1^mw^{(i)}},\overline {e_{(m+2-2\Delta_{w^{(i)}})}f}) w^{(k)}|w^{(j)}\rangle.
\end{align}
\end{proof}

By lemma \ref{lb57}, if $w^{(i)}$ is homogeneous and $\mathcal Y_\alpha(w^{(i)},x)$ is energy-bounded, then $\mathcal Y_\alpha(w^{(i)},f)$ can be viewed as an unbounded operator from $\mathcal H_j$ to $\mathcal H_k$ with domain $W_j$. Moreover, the domain of $\mathcal Y_\alpha(w^{(i)},f)^*$  contains a dense subspace of $\mathcal H_k$ (which is $W_k$). So \emph{$\mathcal Y_\alpha(w^{(i)},f)$  is preclosed}.  We let $\overline{\mathcal Y_\alpha(w^{(i)},f)}$ be its closure . By inequality \eqref{eq214},
$\mathcal H^\infty_j$ is inside $\mathscr D(\overline{\mathcal Y_\alpha(w^{(i)},f)})$, the domain of $\overline{\mathcal Y_\alpha(w^{(i)},f)}$, and  $\overline{\mathcal Y_\alpha(w^{(i)},f)}\mathcal H^\infty_j\subset\mathcal H^\infty_k$. In the following, we will always view $\mathcal Y_\alpha(w^{(i)},f):\mathcal H^\infty_j\rightarrow\mathcal H^\infty_k$ as the restriction of $\overline{\mathcal Y_\alpha(w^{(i)},f)}$ to $\mathcal H_j^\infty$. $\mathcal Y_\alpha(w^{(i)},f)$ is called a \textbf{smeared intertwining operator}. The closed operator $\mathcal Y_\alpha(w^{(i)},f)^*=\overline{\mathcal Y_\alpha(w^{(i)},f)}^*$ is the adjoint of $\mathcal Y_\alpha(w^{(i)},f)$. The \textbf{formal adjoint} of $\mathcal Y_\alpha(w^{(i)},f)$, which is denoted by $\mathcal Y_\alpha(w^{(i)},f)^\dagger$, is the restriction of $\mathcal Y_\alpha(w^{(i)},f)^*$ to $\mathcal H^\infty_k\rightarrow \mathcal H^\infty_j$.

The following proposition follows directly from lemma \ref{lb57}.
\begin{pp}\label{lb59}
Suppose that $w^{(i)}\in W_i$ is  homogeneous, $\mathcal Y_\alpha(w^{(i)},x)$ is energy-bounded and satisfies condition \eqref{eq211}. Then for any $f\in C_c(S^1\setminus\{-1\})$, the following statements are true:\\
(a) $\mathcal Y_\alpha(w^{(i)},f)\mathcal H^\infty_j\subset \mathcal H^\infty_k$. Moreover, for any $p\in\mathbb R$, there exists $M_p\geq0$ independent of $f$, such that for any $\xi^{(j)}\in\mathcal H^\infty_j$, we have
\begin{align}
\big\lVert \mathcal Y_\alpha(w^{(i)},f)\xi^{(j)} \big\lVert_p\leq M_p\big|f\big|_{V,|p|+t}\big\lVert \xi^{(j)} \big\lVert_{p+r}.
\end{align}
(b) $\mathcal Y_\alpha(w^{(i)},f):\mathcal H^\infty_j\rightarrow \mathcal H^\infty_k$ has the formal adjoint $\mathcal Y_\alpha(w^{(i)},f)^\dagger:\mathcal H^\infty_k\rightarrow \mathcal H^\infty_j$, which satisfies
\begin{align}
\mathcal Y_\alpha(w^{(i)},f)^\dagger=\sum_{m\geq0}\frac {e^{-i\pi\Delta_{w^{(i)}}}} {m!}\mathcal Y_{\alpha^*}(\overline{L_1^mw^{(i)}},\overline {e_{(m+2-2\Delta_{w^{(i)}})}f}).\label{eq220}
\end{align}
In particular, if $w^{(i)}$ is quasi-primary, then we have the adjoint relation
\begin{align}
\mathcal Y_\alpha(w^{(i)},f)^\dagger= {e^{-i\pi\Delta_{w^{(i)}}}} \mathcal Y_{\alpha^*}(\overline{w^{(i)}},\overline {e_{(2-2\Delta_{w^{(i)}})}f}).\label{eq300}
\end{align}
\end{pp}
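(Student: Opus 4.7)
The proof falls into two tasks: extending the algebraic bound and formal-adjoint identity in Lemma \ref{lb57} from $W_j, W_k$ to the spaces of smooth vectors $\mathcal H_j^\infty, \mathcal H_k^\infty$. Throughout, I will exploit that $L_1^m w^{(i)} = 0$ for $m$ sufficiently large (since $L_1$ strictly lowers conformal weight, and energies on $W_i$ are bounded below by Proposition \ref{lb88}), so every formal series indexed by $m \geq 0$ is in fact a finite sum.

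For part (a), Lemma \ref{lb57}(a) already supplies the bound $\|\mathcal Y_\alpha(w^{(i)},f)w^{(j)}\|_p \leq M_p|f|_{V,|p|+t}\|w^{(j)}\|_{p+r}$ for $w^{(j)} \in W_j$. To extend this to $\xi^{(j)} \in \mathcal H_j^\infty$, I will approximate $\xi^{(j)}$ by the truncations $\xi_n^{(j)} = \sum_{s \leq n} P_s \xi^{(j)} \in W_j$. Since $\xi^{(j)}$ lies in $\mathscr{D}((1+\overline{L_0})^q)$ for every $q \geq 0$, we have $\xi_n^{(j)} \to \xi^{(j)}$ in each Sobolev norm $\|\cdot\|_q$. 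Applying the case $p=0$ of the bound shows $\{\mathcal Y_\alpha(w^{(i)},f)\xi_n^{(j)}\}$ is Cauchy in $\mathcal H_k$, so $\xi^{(j)} \in \mathscr{D}(\overline{\mathcal Y_\alpha(w^{(i)},f)})$ and the extended operator is the limit. Running the same argument in the $\|\cdot\|_p$ norm and invoking the closedness of $(1+\overline{L_0})^p$ transfers the bound to all of $\mathcal H_j^\infty$; taking $p$ arbitrary shows the image lies in $\mathcal H_k^\infty$.

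For part (b), I first need to know that every term $\mathcal Y_{\alpha^*}(\overline{L_1^m w^{(i)}}, \cdot)$ appearing in \eqref{eq220} is the smeared form of an energy-bounded intertwining operator. Energy-boundedness of $\mathcal Y_\alpha(L_1^m w^{(i)}, x)$ for all $m$ follows from Proposition \ref{lb55} together with the linear energy bounds on $Y_j(\nu,x), Y_k(\nu,x)$ from Proposition \ref{lb89}, using $L_1 = Y_i(\nu,2)$ iteratively on the charge; applying Proposition \ref{lb56} (to $L_1^m w^{(i)}$, noting again the finite-$m$ truncation) then yields the energy bounds on $\mathcal Y_{\alpha^*}(\overline{L_1^m w^{(i)}}, x)$, so each smeared operator is defined and maps $\mathcal H_k^\infty \to \mathcal H_j^\infty$ by part (a). Now Lemma \ref{lb57}(b) gives the pairing identity
\begin{align*}
\langle w^{(k)} | \mathcal Y_\alpha(w^{(i)},f) w^{(j)} \rangle = \sum_{m \geq 0} \frac{e^{-i\pi\Delta_{w^{(i)}}}}{m!} \langle \mathcal Y_{\alpha^*}(\overline{L_1^m w^{(i)}}, \overline{e_{(m+2-2\Delta_{w^{(i)}})} f}) w^{(k)} | w^{(j)} \rangle
\end{align*}
for $w^{(j)} \in W_j, w^{(k)} \in W_k$. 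Both sides are sesquilinear and, by the $p=0$ bound from part (a), jointly continuous in $(w^{(j)}, w^{(k)})$ with respect to some Sobolev norms $\|\cdot\|_{r'}, \|\cdot\|_{r''}$. Using the truncation approximation above on each argument, the identity extends to $(\xi^{(j)}, \xi^{(k)}) \in \mathcal H_j^\infty \times \mathcal H_k^\infty$. By the very definition of $\mathcal Y_\alpha(w^{(i)},f)^\dagger$ as the restriction of the Hilbert space adjoint to $\mathcal H_k^\infty \to \mathcal H_j^\infty$, this identifies $\mathcal Y_\alpha(w^{(i)},f)^\dagger$ with the finite sum in \eqref{eq220}. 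When $w^{(i)}$ is quasi-primary, only the $m=0$ term survives, producing \eqref{eq300}.

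There is no serious obstacle here; the content is routine once part (a) is set up, and the only subtle point is making sure every smeared adjoint intertwining operator on the right of \eqref{eq220} is actually defined, which is handled by combining Propositions \ref{lb55}, \ref{lb56}, \ref{lb89} with the observation that the $m$-sum truncates.
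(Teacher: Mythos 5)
Your proof is correct and takes essentially the same route as the paper, which simply states that the proposition ``follows directly from lemma \ref{lb57}'' together with the closure construction in the preceding paragraph: your truncation argument in the Sobolev norms is exactly the density/closedness step that extends \eqref{eq214} and the pairing identity of lemma \ref{lb57}(b) from $W_j,W_k$ to $\mathcal H^\infty_j,\mathcal H^\infty_k$. Your verification that each $\mathcal Y_{\alpha^*}(\overline{L_1^m w^{(i)}},x)$ is energy-bounded via propositions \ref{lb55}, \ref{lb56}, \ref{lb89} (with the finite $m$-truncation) is precisely the content of corollary \ref{lb107}(c), which the paper relies on implicitly, so you have only made explicit what the paper leaves to the reader.
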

Hence the adjoint relation \eqref{eq300} for smeared intertwining operators is established.
\begin{rem}
	If $\mathcal Y_\alpha\in\mathcal V{k\choose i~j}$ is a unitary energy-bounded intertwining operator of $V$, $w^{(i)}\in W_i$ is not necessarily homogeneous, and $f\in C_c^\infty(S^1\setminus\{-1\})$, then by linearity, we can  define a preclosed operator $\mathcal Y_\alpha(w^{(i)},f):\mathcal H^\infty_j\rightarrow \mathcal H^\infty_k$ to be $\mathcal Y_\alpha(w^{(i)},f)=\sum_{s\in\mathbb R}\mathcal Y_\alpha(P_sw^{(i)},f)$. Proposition \ref{lb59}-(a) still holds in this case.
\end{rem}

\begin{rem}
If $W_i$ is a unitary $V$-module, then $Y_i\in\mathcal V{i\choose 0~i}$. Choose any  vector $v\in V$. Since the powers of $x$ in $Y(v,x)$ are integers, for each $z\in\mathbb C^\times$, $Y_i(v,z)$ does not depend on $\arg z$. Therefore, for any $f\in C^\infty_c(S^1)$, we can defined a smeared vertex operator $Y_i(v,f):\mathcal H^\infty_i\rightarrow \mathcal H^\infty_i$  using \eqref{eq218}. 
\end{rem}

\subsubsection*{Braiding of smeared intertwining operators}
The relation between products of smeared intertwining operators and correlation functions is indicated as follows. 
\begin{pp}\label{lb58}
Let $\mathcal Y_{\alpha_1},\mathcal Y_{\alpha_2},\dots,\mathcal Y_{\alpha_n}$ be a chain of unitary energy-bounded intertwining operators of $V$ with charge spaces $W_{i_1},W_{i_2},\dots,W_{i_n}$ respectively. Let $W_j$ be the source space of $\mathcal Y_{\alpha_1}$, and let $W_k$ be the target space of $\mathcal Y_{\alpha_n}$. Choose mutually disjoint  $I_1,I_2,\dots,I_n\in\mathcal J(S^1\setminus\{-1\})$. For each $m=1,2,\dots,n$ we choose  $w^{(i_m)}\in W_{i_m}$ and $f_m\in C^\infty_c(I_m)$. Then for any $w^{(j)}\in W_j$ and $w^{(k)}\in W_k$,
\begin{align}
&\langle \mathcal Y_{\alpha_n}(w^{(i_n)},f_n)\cdots \mathcal Y_{\alpha_1}(w^{(i_1)},f_1)w^{(j)}|w^{(k)} \rangle\nonumber\\
=&\int_{-\pi}^{\pi}\cdots\int_{-\pi}^{\pi} \langle \mathcal Y_{\alpha_n}(w^{(i_n)},e^{i\theta_n})\cdots \mathcal Y_{\alpha_1}(w^{(i_1)},e^{i\theta_1})w^{(j)}|w^{(k)} \rangle f_1(e^{i\theta_1})\cdots f_n(e^{i\theta_n})\cdot\di\theta_1\cdots\di\theta_n.\label{eq215}
\end{align}
\end{pp}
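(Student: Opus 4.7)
The strategy is to compute both sides of \eqref{eq215} under a radial regularization and then let the radii tend to $1$. By sesquilinearity I may assume each $w^{(i_m)}$ is homogeneous and that each factor acts between irreducible components; for homogeneous $w^{(j)}$, the admissible modes $s_m$ of $\mathcal Y_{\alpha_m}(w^{(i_m)},s_m)$ then lie in a fixed coset of $\mathbb Z$ inside $\mathbb R$ which, by Proposition \ref{lb88}, is bounded above.

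First, I pick $0<r_1<\cdots<r_n<1$ close to $1$ and set $z_m=r_me^{i\theta_m}$ with $\arg z_m=\theta_m\in(-\pi,\pi)$. Writing $A(\vec s):=\langle\mathcal Y_{\alpha_n}(w^{(i_n)},s_n)\cdots\mathcal Y_{\alpha_1}(w^{(i_1)},s_1)w^{(j)}|w^{(k)}\rangle$, Theorem \ref{lb65} yields the absolutely (and locally uniformly) convergent expansion
\begin{align*}
F_{\vec r}(\vec\theta):=\langle\mathcal Y_{\alpha_n}(w^{(i_n)},r_ne^{i\theta_n})\cdots\mathcal Y_{\alpha_1}(w^{(i_1)},r_1e^{i\theta_1})w^{(j)}|w^{(k)}\rangle=\sum_{s_1,\dots,s_n}\prod_{m=1}^n r_m^{-s_m-1}e^{-is_m\theta_m}\,A(\vec s).
\end{align*}
Multiplying by $\prod_m f_m(e^{i\theta_m})\di\theta_m$, integrating over $(-\pi,\pi)^n$, and applying Fubini (justified by the absolute convergence) converts this into
\begin{align*}
I_{\vec r}:=\int_{-\pi}^{\pi}\!\!\cdots\!\int_{-\pi}^{\pi}F_{\vec r}\prod_{m}f_m(e^{i\theta_m})\di\theta_m=\sum_{s_1,\dots,s_n}\prod_{m=1}^n r_m^{-s_m-1}\widehat{f_m}(s_m)\,A(\vec s).
\end{align*}

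Next, I let $\vec r\to\vec 1$ from within the chamber $r_1<\cdots<r_n$. On the integral side, the locally uniform convergence used to define the product on $S^1$ (the discussion preceding Theorem \ref{lb78} combined with the ODE-based analytic continuation of Theorem \ref{lb75}) shows that $F_{\vec r}\to F$ uniformly on the compact set $\prod_m\overline{I_m}$, where $F$ is the integrand on the right-hand side of \eqref{eq215}; the disjointness of the $I_m$'s keeps $\vec\theta$ inside (the $S^1$-boundary of) $\Conf_n(\mathbb C^\times)$, so no singularity is encountered. Hence $I_{\vec r}$ converges to that integral. On the series side, iterating the polynomial Sobolev-norm bounds of Proposition \ref{lb53} along the chain provides an estimate $|A(\vec s)|\leq M\prod_m(1+|s_m|)^T$ independent of $\vec r$; together with the rapid decay of each $\widehat{f_m}$ (since $f_m\in C^\infty_c$) and the boundedness of $r_m^{-s_m-1}$ in $s_m$ (which uses that $s_m$ is bounded above and $r_m$ stays in a compact subinterval of $(0,1]$), dominated convergence yields
\begin{align*}
\lim_{\vec r\to\vec 1}I_{\vec r}=\sum_{s_1,\dots,s_n}\prod_{m=1}^n\widehat{f_m}(s_m)\,A(\vec s).
\end{align*}
By the mode expansion $\mathcal Y_{\alpha_m}(w^{(i_m)},f_m)=\sum_{s_m}\widehat{f_m}(s_m)\mathcal Y_{\alpha_m}(w^{(i_m)},s_m)$ on smooth vectors (Lemma \ref{lb57}, Proposition \ref{lb59}), the latter sum equals the left-hand side of \eqref{eq215}, and comparing the two limits completes the argument.

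The main technical obstacle is the simultaneous control of the two limiting processes: one must produce an $\vec r$-independent summable majorant in order to exchange $\lim_{\vec r\to\vec 1}$ with the iterated sum over $\vec s$, and must separately verify uniform convergence $F_{\vec r}\to F$ on the compact support of the test functions. Both ingredients come together cleanly only because the disjointness of the $I_m$'s keeps the arguments in a bounded region away from coincidences, and because the energy-bounds framework of Section \ref{lb76} produces exactly the polynomial-in-$|s_m|$ estimates needed to pair against the Schwartz-class decay of $\widehat{f_m}$; threading the operator-theoretic estimates with the correlation-function convergence is the delicate point.
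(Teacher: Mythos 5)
Your overall route coincides with the paper's: regularize radially with $0<r_1<\cdots<r_n\leq 1$, expand in modes, exchange the $\vec s$-sum with the $\vec\theta$-integrals using the absolute and locally uniform convergence of theorem \ref{lb65}, exchange $\lim_{\vec r\to \vec 1}$ with the $\vec s$-sum by dominated convergence, and finally exchange the limit with the integrals by continuity of the correlation function (which is exactly how its boundary values on $S^1$ are defined in section \ref{lb72}). The one substantive divergence is in how you justify the $\vec r$-independent majorant needed for the dominated-convergence step, and there your argument as written fails.

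Concretely, you claim that each admissible mode $s_m$ is bounded above, so that each factor $r_m^{-s_m-1}$ stays bounded. This is true only for $m=1$. For $m\geq 2$ the constraint from proposition \ref{lb88} bounds $s_m$ above by $\Delta_{w^{(i_m)}}-1$ \emph{plus the energy of the intermediate vector} produced by the first $m-1$ modes, and that intermediate energy is unbounded over the index set: only the partial sums $s_1+\cdots+s_p$ are bounded above, while the total $s_1+\cdots+s_n$ is pinned by pairing with the homogeneous $w^{(k)}$. Already for $n=2$, letting $s_1\to-\infty$ and $s_2\to+\infty$ with $s_1+s_2$ fixed produces nonvanishing terms in which $r_2^{-s_2-1}\to\infty$, so the individual-factor bound you invoke is false. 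What saves the argument --- and what the paper actually proves --- is that the \emph{product} of the radial factors is uniformly bounded: reindexing by intermediate energies (inserting projections $P_{s_p}$ and using \eqref{eq231}), one rewrites $\prod_m r_m^{-s_m-1}$, up to the mode-to-energy change of variables, as $r_1^{-\Delta_{w^{(i_1)}}}\cdots r_n^{-\Delta_{w^{(i_n)}}}\,(r_1/r_2)^{s_1}\cdots(r_{n-1}/r_n)^{s_{n-1}}\,r_n^{\Delta_{w^{(k)}}}$, where now each exponent $s_p$ is a genuine energy and hence nonnegative by unitarity of the intermediate modules and proposition \ref{lb88}. With $1/2\leq r_1\leq\cdots\leq r_n\leq 1$ each ratio factor $(r_p/r_{p+1})^{s_p}\leq 1$ and the prefactors are bounded, so combined with your (correct) iterated estimate from proposition \ref{lb53} and the rapid decay of the $\widehat{f_m}$ one obtains a summable dominating series independent of $\vec r$. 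Once the domination step is repaired in this way, the remainder of your proof goes through and is essentially identical to the paper's.
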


\begin{proof}	
\begin{align}
&\sum_{s_1,\dots,s_n\in\mathbb R}\big\lVert P_{s_n}\mathcal Y_{\alpha_n}(w^{(i_n)},f_n)P_{s_{n-1}}\mathcal Y_{\alpha_{n-1}}(w^{(i_{n-1})},f_{n-1})P_{s_{n-2}}\cdots P_{s_1}\mathcal Y_{\alpha_1}(w^{(i_1)},f_1)w^{(j)} \big\lVert\nonumber\\
=&\sum_{t_1,\dots,t_n\in\mathbb Z_V}\big\lVert \mathcal Y_{\alpha_n}(w^{(i_n)},t_n)\cdots \mathcal Y_{\alpha_1}(w^{(i_1)},t_1) w^{(j)}\big\lVert\cdot\big|\widehat f_1(t_1)\cdots\widehat f_n(t_n)\big|,
\end{align}	
which, by proposition \ref{lb53}, is finite. Hence, for all $r_1,\dots,r_n,r_1/r_2,\dots,r_{n-1}/r_n\in[1/2,1]$, the following functions of $s_1,\dots,s_n$:
\begin{align}
&\Big|\langle P_{s_n} \mathcal Y_{\alpha_n}(w^{(i_n)},f_n)P_{s_{n-1}}\cdots P_{s_1}\mathcal Y_{\alpha_1}(w^{(i_1)},f_1)w^{(j)}|w^{(k)} \rangle\nonumber\\
&~\cdot r_1^{-\Delta_{w^{(i_1)}}}\cdots r_n^{-\Delta_{w^{(i_n)}}}\Big(\frac {r_1}{r_2}\Big)^{s_1}\cdots\Big(\frac {r_{n-1}}{r_n}\Big)^{s_{n-1}}r_n^{\Delta_{w^{(k)}}}\Big|
\end{align}
are bounded by a constant multiplied by
\begin{align}
\Big|\langle P_{s_n} \mathcal Y_{\alpha_n}(w^{(i_n)},f_n)P_{s_{n-1}}\cdots P_{s_1}\mathcal Y_{\alpha_1}(w^{(i_1)},f_1)w^{(j)}|w^{(k)} \rangle\Big|,
\end{align}
the sum of which over $s_1,\dots,s_n$ is finite. Therefore, if we always assume that $r_1,\dots,r_n>0$ and $0<r_1/r_2<\cdots<r_{n-1}/r_n\leq 1$, then by dominated convergence theorem and relation \eqref{eq231},
\begin{align}
&\langle \mathcal Y_{\alpha_n}(w^{(i_n)},f_n)\cdots \mathcal Y_{\alpha_1}(w^{(i_1)},f_1)w^{(j)}|w^{(k)} \rangle\nonumber\\
=&\sum_{s_1,\dots,s_n\in\mathbb R}\langle P_{s_n}\mathcal Y_{\alpha_n}(w^{(i_n)},f_n)P_{s_{n-1}}\cdots P_{s_1}\mathcal Y_{\alpha_1}(w^{(i_1)},f_1)w^{(j)}|w^{(k)}\rangle\nonumber\\
=&\sum_{s_1,\dots,s_n\in\mathbb R}\lim_{r_1,\dots,r_n\rightarrow 1}\Big(\langle P_{s_n}\mathcal Y_{\alpha_n}(w^{(i_n)},f_n)P_{s_{n-1}}\cdots P_{s_1}\mathcal Y_{\alpha_1}(w^{(i_1)},f_1)w^{(j)}|w^{(k)}\rangle\nonumber\\
&~\cdot r_1^{-\Delta_{w^{(i_1)}}}\cdots r_n^{-\Delta_{w^{(i_n)}}}\Big(\frac {r_1}{r_2}\Big)^{s_1}\cdots\Big(\frac {r_{n-1}}{r_n}\Big)^{s_{n-1}}r_n^{\Delta_{w^{(k)}}}\Big)\nonumber\\
=&\lim_{r_1,\dots,r_n\rightarrow 1}\sum_{s_1,\dots,s_n\in\mathbb R}\Big(\langle P_{s_n}\mathcal Y_{\alpha_n}(w^{(i_n)},f_n)P_{s_{n-1}}\cdots P_{s_1}\mathcal Y_{\alpha_1}(w^{(i_1)},f_1)w^{(j)}|w^{(k)}\rangle\nonumber\\
&~\cdot r_1^{-\Delta_{w^{(i_1)}}}\cdots r_n^{-\Delta_{w^{(i_n)}}}\Big(\frac {r_1}{r_2}\Big)^{s_1}\cdots\Big(\frac {r_{n-1}}{r_n}\Big)^{s_{n-1}}r_n^{\Delta_{w^{(k)}}}\Big)\nonumber\\
=&\lim_{r_1,\dots,r_n\rightarrow 1}\sum_{s_1,\dots,s_n\in\mathbb R}\int_{-\pi}^{\pi}\cdots\int_{-\pi}^{\pi}\langle P_{s_n}\mathcal Y_{\alpha_n}(w^{(i_n)},e^{i\theta_n})P_{s_{n-1}}\nonumber\\
&~\cdots P_{s_1}\mathcal Y_{\alpha_1}(w^{(i_1)},e^{i\theta_1})w^{(j)}|w^{(k)}\rangle  r_1^{-\Delta_{w^{(i_1)}}}\cdots r_n^{-\Delta_{w^{(i_n)}}}\nonumber\\
&~\cdot\Big(\frac {r_1}{r_2}\Big)^{s_1}\cdots\Big(\frac {r_{n-1}}{r_n}\Big)^{s_{n-1}}r_n^{\Delta_{w^{(k)}}}f_1(e^{i\theta_1})\cdots f_n(e^{i\theta_n})\di\theta_1\cdots\di\theta_n\nonumber\\
=&\lim_{r_1,\dots,r_n\rightarrow 1}\sum_{s_1,\dots,s_n\in\mathbb R}\int_{-\pi}^{\pi}\cdots\int_{-\pi}^{\pi}\langle P_{s_n}\mathcal Y_{\alpha_n}(w^{(i_n)},r_ne^{i\theta_n})P_{s_{n-1}}\nonumber\\
&~\cdots P_{s_1}\mathcal Y_{\alpha_1}(w^{(i_1)},r_1e^{i\theta_1})w^{(j)}|w^{(k)}\rangle f_1(e^{i\theta_1})\cdots f_n(e^{i\theta_n})\di\theta_1\cdots\di\theta_n.\label{eq296}
\end{align}
By theorem \ref{lb65} and the discussion below, the sum and the integrals in \eqref{eq296} commute. Therefore \eqref{eq296} equals
\begin{align}
&\lim_{r_1,\dots,r_n\rightarrow 1}\int_{-\pi}^{\pi}\cdots\int_{-\pi}^{\pi}\sum_{s_1,\dots,s_n\in\mathbb R}\langle P_{s_n}\mathcal Y_{\alpha_n}(w^{(i_n)},r_ne^{i\theta_n})P_{s_{n-1}}\nonumber\\
&~\cdots P_{s_1}\mathcal Y_{\alpha_1}(w^{(i_1)},r_1e^{i\theta_1})w^{(j)}|w^{(k)}\rangle f_1(e^{i\theta_1})\cdots f_n(e^{i\theta_n})\di\theta_1\cdots\di\theta_n\nonumber\\
=&\lim_{r_1,\dots,r_n\rightarrow 1}\int_{-\pi}^{\pi}\cdots\int_{-\pi}^{\pi}\langle \mathcal Y_{\alpha_n}(w^{(i_n)},r_ne^{i\theta_n})\nonumber\\
&~\cdots \mathcal Y_{\alpha_1}(w^{(i_1)},r_1e^{i\theta_1})w^{(j)}|w^{(k)}\rangle f_1(e^{i\theta_1})\cdots f_n(e^{i\theta_n})\di\theta_1\cdots\di\theta_n.\label{eq297}
\end{align}
By continuity of correlation functions, the limit and the integrals in \eqref{eq297} commute. So \eqref{eq297} equals the right hand side of equation \eqref{eq215}. Thus the proof is completed.
\end{proof}

\begin{co}\label{lb91}
Let $\mathcal Y_\alpha,\mathcal Y_{\alpha'}$ be unitary energy-bounded intertwining operators of $V$ with common charge space $W_i$,  and $\mathcal Y_\beta,\mathcal Y_{\beta'}$ be unitary energy-bounded intertwining operators of $V$ with common charge space $W_j$. Choose $z_i,z_j\in S^1$ and assume that $\arg z_j<\arg z_i<\arg z_j+2\pi$. Choose disjoint open intervals $I,J\in\mathcal J(S^1\setminus\{-1\})$ such that $I$ is anticlockwise to $J$. Suppose that for any $w^{(i)}\in W_i,w^{(j)}\in W_j$, the following braid relation holds:
\begin{equation}
\mathcal Y_\alpha(w^{(i)},z_i)\mathcal Y_\beta(w^{(j)},z_j)=\mathcal Y_{\beta'}(w^{(j)},z_j)\mathcal Y_{\alpha'}(w^{(i)},z_i).
\end{equation}
Then for any $f\in C^\infty_c(I),g\in C^\infty_c(J)$, we have the braid relation for intertwining operators:
\begin{equation}
\mathcal Y_\alpha(w^{(i)},f)\mathcal Y_\beta(w^{(j)},g)=\mathcal Y_{\beta'}(w^{(j)},g)\mathcal Y_{\alpha'}(w^{(i)},f).\label{eq298}
\end{equation}
\end{co}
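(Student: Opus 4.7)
The plan is to reduce the operator identity to a pointwise identity of unsmeared correlation functions on $I\times J$ via Proposition~\ref{lb58}, extend the hypothesis braid relation from the single pair $(z_i,z_j)$ to every pair in $I\times J$ by continuity, and then integrate back.

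First, fix a vector $\xi$ in the source of $\mathcal Y_\beta$ (equivalently, of $\mathcal Y_{\alpha'}$) and a vector $\eta$ in the target of $\mathcal Y_\alpha$ (equivalently, of $\mathcal Y_{\beta'}$); other matching conditions on intermediate modules make the compositions nonzero. Proposition~\ref{lb58} then rewrites each of the two matrix elements
\begin{align*}
\langle \mathcal Y_\alpha(w^{(i)},f)\,\mathcal Y_\beta(w^{(j)},g)\,\xi\,|\,\eta\rangle, \qquad \langle \mathcal Y_{\beta'}(w^{(j)},g)\,\mathcal Y_{\alpha'}(w^{(i)},f)\,\xi\,|\,\eta\rangle
\end{align*}
as a double integral over $I\times J$ of the corresponding unsmeared matrix element against $f(e^{i\theta_i})g(e^{i\theta_j})\di\theta_i\di\theta_j$. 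Because $I$ and $J$ are disjoint open intervals in $S^1\setminus\{-1\}$ with $I$ anticlockwise of $J$, we may fix continuous choices of argument on each so that $\arg\zeta_j<\arg\zeta_i<\arg\zeta_j+2\pi$ for every $(\zeta_i,\zeta_j)\in I\times J$; in particular the hypothesis pair $(z_i,z_j)$ sits in this same region with these arguments (after possibly deforming $(z_i,z_j)$ continuously inside the region without crossing, which by the remark following Theorem~\ref{lb78} preserves the braid relation).

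Second, I would propagate the unsmeared braid relation from $(z_i,z_j)$ to every $(\zeta_i,\zeta_j)\in I\times J$. This is precisely the content of the remark following Theorem~\ref{lb78}: both sides of the braid relation are, via the limit convention before that theorem, boundary values of analytic continuations of correlation functions living on $\widetilde\Conf_2(\mathbb C^\times)$, and they remain equal as $(\zeta_i,\zeta_j)$ is moved continuously without the two points meeting and with arguments changing continuously. Since $I\times J$ under the argument convention above is path-connected to $(z_i,z_j)$ and no collision occurs, the pointwise equality
\begin{align*}
\langle \mathcal Y_\alpha(w^{(i)},\zeta_i)\mathcal Y_\beta(w^{(j)},\zeta_j)\xi\,|\,\eta\rangle = \langle \mathcal Y_{\beta'}(w^{(j)},\zeta_j)\mathcal Y_{\alpha'}(w^{(i)},\zeta_i)\xi\,|\,\eta\rangle
\end{align*}
holds for all $(\zeta_i,\zeta_j)\in I\times J$.

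Third, substituting this pointwise identity into the integral representations from step one immediately yields equality of the matrix elements of $\mathcal Y_\alpha(w^{(i)},f)\mathcal Y_\beta(w^{(j)},g)$ and $\mathcal Y_{\beta'}(w^{(j)},g)\mathcal Y_{\alpha'}(w^{(i)},f)$ on the algebraic source and target. By Proposition~\ref{lb59}(a) both composites map $\mathcal H^\infty$ of the source continuously into $\mathcal H^\infty$ of the target in the Sobolev topologies, so equality on the dense algebraic subspaces extends to all smooth vectors, which is \eqref{eq298}. The main obstacle is the propagation in the second step: one must identify which holomorphic branch of the two-point correlation functions the hypothesis pins down, and this is governed entirely by the argument convention $\arg\zeta_j<\arg\zeta_i<\arg\zeta_j+2\pi$ together with the deformation remark after Theorem~\ref{lb78}. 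Once this analytic bookkeeping is in place, the remainder is a Fubini-and-density argument resting on the crucial absolutely convergent double integral of Proposition~\ref{lb58} and the Sobolev estimates of Proposition~\ref{lb59}.
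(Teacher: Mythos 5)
Your proposal is correct and follows exactly the route the paper intends: the corollary is stated as an immediate consequence of Proposition \ref{lb58} (the absolutely convergent integral representation of products of smeared intertwining operators), combined with propagating the pointwise braid relation from $(z_i,z_j)$ to all of $I\times J$ via the deformation remark after Theorem \ref{lb78}, and extending from $W_k$ to $\mathcal H^\infty_k$ by the Sobolev estimates of Proposition \ref{lb59}. The paper leaves this argument implicit, and your write-up supplies precisely the analytic bookkeeping (argument conventions on $I\times J$, path-connectedness of the region $\arg\zeta_j<\arg\zeta_i<\arg\zeta_j+2\pi$ without collisions, Fubini, density) that the paper's terse presentation presupposes.
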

Note that if $W_k$ is the source space of $\mathcal Y_\beta$, then both sides of equation \eqref{eq298} are understood to be acting on $\mathcal H^\infty_k$.
\begin{rem}\label{lb92}
If $\mathcal Y_\alpha$ and $\mathcal Y_{\alpha'}$ 
(resp. $\mathcal Y_\beta$ and $\mathcal Y_{\beta'}$) are the vertex operator $Y_k$, then the above corollary still holds if we assume that $I\in\mathcal J$ (resp. $J\in\mathcal J$).
\end{rem}

\subsubsection*{Rotation covariance of smeared intertwining operators}

For each $t\in\mathbb R$, we define an action
\begin{align}
\mathfrak r(t):S^1\rightarrow S^1,~~~\mathfrak r(t)(e^{i\theta})=e^{i(\theta+t)}.
\end{align}
For any $g\in C^\infty_c(S^1)$, we let
\begin{align}
\mathfrak r(t)g=g\circ \mathfrak r(-t).
\end{align}
Therefore, if $J\in\mathcal J$, then $\mathfrak r(t)C^\infty_c(J)=C^\infty_c(\mathfrak r(t)J)$. We also define $g'\in C^\infty_c(S^1)$ to be
\begin{align}
g'(e^{i\theta})=\frac d {d\theta}g(e^{i\theta})
\end{align}
Rotation covariance is stated as follows.
\begin{pp}\label{lb96}
	Suppose that $\mathcal Y_\alpha\in\mathcal V{k\choose i~j}$ is unitary, $w^{(i)}\in W_j$ is homogeneous, $\mathcal Y(w^{(i)},x)$ is energy bounded, and $J\in\mathcal J(S^1\setminus\{-1\})$. Choose $\varepsilon>0$ such that $\mathfrak r(t)J\subset S^1\setminus\{-1\}$ for any $t\in(-\varepsilon,\varepsilon)$. Then for any $g\in C^\infty_c(J)$ and $t\in(-\varepsilon,\varepsilon)$, the following equations hold when both sides  act  on $\mathcal H^\infty_j$:
	\begin{gather}
	[\overline {L_0},\mathcal Y_\alpha(w^{(i)},g)]=\mathcal Y_\alpha\big(w^{(i)},(\Delta_{w^{(i)}}-1)g+ig'\big),\label{eq222}\\
	e^{it{\overline {L_0}}}\mathcal Y_\alpha(w^{(i)},g)e^{-it\overline {L_0}}=\mathcal Y_\alpha\big(w^{(i)},e^{i(\Delta_{w^{(i)}}-1)t}\mathfrak r(t)g\big).\label{eq223}
	\end{gather}
\end{pp}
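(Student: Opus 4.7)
The plan is to verify both identities first on the algebraic dense subspace $W_j$ by a direct mode-by-mode computation, and then extend to $\mathcal H_j^\infty$ by Sobolev continuity using Proposition \ref{lb59}. The hypothesis $\mathfrak r(t)J\subset S^1\setminus\{-1\}$ ensures that $\mathfrak r(t)g$ and $g'$ are both elements of $C^\infty_c(S^1\setminus\{-1\})$, so the right-hand sides of \eqref{eq222} and \eqref{eq223} are legitimate smeared intertwining operators.

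For \eqref{eq222}, I take a homogeneous $\xi\in W_j(a)$. From \eqref{eq132} and the expansion $\mathcal Y_\alpha(w^{(i)},g)=\sum_s\widehat g(s)\mathcal Y_\alpha(w^{(i)},s)$,
\begin{align*}
[L_0,\mathcal Y_\alpha(w^{(i)},g)]\xi=\sum_{s\in\mathbb R}\widehat g(s)(\Delta_{w^{(i)}}-s-1)\mathcal Y_\alpha(w^{(i)},s)\xi.
\end{align*}
Integration by parts on $S^1\setminus\{-1\}$ (where $g$ is compactly supported, so boundary terms vanish) gives $\widehat{g'}(s)=is\widehat g(s)$, whence $(\Delta_{w^{(i)}}-s-1)\widehat g(s)=\widehat h(s)$ for $h:=(\Delta_{w^{(i)}}-1)g+ig'$. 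Thus the sum equals $\mathcal Y_\alpha(w^{(i)},h)\xi$, establishing \eqref{eq222} on $W_j$; absolute convergence of the rearrangement in every Sobolev norm follows from the energy bounds via Proposition \ref{lb53}.

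For \eqref{eq223}, the image $\mathcal Y_\alpha(w^{(i)},s)\xi$ lies in $W_k(a+\Delta_{w^{(i)}}-s-1)$, hence
\begin{align*}
e^{it\overline{L_0}}\mathcal Y_\alpha(w^{(i)},s)e^{-it\overline{L_0}}\xi=e^{it(\Delta_{w^{(i)}}-s-1)}\mathcal Y_\alpha(w^{(i)},s)\xi.
\end{align*}
Summing against $\widehat g(s)$ and using the Fourier identity $\widehat{\mathfrak r(t)g}(s)=e^{-its}\widehat g(s)$ (verified by the substitution $\theta\mapsto\theta-t$, valid because $g$ and $\mathfrak r(t)g$ are both supported in $S^1\setminus\{-1\}$) recovers \eqref{eq223} on $W_j$.

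Finally, to extend to $\mathcal H_j^\infty$: by Proposition \ref{lb59}-(a), the operators $\mathcal Y_\alpha(w^{(i)},\phi)$ extend to bounded maps $\mathcal H_j^{p+r}\to\mathcal H_k^p$ for every $p\in\mathbb R$ (with $r$ the order of the energy bounds), while $\overline{L_0}$ is bounded from $\mathcal H^{p+1}$ to $\mathcal H^p$ and the unitaries $e^{\pm it\overline{L_0}}$ preserve every Sobolev norm (as they commute with $(1+\overline{L_0})^p$). Consequently both sides of \eqref{eq222} are bounded $\mathcal H_j^{r+1}\to\mathcal H_k$, and both sides of \eqref{eq223} are bounded $\mathcal H_j^r\to\mathcal H_k$. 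Since $W_j$ is the algebraic span of $L_0$-eigenvectors and hence dense in every $\mathcal H_j^q$, the identities extend by continuity. The only real technical obstacle is controlling the Sobolev-norm convergence of the mode rearrangements, which is exactly what the energy-bounds hypothesis is tailored to provide.
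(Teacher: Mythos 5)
Your proof is correct, but it takes a genuinely different route from the paper on the second identity, so a comparison is in order. For \eqref{eq222} the two arguments are essentially Fourier-dual versions of one another: the paper works on the function side, applying the commutator formula \eqref{eqa2} under the integral $\int_{-\pi}^{\pi}\cdots g(e^{i\theta})\di\theta$ and integrating by parts in $\theta$ (the shift from $\Delta_{w^{(i)}}$ to $\Delta_{w^{(i)}}-1$ coming from differentiating the factor $e^{i\theta}$ hidden in $\di\theta$), whereas you work on the mode side via \eqref{eq132} and the identity $\widehat{g'}(s)=is\widehat g(s)$; both rest on the same commutation relation and the same Sobolev-norm convergence supplied by Proposition \ref{lb53} and Lemma \ref{lb57}. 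For \eqref{eq223} the paper does \emph{not} compute directly: it defines $\Xi(t)=e^{-it\overline{L_0}}\mathcal Y_\alpha\big(w^{(i)},e^{i(\Delta_{w^{(i)}}-1)t}\mathfrak r(t)g\big)e^{it\overline{L_0}}\xi^{(j)}$, uses the infinitesimal relation \eqref{eq222} together with an $o(h)$ estimate on the difference quotient of $t\mapsto e^{i(\Delta_{w^{(i)}}-1)t}\mathfrak r(t)g$ in the norms $|\cdot|_{V,\tau}$ to show $\Xi'(t)=0$, and concludes $\Xi(t)=\Xi(0)$. You instead exploit that conjugation by $e^{it\overline{L_0}}$ acts diagonally on modes, multiplying $\mathcal Y_\alpha(w^{(i)},s)$ by the phase $e^{it(\Delta_{w^{(i)}}-s-1)}$, and match this against $\widehat{\mathfrak r(t)g}(s)=e^{-ist}\widehat g(s)$. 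Your spectral computation is more elementary, and it even makes \eqref{eq223} logically independent of \eqref{eq222}; what the paper's one-parameter-family scheme buys is generality, since it extends to covariance under conformal transformations whose generators do not act diagonally on the grading (this is how the full conformal covariance alluded to in the paper's footnote, following \cite{CKLW}, is proved), where no mode-by-mode phase computation is available. One point you should state more carefully: for non-integer $s$ the identity $\widehat{\mathfrak r(t)g}(s)=e^{-ist}\widehat g(s)$ requires not merely that $g$ and $\mathfrak r(t)g$ are supported off $-1$, but that no intermediate rotation $\mathfrak r(t')J$, with $t'$ between $0$ and $t$, meets $-1$; otherwise the $\theta\in(-\pi,\pi)$ parametrization wraps around and the correct phase is $e^{-is(t\mp2\pi)}$, which differs from $e^{-ist}$ when $s\notin\mathbb Z$. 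Since the proposition assumes $\mathfrak r(t)J\subset S^1\setminus\{-1\}$ for \emph{all} $t\in(-\varepsilon,\varepsilon)$, the support moves continuously without crossing $-1$, so your argument is sound as stated within the given hypotheses; your final continuity-extension step via Proposition \ref{lb59} and the density of $W_j$ in each Sobolev space is also correct and matches the paper's implicit use of the same estimates.
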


\begin{proof}
	By equation \eqref{eqa2}, for any $z=e^{i\theta}\in J$ we have
	\begin{align*}
	&[\overline {L_0},\mathcal Y_\alpha(w^{(i)},z)]\nonumber\\
	=&\Delta_{w^{(i)}}\mathcal Y_\alpha(w^{(i)},z)+z\partial_{z}\mathcal Y_\alpha(w^{(i)},z)\\
	=&\Delta_{w^{(i)}}\mathcal Y_\alpha(w^{(i)},e^{i\theta})-i\partial_{\theta}\mathcal Y_\alpha(w^{(i)},e^{i\theta})
	\end{align*}
	when evaluated between vectors inside $W_j$ and $W_k$. Thus we have
	\begin{align*}
	&[\overline {L_0},\mathcal Y_\alpha(w^{(i)},g)]=\int^{\pi}_{-\pi}[\overline {L_0},\mathcal Y_\alpha(w^{(i)},e^{i\theta})]g(e^{i\theta})\di\theta\\
	=&\int^{\pi}_{-\pi}\big(\Delta_{w^{(i)}}\mathcal Y_\alpha(w^{(i)},e^{i\theta})-i\partial_{\theta}\mathcal Y_\alpha(w^{(i)},e^{i\theta})\big)g(e^{i\theta})\di\theta\\
	=&\Delta_{w^{(i)}}\mathcal Y_\alpha(w^{(i)},g)-i\int^{\pi}_{-\pi}\partial_{\theta}\mathcal Y_\alpha(w^{(i)},e^{i\theta})g(e^{i\theta})\frac{e^{i\theta}}{2\pi}d\theta\\
	=&\Delta_{w^{(i)}}\mathcal Y_\alpha(w^{(i)},g)+i\int^{\pi}_{-\pi}\mathcal Y_\alpha(w^{(i)},e^{i\theta})\frac{d}{d\theta}\Big(g(e^{i\theta})\frac{e^{i\theta}}{2\pi}\Big)d\theta\\
	=&\Delta_{w^{(i)}}\mathcal Y_\alpha(w^{(i)},g)+i\int^{\pi}_{-\pi}\mathcal Y_\alpha(w^{(i)},e^{i\theta})\big(g'(e^{i\theta})+ig(e^{i\theta})\big)\frac{e^{i\theta}}{2\pi}d\theta\\
	=&(\Delta_{w^{(i)}}-1)\mathcal Y_\alpha(w^{(i)},g)+i\mathcal Y_\alpha(w^{(i)},g').
	\end{align*}
	This proves the first equation. To prove the second one, we first note that for any $\tau\geq0$, when  $h\in\mathbb R$ is small enough,  the $|\cdot|_{V,\tau}$-norm of the function
	\begin{align*}
	&e^{i(\Delta_{w^{(i)}}-1)(t+h)}r(t+h)g-e^{i(\Delta_{w^{(i)}}-1)t}\mathfrak r(t)g\\
	-&\big(i(\Delta_{w^{(i)}}-1)e^{i(\Delta_{w^{(i)}}-1)t}\mathfrak r(t)g-e^{i(\Delta_{w^{(i)}}-1)t}\mathfrak r(t)g'\big)h
	\end{align*}
	is  $o(h)$. For any $\xi^{(j)}\in\mathcal H^\infty_j$, we define a function $\Xi(t)$ for $|t|<\varepsilon$ to be
	\begin{align*}
	\Xi(t)=e^{-it\overline {L_0}}\mathcal Y_\alpha(w^{(i)},e^{i(\Delta_{w^{(i)}}-1)t}\mathfrak r(t)g)e^{it\overline {L_0}}\xi^{(j)}.
	\end{align*}
	Now we can apply relation \eqref{eq222} and proposition \ref{lb59} to see that the vector norm of $\Xi(t+h)-\Xi(t)$ is $o(h)$ for any $|t|<\varepsilon$. (In fact this is true for any Sobolev norm.) This shows that the derivative of $\Xi(t)$ exists and equals $0$. So $\Xi(t)$ is a constant function. In particular, we have $\Xi(0)=\Xi(t)$, which  implies 
	\eqref{eq223}.
\end{proof}

\subsubsection*{The strong intertwining property of smeared intertwining operators}

\begin{pp}\label{lb95}
Let $\mathcal Y_\alpha\in\mathcal V{k\choose i~j}$ be unitary,  $w^{(i)}\in W_i$ be homogeneous, and $v\in V$ be quasi-primary.  Suppose that $\theta v=v$, $\mathcal Y_\alpha(w^{(i)},x)$ is energy bounded, and $Y_j(v,x),Y_k(v,x)$ satisfy linear energy bounds. Let $I\in\mathcal J,J\in\mathcal J(S^1\setminus\{-1\})$ be disjoint. Choose $f\in C^\infty_c(I),g\in C^\infty_c(J)$. Assume that $f$ satisfies 
\begin{align}
e^{i\pi\Delta_v/2}e_{1-\Delta_v}f=\overline {e^{i\pi\Delta_v/2}e_{1-\Delta_v}f}.\label{eq299}
\end{align}
Then $Y_j(v,f)$ and $Y_k(v,f)$ are essentially self-adjoint, and for any $t\in\mathbb R$, we have
\begin{gather}
e^{it\overline{Y_j(v,f)}}\mathcal H^\infty_j\subset\mathcal H^\infty_j,~~~e^{it\overline{Y_k(v,f)}}\mathcal H^\infty_k\subset\mathcal H^\infty_k,\label{eqa1}\\
e^{it\overline{Y_k(v,f)}}\cdot\overline{\mathcal Y_\alpha(w^{(i)},g)}=\overline{\mathcal Y_\alpha(w^{(i)},g)}\cdot e^{it\overline{Y_j(v,f)}}.\label{eq221}
\end{gather} 
\end{pp}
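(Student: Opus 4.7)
The plan is to divide the proof into three blocks: (i) show that condition \eqref{eq299} forces $Y_j(v,f)$ and $Y_k(v,f)$ to be symmetric on $\mathcal H^\infty$ and then apply a Nelson-type commutator theorem to get essential self-adjointness with $\mathcal H^\infty$ as a core; (ii) bootstrap the commutator estimate in $r$ to show that each one-parameter unitary group preserves every Sobolev space $\mathcal H^r$ and thus $\mathcal H^\infty$; (iii) use the weak commutation furnished by Corollary~\ref{lb91} (together with Remark~\ref{lb92}, which is exactly what is needed because one of the two operators is the vertex operator) to get strong commutation by differentiating a suitable matrix-coefficient along the flow.

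For symmetry, I would combine the adjoint relation \eqref{eq300} with the unitarity formula \eqref{eq219} applied to the quasi-primary vector $v$ with $\theta v=v$: a direct computation shows $\mathcal Y_{Y_j^*}(\overline v,x)=Y_j(v,x)$ under the identification $C_0=\theta$ (Corollary~\ref{lb83}), whence
\[
Y_j(v,f)^\dagger=e^{-i\pi\Delta_v}\,Y_j\bigl(v,\overline{e_{2-2\Delta_v}f}\bigr).
\]
Setting $F=e^{i\pi\Delta_v/2}e_{1-\Delta_v}f$ and writing out $\overline F=F$ gives $\overline{e_{2-2\Delta_v}f}=e^{i\pi\Delta_v}f$, i.e.\ $Y_j(v,f)^\dagger=Y_j(v,f)$ on $\mathcal H^\infty_j$; the same argument handles $Y_k(v,f)$. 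Essential self-adjointness then follows by Nelson's commutator theorem with $N=\mathbf 1+\overline{L_0}$: Proposition~\ref{lb89} (linear energy bounds) gives $\|Y_j(v,f)\xi\|\leq M\|N\xi\|$, and Proposition~\ref{lb96} gives $[L_0,Y_j(v,f)]=Y_j\bigl(v,(\Delta_v-1)f+if'\bigr)$, which obeys the same linear bound, producing the Nelson commutator estimate $|\langle N\xi,Y_j(v,f)\xi\rangle-\langle Y_j(v,f)\xi,N\xi\rangle|\leq C\|N^{1/2}\xi\|^2$. Iterating the commutator formula (or differentiating $e^{itA}$ against $(1+L_0)^r$) shows that each $\mathcal H^r_j$ is preserved by the unitary group $e^{itA}$, where $A:=\overline{Y_j(v,f)}$, so $e^{itA}\mathcal H^\infty_j\subset\mathcal H^\infty_j$; this yields \eqref{eqa1}, and analogously for $A':=\overline{Y_k(v,f)}$.

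For the strong intertwining identity \eqref{eq221}, I first invoke Proposition~\ref{lb80} (which gives the unsmeared identity $Y_k(v,\zeta)\mathcal Y_\alpha(w^{(i)},z)=\mathcal Y_\alpha(w^{(i)},z)Y_j(v,\zeta)$) and Corollary~\ref{lb91} with Remark~\ref{lb92} (which allows $I\in\mathcal J$ because $Y_k,Y_j$ are vertex operators), to obtain the weak commutation
\[
Y_k(v,f)\,\mathcal Y_\alpha(w^{(i)},g)=\mathcal Y_\alpha(w^{(i)},g)\,Y_j(v,f)\qquad\text{on }\mathcal H^\infty_j.
\]
Then for $\xi\in\mathcal H^\infty_j$, $\eta\in\mathcal H^\infty_k$, I set
\[
\Xi(t)=\bigl\langle\eta,\,e^{-itA'}\mathcal Y_\alpha(w^{(i)},g)\,e^{itA}\xi\bigr\rangle,
\]
which is well-defined by step~(ii) (both $e^{itA}\xi$ and $e^{itA'}\eta$ lie in $\mathcal H^\infty$, and $\mathcal Y_\alpha(w^{(i)},g)$ maps $\mathcal H^\infty_j$ into $\mathcal H^\infty_k$ by Proposition~\ref{lb59}(a)). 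Differentiating, using self-adjointness of $A,A'$ and the weak commutation, gives $\Xi'(t)\equiv 0$, hence $\Xi(t)=\Xi(0)$, so $\mathcal Y_\alpha(w^{(i)},g)\,e^{itA}\xi=e^{itA'}\mathcal Y_\alpha(w^{(i)},g)\xi$ on $\mathcal H^\infty_j$; closing in $\xi$ via the graph of $\overline{\mathcal Y_\alpha(w^{(i)},g)}$ and using the fact that $\mathcal H^\infty_j$ is a core for this closure (Proposition~\ref{lb59}) yields \eqref{eq221}.

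The main obstacle is step~(iii): the weak commutation only holds on $\mathcal H^\infty_j$, where $\mathcal Y_\alpha(w^{(i)},g)$ is unbounded and its adjoint is not a priori compatible with $A'$. The invariance $e^{itA}\mathcal H^\infty\subset\mathcal H^\infty$ from step~(ii) is therefore essential—without it the differentiation of $\Xi(t)$ is not justified, and one cannot close the argument to upgrade weak to strong commutation. The delicate point in step~(ii) is that Nelson's theorem only directly gives invariance of the form domain $\mathscr D(N^{1/2})$, so I anticipate needing an inductive argument (as in \cite{CKLW}) that uses the commutator $[L_0,Y_j(v,f)]$ to propagate the invariance from $\mathcal H^r$ to $\mathcal H^{r+1}$ through Duhamel's formula.
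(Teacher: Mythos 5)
Your proposal is correct and takes essentially the same route as the paper: symmetry of the smeared vertex operators from \eqref{eq299} together with the adjoint relation \eqref{eq300}, essential self-adjointness plus invariance of all Sobolev spaces $\mathcal H^r$ via a Nelson/Faris--Lavine-type commutator bootstrap (the paper simply quotes Toledano-Laredo's lemma \ref{lb52}, which packages your steps (i) and (ii) in one statement), and an upgrade of the weak commutation from propositions \ref{lb80}, \ref{lb91} and remark \ref{lb92} to the unitary-group identity \eqref{eq221} by differentiating a flow along $e^{it\overline{Y(v,f)}}$ and closing via the core $\mathcal H^\infty_j$. The only cosmetic difference is organizational: the paper forms the direct-sum module $W_l=W_j\oplus^\perp W_k$, extends $\mathcal Y_\alpha(w^{(i)},g)$ by zero on $\mathcal H_k$, and invokes theorem \ref{lb60}, whose proof is exactly your matrix-coefficient computation $\Xi'(t)\equiv 0$ carried out with a single unitary group instead of the pair $(A,A')$.
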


\begin{proof}
Define the direct sum $V$-module $W_l=W_j\oplus^\perp W_k$ of $W_j$ and $W_k$. Then $\mathcal H_l$ is the norm completion of $W_l$, $\mathcal H_l^\infty$ is the dense subspace of smooth vectors, and $Y_l(v,f)=\diag\big(Y_j(v,f),Y_k(v,f)\big)$. By equations \eqref{eq299} and \eqref{eq300}, $Y_l(v,f)$ is symmetric (i.e., $Y_l(v,f)^\dagger=Y_l(v,f)$). Since $Y_l(v,x)$ satisfies linear energy bounds, by proposition \ref{lb59}-(a), relation \eqref{eq222}, and lemma \ref{lb52},  $Y_l(v,f)$ is essentially self-adjoint, and $e^{it\overline{Y_l(v,f)}}\mathcal H^\infty_l\subset\mathcal H^\infty_l$. This is equivalent to saying that $Y_j(v,f)$ and $Y_k(v,f)$ are essentially self-adjoint, and relation \eqref{eqa1} holds.

Let $A=Y_l(v,f)$. Regard $B=\mathcal Y_\alpha(w^{(i)},g)$ as an unbounded operator on $\mathcal H_l$, being the original one when acting on $\mathcal H_j$, and zero when acting on $\mathcal H_k$. (So the domain of $B$ is $\mathcal H_j^\infty\oplus^\perp\mathcal H_k$.) By propositions \ref{lb80}, \ref{lb91}, and remark \ref{lb92},  $AB=BA$ when both sides of the equation act on $\mathcal H^\infty_l$. By theorem \ref{lb60}, $\overline A$ commutes strongly with $\overline B$. Therefore $e^{it\overline A}\cdot\overline B=\overline B\cdot e^{it\overline A}$, which is equivalent to equation \eqref{eq221}.
\end{proof}

\appendix

\section{Appendix for chapter \ref{lb77}}
\subsection{Uniqueness of formal series expansions}

Using Cauchy's integral formula, the coefficients of a Laurent series $\sum_{n\geq N}a_nz^n$ are determined by the values of this series when $z$ is near $0$. This uniqueness property can be generalized to formal series, as we now see.

Let $\mathscr G_0$ be a \emph{finite} subset of $\mathbb R$, and let $\mathscr G=\mathscr G_0+\mathbb Z_{\geq0}=\{\mu+m:\mu\in\mathscr G_0,m\in\mathbb Z_{\geq0} \}$. It is clear that the series
\begin{align}
f(z_1,\dots,z_n)=\sum_{\mu_1,\dots,\mu_n\in\mathscr G}c_{\mu_1,\dots,\mu_n}z_1^{\mu_1}\cdots z_n^{\mu_n}\label{eq242}
\end{align}
converges absolutely if and only if for any $\mu_1,\dots,\mu_n\in\mathscr G_0$, the power series
$$\sum_{m_1,\dots,m_n\in\mathbb Z_{\geq0}}c_{\mu_1+m_1,\dots,\mu_n+m_n}z_1^{\mu_1+m_1}\cdots z_n^{\mu_n+m_n}$$ converges absolutely. Hence, by root test, if $f(z_1,\dots,z_n)$ converges absolutely for some $z_1,\dots,z_n\neq0$, then $f(\zeta_1,\dots,\zeta_n)$ converges absolutely whenever $0<|\zeta_1|<|z_1|,\dots,0<|\zeta_n|<|z_n|$.

 The uniqueness property is stated as follows:
\begin{pp}\label{lb70}
	Let $r_1,\dots,r_n>0$. For any $1\leq l\leq n$, we choose a sequence of complex numbers $\{z_l(m_l):0<|z_l(m_l)|<r_l \}_{m_l\in\mathbb Z_{>0}}$ such that $\lim_{m_l\rightarrow \infty}z_l(m_l)=0$.	 Suppose that \eqref{eq242} converges absolutely when $0<|z_1|<r_1,\dots,0<|z_n|<r_n$, and that for any $m_1,\dots,m_n$, we have $f\big(z_1(m_1),\dots,z_n(m_n)\big)=0$. Then for any $\mu_1,\dots,\mu_n\in\mathscr G$, the coefficient $c_{\mu_1,\dots,\mu_n}=0$.
\end{pp}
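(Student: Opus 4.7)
The plan is to induct on $n$, with the base case $n = 1$ containing the core analytic idea. As preparation, I would first note that $\mathscr{G}$ is a discrete subset of $\mathbb{R}$, bounded below: since $\mathscr{G}_0$ is finite, the distance between any two distinct points of $\mathscr{G}$ is bounded below by
\begin{align*}
\delta \;:=\; \min\bigl\{\, |c + k| \,:\, c \in \mathscr{G}_0 - \mathscr{G}_0,\ k \in \mathbb{Z},\ c + k \neq 0 \,\bigr\} \;>\; 0.
\end{align*}
Hence any nonempty subset $S \subset \mathscr{G}$ has a minimum $\mu^*$, and every other element of $S$ exceeds $\mu^*$ by at least $\delta$. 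Securing this positive gap $\delta$ is the one step that actually requires the finiteness of $\mathscr{G}_0$; everything else in the proof is a rearrangement justified by absolute convergence.

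For the base case $n = 1$, suppose for contradiction that $S := \{\mu \in \mathscr{G} : c_\mu \neq 0\}$ is nonempty, and let $\mu^* = \min S$. Fix $|z_0|$ with $0 < |z_0| < r_1$. For every $z$ with $0 < |z| \leq |z_0|$, the bound $(|z|/|z_0|)^{\mu - \mu^*} \leq (|z|/|z_0|)^\delta$ (valid for $\mu \in S$ with $\mu > \mu^*$) yields
\begin{align*}
\bigl| z^{-\mu^*} f(z) - c_{\mu^*} \bigr| \;\leq\; \sum_{\mu \in S,\ \mu > \mu^*} |c_\mu|\, |z|^{\mu - \mu^*} \;\leq\; (|z|/|z_0|)^\delta \sum_{\mu \in S,\ \mu > \mu^*} |c_\mu|\, |z_0|^{\mu - \mu^*},
\end{align*}
and the last sum is finite by absolute convergence of $f$ at any point with $|z| = |z_0|$. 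Therefore $z^{-\mu^*} f(z) \to c_{\mu^*}$ as $|z| \to 0$. Evaluating along the sequence $z_1(m_1) \to 0$ and using $f(z_1(m_1)) = 0$ forces $c_{\mu^*} = 0$, a contradiction.

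For the inductive step, fix arbitrary $m_2, \dots, m_n$ and regroup by $\mu_1$ (legitimate by absolute convergence), writing
\begin{align*}
f\bigl(z_1, z_2(m_2), \dots, z_n(m_n)\bigr) \;=\; \sum_{\mu_1 \in \mathscr{G}} \widetilde{c}_{\mu_1}\, z_1^{\mu_1}, \qquad \widetilde{c}_{\mu_1} \;:=\; \sum_{\mu_2, \dots, \mu_n \in \mathscr{G}} c_{\mu_1, \dots, \mu_n}\, z_2(m_2)^{\mu_2} \cdots z_n(m_n)^{\mu_n}.
\end{align*}
Applying the $n = 1$ case with the sequence $z_1(m_1)$ yields $\widetilde{c}_{\mu_1} = 0$ for every $\mu_1 \in \mathscr{G}$ and every choice of $(m_2, \dots, m_n)$. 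Now fix $\mu_1$. The series $(z_2, \dots, z_n) \mapsto \sum_{\mu_2, \dots, \mu_n} c_{\mu_1, \dots, \mu_n}\, z_2^{\mu_2} \cdots z_n^{\mu_n}$ converges absolutely on $\{0 < |z_l| < r_l : 2 \leq l \leq n\}$ (it is dominated, up to the factor $|z_1|^{-\mu_1}$ for some fixed admissible $z_1$, by the original series) and vanishes at every tuple $(z_2(m_2), \dots, z_n(m_n))$. The inductive hypothesis then gives $c_{\mu_1, \mu_2, \dots, \mu_n} = 0$ for all $\mu_2, \dots, \mu_n \in \mathscr{G}$, and since $\mu_1$ was arbitrary, the induction closes.
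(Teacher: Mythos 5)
Your proof is correct and follows essentially the same route as the paper's: an induction reducing everything to the single-variable case, where one isolates the minimal exponent carrying a nonzero coefficient, shows the remainder is $o(|z|^{\mu^*})$ as $z\rightarrow 0$, and evaluates along the vanishing sequence. Your explicit $\delta$-separation of $\mathscr G$ merely makes quantitative what the paper's proof uses implicitly when it enumerates the exponents in increasing order $\mu_1<\mu_2<\cdots$ and factors the tail as $z^{\mu_2-\mu_1}g(z)$ with $g$ bounded near $0$.
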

\begin{proof}(cf. \cite{H 4} section 15.4)
By induction, it suffices to prove the case when $n=1$.  Then the series can be written as $f(z)=\sum_{k\in\mathbb Z_{\geq1}}c_{\mu_k}z^{\mu_k}$, where $\mu_{k+1}>\mu_k$ for any $k$, and we have a sequence of complex values $\{z_m \}$ converging to zero, on which the values of $f$ vanish. Define a series $g(z)=\sum_{k\in\mathbb Z_{\geq2}}c_{\mu_k}z^{\mu_k-\mu_2}$. Then the series $g(z)$ converges absolutely when $0<|z|<r$, and $\limsup_{z\rightarrow 0}|g(z)|<+\infty$ .  Since $f(z)z^{-\mu_1}=c_{\mu_1}+z^{\mu_2-\mu_1}g(z)$, we have $c_{\mu_1}=\lim_{m\rightarrow\infty}f\big(z
(m)\big)z(m)^{-\mu_1}=0$. This proves that $c_{\mu_1}=0$. Repeat the same argument, we see that $c_{\mu_k}=0$ for any $k$.
\end{proof}

\subsection{Linear independence of products of intertwining operators}\label{lb68}

This  section is devoted to the proof of proposition \ref{lb66}. First, we need the following lemma, the proof of which is an easy exercise.
\begin{lm}\label{lb67}
	Let $W_i$ be an irreducible $V$-module. Let $n=1,2,\dots$. Consider  the $V$-module $W_i^{\oplus n}=\underbrace{W_i\oplus W_i\oplus\cdots\oplus W_i}_n$. Then for any $V$-module homomorphism $R:W_i\rightarrow W_i^{\oplus n}$, there exist complex numbers $\lambda_1,\dots,\lambda_n$ such that
	\begin{align}
	R(w^{(i)})=(\lambda_1w^{(i)},\lambda_2w^{(i)},\dots,\lambda_nw^{(i)})\qquad(w^{(i)}\in V).\label{eq239}
	\end{align}
\end{lm}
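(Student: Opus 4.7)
The plan is to reduce the lemma to Schur's lemma for irreducible $V$-modules. First, I would observe that for each $m = 1, 2, \ldots, n$, the canonical projection $\pi_m : W_i^{\oplus n} \to W_i$ onto the $m$-th summand is a $V$-module homomorphism, so the composition $\pi_m \circ R \in \End_V(W_i)$. If I can show each such composition equals $\lambda_m \cdot \id_i$ for some $\lambda_m \in \mathbb{C}$, then applying all the $\pi_m$'s to $R(w^{(i)})$ reassembles the required formula \eqref{eq239}.

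Thus the core step is Schur's lemma: $\End_V(W_i) = \mathbb{C} \cdot \id_i$ whenever $W_i$ is irreducible. To prove this, let $\phi \in \End_V(W_i)$ be arbitrary. Since $\phi$ intertwines the $V$-action, it commutes with $L_0 = Y_i(\nu, 1)$, and hence preserves each finite-dimensional weight space $W_i(s)$. Recall that since $W_i$ is irreducible we may write $W_i = \bigoplus_{n \in \mathbb{Z}_{\geq 0}} W_i(n + \Delta_i)$ with $W_i(\Delta_i) \neq 0$ finite-dimensional. The restriction $\phi|_{W_i(\Delta_i)}$ has at least one eigenvalue $\lambda \in \mathbb{C}$, so $\ker(\phi - \lambda \cdot \id_i)$ is a nonzero $V$-invariant subspace of $W_i$; by remark \ref{lb102} it is a $V$-submodule, and by irreducibility of $W_i$ it must equal all of $W_i$. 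Therefore $\phi = \lambda \cdot \id_i$.

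Combining these two steps, for each $m$ we obtain a scalar $\lambda_m$ such that $\pi_m \circ R = \lambda_m \cdot \id_i$, and \eqref{eq239} follows immediately. There is no serious obstacle here; the only point that requires any care is invoking the correct version of Schur's lemma for graded modules with finite-dimensional weight spaces, which is why the existence of an eigenvalue on the (finite-dimensional) lowest weight space $W_i(\Delta_i)$ plays the key role.
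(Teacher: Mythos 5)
Your proof is correct and follows essentially the same route as the paper: compose $R$ with the projections $p_m$ onto each summand to land in $\End_V(W_i)$, then apply Schur's lemma. The only difference is that the paper invokes Schur's lemma without proof, while you spell out the standard eigenvalue argument on the finite-dimensional lowest weight space $W_i(\Delta_i)$ (using remark \ref{lb102} to see that $\ker(\phi-\lambda\,\id_i)$ is a submodule), which is a valid justification in this graded setting.
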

\begin{proof}
	For any $1\leq m\leq n$, let $p_m$ be the projection of $W_i^{\oplus n}$ onto its $m$-th component. Then $p_mR\in\End_V(W_i)$. Since $W_i$ is irreducible, there exists $\lambda_m\in\mathbb C$ such that $p_mR=\lambda_m\id_{W_i}$. \eqref{eq239} now follows immediately.
\end{proof}	

Let $W_i,W_j$ be two $V$-modules. For any $k\in\mathcal E$ we choose a basis $\{\mathcal Y_\alpha:\alpha\in\Theta^k_{ij} \}$ of $\mathcal V{k\choose i~j}$. Consider the $V$-module $W_l=\bigoplus_{k\in\mathcal E}\big(\bigoplus_{\alpha\in\Theta^k_{ij}}W_k^\alpha\big)$, where each $W^\alpha_k$ is a $V$-module equivalent to $W_k$. It's contragredient module is $W_{\overline l}=\bigoplus_{k\in\mathcal E}\big(\bigoplus_{\alpha\in\Theta^k_{ij}}W_{\overline k}^\alpha\big)$, where $W_{\overline k}^\alpha$ is the contragredient module of $W_k^\alpha$.  Consider a type $l\choose i~j$ intertwining operator $\mathcal Y$ defined as follows: for any $w^{(i)}\in W_i,w^{(j)}\in W_j$, we let
\begin{align}
\mathcal Y(w^{(i)},x)w^{(j)}=\bigoplus_{k\in\mathcal E}\bigg(\bigoplus_{\alpha\in\Theta^k_{ij}}\mathcal Y_\alpha(w^{(i)},x)w^{(j)}\bigg),
\end{align}
i.e., the projection of $\mathcal Y(w^{(i)},x)w^{(j)}$ to $W^\alpha_k$ is $\mathcal Y_\alpha(w^{(i)},x)w^{(j)}$.

The following property is due to Huang. See \cite{H 4} lemma 14.9. The notations and terminologies in that article are different from ours, so we include a proof here.
\begin{pp}\label{lb98}
Choose $z\in\mathbb C^\times$ with  argument $\arg z$. Let $w^{(\overline l)}\in W_{\overline l}$. If for any $w^{(i)}\in W_i,w^{(j)}\in W_j$, we have
	\begin{align}
	\langle w^{(\overline l)},\mathcal Y(w^{(i)},z)w^{(j)}  \rangle=0,\label{eq240}
	\end{align}
	then $w^{(\overline l)}=0$.
\end{pp}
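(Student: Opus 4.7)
The plan is to first deduce the vanishing of every mode $\langle w^{(\overline l)}, \mathcal{Y}(w^{(i)}, s) w^{(j)} \rangle$ from the single-point hypothesis, and then use complete reducibility together with the linear independence of the basis $\{\mathcal{Y}_\alpha\}$ to conclude $w^{(\overline l)} = 0$.

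For the first step, fix homogeneous $w^{(i)} \in W_i$ and $w^{(j)} \in W_j$ and set $f(x) := \langle w^{(\overline l)}, \mathcal{Y}(w^{(i)}, x) w^{(j)} \rangle$. Because $W_{\overline l} = \bigoplus_t W_l(t)^*$, the vector $w^{(\overline l)}$ has only finitely many nonzero homogeneous components; combined with the fact that $\mathcal{Y}(w^{(i)}, s) w^{(j)}$ is homogeneous of weight $\Delta_{w^{(i)}} + \Delta_{w^{(j)}} - s - 1$, this forces $f(x) = \sum_{s \in S} c_s x^{-s-1}$ to be a \emph{finite} sum of monomials with pairwise distinct real exponents. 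By the translation property,
\begin{align*}
\tfrac{d^k}{dx^k} f(x) = \langle w^{(\overline l)}, \mathcal{Y}(L_{-1}^k w^{(i)}, x) w^{(j)} \rangle,
\end{align*}
so applying the hypothesis to $L_{-1}^k w^{(i)} \in W_i$ yields $f^{(k)}(z) = 0$ for every $k \ge 0$. The resulting linear system $\sum_{s \in S} c_s \cdot (-s-1)(-s-2)\cdots(-s-k) \cdot z^{-s-1-k} = 0$ (for $k = 0, 1, \dots, |S|-1$) has coefficient matrix that, after factoring out the nonzero powers of $z$, is of Vandermonde type in the distinct numbers $\{-s-1 : s \in S\}$; hence every $c_s = 0$. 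By bilinearity in $w^{(i)}, w^{(j)}$, we obtain $\langle w^{(\overline l)}, \mathcal{Y}(w^{(i)}, s) w^{(j)} \rangle = 0$ for every mode $s \in \mathbb{R}$ and every $w^{(i)} \in W_i$, $w^{(j)} \in W_j$.

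For the second step, let $U := \mathrm{span}\{\mathcal{Y}(w^{(i)}, s) w^{(j)} : w^{(i)} \in W_i, w^{(j)} \in W_j, s \in \mathbb{R}\} \subset W_l$. The Jacobi identity \eqref{eq106} expresses $Y_l(u, m) \mathcal{Y}(w^{(i)}, s) w^{(j)}$ as a linear combination of terms of the same form, so $U$ is $V$-invariant and (being $L_0$-invariant) is a graded $V$-submodule of $W_l$. Suppose $U \ne W_l$. By complete reducibility (condition \eqref{eq303}), $W_l = U \oplus U'$ with $U' \ne 0$, and $U'$ contains an irreducible submodule $V_0 \cong W_{k_0}$ for some $k_0 \in \mathcal E$; let $R : W_l \twoheadrightarrow V_0 \cong W_{k_0}$ be the projection killing $U$ together with a chosen complement of $V_0$ in $U'$. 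By Schur's lemma, $R$ restricted to each summand $W_k^\alpha$ of $W_l$ is zero unless $k = k_0$, and when $k = k_0$ it is a scalar $\lambda_\alpha \in \mathbb{C}$ times the canonical identification $W_{k_0}^\alpha \cong W_{k_0}$, so $R \circ \mathcal{Y} = \sum_{\alpha \in \Theta^{k_0}_{ij}} \lambda_\alpha \mathcal{Y}_\alpha \in \mathcal{V}\binom{k_0}{i~j}$. Since $R(U) = 0$, this intertwining operator vanishes, and the linear independence of the basis $\{\mathcal{Y}_\alpha\}_{\alpha \in \Theta^{k_0}_{ij}}$ forces every $\lambda_\alpha = 0$, contradicting $R \ne 0$. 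Hence $U = W_l$, and the non-degeneracy of the natural pairing $W_{\overline l} \times W_l \to \mathbb{C}$ yields $w^{(\overline l)} = 0$.

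The main obstacle is the first step: at the single value $z$ with a fixed argument, the hypothesis records only one linear condition, and the distinct exponents $\{-s-1\}$ collapse into mere complex numbers $\{z^{-s-1}\}$ that cannot be separated directly. The key observation enabling the argument is that the space $W_i$ of admissible charge vectors is stable under $L_{-1}$, so the translation property converts the hypothesis into the vanishing of \emph{all} $x$-derivatives of $f$ at $x = z$; the Vandermonde structure in the exponents then disentangles the modes, after which the span/basis argument is essentially formal.
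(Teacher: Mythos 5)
Your proof is correct, but it runs in the opposite direction from the paper's. The paper works on the dual side: it lets $W_1\subset W_{\overline l}$ be the set of all vectors satisfying the hypothesis \eqref{eq240}, shows via the commutator formula \eqref{eq106} that $W_1$ is a $V$-submodule, and, if $W_1\neq0$, extracts an irreducible submodule $W_{\overline k}\hookrightarrow W_1$ whose embedding is, by lemma \ref{lb67}, a tuple of scalars $(\lambda_\alpha)$ not all zero; the resulting identity $\sum_\alpha\lambda_\alpha\langle w^{(\overline k)},\mathcal Y_\alpha(w^{(i)},z)w^{(j)}\rangle=0$ is promoted from the single point $z$ to a formal identity by the fact that $3$-point functions are determined by their value at a point (immediate for homogeneous vectors, since such a $3$-point function is a single monomial in $z$ by \eqref{eq132} --- and the paper regains the freedom to choose $w^{(\overline k)}$ homogeneous precisely because it has a whole irreducible submodule inside $W_1$), after which linear independence of $\{\mathcal Y_\alpha\}$ gives the contradiction. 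You instead work on the primal side: you first upgrade the one-point hypothesis to the vanishing of all modes via the translation property ($L_{-1}^k w^{(i)}$ produces all $x$-derivatives of $f$ at $z$) and a Vandermonde argument on the finitely many distinct exponents --- a nice self-contained substitute for the determined-at-a-point principle, and the right move since your $w^{(\overline l)}$ is not homogeneous and a single evaluation does not separate its components. You then prove directly that the span $U$ of the modes is all of $W_l$, which is exactly corollary \ref{lb69}; note the paper deduces \ref{lb69} \emph{from} proposition \ref{lb98}, so you have reversed the logical order, with the same Schur-lemma-plus-linear-independence mechanism (your projection $R$ plays the role of the paper's $R$ and lemma \ref{lb67}). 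Two small remarks: your appeal to complete reducibility \eqref{eq303} is stronger than needed, since $W_l$ is a finite direct sum of irreducibles by construction and hence semisimple, so the complement $U'$ and the irreducible $V_0\subset U'$ exist without invoking rationality of $V$; and your step identifying $R\circ\mathcal Y$ with $\sum_{\alpha\in\Theta^{k_0}_{ij}}\lambda_\alpha\mathcal Y_\alpha$ implicitly uses that post-composing an intertwining operator with a $V$-module homomorphism yields an intertwining operator, which is standard (compare \eqref{eq141}) but worth stating. Net effect: your route is slightly longer but more elementary and self-contained, whereas the paper's is shorter by leaning on the general principle that correlation functions are determined by their value at a point.
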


\begin{proof}
	Let $W_1$ be the subspace of all $w^{(\overline l)}\in W_{\overline l}$ satisfying \eqref{eq240}. We show that $W_1=0$. 
	
	Note that by relation \eqref{eq106},  for any $u\in V,m\in\mathbb Z$ we have
	\begin{align}
	Y_l(u,m)\mathcal Y(w^{(i)},z)-\mathcal Y(w^{(i)},z)Y_k(u,m)=\sum_{h\in\mathbb Z_{\geq0}}{m\choose h}\mathcal Y(Y_i(u,h)w^{(i)},z)z^{m-h}.
	\end{align}
From this we see that $W_1$ is a $V$-submodule of $W_{\overline l}$. If $W_1\neq0$, then $W_1$ contains an irreducible submodule equivalent to $W_{\overline k}$ for some $k\in\mathcal E$. This implies that we have a non-zero $V$-module homomorphism $R:W_{\overline k}\rightarrow \bigoplus_{\alpha\in\Theta^k_{ij}}W_{\overline k}^\alpha\subset W_{\overline l}$, and that the image of $R$ is inside $W_1$.
	
	By lemma \ref{lb67}, we can choose complex numbers $\{\lambda_\alpha:\alpha\in\Theta^k_{ij} \}$, not all of which are zero, such that for any $w^{(\overline k)}$, $Rw^{(\overline k)}=\bigoplus_{\alpha\in\Theta^k_{ij}}\lambda_\alpha w^{(\overline k)}$. Hence for any $w^{(i)}\in W_i,w^{(j)}\in W_j,w^{(\overline k)}\in W_k$, we have
	\begin{align*}
	\sum_{\alpha\in\Theta^k_{ij}}\lambda_\alpha\langle w^{(\overline k)},\mathcal Y_\alpha(w^{(i)},z)w^{(j)}  \rangle=0.
	\end{align*}
	Since $3$-point correlation functions are determined by their values at the point $z$, we have
	\begin{align*}
	\sum_{\alpha\in\Theta^k_{ij}}\lambda_\alpha\langle w^{(\overline k)},\mathcal Y_\alpha(w^{(i)},x)w^{(j)}  \rangle=0,
	\end{align*}
	where $x$ is a formal variable. But we know that $\{\mathcal Y_\alpha:\alpha\in\Theta^k_{ij} \}$ are linearly independent, which forces all the coefficients $\lambda_\alpha$ to be  zero. Hence we have a contradiction.
\end{proof}

\begin{co}\label{lb69}
	Vectors of the form $\mathcal Y(w^{(i)},s)w^{(j)}$ ($w^{(i)}\in W_i,w^{(j)}\in W_j,s\in\mathbb R$) span the vector space $W_l$.
\end{co}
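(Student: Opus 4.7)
The plan is to deduce the corollary from Proposition \ref{lb98} by a standard finite-dimensional duality argument, exploiting the $\mathbb R$-grading of $W_l$ and $W_{\overline l}$.

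First I would denote by $U\subset W_l$ the linear span of all vectors of the form $\mathcal Y(w^{(i)},s)w^{(j)}$ with $w^{(i)}\in W_i$, $w^{(j)}\in W_j$, $s\in\mathbb R$, and show that $U=W_l$. By linearity, it suffices to let $w^{(i)},w^{(j)}$ range over homogeneous vectors. In that case, by equation \eqref{eq132} the mode $\mathcal Y(w^{(i)},s)w^{(j)}$ is homogeneous of energy $\Delta_{w^{(i)}}+\Delta_{w^{(j)}}-s-1$. Hence $U$ is a graded subspace of $W_l$: $U=\bigoplus_{s\in\mathbb R}U(s)$ with $U(s)=U\cap W_l(s)$.

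Next, suppose for contradiction that $U\neq W_l$. Then there exists $s\in\mathbb R$ with $U(s)\subsetneq W_l(s)$. Since $W_l(s)$ is finite-dimensional and $W_{\overline l}(s)$ can be identified with its dual, there exists a nonzero vector $w^{(\overline l)}\in W_{\overline l}(s)\subset W_{\overline l}$ such that $\langle w^{(\overline l)}, u\rangle=0$ for every $u\in U(s)$. Now for any homogeneous $w^{(i)}\in W_i$, $w^{(j)}\in W_j$, and any $t\in\mathbb R$, the vector $\mathcal Y(w^{(i)},t)w^{(j)}$ lies in some weight space $W_l(s')$. If $s'\neq s$, then $\langle w^{(\overline l)},\mathcal Y(w^{(i)},t)w^{(j)}\rangle=0$ because the gradings of $W_l$ and $W_{\overline l}$ are mutually orthogonal under the pairing; if $s'=s$, then $\mathcal Y(w^{(i)},t)w^{(j)}\in U(s)$, and again the pairing vanishes. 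By linearity the same holds for arbitrary (not necessarily homogeneous) $w^{(i)},w^{(j)}$.

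Finally, using the complex analytic expansion \eqref{eq234}, for any $z\in\mathbb C^\times$ with a chosen argument we have
\begin{equation*}
\langle w^{(\overline l)},\mathcal Y(w^{(i)},z)w^{(j)}\rangle=\sum_{t\in\mathbb R}\langle w^{(\overline l)},\mathcal Y(w^{(i)},t)w^{(j)}\rangle\, z^{-t-1}=0
\end{equation*}
for all $w^{(i)}\in W_i$ and $w^{(j)}\in W_j$. Proposition \ref{lb98} then forces $w^{(\overline l)}=0$, contradicting our choice. Therefore $U=W_l$, as desired. The only nontrivial ingredient is Proposition \ref{lb98}, which was just proved; the rest is a straightforward grading-plus-duality argument, so I do not anticipate any real obstacle.
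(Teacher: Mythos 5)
Your proof is correct and takes essentially the same route as the paper: both reduce the statement to Proposition \ref{lb98} by observing that a vector $w^{(\overline l)}\in W_{\overline l}$ annihilating every mode $\mathcal Y(w^{(i)},s)w^{(j)}$ also annihilates $\langle w^{(\overline l)},\mathcal Y(w^{(i)},z)w^{(j)}\rangle$ for all $z\in\mathbb C^\times$, and hence must vanish. The only difference is that you make explicit, via \eqref{eq132}, the graded finite-dimensional duality step that the paper leaves implicit --- a worthwhile precision, since the restricted dual $W_{\overline l}$ does not in general separate a proper \emph{non-graded} subspace of $W_l$ from the whole space, so the gradedness of the span is genuinely needed.
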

\begin{proof}
Choose any $w^{(\overline l)}\in W_{\overline l}$ satisfying that for any $w^{(i)}\in W_i,w^{(j)}\in W_j,s\in\mathbb R$,
	\begin{align}
	\langle w^{(\overline l)},\mathcal Y(w^{(i)},s)w^{(j)}  \rangle=0.
	\end{align}
Then for any $z\in\mathbb C^\times$, equation \eqref{eq240} holds. So $w^{(\overline l)}$ must be zero.
\end{proof}

\begin{proof}[Proof of proposition \ref{lb66}]
	It is clear that $\Phi$ is surjective. So we only need to prove that $\Phi$ is injective. By induction, it suffices to prove that the linear map $\Psi$:
	\begin{gather*}
	\bigoplus_{j\in\mathcal E}\Bigg( \mathcal V{k\choose{i_n~i_{n-1}~\cdots~i_2~j}} \otimes \mathcal V{j\choose i_1~i_0}   \Bigg)\rightarrow\mathcal V{k\choose{i_n~i_{n-1}~\cdots~i_1~i_0}},\\
	\mathcal X \otimes\mathcal Y_{\alpha}\mapsto\mathcal X \mathcal Y_{\alpha}
	\end{gather*}
	is injective. To prove this, we choose, for any $j\in\mathcal E$, a linear basis $\{\mathcal Y_{\alpha}:\alpha\in\Theta^j_{i_1i_0} \}$ of $\mathcal V{j\choose i_1~i_0}$. If we can prove, for any $j\in\mathcal E,\alpha\in\Theta^j_{i_1i_0},\mathcal X _\alpha\in\mathcal V{k\choose{i_n~i_{n-1}~\cdots~i_2~j}}$,  that
	\begin{align}
	\sum_{j\in\mathcal E}\sum_{\alpha\in\Theta^j_{i_1i_0}}\mathcal X _\alpha\mathcal Y_\alpha=0\label{eq241}
	\end{align}
	always implies that $\mathcal X _\alpha=0$ for all $\alpha$, then the injectivity of $\Psi$ follows immediately.
	
	Now suppose that \eqref{eq241} is true. Then  for any $w^{(i_0)}\in W_{i_0},w^{(i_1)}\in W_{i_1},\dots,w^{(i_n)}\in W_{i_n}, s\in\mathbb R$, and $z_2,\dots,z_n$ satisfying $0<|z_2|<\cdots<|z_n|$, we have, by proposition \ref{lb70},
	\begin{align}
	\sum_{j\in\mathcal E}\sum_{\alpha\in\Theta^j_{i_1i_0}}\mathcal X _\alpha(w^{(i_n)},\dots,w^{(i_2)};z_n,\dots,z_2)\mathcal Y_\alpha(w^{(i_1)},s)w^{(i_0)}=0.
	\end{align}
	By corollary \ref{lb69}, for any $j\in\mathcal E,w^{(j)}\in W_j$ and $\alpha\in\Theta^j_{i_1i_0}$, there exist $w^{(i_0)}_1,\dots,w^{(i_0)}_m\in W_{i_0},w^{(i_1)}_1,\dots,w^{(i_1)}_m\in W_{i_1},s_1,\dots,s_m\in\mathbb R$, such that
	$$\mathcal Y_\alpha(w^{(i_1)}_1,s_1)w^{(i_0)}_1+\cdots+\mathcal Y_\alpha(w^{(i_1)}_m,s_m)w^{(i_0)}_m=w^{(j)},$$
	and that for any $\beta\neq\alpha$,
	$$\mathcal Y_\beta(w^{(i_1)}_1,s_1)w^{(i_0)}_1+\cdots+\mathcal Y_\beta(w^{(i_1)}_m,s_m)w^{(i_0)}_m=0.$$
	Hence $\mathcal X _\alpha(w^{(i_n)},\dots,w^{(i_2)};z_n,\dots,z_2)w^{(j)}=0$.
\end{proof}

\subsection{General braiding and fusion relations}\label{lb71}
In this section, we prove all the results claimed in section \ref{lb72}. In the following proofs of absolute convergence, the idea of analytic continuation (lemma \ref{lb74}) and induction is due to \cite{HLZ} proposition 12.7. The trick in step 1 of the proof of theorem \ref{lb73} using $B_\pm$ to transform certain types of absolute convergence to other types can be found in \cite{H 4} proposition 14.1. What's new in our proofs is the change-of-variable trick: we replace the original complex variables $z_1,z_2,\dots$ with the moduli parameters $\omega_1,\omega_2,\dots$ which control the shape of gluing together  Riemann spheres with three holes (pants).

We first introduce some temporary notations. For any $r>0$, let $D(r)=\{z\in\mathbb C:|z|<r \},D^\times(r)=D(r)\setminus\{0\}$, and $E(r)=D(r)\cap(0,+\infty)$. Then we  have the following:
\begin{lm}\label{lb74}
	Given a  power series
	\begin{align}
	\sum_{n_0,n_1,\dots,n_l\in\mathbb Z_{\geq0}}c_{n_0n_1\dots n_l}z_0^{n_0}z_1^{n_1}\cdots z_l^{n_l}\label{eq244}
	\end{align}
	of the complex variables $z_0,z_1,\dots,z_l$, where $l\in\mathbb Z_{>0}$ and each $c_{n_0n_1\dots n_l}\in\mathbb C$. Suppose that there exist $r_0,r_1,\dots,r_l>0$, such that for any $n_0$, the power series
	\begin{align}
	g_{n_0}(z_1,\dots,z_l)=\sum_{n_1,\dots,n_l\in\mathbb Z_{\geq0}}c_{n_0n_1\dots n_l}z_1^{n_1}\cdots z_l^{n_l}
	\end{align}
	converges absolutely on $D(r_1)\times\cdots \times D(r_l)$;  that for any $z_1\in E(r_1),\dots,z_l\in E(r_l)$, 
	\begin{align}
	f(z_0,z_1,\dots,z_l)=\sum_{n_0\in\mathbb Z_{\geq0}}g_{n_0}(z_1,\dots,z_n)
	z_0^n,\label{eq243}
	\end{align}
converges absolutely as a power series of $z_0$ on $D(r_0)$; and that $f$ can be analytically continued to a multivalued holomorphic function on $D^\times(r_0)\times D^\times(r_1)\times\cdots \times D^\times(r_l)$. Then the power series \eqref{eq244} converges absolutely on $D(r_0)\times D(r_1)\times\cdots \times D(r_l)$.
\end{lm}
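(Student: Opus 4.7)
The plan is to show that the multi-valued analytic continuation $f$ is in fact single-valued on the punctured polydisc $\prod_{m=0}^{l} D^\times(r_m)$ and that its multi-variable Laurent expansion around the origin contains only nonnegative-integer exponents; absolute convergence of the original power series on $\prod_{m=0}^{l} D(r_m)$ will then follow at once.

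First, the monodromy of $f$ around $\gamma_0$ (the loop encircling $z_0 = 0$) is trivial: on a neighborhood of any $(z_0^*, t_1,\dots,t_l)$ with $t_m\in E(r_m)$ the defining series $f = \sum_{n_0 \ge 0} g_{n_0}(z_1,\dots,z_l)\,z_0^{n_0}$ involves only nonnegative integer powers of $z_0$, and by analytic continuation this triviality persists throughout the punctured polydisc. The heart of the proof---what I expect to be the main obstacle---is showing that the monodromy around each $\gamma_m$ ($m \ge 1$) is also trivial. With $\gamma_0$-triviality in hand, the contour integral
\[
a_{n_0}(z_1,\dots,z_l) = \frac{1}{2\pi i}\oint_{|w_0|=\rho_0} f(w_0, z_1,\dots,z_l)\,w_0^{-n_0-1}\,dw_0
\]
is well-defined (it extracts the $n_0$-th Laurent coefficient in $w_0$ of the chosen branch of $f(\cdot, z_1,\dots,z_l)$) and varies holomorphically in $(z_1,\dots,z_l)$ over any simply-connected subdomain of $\prod_{m=1}^l D^\times(r_m)$ on which $f$ is single-valued. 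On such a subdomain containing $\prod_{m=1}^l E(r_m)$, $a_{n_0}$ agrees with $g_{n_0}$ for $n_0\ge 0$ and with $0$ for $n_0 < 0$ by the identity principle. Crucially, $g_{n_0}$ is single-valued holomorphic on the simply-connected ambient polydisc $\prod_{m=1}^l D(r_m)$, so any path in $\prod_{m=1}^l D^\times(r_m)$ can be viewed as a path inside this ambient space, along which continuation of $g_{n_0}$ is path-independent; this forces $a_{n_0}$ to extend to a single-valued function on $\prod_{m=1}^l D^\times(r_m)$ equal to $g_{n_0}$ (or to $0$), independent of the branch of $f$ used. Since the Taylor coefficients in $z_0$ determine the branch of $f(\cdot, z_1,\dots,z_l)$ as a holomorphic function of $z_0$, all branches of $f$ must coincide: $f$ is single-valued on the full punctured polydisc $\prod_{m=0}^l D^\times(r_m)$.

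With single-valuedness established, $f$ admits a multi-variable Laurent expansion converging absolutely on $\prod_{m=0}^l D^\times(r_m)$. The identification $a_{n_0} = g_{n_0}$ (with $a_{n_0} = 0$ for $n_0 < 0$), together with the Taylor expansion of each $g_{n_0}$ on $\prod_{m=1}^l D(r_m)$ provided by the first hypothesis, shows that this Laurent expansion equals $\sum_{(n_0,\dots,n_l)\in\mathbb Z^{l+1}_{\ge 0}} c_{n_0 n_1\cdots n_l}\,z_0^{n_0}\cdots z_l^{n_l}$. Therefore this nonnegative-exponent series converges absolutely at every point of $\prod_{m=0}^l D^\times(r_m)$; since setting any $z_j = 0$ only removes terms from a sum of nonnegative quantities, the absolute convergence extends to the full polydisc $\prod_{m=0}^l D(r_m)$, completing the proof.
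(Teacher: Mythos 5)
Your proof is correct and takes essentially the same route as the paper's: both first establish single-valuedness of $f$ in $z_0$ by propagating the nonnegative-integer-power expansion from the real slice, then extract the Laurent coefficients by a Cauchy contour integral in $z_0$, identify them with $g_{n_0}$ (for $n_0\geq 0$) or $0$ (for $n_0<0$) using the identity principle together with the single-valuedness of $g_{n_0}$ on the full polydisc, and finally reconstruct $f$ from these branch-independent coefficients to conclude it is single-valued with power series expansion \eqref{eq244}. The only cosmetic difference is that you deduce absolute convergence on the punctured polydisc and then pass to the coordinate hyperplanes by monotonicity of the nonnegative-power series, whereas the paper first extends $f$ holomorphically across the hyperplanes; the two endgames are interchangeable.
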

\begin{proof}
	Consider the multivalued holomorphic function $f$. From \eqref{eq243}, we know that for any $z_1\in E(r_1),\dots,z_l\in E(r_l)$, $f$ is single-valued for $z_0\in D^\times(r_0)$.
	So $f$ is single-valued on $z_0$ for any $z_1\in D^\times(r_1),\dots,z_l\in D^\times(r_l)$. 
	
	Now, for any $n_0\in\mathbb Z$,
	\begin{align}
	\widetilde g_{n_0}(z_1,\dots,z_n)=\oint_{0} f(z_0,z_1,\dots,z_l)z_0^{-n-1}\frac{d z_0}{2i\pi}
	\end{align}
	is a multivalued holomorphic function on $D^\times(r_1)\times\cdots \times D^\times(r_l)$. If $n_0\geq0$, then by \eqref{eq243}, we must have $\widetilde g_{n_0}=g_{n_0}$ on $E(r_1)\times\cdots\times E(r_l)$. Since $g_{n_0}$ is holomorphic, $\widetilde g_{n_0}=g_{n_0}$ on $D^\times(r_1)\times\cdots \times D^\times(r_l)$. Hence $\widetilde g_{n_0}$ is single-valued. Similarly, when $n_0<0$, we have $\widetilde g_{n_0}(z_1,\dots,z_n)=0$ on $E(r_1)\times\cdots\times E(r_l)$, and hence on $D^\times(r_1)\times\cdots \times D^\times(r_l)$. Therefore, $f(z_0,z_1,\dots,z_n)=\sum_{n_0\in\mathbb Z}\widetilde g_{n_0}(z_1,\dots,z_n)z_0^{n_0}$ is single-valued on $D^\times(r_0)\times D^\times (r_1)\times \cdots\times D^\times(r_n)$, and the Laurant series expansion of $f$ near the origin has no negative powers of $z_0,z_1,\dots,z_n$. So $f$ is a single-valued holomorphic function on $D(r_0)\times D(r_1)\times\cdots \times D(r_l)$ with power series expansion \eqref{eq244}. We can thus conclude that \eqref{eq244} converges absolutely on $D(r_0)\times D(r_1)\times\cdots \times D(r_l)$.
\end{proof}

Recall that a  series $f(z_1,\dots,z_n)=\sum_{s_1,\dots,s_n\in\mathbb R}c_{s_1\dots s_n}z_1^{s_1}\cdots z_n^{s_n}$ is called a \textbf{quasi power series} of $z_1,\dots,z_n$, if $f$ equals a power series multiplied by a monomial of $z_1,\dots,z_n$, i.e., if there exist $t_1,\dots,t_n\in\mathbb C$ such that $f(z_1,\dots,z_n)z_1^{t_1}\cdots z_n^{t_n}\in \mathbb C[[z_1,\dots,z_n]]$. 

\begin{proof}[Proof of theorem \ref{lb73}]
	Step 1. We first prove the convergence. Let $W_i$ be the charge space of $\mathcal Y_\gamma$. Then for any $w^{(i_0)}\in W_{i_0},w^{(i)}\in W_{i}$, we have
	\begin{align*}
	&\mathcal Y_\gamma(w^{(i)},x)w^{(i_0)}\\
	=&\mathcal Y_{B_+B_-\gamma}(w^{(i)},x)w^{(i_0)}\\
	=&e^{xL_{-1}}\mathcal Y_{B_-\gamma}(w^{(i_0)},e^{i\pi}x)w^{(i)},
	\end{align*}	
	where $x$ is a formal variable. Then for any $w^{(\overline{k})}\in W_{\overline{k}}$, we have
	\begin{align*}
	&\langle\mathcal Y_\gamma(w^{(i)},z_1)w^{(i_0)}, w^{(\overline{k})}\rangle
	\\
	=&\langle\mathcal Y_\gamma(w^{(i)},x)w^{(i_0)}, w^{(\overline{k})}\rangle\big|_{x=z_1}\\
	=&\langle e^{xL_{-1}} \mathcal Y_{B_-\gamma}(w^{(i_0)},e^{i\pi}x)w^{(i)}, w^{(\overline{k})}\rangle\big|_{x=z_1}\\
	=&\langle \mathcal Y_{B_-\gamma}(w^{(i_0)},e^{i\pi}x)w^{(i)}, e^{xL_{1}}w^{(\overline{k})}\rangle\big|_{x=z_1}\\
	=&\langle \mathcal Y_{B_-\gamma}(w^{(i_0)},e^{i\pi}z_1)w^{(i)}, e^{z_1L_{1}}w^{(\overline{k})}\rangle.
	\end{align*}
	Therefore,
	\begin{align}
	&\big\langle\mathcal Y_\gamma\big( P_{s_n}\mathcal Y_{\sigma_n}(w^{(i_n)},z_n-z_1) P_{s_{n-1}}\mathcal Y_{\sigma_{n-1}}(w^{(i_{n-1})},z_{n-1}-z_1)\nonumber\\
	&\qquad\qquad\cdots P_{s_2}\mathcal Y_{\sigma_2}(w^{(i_2)},z_2-z_1)w^{(i_1)},z_1\big)w^{(i_0)},w^{(\overline k)}\big\rangle\nonumber\\
	=&\langle\mathcal Y_{B_-\gamma}(w^{(i_0)},e^{i\pi}z_1) P_{s_n}\mathcal Y_{\sigma_n}(w^{(i_n)},z_n-z_1) P_{s_{n-1}}\mathcal Y_{\sigma_{n-1}}(w^{(i_{n-1})},z_{n-1}-z_1)\nonumber\\
	&\qquad\qquad\cdots P_{s_2}\mathcal Y_{\sigma_2}(w^{(i_2)},z_2-z_1)w^{(i_1)}, e^{z_1L_{1}}w^{(\overline{k})}\rangle.\label{eq247}
	\end{align}
	Hence, by theorem \ref{lb65} and the discussion below, the sum of  \eqref{eq247} over $s_2,s_3,\dots,s_n\in\mathbb R$ converges absolutely and locally uniformly.\\

Step 2. Assume that
\begin{gather}
0<|z_1|<|z_2|<\cdots<|z_n|,\nonumber\\
0<|z_2-z_1|<|z_3-z_1|\cdots<|z_n-z_1|<|z_1|,
\end{gather}
and choose arguments $\arg z_1,\arg z_2,\dots,\arg z_n,\arg(z_2-z_1),\dots,\arg(z_n-z_1)$. We prove, by induction on $n$, that  \eqref{eq245} defined near the point $(z_1,z_2,\dots,z_n)$ is a correlation function, i.e., it can be written as a product of a chain of intertwining operators. The case $n=2$ was proved in \cite{H 4} and \cite{H ODE}. Suppose  this theorem holds for  $n-1$, we now prove it for $n$. By analytic continuation, it suffices to assume also that
\begin{gather}
|z_1|+|z_2-z_1|<|z_3|.
\end{gather}

Let $W_{j_2}$ be the target space of $\mathcal Y_{\sigma_2}$. By induction, there exists a chain of intertwining operators $\mathcal Y_{\delta},\mathcal Y_{\alpha_3},\mathcal Y_{\alpha_4},\dots,\mathcal Y_{\alpha_n}$ with charge spaces $W_{j_2},W_{i_3},W_{i_4},\dots,W_{i_n}$ respectively, such that $W_{i_0}$ is the source space of $\mathcal Y_{\delta}$, that $W_k$ is the target space of $\mathcal Y_{\alpha_n}$, and that
	for any $w^{(i_0)}\in W_{i_0},w^{(j_2)}\in W_{j_2},w^{(i_3)}\in W_{i_3},w^{(i_4)}\in W_{i_4},\dots,w^{(i_n)}\in W_{i_n}$, we have the fusion relation
	\begin{align}
	&\mathcal Y_\gamma\big(\mathcal Y_{\sigma_n}(w^{(i_n)},z_n-z_1)\mathcal Y_{\sigma_{n-1}}(w^{(i_{n-1})},z_{n-1}-z_1)\cdots \mathcal Y_{\sigma_3}(w^{(i_3)},z_3-z_1)w^{(j_2)},z_1\big)w^{(i_0)}\nonumber\\
	=&\mathcal Y_{\alpha_n}(w^{(i_n)},z_n)\mathcal Y_{\alpha_{n-1}}(w^{(i_{n-1})},z_{n-1})\cdots\mathcal Y_{\alpha_3}(w^{(i_3)},z_3)\mathcal Y_{\delta}(w^{(j_2)},z_1)w^{(i_0)}
	\end{align}
near the point $(z_1,z_3,z_4,\dots,z_n)$.
	
	There also exists a chain of intertwining operator $\mathcal Y_{\alpha_1},\mathcal Y_{\alpha_2}$ with charge spaces $W_{i_1},W_{i_2}$, such that the source space of $\mathcal Y_{\alpha_1}$ is $W_{i_0}$, that the target space of $\mathcal Y_{\alpha_2}$ equals that of $\mathcal Y_\delta$, and that  the fusion relation
	\begin{align}
	\mathcal Y_\delta\big(\mathcal Y_{\sigma_2}(w^{(i_2)},z_2-z_1)w^{(i_1)},z_1 \big)=\mathcal Y_{\alpha_2}(w^{(i_2)},z_2)\mathcal Y_{\alpha_1}(w^{(i_1)},z_1)
	\end{align}
	holds near the point $(z_1,z_2)$. Now we compute, omitting the evaluation under any $w^{(\overline k)}\in W_{\overline k}$, that
	\begin{align}
	&\mathcal Y_\gamma\big(\mathcal Y_{\sigma_n}(w^{(i_n)},z_n-z_1)\mathcal Y_{\sigma_{n-1}}(w^{(i_{n-1})},z_{n-1}-z_1)\cdots\mathcal Y_{\sigma_2}(w^{(i_2)},z_2-z_1)w^{(i_1)},z_1\big)w^{(i_0)}\nonumber\\
	=&\sum_{s_1\in\mathbb R}\mathcal Y_\gamma\big(\mathcal Y_{\sigma_n}(w^{(i_n)},z_n-z_1)\mathcal Y_{\sigma_{n-1}}(w^{(i_{n-1})},z_{n-1}-z_1)\cdots P_{s_1}\mathcal Y_{\sigma_2}(w^{(i_2)},z_2-z_1)w^{(i_1)},z_1\big)w^{(i_0)}\nonumber\\
	=&\sum_{s_1\in\mathbb R}\mathcal Y_{\alpha_n}(w^{(i_n)},z_n)\mathcal Y_{\alpha_{n-1}}(w^{(i_{n-1})},z_{n-1})\cdots\mathcal Y_{\alpha_3}(w^{(i_3)},z_3)\nonumber\\
	&\qquad\cdot\mathcal Y_{\delta}\big(P_{s_1}\mathcal Y_{\sigma_2}(w^{(i_2)},z_2-z_1)w^{(i_1)},z_1\big)w^{(i_0)}\nonumber\\
	=&\sum_{s_1\in\mathbb R}\sum_{s_2,\dots,s_{n-1}\in\mathbb R}\mathcal Y_{\alpha_n}(w^{(i_n)},z_n)P_{s_{n-1}}\mathcal Y_{\alpha_{n-1}}(w^{(i_{n-1})},z_{n-1})P_{s_{n-2}}\nonumber\\
	&\qquad\cdots P_{s_3}\mathcal Y_{\alpha_3}(w^{(i_3)},z_3)P_{s_2}\mathcal Y_{\delta}\big(P_{s_1}\mathcal Y_{\sigma_2}(w^{(i_2)},z_2-z_1)w^{(i_1)},z_1\big)w^{(i_0)}.\label{eq248}
	\end{align}
	If we can prove, for any $w^{(\overline k)}\in W_{\overline k}$, and any $z_1,z_2,\dots,z_n$ satisfying
\begin{gather}
0<|z_2-z_1|<|z_1|<|z_3|<|z_4|<\cdots<|z_n|,\nonumber\\
|z_1|+|z_2-z_1|<|z_3|,\label{eq250}
\end{gather}	
that the expression
	\begin{align}
	&\langle \mathcal Y_{\alpha_n}(w^{(i_n)},z_n)\mathcal Y_{\alpha_{n-1}}(w^{(i_{n-1})},z_{n-1})\cdots\mathcal Y_{\alpha_3}(w^{(i_3)},z_3)\nonumber\\
	&~\cdot\mathcal Y_{\delta}\big(\mathcal Y_{\sigma_2}(w^{(i_2)},z_2-z_1)w^{(i_1)},z_1\big)w^{(i_0)},w^{(\overline k)}\rangle\label{eq252}
	\end{align}
	converges absolutely, i.e., the sum of the absolute values of
	\begin{align}
	&\big\langle P_{s_n}\mathcal Y_{\alpha_n}(w^{(i_n)},z_n)P_{s_{n-1}}\mathcal Y_{\alpha_{n-1}}(w^{(i_{n-1})},z_{n-1})P_{s_{n-2}}\nonumber\\
	&\cdots P_{s_3}\mathcal Y_{\alpha_3}(w^{(i_3)},z_3)P_{s_2}\mathcal Y_{\delta}\big(P_{s_1}\mathcal Y_{\sigma_2}(w^{(i_2)},z_2-z_1)w^{(i_1)},z_1\big)w^{(i_0)},w^{(\overline k)}\big\rangle\label{eq249}
	\end{align}
	over $s_1,s_2,\dots,s_n\in\mathbb R$ is a finite number, then the two sums on the right hand side of \eqref{eq248} commute. Hence \eqref{eq248} equals
	\begin{align}
	&\sum_{s_2,\dots,s_n\in\mathbb R}\sum_{s_1\in\mathbb R}P_{s_n}\mathcal Y_{\alpha_n}(w^{(i_n)},z_n)P_{s_{n-1}}\mathcal Y_{\alpha_{n-1}}(w^{(i_{n-1})},z_{n-1})P_{s_{n-2}}\nonumber\\
	&\qquad\cdots P_{s_3}\mathcal Y_{\alpha_3}(w^{(i_3)},z_3)P_{s_2}\mathcal Y_{\delta}\big(P_{s_1}\mathcal Y_{\sigma_2}(w^{(i_2)},z_2-z_1)w^{(i_1)},z_1\big)w^{(i_0)}\nonumber\\
	=&\sum_{s_2,\dots,s_n\in\mathbb R}\sum_{s_1\in\mathbb R}P_{s_n}\mathcal Y_{\alpha_n}(w^{(i_n)},z_n)P_{s_{n-1}}\mathcal Y_{\alpha_{n-1}}(w^{(i_{n-1})},z_{n-1})P_{s_{n-2}}\nonumber\\
	&\qquad\cdots P_{s_3}\mathcal Y_{\alpha_3}(w^{(i_3)},z_3)P_{s_2}\mathcal Y_{\alpha_2}(w^{(i_2)},z_2)P_{s_1}\mathcal Y_{\alpha_1}(w^{(i_1)},z_1)w^{(i_0)}\nonumber\\
	=&\mathcal Y_{\alpha_n}(w^{(i_n)},z_n)\mathcal Y_{\alpha_{n-1}}(w^{(i_{n-1})},z_{n-1})\cdots\mathcal Y_{\alpha_1}(w^{(i_1)},z_1)w^{(i_0)}.
	\end{align}
	Therefore, if the series \eqref{eq252} converges absolutely, then \eqref{eq245} defines an $(n+2)$-point correlation function of $V$. The converse statement (every $(n+2)$-point function can be written in the form \eqref{eq252}) can be proved in a similar way.\\
	
	Step 3. We show  that when \eqref{eq250} holds,  \eqref{eq252} converges absolutely. Assume, without loss of generality, that all the intertwining operators in \eqref{eq252} are irreducible, and that all the vectors in \eqref{eq252} are homogeneous. Define a new set of variables $\omega_1,\omega_2,\dots,\omega_n$ by setting
	\begin{gather*}
	z_m=\omega_{m}\omega_{m+1}\cdots\omega_n\quad(3\leq m\leq n),\\
	z_1=\omega_2\omega_3\cdots\omega_n,\\
	z_2-z_1=\omega_1\omega_2\cdots\omega_n.
	\end{gather*}
	Then condition \eqref{eq250} is equivalent to the condition
	\begin{gather}
	0<|\omega_m|<1\quad(1\leq m\leq n-1),\nonumber\\
	0<|\omega_n|,\nonumber\\
	|\omega_2|(1+|\omega_1|)<1.\label{eq251}
	\end{gather}
	It is clear that if $\mathring \omega_1,\mathring \omega_2,\dots\mathring \omega_n$ are complex numbers satisfying condition \eqref{eq251}, then there exist positive numbers $r_1>|\mathring \omega_1|,r_2>|\mathring \omega_2|,\dots,r_n>|\mathring \omega_n|$, such that whenever $0<|\omega_m|<r_m$ ($1\leq m\leq n$), condition \eqref{eq251} is satisfied. We now prove that the sum of \eqref{eq249} over $s_1,\dots,s_n$ converges absolutely  on $\{0<|\omega_1|<r_1,\dots,0<|\omega_n|<r_n \}$.
	
	Let
	\begin{align}
	&c_{s_1s_2\dots s_n}\nonumber\\
	=&\langle P_{s_n}\mathcal Y_{\alpha_n}(w^{(i_n)},1)P_{s_{n-1}}\mathcal Y_{\alpha_{n-1}}(w^{(i_{n-1})},1)P_{s_{n-2}}\cdots \nonumber\\
	&\cdot P_{s_3}\mathcal Y_{\alpha_3}(w^{(i_3)},1)P_{s_2}\mathcal Y_{\delta}\big(P_{s_1}\mathcal Y_{\sigma_2}(w^{(i_2)},1)w^{(i_1)},1\big)w^{(i_0)},w^{(\overline k)}\rangle,
	\end{align}
	where each $\mathcal Y_\cdot(\cdot,1)=\mathcal Y_\cdot(\cdot,x)\big|_{x=1}$. By relation \eqref{eq231}, it is easy to see that \eqref{eq249} equals
	\begin{align}
	&\langle P_{s_n}\omega_n^{L_0}\mathcal Y_{\alpha_n}(w^{(i_n)},1)P_{s_{n-1}}\omega_{n-1}^{L_0}\mathcal Y_{\alpha_{n-1}}(w^{(i_{n-1})},1)P_{s_{n-2}}\cdots \nonumber\\
	&\cdot P_{s_3}\omega_3^{L_0}\mathcal Y_{\alpha_3}(w^{(i_3)},1)P_{s_2}\omega_2^{L_0}\mathcal Y_{\delta}\big(P_{s_1}\omega_1^{L_0}\mathcal Y_{\sigma_2}(w^{(i_2)},1)w^{(i_1)},1\big)w^{(i_0)},w^{(\overline k)}\rangle\nonumber\\
	=&c_{s_1s_2\dots s_n}\omega_1^{s_1}\omega_2^{s_2}\cdots\omega_n^{s_n}\label{eq254}
	\end{align}
	multiplied by a monomial  $\omega_1^{r_1}\omega_2^{r_2}\cdots\omega_n^{r_n}$, where the powers $r_1,r_2,\dots,r_n\in\mathbb R$ are independent of $s_1,s_2,\dots,s_n$. Therefore, the absolute convergence of \eqref{eq252} is equivalent to the absolute convergence of the series
	\begin{align}
	\sum_{s_1,s_2,\dots,s_n\in\mathbb R}c_{s_1s_2\dots s_n}\omega_1^{s_1}\omega_2^{s_2}\cdots\omega_n^{s_n}\label{eq253}
	\end{align}
	on $\{0<|\omega_1|<r_1,0<|\omega_2|<r_2,\dots,0<|\omega_n|<r_n \}$. Note that by irreducibility of the intertwining operators, \eqref{eq253} is a quasi power series of $\omega_1,\omega_2,\dots,\omega_n$. So we are going to prove the absolute convergence of \eqref{eq253} by checking that \eqref{eq253} satisfies all the conditions in lemma \ref{lb74}.
	
	Since  \eqref{eq249} equals \eqref{eq254}  multiplied by  $\omega_1^{r_1}\omega_2^{r_2}\cdots\omega_n^{r_n}$, for each $s_2\in\mathbb R$, step 1 and theorem \ref{lb65} imply that the series
	\begin{align}
	\sum_{s_1,s_3,s_4,\dots,s_n\in\mathbb R}c_{s_1s_2s_3\dots s_n}\omega_1^{s_1}\omega_3^{s_3}\omega_4^{s_4}\cdots\omega_n^{s_n}
	\end{align}
	converges absolutely on $\{0<|\omega_1|<r_1,0<|\omega_3|<r_3,0<|\omega_4|<r_4,\dots,0<|\omega_n|<r_n \}$. If we assume moreover that $0<\omega_1<r_1$, then $0<|\omega_2|<r_2$ clearly  implies  $0<|z_1|<|z_2|<\cdots<|z_n|$ and $0<|z_2-z_1|<|z_1|$. Hence, the following quasi power series of $\omega_2$
	\begin{align}
	&\omega_1^{r_1}\omega_2^{r_2}\cdots\omega_n^{r_n}\cdot\bigg(\sum_{s_2\in\mathbb R}\bigg(\sum_{s_1,s_3,\dots,s_n\in\mathbb R}c_{s_1s_2s_3\dots s_n}\omega_1^{s_1}\omega_3^{s_3}\cdots\omega_n^{s_n}\bigg)\omega_2^{s_2}\bigg)\nonumber\\
	=&\sum_{s_2\in\mathbb R}\langle\mathcal Y_{\alpha_n}(w^{(i_n)},z_n)\mathcal Y_{\alpha_{n-1}}(w^{(i_{n-1})},z_{n-1})\cdots \mathcal Y_{\alpha_3}(w^{(i_3)},z_3)\nonumber\\
	&\qquad \cdot P_{s_2}\mathcal Y_{\delta}\big(\mathcal Y_{\sigma_2}(w^{(i_2)},z_2-z_1)w^{(i_1)},z_1\big)w^{(i_0)},w^{(\overline k)}\rangle\nonumber\\
	=&\sum_{s_2\in\mathbb R}\langle\mathcal Y_{\alpha_n}(w^{(i_n)},z_n)\mathcal Y_{\alpha_{n-1}}(w^{(i_{n-1})},z_{n-1})\cdots\mathcal Y_{\alpha_3}(w^{(i_3)},z_3)\nonumber\\
	&\qquad\cdot P_{s_2}\mathcal Y_{\alpha_2}(w^{(i_2)},z_2)\mathcal Y_{\alpha_1}(w^{(i_1)},z_1) w^{(i_0)},w^{(\overline k)}\rangle\label{eq255}
	\end{align}
	must converge absolutely on $\{0<|\omega_2|<r_2 \}$. By theorem \ref{lb75}, the function \eqref{eq255} defined on $\{0<\omega_1<r_1,0<|\omega_2|<r_2,\dots,0<|\omega_n|<r_n \}$ can be analytically continued to a multivalued holomorphic function on $\{0<|\omega_1|<r_1,0<|\omega_2|<r_2,\dots,0<|\omega_n|<r_n \}$. Hence by lemma \ref{lb74}, the quasi power series \eqref{eq253}  converges absolutely on $\{0<|\omega_1|<r_1,\dots,0<|\omega_n|<r_n \}$.
\end{proof}

\begin{proof}[Proof of theorem \ref{lb12}] 
	The argument here is similar to  step 3 of the proof of theorem \ref{lb73}. Assume, without loss of generality, that all the intertwining operators in \eqref{eq48} are irreducible, and all the vectors in it are homogeneous. We prove this theorem by induction on $m$. The case that $m=1$ is proved in theorem \ref{lb73}. Suppose that the theorem holds for $m-1$, we prove this for $m$. 
	
	Define a new set of variables $\{\omega^a_b:1\leq a\leq m,1\leq b\leq n_a \}$  in the following way: For any $1\leq a\leq m$, we set
	\begin{gather}
	z^a_1=\omega^a_1\omega^{a+1}_1\cdots \omega^m_1,\label{eq53}
	\end{gather}
	and if  $2\leq b\leq n_a$, we set
	\begin{gather}
	z^a_b-z^a_1=\omega^a_1\omega^{a+1}_1\cdots\omega^m_1\cdot\omega^a_b\omega^a_{b+1}\cdots\omega^a_{n_a}.\label{eq54}
	\end{gather}
	Then the condition (1) and (2) on $\{z^a_b:1\leq a\leq m,1\leq b\leq n_a \}$ is equivalent to the condition
	\begin{gather}
	0<|\omega^a_b|<1\quad(1\leq a\leq m,2\leq b\leq n_a),\nonumber\\
	0<|\omega^m_1|,\nonumber\\
	0<|\omega^a_1|\big(1+(1-\delta_{n_a,1})|\omega^a_{n_a}|\big)<1-\big(1-\delta_{n_{a+1},1}\big)|\omega^{a+1}_{n_{a+1}}|\quad(1\leq a\leq m-1).\label{eq256}
	\end{gather}	It is clear that if $\{\mathring\omega^a_b:1\leq a\leq m,1\leq b\leq n_a \}$ are complex numbers satisfying condition \eqref{eq256}, then there exist positive numbers $\{r^a_b>|\mathring\omega^a_b| \}$, such that whenever $0<|\omega^a_b|<r^a_b$ for all $a$ and $b$, then \eqref{eq256} is true. If, moreover, any $\omega^a_b$ except $\omega^1_1$ satisfies $0<\omega^a_b<r^a_b$, then condition (3) also also holds for $\{z^a_b:1\leq a\leq m,1\leq b\leq n_a \}$.
	
	Let $\vec s$ be the sequence $\{s^a_b\}$, $\vec \omega$ be $\{\omega^a_b\}$, $\vec s\setminus {s^1_1}$ be   $\{\text{all }s^a_b\text{ except }s^1_1\}$, and $\vec \omega\setminus {\omega^1_1}$ be   $\{\text{all }\omega^a_b\text{ except }\omega^1_1\}$. We let ${\vec\omega}^{\vec s}=\prod_{1\leq a\leq m,1\leq b\leq n_a}(\omega^a_b)^{s^a_b}$. For each $\vec s$, we define
	\begin{align}
	c_{\vec s}=\Big\langle \Big[\prod_{m\geq a\geq 1}P_{s^a_1}\mathcal Y_{\alpha^a}\Big(\Big(\prod_{n_a\geq b\geq 2}P_{s^a_b}\mathcal Y_{\alpha^a_b}(w^a_b,1)\Big) w^a_1,1\Big)\Big]w^i,w^{\overline k} \Big\rangle,
	\end{align}
	where each $\mathcal Y_\cdot(\cdot,1)$ means $\mathcal Y_\cdot(\cdot,x)|_{x=1}$. Then by \eqref{eq231}, the expression
	\begin{align}
	\Big\langle \Big[\prod_{m\geq a\geq 1}P_{s^a_1}\mathcal Y_{\alpha^a}\Big(\Big(\prod_{n_a\geq b\geq 2}P_{s^a_b}\mathcal Y_{\alpha^a_b}(w^a_b,z^a_b-z^a_1)\Big) w^a_1,z^a_1\Big)\Big]w^i,w^{\overline k} \Big\rangle
	\end{align}
	equals $c_{\vec s}\cdot{\vec\omega}^{\vec s}$ multiplied by a monomial of $\vec{\omega}$ whose power is independent of $\vec s$.
	By induction, we can show that for each $s^1_1\in\mathbb R$, the series $\sum_{\vec s\setminus {s^1_1}} c_{\vec s}\cdot{\vec\omega}^{\vec s}\cdot(\omega^1_1)^{-s^1_1}$ of $\vec \omega\setminus {\omega^1_1}$ converges absolutely on $\{\vec \omega\setminus {\omega^1_1}:0<|\omega^a_b|<r^a_b \}$;  that for all $\vec \omega\setminus {\omega^1_1}$ satisfying $0<\omega^a_b<r^a_b$, 
	\begin{align}
	\sum_{s^1_1\in\mathbb R}\sum_{\vec s\setminus {s^1_1}} c_{\vec s}\cdot{\vec\omega}^{\vec s},\label{eq257}
	\end{align}
	as a series of $\omega^1_1$, converges absolutely on $\{\omega^1_1:0<|\omega^1_1|<r^1_1 \}$; and that as a function of ${\vec{\omega}}$, \eqref{eq257} can be analytically continued to a multivalued holomorphic function on $\{\vec \omega:0<|\omega^a_b|<r^a_b \}$.  Hence, by lemma \ref{lb74}, the quasi power series $\sum_{\vec s} c_{\vec s}\cdot{\vec\omega}^{\vec s}$ converges absolutely on  $\{\vec \omega:0<|\omega^a_b|<r^a_b \}$. If, moreover, $\{z^a_b \}$ satisfy condition (3), then by induction and the argument in step 2 of the proof of theorem \ref{lb73}, \eqref{eq48} can be written as a product of a chain of intertwining operators. So it is a correlation function defined near $\{z^a_b \}$.
\end{proof}

\begin{proof}[Proof of corollary \ref{lb13}]
One can prove this corollary, either by theorem \ref{lb12} and the argument in step 1 of the proof of theorem \ref{lb73}, or by induction and the argument in step 3 of the proof of theorem \ref{lb73}. We leave the details to the reader. 
\end{proof}

\begin{proof}[Proof of proposition \ref{lb85}]
Fix $z_i\in\mathbb C^\times$. Let $w_1$ (resp. $w_2$) be a vector in the source space (resp. in the contragredient module of the target space) of $\mathcal Y_\delta$. Let $x_i,x_{ji},\widetilde x_{ji}$ be commuting independent formal variables. It is easy to check that for any $w^{(k)}\in W_k$,
\begin{align}
&\big\langle\mathcal Y_\delta\big(e^{\widetilde x_{ji}L_{-1}}w^{(k)},x_i \big)w_1,w_2\big\rangle=\big\langle\mathcal Y_\delta\big(w^{(k)},x_i+\widetilde x_{ji} \big)w_1,w_2\big\rangle\nonumber\\
:=&\sum_{s\in\mathbb R,l\in\mathbb Z_{\geq0}}\big\langle\mathcal Y_\delta(w^{(k)},s)w_1,w_2\big\rangle{-s-1\choose l}x_i^{-s-1-l}\widetilde x_{ji}^l.
\end{align}
Put $x_i=z_i$, we have
\begin{align}
&\big\langle\mathcal Y_\delta\big(e^{\widetilde x_{ji}L_{-1}}w^{(k)},z_i \big)w_1,w_2\big\rangle=\big\langle\mathcal Y_\delta\big(w^{(k)},z_i+\widetilde x_{ji} \big)w_1,w_2\big\rangle\nonumber\\
:=&\sum_{s\in\mathbb R,l\in\mathbb Z_{\geq0}}\big\langle\mathcal Y_\delta(w^{(k)},s)w_1,w_2\big\rangle{-s-1\choose l}z_i^{-s-1-l}\widetilde x_{ji}^l.
\end{align}	
Clearly
\begin{align}
\big\langle \mathcal Y_\delta\big(\mathcal Y_{\gamma}(w^{(i)},e^{\pm i\pi}z_{ji})w^{(j)},z_i+\widetilde z_{ji} \big)w_1,w_2\big\rangle\label{eq276}
\end{align}
is a multivalued holomorphic function of $z_{ji},\widetilde z_{ji}$ when $0<|z_{ji}|,|\widetilde z_{ji}|<\frac 1 2|z_i|$. Since the series
\begin{align}
\sum_{s\in\mathbb R}\big\langle \mathcal Y_\delta\big(P_s\mathcal Y_{\gamma}(w^{(i)},e^{\pm i\pi}z_{ji})w^{(j)},z_i+\widetilde z_{ji} \big)w_1,w_2\big\rangle
\end{align}
converges absolutely and locally uniformly, the infinite sum commutes with  Cauchy's integrals around the pole $\widetilde z_{ji}=0$. From this we see that \eqref{eq276}  has the  series expansion 
\begin{align}
\big\langle\mathcal Y_\delta\big(\mathcal Y_{\gamma}(w^{(i)},e^{\pm i\pi}x_{ji})w^{(j)},z_i+\widetilde x_{ji} \big)w_1,w_2\big\rangle\Big|_{x_{ji}=z_{ji},\widetilde x_{ji}=\widetilde z_{ji}},
\end{align}
which must be absolute convergent, and also equals
\begin{align}
\big\langle\mathcal Y_\delta\big(e^{\widetilde x_{ji}L_{-1}}\mathcal Y_{\gamma}(w^{(i)},e^{\pm i\pi}x_{ji})w^{(j)},z_i \big)w_1,w_2\big\rangle\Big|_{x_{ji}=z_{ji},\widetilde x_{ji}=\widetilde z_{ji}}.
\end{align}
Therefore, when $0<|z_j-z_i|<\frac 1 2|z_i|$, the series
\begin{align}
\sum_{r,s\in\mathbb R}\big\langle \mathcal Y_\delta\big(P_re^{(z_j-z_i)L_{-1}}P_s\mathcal Y_{\gamma}(w^{(i)},e^{\pm i\pi}(z_j-z_i))w^{(j)},z_i \big)w_1,w_2\big\rangle\label{eq277}
\end{align}
converges absolutely and equals \eqref{eq276} with $z_{ji}=\widetilde z_{ji}=z_j-z_i$.

One the other hand,
\begin{align}
&\big\langle \mathcal Y_\delta\big(\mathcal Y_{B_\pm\gamma}(w^{(j)},z_j-z_i)w^{(i)},z_i\big)  w_1,w_2\big\rangle\nonumber\\
=&\sum_{r\in\mathbb R}\big\langle \mathcal Y_\delta\big(P_r\mathcal Y_{B_\pm\gamma}(w^{(j)},z_j-z_i)w^{(i)},z_i\big)  w_1,w_2\big\rangle\nonumber\\
=&\sum_{r\in\mathbb R}\big\langle \mathcal Y_\delta\big(P_re^{(z_j-z_i)L_{-1}}\mathcal Y_{\gamma}(w^{(i)},e^{\pm i\pi}(z_j-z_i) )w^{(j)},z_i\big)  w_1,w_2\big\rangle\nonumber,
\end{align} 
which is just \eqref{eq277}. So it also equals \eqref{eq276} with $z_{ji}=\widetilde z_{ji}=z_j-z_i$. This proves relation \eqref{eq275} when $0<|z_j-z_i|<\frac 1 2|z_i|$. The general case follows from analytic continuation.
\end{proof}

\begin{proof}[Proof of theorem \ref{lb78}]
	
The case $n=2$ follows immediately from proposition \ref{lb85} and the fusion relations of two intertwining operators. We now prove the general case.	
	
Since $S_n$ is generated by adjacent transpositions, we can assume that $\varsigma$ exchanges $m,m+1$ and fixes the other elements in $\{1,2,\dots,n \}$. 	Write
\begin{gather*}
\mathcal X_1=\mathcal Y_{\alpha_{m-1}}(w^{(i_{m-1})},z_{m-1})\cdots\mathcal Y_{\alpha_1}(w^{(i_1)},z_1),\\
\mathcal X_2=\mathcal Y_{\alpha_n}(w^{(i_n)},z_n)\cdots\mathcal Y_{\alpha_{m+2}}(w^{(i_{m+2})},z_{m+2}).
\end{gather*}
To proof the braid relation in this case, it is equivalent to showing that if $0<|z_1|<\cdots<|z_{m-1}|<|z_{m+1}|<|z_m|<|z_{m+2}|<\cdots<|z_n|$, and if we move $z_m,z_{m+1}$ to satisfy $0<|z_1|<\cdots<|z_{m-1}|<|z_m|<|z_{m+1}|<|z_{m+2}|<\cdots<|z_n|$ by scaling the norms of $z_m,z_{m+1}$, then we can find intertwining operators $\mathcal Y_{\beta_m},\mathcal Y_{\beta_{m+1}}$ independent of the choice of vectors, such that
\begin{align}
\langle \mathcal X_2  \mathcal Y_{\alpha_m}(w^{(i_m)},z_m)\mathcal Y_{\alpha_{m+1}}(w^{(i_{m+1})},z_{m+1}) \mathcal X_1 w^{(i_0)},w^{(\overline k)}\rangle\label{eqa4}
\end{align}
can be analytically continued to
	\begin{align}
\langle\mathcal X_2 \mathcal Y_{\beta_{m+1}}(w^{(i_{m+1})},z_{m+1})  \mathcal Y_{\beta_m}(w^{(i_m)},z_m) \mathcal X_1 w^{(i_0)},w^{(\overline k)}\rangle.\label{eqa5}
\end{align}
By analytic continuation, we can also assume that during the process of moving $z_m,z_{m+1}$, conditions $0<|z_1|<\cdots<|z_{m-1}|<|z_m|,|z_{m+1}|<|z_{m+2}|<\cdots<|z_n|$ and $0<|z_m-z_{m+1}|<|z_{m+1}|$ are always satisfied.

	Let $W_{j_1}$ be the source space of $\mathcal Y_{\alpha_{m+1}}$ and $W_{j_2}$ be the target space of $\mathcal Y_{\alpha_m}$. By braiding of two intertwining operators, there exists a chain of intertwining operators $\mathcal Y_{\beta_m},\mathcal Y_{\beta_{m+1}}$ with charge spaces $W_{i_m},W_{i_{m+1}}$ respectively, such that the source space of $\mathcal Y_{\beta_m}$ is $W_{j_1}$, that the target space of $\mathcal Y_{\beta_{m+1}}$ is $W_{j_2}$, and that for any $w^{(j_1)}\in W_{j_1},w^{(i_m)}\in W_{i_m},w^{(i_{m+1})}\in W_{i_{m+1}}, w^{(\overline{j_2})}\in W_{\overline {j_2}}$, the expression
	\begin{align}
	\langle  \mathcal Y_{\alpha_m}(w^{(i_m)},z_m)\mathcal Y_{\alpha_{m+1}}(w^{(i_{m+1})},z_{m+1}) w^{(j_1)},w^{(\overline{j_2})}\rangle\label{eq260}
	\end{align}
	defined on $0<|z_{m+1}|<|z_m|$ can be analytically continued to
	\begin{align}
	\langle \mathcal Y_{\beta_{m+1}}(w^{(i_{m+1})},z_{m+1})  \mathcal Y_{\beta_m}(w^{(i_m)},z_m)w^{(j_1)},w^{(\overline{j_2})} \rangle\label{eq261}
	\end{align}
	defined on $0<|z_m|<|z_{m+1}|$ by scaling the norms of $z_m$ and $z_{m+1}$.
	
Now, by fusion of intertwining operators, there exist intertwining operators $\mathcal Y_\delta,\mathcal Y_\gamma$ with suitable charge spaces, source spaces, and target spaces, such that \eqref{eq260} equals
	\begin{align}
	\langle \mathcal Y_\delta\big(\mathcal Y_\gamma(w^{(i_m)},z_m-z_{m+1})w^{(i_{m+1})},z_{m+1} \big)w^{(j_1)},w^{(\overline{j_2})} \rangle \label{eq262}
	\end{align}
	when $|z_{m+1}|<|z_m|$. Then \eqref{eq261} equals \eqref{eq262} when $|z_m|<|z_{m+1}|$. By theorem \ref{lb12}, the expression
	\begin{align}
	\langle \mathcal X_2  \mathcal Y_\delta\big(\mathcal Y_\gamma(w^{(i_m)},z_m-z_{m+1})w^{(i_{m+1})},z_{m+1} \big)\mathcal X_1 w^{(i_0)},w^{(\overline k)}\rangle
	\end{align}
	converges absolutely and locally uniformly. Hence it is a locally defined holomorphic function when $0<|z_1|<\cdots<|z_{m-1}|<|z_m|,|z_{m+1}|<|z_{m+2}|<\cdots<|z_n|$. Therefore \eqref{eqa4}	can be analytically continued to \eqref{eqa5}	from $\{0<|z_{m+1}|<|z_m| \}$ to $\{0<|z_m|<|z_{m+1}| \}$.
\end{proof}

\section{Appendix for chapter \ref{lb90}}\label{lba1}

\subsection{von Neumann algebras generated by closed operators}\label{lb62}
Let $A$ be a (densely defined) unbounded operator on $\mathcal H$ with domain $\mathscr D(A)$. Choose $x\in B(\mathcal H)$, i.e., let $x$ be a bounded operator on $\mathcal H$.  Recall that the notation $xA\subset Ax$ means that $x\mathscr D(A)\subset\mathscr D(A)$, and $xA\xi=Ax\xi$ for any $\xi\in\mathscr D(A)$. The following proposition is easy to show.
\begin{pp}\label{lb97}
Let $A$ be a preclosed operator on $\mathcal H$ with closure $\overline A$.\\
(1) If  $x\in B(\mathcal H)$ and $xA\subset Ax$, then we have $x^*A^*\subset A^*x^*$ and $x\overline A\subset\overline Ax$. \\
(2) If $A$ is closed, then the set of all $x\in B(\mathcal H)$ satisfying $xA\subset Ax$ form a strongly closed subalgebra of $B(\mathcal H)$.
\end{pp}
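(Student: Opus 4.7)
The proposition is a standard pair of facts about commutation between bounded operators and unbounded (pre)closed operators. Neither part involves any real obstacle; both reduce to careful bookkeeping with domains plus the defining property of adjoints/closures. My plan is as follows.

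For part (1), first inclusion: take $\eta\in\mathscr D(A^*)$. I want to verify $x^*\eta\in\mathscr D(A^*)$ with $A^*x^*\eta=x^*A^*\eta$. For any $\xi\in\mathscr D(A)$, the hypothesis $xA\subset Ax$ gives $x\xi\in\mathscr D(A)$ and $xA\xi=Ax\xi$, so
\begin{align*}
\langle A\xi,x^*\eta\rangle=\langle xA\xi,\eta\rangle=\langle Ax\xi,\eta\rangle=\langle x\xi,A^*\eta\rangle=\langle\xi,x^*A^*\eta\rangle.
\end{align*}
The map $\xi\mapsto\langle A\xi,x^*\eta\rangle$ is therefore bounded on $\mathscr D(A)$, giving $x^*\eta\in\mathscr D(A^*)$ and $A^*x^*\eta=x^*A^*\eta$ by uniqueness of the adjoint.

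For part (1), second inclusion, I will argue directly by sequential approximation: given $\xi\in\mathscr D(\overline A)$ pick $\xi_n\in\mathscr D(A)$ with $\xi_n\to\xi$ and $A\xi_n\to\overline A\xi$. Then $x\xi_n\in\mathscr D(A)$, $x\xi_n\to x\xi$, and $Ax\xi_n=xA\xi_n\to x\overline A\xi$. Since $\overline A$ is closed, $x\xi\in\mathscr D(\overline A)$ and $\overline A x\xi=x\overline A\xi$, which is the desired inclusion. (Alternatively, one could invoke $\overline A=A^{**}$ and apply the first inclusion twice — but the sequential argument is more elementary.)

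For part (2), let $\mathcal M=\{x\in B(\mathcal H):xA\subset Ax\}$. Linearity and closure under multiplication are immediate: if $x,y\in\mathcal M$ and $\xi\in\mathscr D(A)$, then $y\xi\in\mathscr D(A)$, $xy\xi\in\mathscr D(A)$, and $Axy\xi=xAy\xi=xyA\xi$. For strong closedness, suppose a net $x_\alpha\in\mathcal M$ converges strongly to $x\in B(\mathcal H)$. For $\xi\in\mathscr D(A)$ we have $x_\alpha\xi\in\mathscr D(A)$, $x_\alpha\xi\to x\xi$, and $Ax_\alpha\xi=x_\alpha A\xi\to xA\xi$; since $A$ is \emph{closed}, we conclude $x\xi\in\mathscr D(A)$ and $Ax\xi=xA\xi$, so $x\in\mathcal M$. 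The main point worth flagging is exactly this: strong closedness of $\mathcal M$ is where the hypothesis that $A$ is closed (not merely preclosed) is used — otherwise the graph-limit argument would fail.
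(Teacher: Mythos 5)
Your proof is correct, and it diverges from the paper's in instructive ways. For the first inclusion of (1) you verify $x^*\eta\in\mathscr D(A^*)$ by a direct inner-product computation, which is the unwound version of what the paper does via adjoint calculus: the paper takes adjoints of $xA\subset Ax$ to get $(Ax)^*\subset(xA)^*$ and sandwiches $x^*A^*\subset(Ax)^*\subset(xA)^*=A^*x^*$ using the general rules $B^*A^*\subset(AB)^*$ and $(xA)^*=A^*x^*$ for bounded $x$ --- same content, different packaging. For the second inclusion your route is genuinely different: you run a sequential graph argument ($\xi_n\to\xi$, $A\xi_n\to\overline A\xi$, closedness of $\overline A$), whereas the paper applies its first inclusion to the pair $(x^*,A^*)$ and invokes $A^{**}=\overline A$; the alternative you flag in parentheses is exactly the paper's proof. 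Your approximation argument is more elementary (it never needs von Neumann's $A^{**}=\overline A$), at the cost of being slightly less slick; the paper's double-adjoint trick buys brevity and makes the two halves of (1) visibly instances of one mechanism. For part (2) the paper says only ``routine check,'' so your write-up supplies what is omitted, and you correctly identify the one nontrivial point: strong closedness of $\{x:xA\subset Ax\}$ rests on $A$ being closed (for merely preclosed $A$ the limit of the graphs $\bigl(x_\alpha\xi,Ax_\alpha\xi\bigr)$ would land only in the graph of $\overline A$), which is precisely why the proposition states (2) for closed $A$.
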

\begin{proof}
If $xA\subset Ax$ then $(Ax)^*\subset(xA)^*$. Recall that in general, if $A,B$ are two densely defined unbounded operators on $\mathcal H$, and if $AB$ has dense domain, then $B^*A^*\subset(AB)^*$. If $A$ is bounded, then $B^*A^*=(AB)^*$. Thus we have $x^*A^*\subset (Ax)^*\subset (xA)^*=A^*x^*$. Apply this relation to $x^*,A^*$, and note that $A^{**}=\overline A$, then we have $x\overline A\subset \overline Ax$. This proves part (1). Part (2) is a routine check.
\end{proof}

\begin{df}
Let $A$ be a closed operator on a Hilbert space $\mathcal H$ with domain $\mathscr D(A)$, and let $x\in B(\mathcal H)$. We say that $A$ and $x$ \textbf{commute strongly}\footnote{Our definition follows \cite{Neu16} chapter XIV, in which the strong commutativity of an unbounded  operator with a bounded one is called adjoint commutativity.}, if the following relations hold:
\begin{gather}
xA\subset Ax,~~~x^*A\subset Ax^*.
\end{gather}
\end{df}
\begin{co}
Suppose that $\mathfrak S$ is a collection of closed operators on $\mathcal H$. We define its \textbf{commutant} $\mathfrak S'$ to be the set of all bounded operators on $\mathcal H$ which commute strongly with any element of $\mathfrak S$. Then $\mathfrak S'$ is a von Neumann algebra. It's double commutant $\mathfrak S''$, which is the commutant of $\mathfrak S'$, is called the \textbf{von Neumann algebra generated by $\mathfrak S$}.
\end{co}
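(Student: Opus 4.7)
The plan is to verify that $\mathfrak{S}'$ satisfies the three defining properties of a von Neumann algebra: it is unital, it is a self-adjoint subalgebra of $B(\mathcal{H})$, and it is closed in the weak (equivalently strong) operator topology. The bulk of this is routine bookkeeping built on Proposition \ref{lb97}; the only step requiring genuine care is the topological closure, which I will single out at the end.

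First I would dispose of the algebraic properties. Unitality is immediate, since $\mathbf{1}\cdot A\subset A\cdot\mathbf{1}$ for every $A\in\mathfrak{S}$. Self-adjointness of $\mathfrak{S}'$ is automatic from its symmetric definition: the conditions $xA\subset Ax$ and $x^*A\subset Ax^*$ together are unchanged under $x\leftrightarrow x^*$, so $x\in\mathfrak{S}'$ iff $x^*\in\mathfrak{S}'$. Closure under $\mathbb{C}$-linear combinations follows from the linearity of the containment relation $\cdot\,A\subset A\,\cdot$. For closure under products, suppose $x,y\in\mathfrak{S}'$ and $A\in\mathfrak{S}$. Given $\xi\in\mathscr{D}(A)$, the relation $yA\subset Ay$ yields $y\xi\in\mathscr{D}(A)$ and $Ay\xi=yA\xi$; then $xA\subset Ax$ gives $xy\xi\in\mathscr{D}(A)$ and $Axy\xi=xAy\xi=xyA\xi$, so $(xy)A\subset A(xy)$. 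The identical calculation with $x^*,y^*$ in place of $x,y$ gives $(xy)^*A=y^*x^*A\subset Ay^*x^*=A(xy)^*$.

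The one step that deserves attention is the topological closure. For each $A\in\mathfrak{S}$, write
\[
\mathcal{C}(A):=\{x\in B(\mathcal{H}):xA\subset Ax\},\qquad \mathcal{C}(A)^*:=\{x^*:x\in\mathcal{C}(A)\},
\]
and observe that
\[
\mathfrak{S}'=\bigcap_{A\in\mathfrak{S}}\big(\mathcal{C}(A)\cap\mathcal{C}(A)^*\big).
\]
By Proposition \ref{lb97}(2), each $\mathcal{C}(A)$ is a strongly closed subalgebra of $B(\mathcal{H})$. Since it is convex, it is also weakly closed. The main subtlety is that the adjoint operation $x\mapsto x^*$ is \emph{not} strongly continuous, so strong closedness of $\mathcal{C}(A)$ does not transfer directly to $\mathcal{C}(A)^*$. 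However, $x\mapsto x^*$ is weakly continuous, and therefore $\mathcal{C}(A)^*$ is weakly closed. Hence $\mathfrak{S}'$ is an intersection of weakly closed sets, and so is itself weakly closed. Being a unital weakly closed *-subalgebra of $B(\mathcal{H})$, it is a von Neumann algebra, which completes the proof.
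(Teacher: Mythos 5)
Your proof is correct and follows exactly the route the paper intends: the paper states this corollary without proof as an immediate consequence of Proposition \ref{lb97}, and your argument simply fills in the routine algebraic checks together with the one genuinely delicate point. Your handling of that point --- passing from strong to weak closedness of $\mathcal{C}(A)$ via convexity, because the adjoint is WOT- but not SOT-continuous --- is precisely the detail that makes the closure argument work, so nothing is missing.
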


\begin{lm}\label{lb50}
Suppose that $A$ is a closed operator on $\mathcal H$, and $v\in B(\mathcal H)$ is a unitary operator. Let $A=uH$ (resp. $Hu$) be the left (resp. right) polar decomposition of $A$, such that $u$ the partial isometry and $H$ the self adjoint opertor.  Then the following conditions are equivalent:
\begin{flalign}
&\text{(a) $v$ commutes strongly with $A$.}&\nonumber\\
&\text{(b) }vA=Av.&\\
&\text{(c) }[u,v]=0,\text{ and }[e^{itH},v]=0\text{ for any }t\in\mathbb R.&\label{eq203}
\end{flalign}
\end{lm}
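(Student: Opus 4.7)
The plan is to prove the cyclic chain of implications (a) $\Rightarrow$ (b) $\Rightarrow$ (c) $\Rightarrow$ (a). The equivalence of (a) and (b) should be almost immediate from the unitarity of $v$: given $vA \subset Av$ and $v^*A \subset Av^*$, the first inclusion gives $v\mathscr D(A) \subset \mathscr D(A)$, while the second gives $v^*\mathscr D(A) \subset \mathscr D(A)$, equivalently $\mathscr D(A) \subset v\mathscr D(A)$. Together these force $v\mathscr D(A) = \mathscr D(A)$, so $vA$ and $Av$ have the same domain and $vA = Av$. Conversely, if $vA = Av$, taking adjoints and using that $v \in B(\mathcal H)$ yields $A^*v^* = v^*A^*$, i.e. $v^*A^* = A^*v^*$, from which $v^*A \subset Av^*$ follows after taking closures.

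For (b) $\Rightarrow$ (c), I would start by using the same adjoint argument to obtain $vA^* = A^*v$ alongside $vA = Av$. Working with the left polar decomposition $A = uH$, $H = \sqrt{A^*A}$, I would compose these relations to get $vH^2 = vA^*A = A^*vA = A^*Av = H^2v$, so $v$ commutes strongly with the non-negative self-adjoint operator $H^2$. By the Borel functional calculus applied to the continuous function $s \mapsto \sqrt{s}$ on $[0,\infty)$, it follows that $v$ commutes strongly with $H$, and hence with every $e^{itH}$. It remains to derive $[u,v] = 0$: from $vuH = vA = Av = uHv = uvH$, I would conclude that $vu = uv$ holds on a dense subspace of $\overline{\mathrm{Range}(H)}$, which is precisely the initial space of $u$. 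On the orthogonal complement $\ker(H)$, the operator $u$ vanishes, and $v$ preserves $\ker(H)$ since it commutes with the spectral projection of $H$ onto $\{0\}$; hence $vu = 0 = uv$ on $\ker(H)$ as well. Thus $[u,v] = 0$ globally.

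For (c) $\Rightarrow$ (a), the hypothesis $[e^{itH},v] = 0$ for all $t \in \mathbb R$ together with the spectral theorem yields $v\mathscr D(H) = \mathscr D(H)$ and $vH = Hv$ on $\mathscr D(H)$, with the analogous relation for $v^*$. Since $\mathscr D(A) = \mathscr D(H)$ in the left polar decomposition, combining these with $[u,v] = [u,v^*] = 0$ gives, for any $\xi \in \mathscr D(A)$, $vA\xi = vuH\xi = uvH\xi = uHv\xi = Av\xi$, and similarly $v^*A\xi = Av^*\xi$. This is precisely the strong commutativity of $A$ and $v$. The case of the right polar decomposition $A = Hu$ (with $H = \sqrt{AA^*}$) is handled by symmetry, applying the preceding argument to $A^*$, whose left polar decomposition has the form $A^* = u^*H$.

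The main obstacle I anticipate is the step in (b) $\Rightarrow$ (c) where I pass from strong commutation with $H^2$ to strong commutation with $H$, and then carefully show that $[u,v] = 0$ not only on the initial space of $u$ but also on its kernel. This requires invoking the Borel functional calculus for unbounded self-adjoint operators and making precise use of the decomposition $\mathcal H = \overline{\mathrm{Range}(H)} \oplus \ker(H)$, together with the fact that $v$ leaves both summands invariant.
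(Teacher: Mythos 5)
Your proposal is correct in substance, but it diverges from the paper's proof at the central step, in an instructive way. The paper disposes of (b)$\Rightarrow$(c) in one stroke by invoking the \emph{uniqueness of the polar decomposition}: from $vAv^{-1}=A$ it writes $vAv^{-1}=(vuv^{-1})(vHv^{-1})$, notes that the right-hand side is again a (left) polar decomposition, and concludes $vuv^{-1}=u$ and $vHv^{-1}=H$ simultaneously; for (c)$\Rightarrow$(b) it uses uniqueness of the generator of the one-parameter group, via $ve^{itH}v^{-1}=e^{itvHv^{-1}}$, and then closes the cycle with (b)$\Rightarrow$(a). You instead prove the uniqueness statement inline: you derive $vA^*=A^*v$, obtain $vH^2v^{-1}=H^2$, pass from $H^2$ to $H$ by the square-root functional calculus, and recover $[u,v]=0$ by a density argument on $\overline{\mathrm{ran}\,H}$ combined with the invariance of $\ker H$ under $v$ and the vanishing of $u$ there. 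Your (c)$\Rightarrow$(a) is a direct concatenation of the paper's (c)$\Rightarrow$(b) and (b)$\Rightarrow$(a), and your observation that $[u,v]=0$ implies $[u,v^*]=0$ (by conjugating with the unitary $v$) is both correct and genuinely needed there. What your route buys is self-containedness — every spectral-theoretic input is made explicit — at the cost of the domain bookkeeping for $A^*A$ and the $\overline{\mathrm{ran}\,H}\oplus\ker H$ case analysis, which the paper's citation of polar-decomposition uniqueness hides. Your treatment of the right polar decomposition via $A^*=u^*H$ matches the intended symmetry.

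One step needs repair: in your (b)$\Rightarrow$(a), taking adjoints of $vA=Av$ yields $A^*v^*=v^*A^*$, which is the commutation relation for $A^*$ with $v^*$, not for $A$; taking adjoints (or closures) again merely returns $vA=Av$, so the claimed inclusion $v^*A\subset Av^*$ does not follow from that manipulation. The correct derivation is a one-liner using unitarity, and is exactly the paper's: $vA=Av$ gives $v^*Av=A$, hence $Av^{-1}=v^{-1}A$, i.e. $v^*A=Av^*$. With this substitution (note you already use the analogous conjugation correctly when deriving $vA^*=A^*v$ for (b)$\Rightarrow$(c)), your proof is complete.
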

\begin{proof}
We prove this for the left polar decomposition. The other case  can be proved in the same way.

(a)$\Rightarrow$(b): Since $v$ commutes strongly with $A$, we have $vA\subset Av$ and $v^{-1}A\subset Av^{-1}$. Therefore, $v\mathscr D(A)\subset \mathscr D(A)$ and $v^{-1}\mathscr D(A)\subset \mathscr D(A)$. So we must have $v\mathscr D(A)= \mathscr D(A)$, and hence $vA=Av$.

(b)$\Rightarrow$(a): If $vA=Av$, then $vAv^{-1}=A$. So $Av^{-1}=v^{-1}A$, which proves  (a).

(b)$\Rightarrow$(c): We have $vAv^{-1}=A$. Thus by uniqueness of  left polar decompositions, we have $vuv^{-1}=u$ and $vHv^{-1}=H$. Hence for any $t\in\mathbb R$ we have $$ve^{itH}v^{-1}=e^{i v(tH)v^{-1}}=e^{itH}.$$ This proves (c).

(c)$\Rightarrow$(b): Suppose that we have  \eqref{eq203}. Then $vuv^{-1}=u$ and $ve^{itH}v^{-1}=e^{itH}.$ On the other hand, we always have $ve^{itH}v^{-1}=e^{itvHv^{-1}}$ in general. So  $vHv^{-1}$ and $H$ are both generators of the one parameter unitary group $ve^{itH}v^{-1}$. Hence we must have $vHv^{-1}=H$. This implies that $vA=Av$. Therefore (b) is true.
\end{proof}

\begin{pp}\label{lb51}
Let $\mathfrak S$ be a set of closed operators on $\mathcal H$. For each $A\in\mathfrak S$, we either let $A=u_AH_A$ be the left polar decomposition of $A$,  or  let $A=H_Au_A$ be the right polar decomposition  of $A$. Then $\mathfrak S''$ is the von Neumann algebra generated by  the bounded operators $\{u_A,e^{itH_A}:t\in\mathbb R,A\in\mathfrak S \}$.
\end{pp}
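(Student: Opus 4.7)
The plan is to reduce the statement to a comparison of the commutants $\mathfrak S'$ and $\mathcal N'$, where $\mathcal N$ denotes the von Neumann algebra generated by $\{u_A, e^{itH_A} : A\in\mathfrak S, t\in\mathbb R\}$. Once I show $\mathfrak S'=\mathcal N'$, taking commutants and invoking the bicommutant theorem $\mathcal N''=\mathcal N$ gives $\mathfrak S''=\mathcal N$, which is the claim.

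First I would verify that $\mathfrak S'$ is itself a von Neumann algebra. The definition of strong commutativity of $x$ with $A$, namely $xA\subset Ax$ and $x^*A\subset Ax^*$, is manifestly symmetric under $x\mapsto x^*$, so $\mathfrak S'$ is $*$-closed. Strong (indeed weak) closedness is immediate from Proposition \ref{lb97}(2) applied to both $A$ and $A^*$ (or more directly, from the fact that the conditions $xA\subset Ax$ and $x^*A\subset Ax^*$ are strongly closed conditions on $x$). Hence $\mathfrak S'$ is a von Neumann algebra. Of course, $\mathcal N'$ is one as well, by definition.

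Since any von Neumann algebra is the norm-closed linear span of its unitary elements (every self-adjoint bounded operator is a real linear combination of two unitaries, and every bounded operator is a complex linear combination of two self-adjoints), it suffices to prove that $\mathfrak S'$ and $\mathcal N'$ contain the same unitary operators. For a unitary $v\in B(\mathcal H)$, Lemma \ref{lb50} gives the chain of equivalences
\begin{align*}
v\in\mathfrak S'\iff \text{$v$ commutes strongly with every $A\in\mathfrak S$}\iff [v,u_A]=0\text{ and }[v,e^{itH_A}]=0\ \ \forall A,t,
\end{align*}
and the last condition is precisely $v\in\mathcal N'$ (since $\mathcal N'$ is the commutant of the generating set). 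Hence the unitary groups of $\mathfrak S'$ and $\mathcal N'$ coincide, so $\mathfrak S'=\mathcal N'$, and taking commutants yields $\mathfrak S''=\mathcal N''=\mathcal N$.

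There is no serious obstacle here; the content is entirely packaged in Lemma \ref{lb50} together with the von Neumann algebra generalities. The only mild care required is the symmetry check that $\mathfrak S'$ is $*$-closed (so that the reduction to unitaries is legitimate), and the observation that Lemma \ref{lb50} applies uniformly whether one uses the left or right polar decomposition of each individual $A\in\mathfrak S$, which is exactly how the lemma is phrased.
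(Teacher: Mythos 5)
Your proposal is correct and takes essentially the same approach as the paper: both arguments rest entirely on Lemma \ref{lb50} combined with the fact that a von Neumann algebra is generated by its unitary elements. The only difference is organizational — you prove $\mathfrak S'=\mathcal N'$ directly (via the equivalence (a)$\Leftrightarrow$(c) applied to unitaries) and then take commutants once, whereas the paper establishes the two inclusions $\mathcal M\subset\mathfrak S''$ and $\mathfrak S''\subset\mathcal M$ separately, using (a)$\Rightarrow$(c) on the unitaries of $\mathfrak S'$ and (c)$\Rightarrow$(a) on the unitaries of $\mathcal M'$.
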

\begin{proof}
Let $\mathcal M$ be the von Neumann algebras generated by those $u_A$ and $e^{itH_A}$. We  show that $\mathcal M=\mathfrak S''$.

Let $\mathcal U(\mathfrak S')$ be the set of unitary operators in $\mathfrak S'$. We know that $\mathcal U(\mathfrak S')$ generates $\mathfrak S'$. So $\mathfrak S''=\mathcal U(\mathfrak S')'$. By lemma \ref{lb50} (a)$\Rightarrow$(c) we see that $\mathcal M$ commutes with $\mathcal U(\mathfrak S')$. Hence $\mathcal M\subset\mathcal U(\mathfrak S')'=\mathfrak S''$.

Let $\mathcal U(\mathcal M')$ be the set of unitary operators in $\mathcal M'$, the commutant of $\mathcal M$. Then by lemma \ref{lb50} (c)$\Rightarrow$(a) we also have $\mathcal U(\mathcal M')\subset \mathfrak S'$. Hence $\mathcal M'\subset \mathfrak S'$, which implies that $\mathcal M\supset \mathfrak S''$. Thus we've proved that $\mathcal M=\mathfrak S''$.
\end{proof}

\begin{co}
Assume that $A$ is a closed operator on $\mathcal H$ and $x\in B(\mathcal H)$. Let $A=uH$ (resp. $Hu$) be the left (resp. right) polar decomposition of $A$ with $u$ the partial isometry and $H$ the self adjoint opertor. Then $x$ commutes strongly with $A$ if and only if $[u,x]=0$ and $[e^{itH},x]=0$ for any $t\in\mathbb R$.
\end{co}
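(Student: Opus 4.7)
The plan is to deduce the corollary directly from Proposition \ref{lb51} applied to the singleton $\mathfrak{S}=\{A\}$. By definition, $x$ commutes strongly with $A$ if and only if $x\in\{A\}'$. Proposition \ref{lb51} identifies $\{A\}''$ with the von Neumann algebra $\mathcal{M}:=\{u,\,e^{itH}:t\in\mathbb{R}\}''$, and since $\{A\}'$ is itself a von Neumann algebra the bicommutant theorem gives
\[
\{A\}' \;=\; \bigl(\{A\}''\bigr)' \;=\; \mathcal{M}' \;=\; \{u,\,e^{itH}:t\in\mathbb{R}\}',
\]
where the last equality is the standard identity $T'''=T'$ valid for any $T\subset B(\mathcal H)$. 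Hence it suffices to show that $x\in\{u,\,e^{itH}:t\in\mathbb{R}\}'$ is equivalent to the stated conjunction $[u,x]=0$ and $[e^{itH},x]=0$ for every $t\in\mathbb{R}$.

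For the forward direction ($\Rightarrow$), if $x\in\{A\}'$ then $x$ commutes (in the usual bounded-operator sense) with every element of $\{A\}''=\mathcal{M}$, which contains both $u$ and each $e^{itH}$; this yields the two stated commutation relations at once.

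For the converse ($\Leftarrow$), the hypotheses are that $x$ commutes strongly (in the sense of bounded operators, i.e.\ $x$ \emph{and} $x^{*}$ both commute with the target) with $u$ and with each $e^{itH}$. The only piece that is not in the hypothesis is the $e^{itH}$-commutation of $x^{*}$, and this is automatic: taking the adjoint of $[x,e^{itH}]=0$ gives $[x^{*},e^{-itH}]=0$, which after replacing $t$ by $-t$ becomes $[x^{*},e^{itH}]=0$ for all $t$. Once $x$ and $x^{*}$ are each known to commute with every generator of $\mathcal{M}$, they lie in $\mathcal{M}'$, and so $x\in\mathcal{M}'=\{A\}'$, i.e.\ $x$ commutes strongly with $A$.

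The entire argument is essentially bookkeeping on top of Proposition \ref{lb51}, and I do not anticipate any genuine obstacle. The single point of care is the self-adjoint closure of the commutation hypothesis, which for the unitary group $\{e^{itH}\}$ comes for free via the one-line adjoint calculation above, and for the partial isometry $u$ is built into reading $[u,x]=0$ in the $B(\mathcal H)$-strong sense (as is natural in view of the bounded-operator commutant appearing in Proposition \ref{lb51}).
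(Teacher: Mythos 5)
Your proof is correct and takes essentially the same route as the paper: the paper's own proof likewise applies Proposition \ref{lb51} to the singleton $\mathfrak S=\{A\}$ and concludes that $x\in\mathfrak S'$ if and only if $x$ commutes with $u$ and all $e^{itH}$. Your extra bookkeeping (the identity $\mathfrak S'=\mathfrak S'''=\mathcal M'$, the adjoint computation $[x^{*},e^{itH}]=0$, and reading $[u,x]=0$ in the adjoint-closed sense) only spells out what the paper's two-line argument leaves implicit, and in fact handles the $u^{*}$ subtlety more carefully than the paper does.
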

\begin{proof}
Let $\mathfrak S=\{A\}$. Then by proposition \ref{lb51}, $\mathfrak S''$ is generated by $u$ and all $e^{itH}$. Thus $x\in\mathfrak S'$ if and only if $x$ commutes with $u$ and all $e^{itH}$.
\end{proof}
\begin{df}
Let $A$ and $B$ be two closed operators on a Hilbert space $\mathcal H$. We say that $A$ and $B$ \textbf{commute strongly}, if the von Neumann algebra generated by $A$ commutes with the one generated by $B$.
\end{df}

If $\mathcal M$ is a von Neumann algebra on $\mathcal H$ and $A$ is a closed operator on $\mathcal H$. We say that $A$ is \textbf{affiliated with} $\mathcal M$, if the von Neumann algebra generated by the single operator $A$ is inside $\mathcal M$. Now suppose that $\mathcal N$ is another von Neumann algebra on a Hilbert space $\mathcal K$, and $\pi:\mathcal M\rightarrow \mathcal N$ is a normal (i.e. $\sigma-$weakly continuous) unital *-homomorphism. We define $\pi(A)$ to be a closed operator on $\mathcal K$ affiliated with $\mathcal N$ in the following way: Let $A=uH$ be its left polar decomposition. Define $\pi(H)$ to be the generator of the one parameter unitary group $\pi(e^{itH})$ acting on $\mathcal H$, i.e., the unique self-adjoint operator on $\mathcal K$ satisfying
\begin{gather}
e^{it\pi(H)}=\pi(e^{itH})~~~(t\in\mathbb R).
\end{gather}
We then define
\begin{align}
\pi(A)=\pi(u)\pi(H).
\end{align}
We can also define $\pi(A)$ using the right polar decomposition of $A$. It is easy to show that these two definitions are the same.

\subsection{A criterion for strong commutativity}

A famous example of Nelson (cf. \cite{Nel}) shows that two unbounded self-adjoint operators  commuting on a common invariant core might not commute strongly. In this section, we give a criterion on the strong commutativity of unbounded closed operators. Our approach follows \cite{Toledano integrating} and \cite{Toledano}. See also
\cite{GJ} section 19.4 for related materials.

 Suppose that $D$ is a self-adjoint positive operator on a Hilbert space $\mathcal H$. For any $r\in\mathbb R$, we let $\mathcal H^r$ be the domain of $(1+D)^r$. It is clear that $\mathcal H^{r_1}\supset\mathcal H^{r_2}$ if $r_1<r_2$. We let $\mathcal H^\infty=\bigcap_{r\geq0}\mathcal H^r$. Define a norm $\lVert\cdot\lVert_r$ on $\mathcal H_r$ to be $\lVert\xi\lVert_r=\lVert(1+D)^r\xi\lVert$. Suppose that $K$ is an unbounded operator on $\mathcal H$ with invariant domain $\mathcal H^\infty$ (``invariant'' means that $K\mathcal H^\infty\subset\mathcal H^\infty$), that $K$ is symmetric, i.e., for any $\xi,\eta\in\mathcal H^\infty$ we have 
 \begin{equation}
 \langle K\xi|\eta\rangle=\langle\xi|K\eta\rangle,
 \end{equation}
 and that for any $n\in\mathbb Z_{\geq0}$ there exist positive numbers $|K|_{n+1}$ and $|K|_{D,n+1}$, such that for any $\xi\in\mathcal H^\infty$
 we have
 \begin{gather}
 \lVert K\xi\lVert_n\leq |K|_{n+1}\lVert\xi\lVert_{n+1},\label{eqa6}\\
 \lVert [D,K]\xi\lVert_n\leq |K|_{D,n+1}\lVert\xi\lVert_{n+1}.
 \end{gather}
Since $K$ is symmetric, it is obviously preclosed. We let $\overline K$ denote the closure $K$. The following lemma is due to Toledano-Laredo (cf. \cite{Toledano integrating} proposition 2.1\footnote{Toledano-Laredo's proof of this proposition was based on a trick in \cite{FL74} theorem 2.} and corollary 2.2).
 \begin{lm}\label{lb52}
 	$\overline K$ is self-adjoint. Moreover, the following statements are true:\\
 	(1) For any $n\in\mathbb Z_{\geq0}$ and $t\in\mathbb R$, the unitary operator $e^{it\overline K}$ restricts to a bounded linear map $\mathcal H^n\rightarrow \mathcal H^n$ with
 	\begin{equation}
 	\lVert e^{it\overline K}\xi\lVert_n\leq e^{ 2nt|K|_{D,n}}\lVert\xi\lVert_n,~~~\xi\in\mathcal H^n.
 	\end{equation}
 	(2) For any $\xi\in\mathcal H^\infty$, $h\in\mathbb R$ and $k=1,2,\dots$, we have
 	\begin{equation}\label{eq204}
 	e^{i(t+h)\overline K}\xi=e^{it\overline K}\xi+\cdots+\frac{h^k}{k!} K^ke^{it\overline K}\xi+R(h),
 	\end{equation}
 	where all terms are in $\mathcal H^\infty$ and $R(h)=o(h^k)$ in each $\lVert\cdot\lVert_n$ norm, i.e., $\lVert R(h)\lVert_nh^{-k}\rightarrow 0$ as $h\rightarrow 0$.
 \end{lm}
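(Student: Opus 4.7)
The plan is to prove the lemma in three stages: essential self-adjointness of $K$, a Sobolev-norm estimate on $e^{it\overline K}$, and finally the Taylor expansion. For essential self-adjointness, I would invoke the Faris--Lavine commutator theorem with comparison operator $N = 1+D \geq 1$. The needed hypotheses are that $\mathcal H^\infty$ is a common invariant core (given), that $\|K\xi\| \leq |K|_1 \|N\xi\|$ (the $n=0$ case of \eqref{eqa6}, since $\|\xi\|_1 = \|N\xi\|$), and that $\pm i[K,N] \leq c N^2$ as quadratic forms on $\mathcal H^\infty$. The latter follows from $|\langle [D,K]\xi,\xi\rangle| \leq \|[D,K]\xi\| \cdot \|\xi\| \leq |K|_{D,1} \|\xi\|_1 \|\xi\| \leq |K|_{D,1} \|N\xi\|^2$, using $\|\xi\| \leq \|N\xi\|$ because $N \geq 1$. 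Thus $\overline K$ is self-adjoint and $e^{it\overline K}$ is a strongly continuous one-parameter unitary group by Stone's theorem.

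For part (1), I would first smooth $K$ by setting $K_\epsilon = e^{-\epsilon N^2} K e^{-\epsilon N^2}$ for $\epsilon > 0$, a bounded self-adjoint operator which preserves $\mathcal H^\infty$ and converges to $K$ strongly on $\mathcal H^\infty$; by the Trotter--Kato theorem $e^{itK_\epsilon} \to e^{it\overline K}$ strongly as $\epsilon \downarrow 0$. To bound the Sobolev norm, introduce the bounded self-adjoint regulator $B_\lambda = (1+D)^n (1+\lambda D)^{-n}$, which commutes with $D$ and tends monotonically to $(1+D)^n$ on $\mathcal H^n$ as $\lambda \downarrow 0$. For $\xi \in \mathcal H^\infty$ set $\varphi_{\epsilon,\lambda}(t) = \|B_\lambda e^{itK_\epsilon}\xi\|^2$. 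Differentiating and expanding via the telescoping identity $[(1+D)^m, K] = \sum_{j=0}^{m-1} (1+D)^j [D,K] (1+D)^{m-1-j}$, then iterating the commutator bound $\|[D,K]\eta\|_\ell \leq |K|_{D,\ell+1} \|\eta\|_{\ell+1}$, produces the differential inequality $|\varphi_{\epsilon,\lambda}'(t)| \leq 4n |K|_{D,n} \varphi_{\epsilon,\lambda}(t)$ with constants independent of $\epsilon$ and $\lambda$. Gronwall's lemma gives $\|B_\lambda e^{itK_\epsilon}\xi\| \leq e^{2n|t||K|_{D,n}} \|B_\lambda \xi\|$; sending $\epsilon \downarrow 0$ and then $\lambda \downarrow 0$, using strong and monotone convergence respectively, yields $\|e^{it\overline K}\xi\|_n \leq e^{2n|t||K|_{D,n}} \|\xi\|_n$. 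Extension to $\xi \in \mathcal H^n$ is by density, and in particular $e^{it\overline K} \mathcal H^\infty \subset \mathcal H^\infty$.

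For part (2), with $e^{it\overline K}$ preserving $\mathcal H^\infty$, for $\xi \in \mathcal H^\infty$ the curve $t \mapsto e^{it\overline K}\xi$ is strongly differentiable in each $\|\cdot\|_n$ with derivative $iK e^{it\overline K}\xi$: writing $h^{-1}(e^{i(t+h)\overline K} - e^{it\overline K})\xi - iK e^{it\overline K}\xi = h^{-1}\int_t^{t+h} [iK e^{is\overline K}\xi - iK e^{it\overline K}\xi]\, ds$ and using continuity of $s \mapsto iK e^{is\overline K}\xi$ in $\|\cdot\|_n$ (from the Sobolev estimate combined with \eqref{eqa6}) gives differentiability; iterating shows $\frac{d^k}{dt^k} e^{it\overline K}\xi = (iK)^k e^{it\overline K}\xi$ in every $\|\cdot\|_n$. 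The Banach-space Taylor formula with integral remainder then produces \eqref{eq204} with $R(h) = o(h^k)$ in each $\|\cdot\|_n$ by continuity. The main obstacle is the uniform-in-$(\epsilon,\lambda)$ commutator bound in part (1): the telescoping expansion of $[B_\lambda^2, K_\epsilon]$ introduces unbounded factors $(1+D)^j$, and controlling every arising matrix element by a constant independent of both regulators --- which the hierarchy \eqref{eqa6} together with the commutator bounds is precisely designed to permit --- is the technical heart of the proof, since otherwise the Gronwall bound would fail to survive the limit.
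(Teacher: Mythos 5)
The paper does not actually prove this lemma: it cites \cite{Toledano integrating}, Proposition 2.1 and Corollary 2.2, with a footnote noting that the proof there is based on a trick from \cite{FL74}, Theorem 2. Your outline for parts (1) and (2) is essentially Toledano-Laredo's argument, but your self-adjointness step contains a genuine gap, and it is precisely the point where the Faris--Lavine trick (rather than a black-box citation of their theorem) becomes necessary. The commutator theorem of Nelson/Faris--Lavine requires the form bound $\pm i[K,N]\leq bN$ on the core, i.e. $|\langle K\xi, N\xi\rangle-\langle N\xi,K\xi\rangle|\leq b\,\langle \xi,N\xi\rangle$, \emph{not} $\pm i[K,N]\leq cN^2$ as you state. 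The $N^2$ form bound is vacuous: it already follows from the relative bound $\lVert K\xi\rVert\leq c\lVert N\xi\rVert$ alone, since $|\langle K\xi,N\xi\rangle-\langle N\xi,K\xi\rangle|\leq 2c\lVert N\xi\rVert^2=2c\langle\xi,N^2\xi\rangle$; and relative boundedness of a symmetric operator on an invariant core does not imply essential self-adjointness, so a ``commutator theorem'' with the $N^2$ hypothesis is false. Nor does the correct hypothesis follow from your estimate: you obtain $|\langle[D,K]\xi,\xi\rangle|\leq |K|_{D,1}\lVert\xi\rVert_1\lVert\xi\rVert_0$, whereas what is needed is a bound by $b\lVert\xi\rVert_{1/2}^2$, and Cauchy--Schwarz in the spectral representation of $D$ gives $\lVert\xi\rVert_{1/2}^2\leq \lVert\xi\rVert_0\lVert\xi\rVert_1$ --- the inequality runs the wrong way. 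This gap is load-bearing: your part (1) invokes Trotter--Kato, which requires the limit generator $\overline K$ to be already known self-adjoint, and part (2) sits downstream of part (1).

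The step can be repaired inside your framework. Since $D$ and $K$ are symmetric on the invariant domain $\mathcal H^\infty$, the commutator $[D,K]$ is skew-symmetric there, so the hypothesis $\lVert [D,K]\xi\rVert_0\leq |K|_{D,1}\lVert\xi\rVert_1$ transposes by duality to $\lVert [D,K]\xi\rVert_{-1}\leq |K|_{D,1}\lVert\xi\rVert_0$; interpolating in the Sobolev scale of $1+D$ between $\mathcal H^1\rightarrow\mathcal H^0$ and $\mathcal H^0\rightarrow\mathcal H^{-1}$ shows $[D,K]$ is bounded $\mathcal H^{1/2}\rightarrow\mathcal H^{-1/2}$, whence $|\langle [D,K]\xi,\xi\rangle|\leq |K|_{D,1}\lVert\xi\rVert_{1/2}^2=|K|_{D,1}\langle\xi,N\xi\rangle$, which is exactly the Nelson/Faris--Lavine hypothesis; alternatively one can reproduce the regularization trick of \cite{FL74} directly, as Toledano-Laredo does. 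Granting self-adjointness, the remainder of your plan is sound and parallels \cite{Toledano integrating}: the mollified generator with a telescoped commutator estimate uniform in both regulators and Gronwall reproduces the constant $e^{2nt|K|_{D,n}}$, and in part (2) the $\lVert\cdot\rVert_n$-continuity of $s\mapsto e^{is\overline K}\xi$ that your difference-quotient argument needs follows from part (1) combined with the interpolation $\lVert\eta\rVert_n^2\leq\lVert\eta\rVert_0\lVert\eta\rVert_{2n}$, after which the Banach-space Taylor formula indeed yields $R(h)=o(h^k)$ (in fact $O(h^{k+1})$) in every Sobolev norm.
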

 
 This lemma may help us prove the following important criterion for strong commutativity of unbounded closed operators.
 \begin{thm} \label{lb60}
 Let $T$ be another unbounded operator on $\mathcal H$ with invariant domain $\mathcal H^\infty$. Suppose that $T$ satisfies the following conditions:\\
 	(1) There exists  $m\in\mathbb Z_{\geq0}$, such that for any $n\in\mathbb Z_{\geq0}$, we can find a positive number $|T|_{n+m}$, such that
 	\begin{equation}\label{eq208}
 	\lVert T\xi\lVert_n\leq|T|_{n+m}\lVert\xi\lVert_{n+m}\quad(\xi\in\mathcal H^\infty).
 	\end{equation}
 	(2) $T$ is a preclosed operator on $\mathcal H$.\\
 	(3) $KT\xi=TK\xi$ for any $\xi\in\mathcal H^\infty$.\\
 	Then  the self-adjoint operator$\overline K$ commutes strongly with $\overline T$, the closure of $T$.
 \end{thm}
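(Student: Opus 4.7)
The strategy is to show that each unitary $e^{it\overline{K}}$ commutes strongly with the closed operator $\overline{T}$, from which strong commutativity of $\overline{K}$ and $\overline{T}$ will follow. Indeed, lemma \ref{lb52} guarantees that $\overline{K}$ is self-adjoint, so by standard functional calculus (equivalently, proposition \ref{lb51} combined with the observation that for a self-adjoint $\overline K$ both factors of its polar decomposition are Borel functions of $\overline K$) the von Neumann algebra generated by $\overline K$ coincides with $\{e^{it\overline K}:t\in\mathbb R\}''$. Hence it suffices to check that each $e^{it\overline K}$ lies in the commutant of the von Neumann algebra generated by $\overline T$, which by lemma \ref{lb50} amounts to the operator-theoretic inclusion $e^{it\overline K}\overline T\subset \overline T\, e^{it\overline K}$ for every $t\in\mathbb R$.

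The heart of the argument is to establish the identity $Te^{it\overline K}\xi = e^{it\overline K}T\xi$ for every $\xi\in\mathcal H^\infty$ and every $t\in\mathbb R$. I would fix $\xi$ and consider the $\mathcal H$-valued function
\[
g(t):=e^{-it\overline K}\,T\,e^{it\overline K}\xi - T\xi,
\]
which is well defined because lemma \ref{lb52}(1) says $e^{it\overline K}$ preserves $\mathcal H^\infty$ and $T$ has invariant domain $\mathcal H^\infty$. The goal is to prove $g'(t)=0$ in the Hilbert norm for every $t$. Setting $\eta_t=e^{it\overline K}\xi\in\mathcal H^\infty$, the Taylor expansion of lemma \ref{lb52}(2) at order $k=1$ gives $(e^{ih\overline K}-I)\eta_t = ih\,K\eta_t + o(h)$ in every Sobolev norm $\|\cdot\|_n$; applying $T$ and using the bound \eqref{eq208} then yields $T(e^{ih\overline K}-I)\eta_t = ih\,TK\eta_t + o(h)$ in $\|\cdot\|_0$. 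Similarly, since $T\eta_t\in\mathcal H^\infty$, one has $(e^{-ih\overline K}-I)(T\eta_t) = -ih\,KT\eta_t + o(h)$ in $\|\cdot\|_0$. Combining these two expansions with the strong continuity of $h\mapsto e^{-ih\overline K}$ produces
\[
g(t+h)-g(t) = ih\cdot e^{-it\overline K}(TK-KT)\eta_t + o(h).
\]
Hypothesis (3) of the theorem forces $(TK-KT)\eta_t=0$, so $g'(t)=0$ for all $t$. Since $g$ is continuous, vanishes at $t=0$, and has zero derivative everywhere, pairing with arbitrary vectors in $\mathcal H$ reduces the conclusion $g\equiv 0$ to the elementary fact that a continuously differentiable scalar function on $\mathbb R$ with zero derivative is constant.

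Once $Te^{it\overline K}\xi=e^{it\overline K}T\xi$ is established for all $\xi\in\mathcal H^\infty=\mathscr D(T)$, this reads exactly as the inclusion $e^{it\overline K}T\subset Te^{it\overline K}$ of unbounded operators on $\mathcal H$, and proposition \ref{lb97}(1) then upgrades it to $e^{it\overline K}\overline T\subset\overline T\,e^{it\overline K}$. Replacing $t$ by $-t$ gives the analogous inclusion with $(e^{it\overline K})^*=e^{-it\overline K}$ in place of $e^{it\overline K}$, so both defining conditions of strong commutativity between the bounded unitary $e^{it\overline K}$ and the closed operator $\overline T$ hold. By the reduction in the first paragraph, this finishes the proof.

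The main technical obstacle will be the differentiation step. One must track the Sobolev norms carefully when expanding both factors $e^{\pm ih\overline K}$, because the bound \eqref{eq208} on $T$ only transfers $o(h)$ estimates from $\|\cdot\|_m$ down to $\|\cdot\|_0$; the Taylor remainders from lemma \ref{lb52}(2) must therefore be invoked in sufficiently high Sobolev norm, not merely in $\|\cdot\|_0$. A secondary subtlety is the handling of $e^{-i(t+h)\overline K}=e^{-it\overline K}e^{-ih\overline K}$: after splitting off the extra $e^{-ih\overline K}$, one must exploit its strong continuity together with the $O(h)$ size of the vectors it then hits to absorb the spurious $(e^{-ih\overline K}-I)$ perturbation into the $o(h)$ error.
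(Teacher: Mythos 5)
Your proposal is correct and follows essentially the same route as the paper: both prove $\Xi'(t)=0$ for the conjugated family (you use $e^{-it\overline K}Te^{it\overline K}\xi$, the paper $e^{it\overline K}Te^{-it\overline K}\xi$) via the order-one Taylor expansion of lemma \ref{lb52}(2), transfer the $o(h)$ remainders through $T$ with the Sobolev bound \eqref{eq208}, kill the derivative with hypothesis (3), and then pass to closures (proposition \ref{lb97}) to get strong commutativity. Your explicit handling of the extra $e^{-ih\overline K}$ factor by strong continuity and your spelled-out commutant reduction via lemma \ref{lb50} and proposition \ref{lb51} are just more detailed versions of steps the paper leaves implicit.
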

 
 \begin{proof}
 	By lemma \ref{lb52}, for each $t\in\mathbb R$, $e^{it\overline K}$ leaves $\mathcal H^\infty$ invariant. We want to show that
 	\begin{equation}\label{eq210}
 	e^{it\overline K}Te^{-it\overline K}=T ~~~\text{on } \mathcal H^\infty.
 	\end{equation}
 	For any $\xi\in\mathcal H^\infty$ we define a $\mathcal H^\infty$-valued function $\Xi$ on $\mathbb R$ by
 	\begin{equation}
 	\Xi(t)=e^{it\overline K}Te^{-it\overline K}\xi.
 	\end{equation}
 	If we can show that this function is  constant, then we  have $\Xi(t)=\Xi(0)$, which proves \eqref{eq210}. To prove this, it suffices to show that the derivative of this function is always $0$.
 	
For any $t\in\mathbb R$, if $0\neq h\in\mathbb R$, then
 	\begin{align}
 	\Xi(t+h)=&e^{i(t+h)\overline K}Te^{-i(t+h)\overline K}\xi\\
 	=&e^{i(t+h)\overline K}T\big((1-ihK)e^{-it\overline K}\xi+o(h)\big)\label{eq205}\\
 	=&e^{i(t+h)\overline K}T(1-ihK)e^{-it\overline K}\xi+o(h)\label{eq209}\\
 	=&e^{i(t+h)\overline K}Te^{-it\overline K}\xi-ihe^{i(t+h)\overline K}KTe^{-it\overline K}\xi+o(h)\label{eq207}\\
 	=&[e^{it\overline K}(1+ihK)Te^{-it\overline K}\xi+o(h)]\nonumber\\&-ih[e^{it\overline K}(1+ihK)KTe^{-it\overline K}\xi+o(h)]+o(h)\label{eq206}\\
 	=&e^{it\overline K}Te^{-it\overline K}\xi+o(h)=\Xi(t)+o(h),
 	\end{align}
where \eqref{eq205} and \eqref{eq206} follow from \eqref{eq204}, and  \eqref{eq207} follows from the relation $KT=TK$ on $\mathcal H^\infty$. We also used the fact that $To(h)=o(h)$ (which follows from \eqref{eq208}) in \eqref{eq209}. Here the meaning of $o(h)$ is same as that in lemma \ref{lb52}.
 	
Hence we have shown that $\Xi'(t)=0$ for any $t\in\mathbb R$, which proves \eqref{eq210}. Now we regard $T$ as an unbounded operator on $\mathcal H$. By passing to the closure, we have $e^{it\overline K}\overline Te^{-it\overline K}=\overline T$. This shows that $\overline T$ commutes strongly with $\overline K$.
 \end{proof}

\end{document}